\numberwithin{equation}{section}
\newtheorem{theo}{Theorem}[section]%
\newtheorem{defi}[theo]{Definition}
\newtheorem{assum}[theo]{Assumption}%
\newtheorem{lemma}[theo]{Lemma}%
\newtheorem{rem}[theo]{Remark}%
\newtheorem{prop}[theo]{Proposition}%
\newtheorem{Ex}[theo]{Example}%
\def\tr{{\rm Tr}}
\newcommand{\diag}{\mathrm{diag}}
\newcommand{\E}{\mathbb E}
\newcommand{\Pp}{\mathbb P}
\newcommand{\C}{\mathbb C}
\newcommand{\R}{\mathbb R}
\newcommand{\N}{\mathbb N}
\newcommand{\Ss}{\mathbb S}
\newcommand{\bs}{\bm{\sigma}}
\DeclareMathOperator{\1}{\mathbbm{1}}
\begin{document}
	
	\title{Spherical Integrals of Sublinear Rank}
\author{Jonathan Husson* and Justin Ko*}

\email{jhusson@umich.edu}
\email{justin.ko@ens-lyon.fr}
	\thanks{* Supported in part by ERC Project LDRAM:  ERC-2019-ADG	Project 884584.}

\begin{abstract}
We consider the asymptotics of $k$-dimensional spherical integrals when $k = o(N)$. We prove that the $o(N)$-dimensional spherical integrals are approximately the products of $1$-dimensional spherical integrals. Our formulas extend the results for $k$-dimensional spherical integrals proved by Guionnet and Maïda in \cite{GuMa05} and  Husson and Guionnet in \cite{GuHu21} which are only valid for $k$ finite and independent of $N$. These approximations will be used to prove a large deviation principle for the joint $2k(N)$ extreme eigenvalues for sharp sub-Gaussian Wigner matrices and for additive deformations of GOE/GUE matrices. Furthermore, our results will be used to compute the free energies of spherical SK vector spin glasses and the mutual information for matrix estimation problems when the dimensions of the spins or signals have sublinear growth.  
\end{abstract}
	
	\maketitle
	\section{Introduction}
	
	The Harish--Chandra--Itzykson--Zuber integral was first introduced by Harish--Chandra as the following integral on the orthogonal group  or unitary group: 
	\begin{equation}\label{eq:HCIZ}
		HCIZ(A,B) = \int_{\mathcal{U}_N} \exp(N \tr(AUBU^*)) dU 
	\end{equation}
	where $A,B$ are two self-adjoint $N \times N$ matrices, $\mathcal{U}_N$ is either the unitary group or the orthogonal group and $dU$ is the Haar measure on it. This integral can be thought of as a way to generalize the Laplace transform on the orthogonal and unitary group \cite{Harish}. An explicit formula was given in the unitary case by Itzykson and Zuber \cite{Itzyksonzuber} and Harish-Chandra: 
	\[ HCIZ(A,B) = \frac{ \det( ( e^{N \lambda_i \mu_j})_{ 1 \leq i, j \leq N})}{\Delta(A) \Delta(B)} \]
	where $\lambda_1, \dots, \lambda_N$ are the eigenvalues of $A$ and $\mu_1,\dots,\mu_N$ are the eigenvalue of $B$ and $\Delta(A) = \prod_{i>j} |\lambda_i - \lambda_j|$. It is a powerful and well studied object in a variety of fields from algebraic geometry to physics. In random matrix theory, results by Coquereaux, McSwiggen and Zuber \cite{CoMcSZu} and Zuber \cite{Zu} use these spherical integrals to express the density of the eigenvalues for matrix models of the form $A +UBU^*$ where $A$ and $B$ are deterministic self-adjoint matrices and $U$ is an Haar distributed random matrix in the orthogonal or the unitary group. For questions of large deviations, knowing an equivalent of $\ln HCIZ(A_N,B_N)$ depending on the behavior of the spectra of $A_N$ and $B_N$ can help prove large deviation principles for the largest eigenvalue. For instance, one can refer to \cite{HuGu,Hu,AuGuHu,McKenna,GuHu21} for large deviation principles for the largest eigenvalue of matrices with entries that satisfy a sharp sub-Gaussian bound (see Definition ~\ref{sharpsubG}), \cite{Mai} for the largest eigenvalue of an additive deformation of a GOE/GUE matrix, \cite{GuMa20} for the largest eigenvalue of the sum of two random matrices, and \cite{BGM} for a large deviation principle of the empirical measure of diagonal entries of a unitary invariant matrix. All these results hinge on the asymptotic behavior of the logarithm of the spherical integral either for $B_N$ with finite rank $k$ when we are interested in the $k$ largest eigenvalue or $B_N$ of full rank when we are interested in the empirical mesure.  When the eigenvalue distribution of $A_N$ and $B_N$ converge, Guionnet and Zeitouni investigated the limit of $N^{-2} \ln HCIZ(A_N,B_N)$ \cite{GZ3,GZei1add}. In the case where the rank of $B_N$ is one, more precisely when $B_N = \theta e e^*$ where $e$ is some unitary vector and $\theta$ is a real number (that does not depend on $N$), the limit $N^{-1} \ln HCIZ(A_N, B_N)$  was determined by Maïda and Guionnet \cite{GuMa05} (see also \cite{GoPa}). If $\theta > 0$, this limit depends on the limit of the largest eigenvalue of $A_N$ and the limit of the eigenvalue distribution of $A_N$. More precisely, assuming that both those quantities converge toward respectively toward $\lambda$ and $\mu$, we have that: 
	\[ \lim_{N \to \infty}\frac{1}{N} \ln HCIZ\bigg(A_N,\frac{\beta}{2} \theta e e^* \bigg) = \frac{\beta}{2}J(\theta, \lambda, \mu) \]
	where $J$ is defined by using $G_{\mu}$,  the Stieltjes transform of $\mu$ and $G_{\mu}^{-1}$, its inverse function as follows:
	\begin{equation}\label{eq:defJ}
		J(\theta, \lambda, \mu) = \theta \lambda + (v - \lambda) G_{\mu}(v) - \ln| \theta| - \int \ln | v - x | d \mu(x) - 1,
	\end{equation}
	where 
	\[ v = \begin{cases} \lambda &\text { when } 0 \leq G_{\mu}(\lambda) \leq \theta \text{ or } \theta \leq G_{\mu}(\lambda) \leq 0 \\
		G_{\mu}^{-1}(\theta) &\text{ otherwise. } 
	\end{cases}
	\]
	
	This result was generalized to $B_N$ of finite rank by Guionnet and one of the authors in \cite{GuHu21}. If $\theta^+ _1 \geq \theta^+_2 \geq \dots \geq \theta_l^+ \geq 0$ and $\theta^- _1 \leq \theta^-_2 \leq \dots \leq \theta_m^- \leq 0$ and $A_N$ is a sequence of (deterministic) matrices such that for $i \leq l$, the $i$-th largest eigenvalue converges toward $\lambda^+_i$, that for $j \leq m$, the $j$-th smallest eigenvalue converges toward $\lambda^-_j$ and that the eigenvalue distribution of $A_N$ converges toward $\mu$ then, if $B_N =\frac{\beta}{2} \Big[ \sum_{i=1}^l \theta^+_i e_i e_i^* +\sum_{i=1}^m \theta^-_i f_i f_i^* \Big]$ where $ \{ e_i \}_{1 \leq l} \cup \{ f_i \}_{1 \leq m}$ is a family of orthonormal vectors, we have that:
	
	\[ \lim_{N \to + \infty} \frac{1}{N} \ln HCIZ(A_N,B_N) = \frac{\beta}{2} \Big[ \sum_{i=1}^l J( \theta_i^+, \lambda_i^+, \mu) + \sum_{i=1}^m J( \theta_i^-, \lambda_i^-, \mu)  \Big]\]
	where $J$ is given by \eqref{eq:defJ}.
	At the limit, there is an additivity phenomenon where we pair each parameter $\theta$ to a corresponding eigenvalue of $A_N$. Up to this pairing the asymptotical behavior of the integral is similar to the sum of the behavior of rank one integral.

	From this result, one can make a conjecture regarding the behavior of the same integral where the rank of the matrix $B_N$ is negligible relative to $N$. More precisely, if $B_N =\sum_{i=1}^{l(N)} \theta^+_i e_i e_i^* +\sum_{i=1}^{m(N)} \theta^-_i f_i f_i^*$ is a sequence of matrices bounded in operator norm with $\theta^+_1 \geq \dots \geq \theta^+_{l(N)} \geq 0$ and $\theta^-_1 \leq \dots \leq \theta^-_{m(N)} \leq 0$ and $\{ e_i \}_{1 \leq l(N)} \cup \{ f_i \}_{1 \leq m(N)} $ a family of orthonormal vectors, and $(A_N)_{N \in \N}$ is a sequence of matrices bounded in operator norm then:
	\begin{equation}\label{eq:conjecture}
	\frac{1}{N} \ln HCIZ(A_N,B_N) =  \frac{\beta}{2} \Big[\sum_{i=1}^{l(N)} J( \theta_i^+, \lambda_i^+, \mu) + \sum_{i=1}^{m(N)} J( \theta_i^-, \lambda_i^-, \mu) \Big] + o(l(N) + m(N))
	\end{equation}
	where the $\lambda_i^+$ are the $i$-th largest eigenvalue of $A_N$ and $\lambda_i^-$ the $i$-th lowest one. 
	
	In this mesoscopic case,  Guionnet and Maïda investigated the case when $k(N) = l(N) + m(N) = o( N^{-1/2 -\epsilon})$ and  $\theta_i$ below the transition threshold \cite{GuMa05} and Collins and Sniady investigated the case where the extremal eigenvalues stick to the edges of the limit measures \cite{CoSn}. Huang also provides in \cite{Huang} and expansion of such integrals again for small values of $\theta_i$. Note than none of those cases dealing with a non-constant $k$ actually exhibits the pairing phenomenon of $\lambda_i^{\pm}$ with $\theta_{i}^{\pm}$ since in these cases $J(\theta, \lambda, \mu)$ does not actually depend on the value of $\lambda$.
	In this paper we will prove the conjecture stated on equation \eqref{eq:conjecture} for any sequence $k(N)$ such that $k(N)=o(N)$. We will also generalize the large deviation results of \cite{HuGu} for the largest eigenvalue of sharp sub-Gaussian random matrices and \cite{Mai} for the largest eigenvalue of an additive deformation of a GOE/GUE matrix to the joint large deviations of the $k(N)$ largest eigenvalues. 
	
	These growing rank spherical integrals also have applications in spin glasses. The spherical $2$-spin models have deep connections with random matrix theory because the Hamiltonians can be expressed as quadratic forms of a GOE matrix. This spherical model was introduced in \cite{sphericalskoriginal} as a variant of the Ising spin Sherrington--Kirkpatrick model introduced in \cite{SK}. A generalized form of this model called the mixed $p$-spin model and the analogue of the Parisi formula \cite{parisi1979infinite,parisi1980sequence} for the free energy of this model was discovered by Crisanti and Sommers in \cite{crisanti1992sphericalp} and was proven rigorously in \cite{TSPHERE,CASS}. Because of the simple structure of the Hamiltonian in the spherical case, the computation of the free energy is closely tied to the behavior of the eigenvalues of a GOE matrix, which has been the studied extensively in random matrices.  Random matrix techniques have been applied to study the fluctuations of the free energy and corresponding phase transtions in \cite{BaikFluct, BaikFerro, BaikExternal}, the connection the large deviations of the top eigenvalue in \cite{mergny2022right}, and the marginals of spherical spin glasses with correlated disorder matrices in \cite{barbier_correlated}.
	
	In this paper, we provide another application of random matrix tools to tackle a high dimensional analogue of the spherical SK model called the vector spin model. The analogue of the Cristanti--Sommers and Parisi formula for the limit of the free energy of this model was proved \cite{PTSPHERE, kosphere, kocs}. The derivation of this formula for the vector spin free energy  for the spherical SK model used  standard techniques in spin glasses such as interpolation \cite{guerra2003broken}, the cavity method \cite{AS2, CASS}, ultrametricity \cite{physicsultrametricity1, physicsultrametricity2, PUltra}, or sychronization \cite{PPotts, PVS}. The spherical integrals can be used as a direct large deviations proof of the limit of the free energy. Our formula will allow us to compute the limit of vector spin free energies for $2$ spin models when the dimension of the vector spins grow sublinearly with $N$.

	Lastly, we state an application of the spherical integrals in statistical inference. There has been a lot of interest and rigorous results in a class of statistical inference problems called the matrix factorization problems \cite{krzakala2016mutual,dia2016mutual,miolanefundamentallimits,el2018estimation,barbier2019adaptive,el2020fundamental}. Fundamental limits of the finite rank matrix factorization problems, which involves estimating a low rank signal in the presence of a Gaussian noise matrix, were proved by adapting techniques in spin glasses in \cite{miolanefundamentallimits}. Techniques to study the extensive rank problems, when the rank of the signal is on the same order as the dimension of the noise matrix, was studied  recently by physicists in \cite{maillard2021perturbative,troiani2022optimal,barbier2021statistical}. An interest lies in seeing what happens at the interface of these two regimes. In \cite{ReevesMatrixTensor}, the limit of the mutual information was computed under some additional assumptions on the sublinear growth rate of the dimension. In this work, we will use the spherical integrals to derive explicit formulas in the case when the rank of the matrix factorization problems go to $\infty$ at a sublinear rate, without any additional restrictions on the sublinear growth rate. See also \cite{MourratTensor1, MourratTensor2,Camilli2022AnIP} for other rigorous works related to the mutual information of matrix factorization problems.
	
	\section{Notation and main results}
	In the following, we will denote $\mathcal{H}_N^{\beta}$ the set of $N \times N$  symmetric matrices if $\beta=1$ and $N \times N$ Hermitian matrices if $\beta=2$. We will denote $\mathcal{U}_N^{\beta}$ the orthogonal group of $\R^N$ if $\beta=1$ and the unitary group of $\C^N$ if $\beta =2$. In the rest of the paper, $\beta$ will be fixed.  For a real or complex matrix $M $ we will denote 
		\[ |||M||| := \sup_{ u \neq 0} \frac{||Mu||_2}{||u||_2}  \]
	its operator norm and 
	\[ ||M||_2 = \sqrt{ \sum_{i,j} |M(i,j)|^2} \]
	its Euclidian norm. We will also denote by $B(M,r)$ the ball with center $M$ and radius $r$ under the norm $|||\cdot|||$ and $B_2(M,r)$ for the ball of center $M$ and radius $r$ for the norm $||\cdot||_2$

	 We will also denote for any integer $k$, $I_k$ the $k \times k$ identity matrix. Finally, if $A,B \in \mathcal{H}_N^{\beta}$, we will denote $A \leq B$ to mean that $B-A$ is a positive matrix.

	Let $(k(N))_{N \in \N}$ be a sequence of integers such that $\lim_{N \to + \infty} k(N)/N =0$, $(l(N))_{N \in \N}$ a sequence of integers such that $l(N) \leq k(N)$.	We will also manipulate sequence of matrices $(A_N)_{N \in \N} $ and $(D_N)_{N \in \N}$ as parameters for our spherical integral. The following assumption will be needed to state our main result:
	\begin{assum} \label{assum:A1}
		We assume that $(A_N)_{N \in \N}$ and $(D_N)_{N \in \N}$ are two sequences of matrices such that: 
		\begin{enumerate}
			\item $\forall N \in \N, A_N, D_N \in \mathcal{H}_N^{\beta}$. 
			\item There exists $K > 0$ such that for all $N \in \N$, $|||A_N|||, |||D_N||| \leq K$.
			\item For all $N \in \N$, the signature $(p,n, N - p - n)$ of $D_N$ is such that $p + n \leq k(N)$, $n \leq l(N)$. In other words, $D_N$ has  a rank at most $k(N)$ and has at most $ l(N)$ negative eigenvalues. 
		\end{enumerate}
		If these conditions are met, for every $N$, we will denote $\lambda_1 \geq \dots \geq \lambda_N$ the eigenvalues (with multiplicity) of $A_N$ and $\theta_1 \geq \dots \geq \theta_{k(N)}$ the reals (which may be $0$) such that the spectrum of $D_N$ (with multiplicity) is the $\theta_i$ together with $0$ with multiplicity $N - k(N)$. Lastly we will assume that $\theta_{l(N)} \geq 0 \geq \theta_{l(N) +1}$. So, for a fixed $i$, $\lambda_i$ and $\theta_i$ depend implicitly on $N$ but for the sake of concision we omit this dependency in the notation.

	 Lastly, we assume that the eigenvalue distribution of $A_N$, $\mu_{A_N} = \frac{1}{N} \sum_{i=1}^N \delta_{\lambda_i}$ converges weakly toward a compactly supported measure $\mu$.
		
	\end{assum}

	We now define our spherical integral:
	\begin{defi}\label{def:sphericalint}
		For $N \in \N$, and two matrices $A$ and $B$ in $\mathcal{H}_N^{\beta}$, we will denote by $I_N(A,B)$ the following integral on the orthogonal/unitary group $\mathcal{U}_N^{\beta}$: 
		\[I_N(A,B) = \int \exp\Big( \frac{\beta N}{2}  \tr(AUBU^* ) \Big) dU \]
		where $dU$ is the Haar measure.

		Furthermore, we extend this definition for matrices whose dimensions are smaller than $N$. If $A$ is in $\mathcal{H}_L^{\beta}$ and $B \in\mathcal{H}_M^{\beta} $ with $L,M \leq N$, we denote $A'$ and $B'$ the $N \times N$ matrices whose topleft submatrix is respectively $A$ and $B$ and whose remaining entries are $0$. Then we define \[I_N(A,B):= I_N(A',B').\]
	\end{defi}
	In the rest of the paper, we are often going to consider matrices $A_N$ of (potentially) full rank and matrices $D_N$ whose rank is going to be small relative to $N$. The eigenvalues of $D_N$ will sometimes be called \emph{inverse temperatures} by analogy to physics in the case where $D_N$ is of rank $1$. 
	We also define the function $J$ which is going to govern the asymptotics of $I_N$: 
	\begin{defi}\label{def:J}
		Let $\mu$ be a compactly supported measure  on $\R$, $\theta \geq 0$ and $\lambda \in \R$. We denote $supp(\mu)$ the support of $\mu$, and $r(\mu), l(\mu)$ denotes respectively the rightmost and leftmost points of $supp(\mu)$. We let
		$G_{\mu}$ denote the Stieltjes transform of $\mu$ defined for $z \in \C \setminus supp(\mu)$ by: 
		\[ G_{\mu}(z) = \int \frac{1}{z - x} d \mu(x). \] 
		For $z = r(\mu), l(\mu)$, we define 
		\[ G_{\mu}(r(\mu)) = \lim_{ z \to r(\mu)^+ \atop z \in \R} G_{\mu}(z) \quad\text{and}\quad G_{\mu}(l(\mu)) = \lim_{ z \to \ell(\mu)^- \atop z \in \R} G_{\mu}(z)\]
		so that $G_{\mu}(r(\mu)), G_{\mu}(l(\mu))$ may be infinite. We define on $[ G_{\mu}( l(\mu)), G_{\mu}( r(\mu))] \setminus \{ 0 \}$  the inverse function $G_{\mu}^{-1}$ of $G_{\mu}$.
	
		We define $J(\theta, \lambda, \mu)$ by  
		\begin{equation}
			J(\theta, \lambda, \mu) = \theta \lambda ' + (v - \lambda') G_{\mu}(v) - \ln| \theta| - \int \ln | v - x | d \mu(x) - 1 
		\end{equation}
		where $\lambda' = \max(\lambda,r(\mu))$
		\[ v:= v(\lambda,\theta) = \begin{cases} \lambda' &\text { when } 0 \leq G_{\mu}(\lambda') \leq \theta \text{ or } \theta \leq G_{\mu}(\lambda') \leq 0 \\
			G_{\mu}^{-1}(\theta) &\text{ otherwise } .
		\end{cases}
		\]
	\end{defi}

	The aysmptotics for the spherical integrals when $k(N) = 1$ was proven in \cite{GuMa05} and  was extended to finite dimensional $k(N) = k$ in \cite{HuGu} . We remind the readers of the main results here.     
	\begin{theo}\label{theo:rank1}
		Suppose $(A_N)_{N \in \N}$ and $(D_N)_{N \in \N}$ are two sequences of matrices that satisfy Assumption~\ref{assum:A1}. We have
		\begin{enumerate}
			\item If $k(N)=1$ for all $N$: 
			\[ \lim_{N \to \infty} \Big| \frac{2}{\beta N} \ln I_N(D_N,A_N) -  \mathds{1}_{\theta_1 \geq 0} J(\theta_1,\lambda_1, \mu) - \mathds{1}_{\theta_1 < 0} J(\theta_1,\lambda_N, \mu)  \Big| = 0. \]
			\item If $k(N)=k$ and $l(N) = l$ for all $N$: 
			\[ \lim_{N \to \infty} \left| \frac{2}{\beta N} \ln I_N(D_N,A_N) - \left[ \sum_{i=1}^{l} J(\theta_i, \lambda_i,\mu ) +\sum_{i=1}^{k- l} J(\theta_{l +i}, \lambda_{ N + i -k + l},\mu ) \right]  \right| = 0. \]
		\end{enumerate} 
	\end{theo}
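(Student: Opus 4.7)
The plan is to prove the two items by a Laplace-type analysis of an integral representation for $I_N$, following the strategies of Guionnet--Maïda \cite{GuMa05} for rank one and of Guionnet--Husson \cite{HuGu} for fixed rank $k$. For part (1), I would first use invariance of the Haar measure to write
\[
I_N(\theta_1 e_1 e_1^*, A_N) \;=\; \E_\sigma\!\left[\exp\!\left(\tfrac{\beta N \theta_1}{2}\langle \sigma, A_N \sigma\rangle\right)\right],
\]
with $\sigma$ uniform on the unit sphere of $\R^N$ (or $\C^N$). Parameterizing $\sigma = g/\|g\|$ for a standard Gaussian $g$, Fubini yields the identity
\[
\int_{\R^N} \exp\!\Big(\tfrac{\beta N}{2}\big(\theta_1 \langle g, A_N g\rangle - v\|g\|^2\big)\Big)\, dg \;=\; C_N \int_{S^{N-1}} \big(v - \theta_1 \langle \sigma, A_N \sigma\rangle\big)^{-\beta N/2}\, d\sigma,
\]
valid for $v>\theta_1\la_1$, which links $I_N$ to a Gaussian integral that evaluates to $\exp\!\big(-\tfrac{\beta N}{2}\int \ln|v-x|\,d\mu_{A_N}(x)\big)$ up to lower-order factors. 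Laplace's method in $v$ then produces the stationary equation $G_\mu(v)=\theta_1$, giving $v = G_\mu^{-1}(\theta_1)$ in the ``bulk'' regime; when $\theta_1 > G_\mu(r(\mu))$ no interior solution exists and the maximum sits at the spectral edge $v = \la_1$. Matching the two regimes reproduces the piecewise definition of $v(\theta,\la)$ in Definition~\ref{def:J} and, after collecting constants, gives $\tfrac{\beta}{2}J(\theta_1,\la_1,\mu)$; the edge $l(\mu)$ handles $\theta_1 < 0$ analogously.

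For part (2), I would proceed inductively on the rank $k$. Since $(Ue_1,\dots,Ue_k)$ is a uniformly distributed orthonormal frame, conditioning on the first $k-1$ columns reduces $I_N$ to a rank-one spherical integral against the projection of $A_N$ to the orthogonal complement of their span. The Laplace analysis of this rank-one integral forces the remaining column to concentrate on an eigenvector associated to the largest (resp.\ smallest) still-unconsumed eigenvalue of $A_N$, depending on the sign of the corresponding $\theta_i$, which produces the pairing $(\theta_i,\la_i^\pm)$ claimed in the statement. Because $k$ is fixed, the rank-$O(1)$ perturbation of $A_N$ at each step of the induction moves its empirical measure by $O(1/N)$ and moves each relevant eigenvalue by $o(1)$, so the limiting $\mu$ is preserved and the contributions $\tfrac{\beta}{2}J(\theta_i,\la_i,\mu)$ simply add.

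The main obstacle in both items is the analysis near the BBP-type threshold $\theta = G_\mu(r(\mu))$, where the saddle point transitions from the bulk to the edge: one needs uniform control of Laplace's method in a neighborhood of the critical value, which requires quantitative estimates on $G_\mu^{-1}$ and on the density of $\mu_{A_N}$ near $r(\mu)$. A secondary subtlety in the iterative step is ensuring that the orthogonality constraints do not shift the saddle point by more than $o(1)$ at each removal; this follows from interlacing bounds on the outlying eigenvalues of rank-one perturbations, together with the fact that the Stieltjes transform of $\mu_{A_N}$ is perturbed only by $O(1/N)$ off a neighborhood of the deleted eigenvalue.
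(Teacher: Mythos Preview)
Your sketch is broadly consistent with how these results are actually proved in the literature, but note that the paper does not give its own proof of Theorem~\ref{theo:rank1}: it is stated as background and attributed to \cite{GuMa05} for $k(N)=1$ and to \cite{HuGu,GuHu21} for fixed $k$, with a short remark that the rank-one statement is equivalent (by continuity of $J$ and boundedness of the parameters) to the version in which $\theta_1$ and the extremal eigenvalue converge. So there is no ``paper's proof'' to compare against beyond those citations.

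That said, your outline matches those sources in spirit. For part~(1), the Gaussian linearization of the spherical integral and the Laplace/saddle-point dichotomy in $v$ is exactly the mechanism of \cite{GuMa05}. For part~(2), the iterated conditioning on columns of $U$ is precisely the scheme of \cite{GuHu21}; indeed, the present paper records the key conditioning step as Lemma~\ref{cond} and uses it repeatedly. One small caveat: in \cite{GuHu21} the induction is run by peeling off one column at a time and bounding the resulting $(N-1)$-dimensional spherical integral above and below via interlacing (as in Lemma~\ref{cond} here), rather than by first ``freezing'' $k-1$ columns and analyzing a rank-one integral on the orthogonal complement; the two descriptions are equivalent for fixed $k$, but the upper/lower sandwich of Lemma~\ref{cond} is what actually produces the uniform control you allude to near the threshold.
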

	In the first scenario of Theorem~\ref{theo:rank1}, using the boundedness on $\theta_1$ and $\lambda_1$ and the continuity of $J(\theta, \lambda,\mu)$ in $(\theta, \lambda)$, one can see that this is equivalent to showing that if $\lambda_1$ converges to $\lambda$ and $\theta_1$ to $\theta > 0$, or if $\lambda_N$ converges to $\lambda$ and $\theta_1$ to $\theta <0$, then 
	\[ \lim_{N \to \infty} \frac{2}{\beta N} \ln I_N(D_N,A_N) = J(\theta,\lambda, \mu). \]
	This result was first proved in \cite{GuMa05} under the stronger hypothesis that $d(\frac{1}{N}\sum_{i=1}^N \delta_{\lambda_i}, \mu) \leq N^{-\kappa}$. This hypothesis is relaxed in \cite{GuHu21}.

	The main result of this paper is the extension of Theorem~\ref{theo:rank1} to the setting where $k(N) = o(N)$. 
	
	\begin{theo}\label{maintheo}
		If $(A_N)_{N \in \N}$ and $(D_N)_{N \in \N}$ are two sequences of self-adjoint matrices that satisfy Assumption~\ref{assum:A1}, then: 
		\[\lim_{N \to \infty} \left| \frac{2}{\beta k(N) N} \ln I_N(D_N,A_N) - \left[ \frac{1}{k(N)} \sum_{i=1}^{l(N)} J(\theta_i, \lambda_i,\mu ) + \frac{1}{k(N)} \sum_{i=1}^{k(N)- l(N)} J(\theta_{l(N) +i}, \lambda_{ N + i -k(N) + l(N)},\mu ) \right]\right| = 0. \]
	\end{theo}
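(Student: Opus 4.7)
The plan is to deduce Theorem~\ref{maintheo} inductively from the rank-one case, Theorem~\ref{theo:rank1}(1), by peeling the eigendirections of $D_N$ one at a time via the tower property of the Haar measure. By unitary invariance we may take $D_N = \diag(\theta_1,\ldots,\theta_{k(N)},0,\ldots,0)$, so that writing $U = [u_1|\cdots|u_N]$ and conditioning on $u_1$ yields
\[
I_N(A_N, D_N) = \E_{u_1}\!\left[e^{\frac{\beta N}{2} \theta_1 \langle u_1, A_N u_1\rangle} \cdot I^{(1)}\!\bigl(A_N^{(1)}(u_1), D_N^{(1)}\bigr)\right],
\]
where $A_N^{(1)}(u_1) := P_{u_1^\perp} A_N P_{u_1^\perp}$ acts on an $(N-1)$-dimensional space and $D_N^{(1)}$ has eigenvalues $\theta_2,\ldots,\theta_{k(N)}$. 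Iterating this recursion reduces the problem to $k(N)$ successive rank-one spherical integrals; the task is then to show that the accumulated error is $o(k(N) N)$.

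For the lower bound, at each step $i$ I would restrict the corresponding integration variable $u_i$ to a small geodesic cap of radius $\varepsilon_N \to 0$ around the top eigenvector of $A_N^{(i-1)}$ if $\theta_i \geq 0$ and around its bottom eigenvector if $\theta_i < 0$. Provided all previous $u_j$ have been similarly restricted, Cauchy interlacing and eigenvector perturbation ensure that the relevant extreme eigenvalue of $A_N^{(i-1)}$ is within $o(1)$ of the ``target'' $\lambda$ appearing in the theorem's pairing, and that the spectral measure of $A_N^{(i-1)}$ is within $O((i-1)/N)$ of $\mu$. A quantitative version of Theorem~\ref{theo:rank1}(1) then yields a factor $e^{\frac{\beta N}{2} [J(\theta_i, \lambda^\pm_i, \mu) + o(1)]}$ at step $i$; the log-volume cost of the cap is only $O(\log N)$ and is absorbed into $o(N)$. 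Summing over $i \leq k(N) = o(N)$ produces the lower bound. In the below-threshold regime where $J(\theta_i,\cdot,\mu)$ is independent of its eigenvalue argument, the cap restriction is unnecessary and one simply invokes the unrestricted rank-one lower bound.

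For the upper bound, a direct inductive application of the rank-one upper bound is too loose: without any restriction on $u_1$, Cauchy interlacing only yields $\lambda_1(A_N^{(1)}(u_1)) \leq \lambda_1(A_N)$, and the induction produces $\sum_i J(\theta_i, \lambda_{i-1}, \mu)$ rather than the desired $\sum_i J(\theta_i, \lambda_i, \mu)$. The plan is to split the outer integral into a ``concentrated'' piece $u_1 \in \mathcal{B}_{\varepsilon_N}(v_1)$ and a ``spread'' piece outside. On the concentrated piece, interlacing becomes tight (the top eigenvalue of $A_N^{(1)}(u_1)$ is within $o(1)$ of $\lambda_2$) and the inductive bound gives the correct pairing; on the spread piece, the outer weight $e^{\frac{\beta N}{2} \theta_1 \langle u_1, A_N u_1\rangle}$ falls below its maximum by enough to compensate the looseness of the inductive bound, modulo $o(k(N) N)$. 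In the below-threshold regime, $J$ is constant in its $\lambda$-argument, so the looseness is vacuous and the naive induction suffices. Iterating this decomposition and handling the pairing for negative $\theta_i$ symmetrically yields the matching upper bound.

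The main technical obstacle is the uniformity of the rank-one asymptotic. Theorem~\ref{theo:rank1}(1) provides only a qualitative $o(N)$ error, whereas for the induction to telescope to $o(k(N) N)$ one needs a quantitative rate
\[
\left|\tfrac{2}{\beta M}\ln I_M(a, \theta ee^*) - J\!\bigl(\theta, \lambda_{\max}(a), \mu_a\bigr)\right| \leq \varepsilon\!\bigl(M, \omega_a, |\theta|\bigr),
\]
with $\varepsilon(M,\cdot,\cdot) \to 0$ as $M \to \infty$ uniformly in a modulus $\omega_a$ controlling the convergence of $\mu_a$ to $\mu$ and of the extreme eigenvalues of $a$. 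Each peel is a rank-one perturbation, so $\omega_{A_N^{(i)}}$ deteriorates additively in $i$; since $i \leq k(N) = o(N)$, this remains controlled, but extracting a sufficiently uniform rate from the proofs in \cite{GuMa05,GuHu21}, and handling the transition across the BBP threshold $\theta = G_\mu(\lambda)$ in the concentration arguments needed for the compensation step of the upper bound, will be the principal technical challenge.
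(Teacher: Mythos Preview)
Your lower-bound argument contains a concrete error: the log-volume of a geodesic cap of radius $\varepsilon_N\to 0$ on $S^{N-1}$ is of order $N\ln\varepsilon_N$, not $O(\log N)$, so restricting each $u_i$ to such a cap costs a factor $e^{O(N\ln\varepsilon_N)}$ per step, which swamps everything. More fundamentally, full concentration of $u_1$ on the top eigenvector $v_1$ does not reproduce the rank-one asymptotic: the Guionnet--Ma\"{\i}da analysis shows the dominant contribution to $I_N(\theta_1 e e^*,A_N)$ comes from $|\langle u_1,v_1\rangle|^2\approx 1-G_\mu(\lambda_1)/\theta_1$, not $1$. There is an easy repair that bypasses caps: the lower inequality in Lemma~\ref{cond} with $k=1$ gives $I_N(D_N,A_N)\ge I_N(\theta_1,A_N)\,I_{N-1}\bigl(\diag(\theta_2,\dots),\tfrac{N}{N-1}\diag(\lambda_2,\dots,\lambda_N)\bigr)$, and iterating produces $\prod_i I_{N-i+1}(\theta_i,\,\text{matrix with top eigenvalue }\lambda_i)$ with the correct pairing already in place. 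The paper's lower bound (Section~\ref{sec:lwbd}) is a refinement of this: it localizes the full $k(N)\times k(N)$ overlap matrix $M=U_1^*U_1$ near an explicit diagonal $L_N$ whose entries are exactly the optimal Guionnet--Ma\"{\i}da overlaps.

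The upper bound is where your proposal has its real gap. Your compensation scheme asks that, on $\{u_1\notin\mathcal B_{\varepsilon_N}(v_1)\}$, the deficit in $\theta_1\langle u_1,A_Nu_1\rangle$ offset the excess $J(\theta_2,\lambda_1,\mu)-J(\theta_2,\lambda_2,\mu)$ coming from the loose inductive bound at the next step; but the first quantity depends on $\theta_1$ and on where $u_1$ lands, while the second depends on $\theta_2$ and $\lambda_1-\lambda_2$, and there is no mechanism forcing these to balance, let alone telescoping over $k(N)$ iterations. The paper sidesteps this entirely by conditioning on the whole block $M=U_1^*U_1$ at once. This splits the integral into a ``top'' piece $I^{(1)}$ involving only $\lambda_1,\dots,\lambda_{k(N)}$ and the eigenvalues $\bar\psi$ of $\sqrt{M}D_N\sqrt{M}$, bounded by the trace inequality $\sum\lambda_i\psi_i$, and a ``bulk'' piece $I^{(2)}$ on which recursive peeling is harmless because its top eigenvalue is always $\lambda_{k(N)}\approx r(\mu)$ and the pairing issue vanishes. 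Optimizing over $M$ yields the variational quantity $\mathcal M(\mu,\bar\lambda,\bar\theta)$ of Definition~\ref{defi:optimization}; the paper then does not solve this by hand but observes (Lemma~\ref{lem:lowboundM}) that $\mathcal M$ is also a \emph{lower} bound for the fixed-$k$ integral, whose limit $\sum_i J(\theta_i,\lambda_i,\mu)$ is already known from \cite{GuHu21}, yielding $\mathcal M\le\sum_i J(\theta_i,\lambda_i,\mu)$ for free.
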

	
	\subsection{Applications of the Main Result}
	
	The asymptotics of the growing rank spherical integrals proved in Theorem~\ref{maintheo} has several applications in random matrix theory and statistical physics which we now summarize. 
	
	\subsubsection{Large Deviations of Extremal Empirical Measures}
	
	In Section~\ref{sec:largesteigenvalues}  we prove a large deviations principle for the joint $k(N)$ largest and lowest eigenvalues of a Wigner matrix with sharp sub-Gaussian coefficients in Theorem~\ref{LDPTheo}. We remind here that a random variable $X$ with values in $\R^k$ is said to be sharp sub-Gaussian if for every $t \in \R^k$:
	\[ \E[ \exp( \langle t, X \rangle )] \leq \exp ( \langle t, Cov(X) t \rangle /2). \]
	
	For a random matrix $X_N$, we will capture the behavior of its $k(N)$ largest and smallest eigenvalues through the following ``extremal empirical measure"
	\begin{equation}\label{eq:extremalempirical}
	 \hat{\nu}_{k(N)}(X_N) = \frac{1}{2 k(N)} \Big( \sum_{i=1}^{k(N)} \delta_{\lambda_i(X_N)} + \delta_{\lambda_{N-i+1}(X_N)} \Big)
	\end{equation}
	where $\lambda_1(X_N) \leq \lambda_2(X_N) \leq \dots \leq \lambda_N(X_N)$  are the eigenvalues of $X_N$. 
	
	To be stated, the result will also need two other assumptions, one stating that the empirical measure of $X_N$ concentrates at speed greater than $k(N)N$ (Assumption~\ref{assum2}) and one stating that the entries of $X_N$ are sharp sub-Gaussian and the variance (or in the complex case, identifying $\C$ with $\R^2$, the covariance matrix) of each entries must be the same as for a matrix for the GOE/GUE (Assumption~\ref{assum1}). We leave the details of these two assumptions in Section~\ref{sec:largesteigenvalues}.
		\begin{theo} 
		Let $(X_N)_{N \in \N}$ be a sequence of Wigner matrices satisfying Assumptions \ref{assum2} and \ref{assum1}. 
		Let $k(N)\in \N$ such that  $k(N) = o(N/ \ln N)$.

		Then $\hat{\nu}_N = \hat{\nu}_{k(N)}(X_N)$ statisfies a large deviation principle for the weak topology with speed $2N k(N)$ and rate function $\frac{\beta}{2} \mathcal{I}$ where $\mathcal{I}$ is defined by:
		\[ \mathcal{I}(\nu) = \begin{cases}
			\int_{\R} I(x) d \nu(x) &\text{ if } \nu(] - \infty, -2]) = \nu([2, + \infty[) = \frac{1}{2} \\ + \infty  &\text{otherwise,}\end{cases} \]
		and $I$ is the function defined by 
		\[ I(x) = \begin{cases} \int_{2}^x \sqrt{t^2 - 4} dt &\text{ when } x \geq 2 \\
			\int_{x}^{-2} \sqrt{t^2 - 4} dt &\text{ when } x \leq -2 \\
			0 &\text{ when } - 2 < x < 2. \end{cases} \]
		In particular, $\mathcal{I}$ is a good rate function. 
	\end{theo}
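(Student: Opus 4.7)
The plan is to carry over the sharp sub-Gaussian LDP strategy of Husson--Guionnet \cite{HuGu,GuHu21}, replacing their finite-rank spherical integral asymptotics with the sublinear-rank asymptotics of Theorem \ref{maintheo}. The argument decomposes into exponential tightness, a matching upper bound, a matching lower bound, and a Legendre-duality identification of the rate function from $J(\cdot,\lambda,\mu_{sc})$, where $\mu_{sc}$ denotes the semicircle law. Exponential tightness at speed $2Nk(N)$ follows from sharp sub-Gaussian operator-norm tails $\Pp(|||X_N||| > R) \leq e^{-cNR^2}$, which beat the speed for large $R$; combined with Assumption \ref{assum2} forcing the bulk to concentrate at $\mu_{sc}$ faster than $Nk(N)$, this restricts any subsequential limit of $\hat\nu_{k(N)}$ to measures supported outside $(-2,2)$ with half their mass on each side, so $\mathcal{I}$ is automatically infinite off this set.

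For the upper bound, fix a target $\nu$ and let $\{\lambda_i^*\}_{i=1}^{2k(N)}$ be a discretization given by its top and bottom $k(N)$ quantiles. For any tilt parameters of matching signs (positive for top quantiles, negative for bottom), form the diagonal matrix $D_N$ carrying these $2k(N)$ nonzero entries as eigenvalues. On the event $\{\hat\nu_{k(N)}(X_N)\approx\nu\}$, Haar-invariance together with continuity of $I_N$ in the eigenvalues yields
\[
(1+o(1))\,I_N\bigl(D_N,\diag(\lambda^*)\bigr)\,\Pp(\hat\nu_{k(N)}\approx\nu) \leq \int_{\mathcal U_N^\beta} \E\!\left[e^{\frac{\beta N}{2}\tr(UD_NU^*X_N)}\right] dU.
\]
Applying sharp sub-Gaussianity pointwise in $U$ together with the Frobenius invariance $\|UD_NU^*\|_2 = \|D_N\|_2$ bounds this integral by $\exp(\tfrac{\beta N}{4}\|D_N\|_2^2)$. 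Substituting the asymptotics $\ln I_N(D_N,\diag(\lambda^*)) = \tfrac{\beta N}{2}\sum_i J(\theta_i,\lambda_i^*,\mu_{sc}) + o(Nk(N))$ from Theorem \ref{maintheo} and then optimizing each $\theta_i$ independently reduces the bound to a sum of pointwise Legendre transforms $\sup_\theta[J(\theta,\lambda,\mu_{sc}) - \theta^2/2]$, which by the semicircle computation of \cite{HuGu} equals $I(\lambda)$ and is attained at $\theta^* = (\lambda + \operatorname{sign}(\lambda)\sqrt{\lambda^2-4})/2$. Summing the $2k(N)$ pointwise optima, dividing by $2Nk(N)$ and recognizing the Riemann sum as $\int I\,d\nu$ produces exactly $-\tfrac{\beta}{2}\mathcal{I}(\nu)$.

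For the lower bound I run the tilt in reverse. Given target atoms $\lambda_i^*$, take $\theta_i^*$ as above, let $D_N^*$ be the corresponding diagonal matrix, and introduce the tilted law $\frac{d\tilde\Pp_N}{d\Pp_N} \propto \exp(\tfrac{\beta N}{2}\tr(D_N^* X_N))$. When $X_N$ is exactly GOE/GUE, this is the law of $X_N^G + D_N^*$, whose $2k(N)$ outliers concentrate at the $\lambda_i^*$ by a joint BBP analysis; the partition function is exactly $\exp(\tfrac{\beta N}{4}\|D_N^*\|_2^2)$, matching the upper bound. For general sharp sub-Gaussian $X_N$, I would decompose $X_N = \sqrt{s}\,G_N + \sqrt{1-s}\,Y_N$ with $G_N$ an independent GOE/GUE and $Y_N$ sharp sub-Gaussian of reduced variance (allowed precisely by the sharp sub-Gaussian hypothesis), apply the Gaussian tilt to $G_N$ alone, and combine Weyl's inequality with operator-norm tails on $Y_N$ to show the extreme eigenvalues of $X_N$ inherit the BBP profile of $\sqrt{s}\,G_N + D_N^*$ up to negligible error.

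The main obstacle is this simultaneous BBP analysis for $2k(N)\to\infty$ prescribed outliers: one needs concentration of their joint locations at exponential cost $o(Nk(N))$. A naive union bound over the $2k(N)$ extreme eigenvalues costs $\log k(N)$, and the hypothesis $k(N) = o(N/\log N)$ provides exactly the slack that lets this be absorbed into the $o(Nk(N))$ error from Theorem \ref{maintheo}. The leading-order spherical integral asymptotics handle the counting side, but the joint BBP-type concentration for sublinear-rank deformations is not a black-box consequence of existing finite-rank results and will likely require a dedicated resolvent/interlacing argument adapted from the finite-rank case.
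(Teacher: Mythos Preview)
Your upper-bound scheme is essentially the paper's: tilt by a spherical integral, bound the annealed quantity via sharp sub-Gaussianity, insert the sublinear-rank asymptotics, and optimize in the $\theta_i$'s. Two points, however, do not go through as written.

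\textbf{Exponential tightness.} The operator-norm tail $\Pp(|||X_N|||>R)\le e^{-cNR^2}$ has speed $N$, not $Nk(N)$. For this to beat $e^{-2Nk(N)L}$ you would need $R^2\gtrsim k(N)$, so the ``compact'' set $\{\nu:\operatorname{supp}\nu\subset[-R,R]\}$ would depend on $N$ and is not a fixed compact. The paper instead controls the second moment $\hat\nu_N(x^2)$: via an $\epsilon$-net on $k(N)$-tuples of orthonormal vectors and the bound $\E[\exp(aN\sum_{i\le k}\|X_Ne_i\|^2)]\le e^{CNk}$, it obtains $\Pp[\hat\nu_N(x^2)\ge M]\le e^{-Nk(N)(M-C')}$, which is the right speed. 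This is where the $k(N)=o(N/\ln N)$ hypothesis first bites (the net has cardinality $(6/\epsilon)^{\beta Nk(N)}$).

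\textbf{Lower bound.} The linear tilt $d\tilde\Pp\propto e^{\frac{\beta N}{2}\tr(D_N^*X_N)}$ is the wrong object outside the Gaussian case. First, the decomposition $X_N=\sqrt{s}\,G_N+\sqrt{1-s}\,Y_N$ with $G_N$ independent GOE/GUE and $Y_N$ sharp sub-Gaussian simply does not exist in general: a Rademacher entry is discrete and cannot have an independent Gaussian summand. Second, and more fundamentally, the linear tilt acts only on the first $2k(N)$ diagonal entries $d_i/\sqrt{N}$, and it tilts them at parameter $\frac{\beta\sqrt{N}}{2}\theta_i^*\to\infty$. For bounded $d_i$ (e.g.\ Rademacher) the tilted mean of $X_N(i,i)$ is $L_{d_i}'(\frac{\beta\sqrt{N}}{2}\theta_i^*)/\sqrt{N}\to 0$, not $\theta_i^*$, so no BBP outlier is created. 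Third, $\tr(D_N^*X_N)$ is not a spectral functional, so on the event $\{\hat\nu_N\approx\nu\}$ the Radon--Nikodym derivative is not controlled.

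The paper circumvents all three issues by tilting with the \emph{spherical integral} itself, $d\Pp^{\theta}\propto I_N(D_N,X_N)\,d\Pp$, which is a function of the spectrum of $X_N$ alone. Under this tilt the paper does not invoke BBP at all; instead it reruns the upper-bound machinery (annealed bound plus Theorem~\ref{maintheo}) to show that for any $\nu'\neq\nu$ one has $\limsup\frac{1}{Nk(N)}\ln\Pp^{\theta}[\hat\nu_N\approx\nu']<0$, and combines this with exponential tightness under $\Pp^{\theta}$ to conclude $\Pp^{\theta}[\hat\nu_N\approx\nu]\to 1$. This G\"artner--Ellis style argument is what replaces the ``joint BBP concentration'' step you flag as the main obstacle, and it works uniformly for all sharp sub-Gaussian entries because it only uses the annealed asymptotics $\frac{1}{Nk(N)}\ln\E[I_N(X_N,D_N)]\to\frac{\beta}{4k(N)}\sum\theta_i^2$.
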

	This result is an extension of the results of \cite{HuGu,GuHu21} to a growing number of eigenvalues. In particular, the rate function in \cite{Mai,GuHu21} that was expressed as a sum of the rate functions for one eigenvalue, is now an integral over $\hat{\nu}$.

	\subsubsection{Large Deviations of Extreme Eigenvalues of a Perturbation of a Gaussian Matrix}
	
	Next, in Section~\ref{sec:extremeeig} we prove a large deviations principle for the extreme eigenvalues of a rank $k(N)$ deformation of a Gaussian matrix in Theorem~\ref{theo:extremeeigGaussian}. Namely, we take a random matrix $X_N$ from either the GOE (for $\beta =1$) or the GUE (for $\beta =2$) and $D_N$ a deterministic random variable 
	of rank $2 k(N)$:
	\[ D_N = \diag( \theta^N_{- k(N)}, \dots \theta^N_{-1}, \theta^N_1 ,\dots, \theta^N_{k(N)},0,\dots,0) \] where $k(N)= o(N)$ and $\theta^N_{-k(N)} \leq \dots \leq \theta^N_{-1} \leq 0 \leq \theta^N_1\leq \dots \leq \theta_{k(N)}^N$.
	For a probability measure $\mu \in \mathcal{P}(\R)$, we let $Q_{\mu}$ be the inverse of the cumulative distribution function of $\mu$. In other words, $Q_{\mu}$ is defined for $p \in ]0,1[$ by:
	\begin{equation}\label{eq:quantileintro}
		Q_{\mu}(p) = \inf \{ x \in \R: p \leq \mu( ] - \infty,x ]) \} .
	\end{equation}
	We have the following large deviations principle for the extremal empirical measure of $X_N + D_N$.
			\begin{theo}
		Let us assume that there is some probability measure $\xi$ such that: 
		\[ \lim_{N \to \infty} \frac{1}{2 k(N)} \sum_{i= -k(N),\dots,k(N) \atop i \neq 0} \delta_{\theta^N_i} = \xi. \]
		Then, $\hat{\nu}_N = \hat{\nu}_{k(N)}(X_N+D_N) $ satisfies a large deviation principle in speed $2 N k(N)$  with good rate function $\beta \mathcal{I}_{\xi} /2$ defined by:
		\[ \mathcal{I}_{\xi}(\nu) = \begin{cases}
			\int_0^1 I_{ Q_{\xi}(t)}( Q_{\nu}(t)) d t & \text{ if } \nu(] - \infty, -2]) = \nu([2, + \infty[) = \frac{1}{2} \\ + \infty  &\text{otherwise.}\end{cases} \]
		where for any $\theta \geq 0$ and $x \geq 0$, $I_{\theta}(x)$ is defined by:
		\[ I_{\theta}(x) = \begin{cases} + \infty &\text{ if }x < 2\\
		I(x) - J(\theta,x,\sigma) - \inf_{y \geq 2} (I(y) - J(\theta,y,\sigma)) &\text{ if  }x > 2
	\end{cases}
	\]
	and for $\theta \leq 0$, $x \leq 0$, $I_{\theta}(x)$ is defined by $I_{\theta}(x) = I_{ - \theta}( -x)$. 
	\end{theo}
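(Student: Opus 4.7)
The plan is to combine the explicit joint eigenvalue density of $X_N + D_N$ with the sublinear-rank spherical integral asymptotics of Theorem~\ref{maintheo}, following the strategy used in the finite-rank deformation analysis of \cite{Mai,GuHu21}. Integrating the Gaussian density over the Haar measure on eigenvectors, the eigenvalue density of $X_N + D_N$ is, with $\Lambda = \diag(\lambda_i)$,
\[ p_N(\lambda) = \frac{1}{Z_N}\Delta(\lambda)^\beta \exp\!\Big(-\frac{\beta N}{4}\sum_i \lambda_i^2\Big)\, I_N(\Lambda, D_N), \]
up to a factor depending only on $D_N$. Exponential tightness of $\hat{\nu}_N$ at speed $2Nk(N)$ follows from $|||D_N|||\leq K$ together with the classical sharp tail bound $\Pp(\|X_N\|\geq R)\leq e^{-cNR^2}$, so it suffices to prove the weak LDP on a fixed compact.

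\textbf{Weak LDP: three contributions.} On the event, of probability at least $1-e^{-cN^2}$, that $\mu_{X_N+D_N}$ lies within weak distance $\varepsilon$ of the semicircle $\sigma$, Theorem~\ref{maintheo} gives
\[ \log I_N(\Lambda,D_N) = \frac{\beta N}{2}\sum_{i=1}^{k(N)}\!\Big[J(\theta^N_i,\lambda^+_i,\sigma)+J(\theta^N_{-i},\lambda^-_i,\sigma)\Big] + o(Nk(N)), \]
where $\lambda^\pm_i$ are the sorted extremal eigenvalues of $X_N+D_N$. Splitting $\Delta(\lambda)^\beta$ into extremal-bulk and extremal-extremal factors and using the identity $\frac{d}{dx}\bigl[x^2/4 - \int\ln|x-y|\,d\sigma(y)\bigr] = \tfrac12\sqrt{x^2-4}$ on $[2,\infty)$ shows, up to a constant depending only on $\mu_{X_N+D_N}$,
\[ \Delta(\lambda)^\beta\exp\!\Big(-\tfrac{\beta N}{4}\sum_i\lambda_i^2\Big) = C_N\exp\!\Big(-\tfrac{\beta N}{2}\sum_i I(\lambda^\pm_i)+o(Nk(N))\Big). \]
Finally, to leading order $Z_N$ factorizes as a product of $2k(N)$ one-dimensional Laplace integrals $\int e^{-\frac{\beta N}{2}(I(y)-J(\theta^N_i,y,\sigma))}\,dy = \exp(-\tfrac{\beta N}{2}\inf_y(I(y)-J(\theta^N_i,y,\sigma))+o(N))$, which produces the infimum term in the definition of $I_\theta$.

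\textbf{Pairing via quantiles and assembly.} Since the $\theta^N_i$ are already sorted and $\frac{1}{2k(N)}\sum_i\delta_{\theta^N_i}\to\xi$, while the sorted extremes have empirical distribution $\hat{\nu}_N\to\nu$, matching top-to-top and bottom-to-bottom turns the sums above into quantile integrals:
\[ \frac{1}{2k(N)}\sum_i J(\theta^N_i,\lambda^\pm_i,\sigma)\longrightarrow \int_0^1 J(Q_\xi(t),Q_\nu(t),\sigma)\,dt, \qquad \frac{1}{2k(N)}\sum_i I(\lambda^\pm_i)\longrightarrow \int I\,d\nu. \]
Combining the three contributions above, for $\nu$ with $\nu((-\infty,-2])=\nu([2,\infty))=\tfrac12$,
\[ -\frac{1}{2Nk(N)}\log\Pp(\hat{\nu}_N\in B(\nu,\varepsilon)) \xrightarrow[\varepsilon\to 0]{N\to\infty} \frac{\beta}{2}\int_0^1 I_{Q_\xi(t)}(Q_\nu(t))\,dt = \frac{\beta}{2}\mathcal{I}_\xi(\nu), \]
and any $\nu$ failing the support constraint is ruled out at infinite cost by a standard rigidity bound. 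Exponential tightness promotes this weak LDP to the full LDP with good rate function $\tfrac{\beta}{2}\mathcal{I}_\xi$.

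\textbf{Main obstacle.} The hardest step is the pairing/quantile argument together with the uniformity needed for the LDP lower bound. Because $\xi$ and $\nu$ are only weakly convergent and may carry atoms, one must approximate them by measures with smooth densities and transfer continuity through $J(\theta,\cdot,\sigma)$, paying careful attention to the case transitions of $v(\lambda,\theta)$ near the BBP threshold $G_\sigma(\pm 2)=\pm 1$ where the formula for $J$ switches branches. The sign handling (positive $\theta$ paired with top $\lambda$, negative with bottom) is absorbed through the symmetry relation $I_\theta(x)=I_{-\theta}(-x)$ but requires separate treatment of the two halves of $(\xi,\nu)$, and the lower bound demands a constructive tilting that prescribes extremal eigenvalues near $Q_\nu(t)$ while applying Theorem~\ref{maintheo} uniformly over a neighborhood of such configurations.
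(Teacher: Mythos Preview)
Your approach is workable but takes a genuinely different route from the paper. You work directly with the explicit joint eigenvalue density, split the Vandermonde into bulk-bulk, bulk-extremal, and extremal-extremal pieces, and extract the rate function from a Laplace analysis of the resulting approximate product density. The paper instead observes that the law of $U(Y_N+D_N)U^*$ is exactly the GOE/GUE law tilted by the Radon--Nikodym factor $I_N(Y_N,D_N)/\E[I_N(Y_N,D_N)]$, and then transfers the already-established LDP for the \emph{undeformed} extremal empirical measure (Theorem~\ref{LDPTheo}) through this tilt. Concretely, the paper proves that $\frac{2}{\beta Nk(N)}\ln I_N(X_N,D_N)\approx \mathcal{J}_\xi(\hat\nu_N)$ uniformly on the good event and that $\mathcal{J}_\xi(\nu)=\int_0^1 J(Q_\xi(t),Q_\nu(t),\sigma)\,dt$ is continuous in $\nu$ (Proposition~\ref{prop:J}); the rate function then drops out of the identity $\mathcal{I}_\xi(\nu)=\mathcal{I}(\nu)+\xi(x^2/2)-\mathcal{J}_\xi(\nu)$, and exponential tightness under the tilt is borrowed from Lemma~\ref{tiltexptight}. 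The paper's route is more modular: it avoids any direct analysis of the Vandermonde or the normalization $Z_N$ (which, incidentally, equals the GOE/GUE partition function exactly, since $Y_N\mapsto Y_N+D_N$ preserves Lebesgue measure---your Laplace factorization of $Z_N$ is therefore a roundabout way of recovering $\E[I_N(Y_N,D_N)]=\exp(\frac{\beta N}{4}\sum\theta_i^2)$). Your approach is closer in spirit to Ma\"ida's original rank-one argument and is self-contained, but it reproves much of Section~\ref{sec:largesteigenvalues} in disguise. Note also that what you flag as the main obstacle---the quantile pairing near the BBP threshold---is handled in the paper by the simple continuity statement in Proposition~\ref{prop:J}; the genuinely delicate step in your outline is rather the uniform control needed to integrate out the bulk eigenvalues and justify the product structure of the marginal on the extremes.
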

Again, this is an extension of results from \cite{ GuHu21} and the rate function is the integration of the rank one rate function over $\hat{\nu}$.
	
	\subsubsection{The Free Energy of Spherical Vector Spin Glasses}
	
	The applications of spherical integrals to compute the free energies of spherical spin glass models are discussed in Section~\ref{sec:spinglass}. Let $G_N$ be a GOE matrix and let $\sigma \in \R^N$ be a unit vector. Consider the function,
	\[
	H_N(\sigma) = \frac{N}{2} \sigma^\top G_N \sigma
	\]
	which is called the pure 2-spin Hamiltonian associated with the spherical Sherrington--Kirkpatrick (SK) model. 
		
	In this paper, we are interested in a high dimensional variant of this model called the vector spin model. In contrast to the standard vector spin models, the main novelty is that Theorem~\ref{maintheo} also allows us to consider the case when the dimensions of the vector spins are dependent on $N$.  Consider a matrix $\Sigma = \Sigma_{k(N)} = (\sigma_1, \dots, \sigma_{k(N)}) \in \R^{k(N) \times N}$ of $k(N)$ replica and a sequence of constraint matrices $Q = Q_{k(N)} \in \R^{k(N) \times k(N)}$ with $1$ along the diagonal. The free energy is defined by
	\[
	\tilde F^\epsilon_N(Q) = \frac{1}{N k(N)} \ln \int \1( |||  \Sigma \Sigma^\top - Q ||| \leq \epsilon ) e^{\sum_{\ell = 1}^{k(N)} \theta_\ell H_N(\sigma^\ell)} \, d \sigma^1 \cdots d \sigma^{k(N)},
	\]
	where $d \sigma$ is uniform on the unit sphere in $\R^N$. If the sequence of $Q_{k(N)}$ have smallest eigenvalue uniformly bounded away from $0$, and the eigenvalue distributions of the constraint matrices $Q_{k(N)}$ and a temperature transformed constraint matrix $\tilde Q_{k(N)} = (\sqrt{\theta_i \theta_j}Q_{ij})_{ij \leq k(N)}$ converges weakly to compactly supported measures $\mu$ and $\tilde \mu$ (see Assumption~\ref{assum:AQ}), then the limit of the free energy can be computed precisely.
	\begin{prop}  Let $k(N) = o(N)$. Suppose that the matrices $D_{k(N)} = \diag (\theta_1, \dots, \theta_{k(N)})$ and $Q_{k(N)}$ satisfy Assumption~\ref{assum:AQ}, then
		\[
		\lim_{\epsilon \to 0} \lim_{N\to \infty}  \E \tilde F_N^{\epsilon} (Q_{k(N)}) = \int p(x) d \tilde \mu(x) + \int \ln(x) d \mu(x) 
		\]
		where $p(x)$ is the one dimensional limit of the spherical SK free energy given precisely by
		\[
		p(x) =  \begin{cases}
			\frac{x^2}{4} & x < 1
			\\x - \frac{\ln x}{2} - \frac{3}{4}  &x \geq 1.
		\end{cases}
		\]
	\end{prop}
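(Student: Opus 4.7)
The plan is to disintegrate the spin integral with respect to the Gram matrix $S := \Sigma \Sigma^\top$: this factors the free energy into an entropic contribution (the density of $S$ at $Q$) and an energetic one which, after reduction to a Haar average on $\mathcal{U}_N^\beta$, becomes a spherical integral of the type controlled by Theorem~\ref{maintheo}.

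Concretely, I would parametrize the constraint manifold $\{\Sigma : \Sigma \Sigma^\top = S\}$ by $\Sigma = S^{1/2} U$, with $U$ ranging over the Stiefel manifold $V_{k(N),N} = \{U \in \R^{k(N) \times N} : U U^\top = I_{k(N)}\}$. Under this bijection the product-of-spheres measure on $(\sigma^1, \ldots, \sigma^{k(N)})$ disintegrates as $\rho_N(S)\, dS\, dU$, where $\rho_N(S) \propto \det(S)^{(N - k(N) - 1)/2}$ is the classical density of the Gram matrix of $k(N)$ iid uniform unit vectors in $\R^N$ and $dU$ is Haar on $V_{k(N),N}$. In these coordinates the Hamiltonian becomes
\[
\sum_{\ell=1}^{k(N)} \theta_\ell H_N(\sigma^\ell) = \frac{N}{2} \tr\bigl( S^{1/2} D S^{1/2} \, U G_N U^\top \bigr),
\]
and on $\|S - Q\| \le \epsilon$ the matrix $S^{1/2} D S^{1/2}$ is spectrally close to $\tilde Q$, whose spectral distribution converges to $\tilde \mu$ by Assumption~\ref{assum:AQ}.

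For the inner Stiefel integral, I extend $U$ to a Haar element $O \in \mathcal{U}_N^\beta$. Since $\tr(\tilde Q \, U G_N U^\top) = \tr(\tilde Q' \, O G_N O^\top)$, where $\tilde Q' \in \mathcal{H}_N^\beta$ is the embedding of $\tilde Q$ in the top-left $k(N) \times k(N)$ block padded by zeros, the Stiefel integral equals $I_N(\tilde Q', G_N)$ in the notation of Definition~\ref{def:sphericalint}. Because $G_N$ is GOE, almost surely its empirical spectral distribution converges to the semicircle law $\mu_{\mathrm{sc}}$ on $[-2,2]$, and each of its top $k(N) = o(N)$ eigenvalues converges to $2$, so the pair $(G_N, \tilde Q')$ satisfies Assumption~\ref{assum:A1}. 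Theorem~\ref{maintheo} therefore yields
\[
\frac{2}{\beta N k(N)} \ln I_N(\tilde Q', G_N) \longrightarrow \int J(x, 2, \mu_{\mathrm{sc}}) \, d\tilde \mu(x),
\]
and a direct computation from Definition~\ref{def:J} using $G_{\mu_{\mathrm{sc}}}(z) = (z - \sqrt{z^2 - 4})/2$ and $\int \ln|2 - x|\, d\mu_{\mathrm{sc}}(x) = 1/2$ shows $J(x, 2, \mu_{\mathrm{sc}}) = 2 p(x)$ for $x \ge 0$. With $\beta = 1$ this gives $\frac{1}{N k(N)} \ln I_N \to \int p(x) \, d\tilde \mu(x)$, and combining with the entropic contribution from $\rho_N(Q)$ produces the $\int \ln(x) \, d\mu(x)$ term, assembling the announced formula.

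The main technical difficulty is controlling the iterated limit $\lim_{\epsilon \to 0} \lim_{N \to \infty}$: one needs a Laplace-type argument ensuring that only $S = Q$ contributes to leading order uniformly in $N$, which in particular requires controlling the dependence of both the density $\rho_N$ and the Stiefel integral on $S$ in an $\epsilon$-neighborhood of $Q$. A secondary point is passing from the almost-sure asymptotic of $\ln I_N(\tilde Q', G_N)$ to its expectation, for which one invokes the standard Gaussian concentration of the log-spherical integral. The lower bound on the smallest eigenvalue of $Q_{k(N)}$ in Assumption~\ref{assum:AQ} enters here through the finiteness of $\int \ln x\, d\mu(x)$.
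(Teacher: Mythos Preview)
Your approach is correct and is conceptually the same as the paper's: both reduce the constrained integral to a rank-$k(N)$ spherical integral $I_N(\tilde Q', G_N)$ with temperature matrix $\tilde Q = D^{1/2} Q D^{1/2}$, then invoke Theorem~\ref{maintheo} together with the explicit evaluation $\tfrac{1}{2}J(x,2,\sigma)=p(x)$, and pick up a $\tfrac{1}{2k(N)}\ln\det Q$ entropy term.

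The organizational difference is that you do the Gram-matrix/Stiefel disintegration $\Sigma = S^{1/2}U$ in one shot, reading off the density $\rho_N(S)\propto\det(S)^{(N-k-1)/2}$ on correlation matrices directly. The paper instead splits the argument in two: Lemma~\ref{lem:apporxidentity} first treats the case $Q=I$ (showing, via $U=(EE^\top)^{-1/2}E$ and a degenerate-Wishart estimate, that the approximate-orthogonality constraint is asymptotically equivalent to Haar integration), and then Proposition~\ref{prop:vectorspinspherical} reduces general $Q$ to $Q=I$ by passing through a Gaussian reference measure and performing the linear change of variables $Y=XQ^{-1/2}$, where the Jacobian $\det(Q)^{N/2}$ produces the $\tfrac{1}{2k(N)}\ln\det Q$ term. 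The paper's detour through Gaussians buys a cleaner Jacobian computation and avoids having to explicitly control the normalizing constant of $\rho_N$ on the constrained manifold of correlation matrices; your route is more direct but, as you note, requires handling that normalization and the $\epsilon$-volume as part of the Laplace argument. Both are entirely viable and arrive at the same formula.
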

	This proposition when $k(N) = k$ is independent of $N$ was already proven in \cite[Theorem~2]{PTSPHERE}. In that paper, it was remarked that a large deviations principle can be used to yield a simpler proof of the limit of the free energy in the $2$ spin models. This large deviations principle is precisely the asymptotics of the spherical integral we prove in this paper.  The limit of the free energy and its reduction to a growing rank spherical integral is detailed in Proposition~\ref{prop:vectorspinspherical}.

	\subsubsection{The Mutual Information of Spiked Matrix Factorization}

	In Section~\ref{sec:matestimation}, we explore the application of spherical integralswhen studying the mutual information of spiked matrices with rotationally invariant prior.
	
	Consider the following estimation problem
	\[
	Y_N = G_N + \sqrt{\frac{\gamma}{N}} X_N
	\]
	where $X_N \in \R^{N \times N}$ is a random rank $k(N)$ rotationally invariant symmetric matrix and $\gamma \in \R^+$ is the signal to noise ratio. In the case when $k(N) = k$ is constant, this model is the classical finite rank matrix estimation problem was studied in works such as \cite{miolanefundamentallimits, barbier2020information, lesieur2015mmse, dia2016mutual, krzakala2016mutual}. We are interested in the sub extensive rank case when the rank $k$ signal goes to $\infty$ but slow enough such that $\frac{k}{N} \to 0$. This is in a different regime than the challenging extensive rank case when $\frac{k}{N} \to \alpha > 0$. These extensive rank models have been a topic of a lot of recent works \cite{maillard2021perturbative,troiani2022optimal,barbier2021statistical}.
	
	Suppose that the eigenvalues of $\theta_1, \dots, \theta_{k(N)}$ have joint distribution $P_D$.  Our goal is to study the denoising of such matrices. In particular, the mutual information (see Section~\ref{sec:matestimation}) is given by
	\[
	\frac{1}{Nk(N)} I_N(\gamma):=  \frac{\gamma}{4} \frac{1}{k(N)} \E \tr(X^2) - \frac{1}{Nk(N)} \E_Y \ln \int  e^{-\frac{N\gamma}{4} \sum_{i = 1}^{k(N)} \theta^2_i} \bigg( \int \exp \frac{ \sqrt{\gamma} N }{2} \tr\bigg(   U^\top Y U D  \bigg) dU \bigg) \, d P_D(\theta)  \label{eq:mutualinformation}.
	\]
	A formula for this limit when $k(N)$ is constant was proven in \cite{miolanefundamentallimits}. The phase transition and several applications of these models can be found in \cite{KZLowRank}.
	We combine facts about the phase transitions of the top eigenvalues \cite{BBAP05} and the spectrums of spiked matrices \cite{BGNSpiked,JYmeso} with the limits in Theorem~\ref{maintheo} to compute this limit explicitly. The spherical integrals that appear in these models are explicit, and can be seen as a limit of the finite rank problems. This is in sharp contrast to the complicated formulas that appear in the extensive rank matrix factorization problems.
	
	To state the limit (see Assumption~\ref{assum:eigdist}), we assume that the empirical distribution  $\frac{1}{k(N)}\sum_{i=1}^{k(N)}\delta_{\theta_{i}}$  converges under $P_D$ in probability towards a probability measure $\eta$ in a metric that metrizes weak convergence, and that its law 
	satisfies a large deviations principle with good rate function $\Gamma$ and speed $k(N) N$. We moreover assume that $P_D$ is compactly supported in $[-M,M]^{k(N)}$ for some finite $M$.

	\begin{prop}\label{prop:mutualinfo}
			If Assumption~\ref{assum:eigdist} holds, then, 
			\[
			\lim_{N \to \infty}\frac{1}{Nk(N)} I_N(\gamma)  = \frac{\gamma}{4} \int_0^1 x^2 d\eta(x) - \sup_{\nu} \bigg( - \frac{\gamma}{4} \int_0^1 x^{2}d\nu(x) + \frac{1}{2} \int_0^1 J( \sqrt{\gamma} Q_\nu( x ), f( \sqrt{\gamma} Q_\eta(x ) ) , \sigma ) \, d x - \Gamma(\nu) \bigg)
			\] where $Q_{\nu}$ denotes the quantile function \eqref{eq:quantileintro} and 
			\[
			f(x) = \begin{cases}
				2 & x < 1\\
				x + \frac{1}{x} & x > 1
			\end{cases}
			\]
			is the BBP transition map.
	\end{prop}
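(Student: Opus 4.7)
The plan is to reduce the inner spherical integral to a quantile-paired sum of rank-one rate functions via Theorem~\ref{maintheo}, identify the extreme eigenvalues of $Y_N = G_N + \sqrt{\gamma/N}X_N$ through the BBP transition, and evaluate the outer $P_D$-integration by Varadhan's lemma at speed $Nk(N)$. The first term $\tfrac{\gamma}{4k(N)}\E\tr(X^2)\to \tfrac{\gamma}{4}\int x^2\,d\eta$ is immediate from Assumption~\ref{assum:eigdist} and the uniform compact support of $P_D$, so all the work lies in computing the limit of the log-integral in \eqref{eq:mutualinformation}.

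By Haar invariance, $\int \exp\bigl(\tfrac{\sqrt{\gamma}N}{2}\tr(U^\top YUD)\bigr)\,dU = I_N(D,\sqrt{\gamma}Y)$ with $\beta=1$. Since $D$ has rank $k(N)=o(N)$ and $\sqrt{\gamma}Y$ has bounded operator norm on an event of overwhelming Gaussian probability, Assumption~\ref{assum:A1} is met with limit measure $\sqrt{\gamma}\sigma$, and Theorem~\ref{maintheo} yields
\[
\frac{2}{Nk(N)}\ln I_N(D,\sqrt{\gamma}Y) = \frac{1}{k(N)}\sum_{i=1}^{k(N)} J\bigl(\theta_i, \sqrt{\gamma}\lambda_i(Y), \sqrt{\gamma}\sigma\bigr) + o(1).
\]
A direct rescaling check on Definition~\ref{def:J} shows the homogeneity identity $J(\theta, c\lambda, c\mu) = J(c\theta,\lambda,\mu)$ for $c>0$, which reduces the summand to $J(\sqrt{\gamma}\theta_i, \lambda_i(Y), \sigma)$. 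Since $X_N$ has sublinear rank, the bulk of $Y$ converges to $\sigma$ and its extreme eigenvalues undergo a BBP transition: combining \cite{BBAP05,BGNSpiked,JYmeso} with Assumption~\ref{assum:eigdist} yields $\lambda_i(Y)\approx f(\sqrt{\gamma}\theta^\ast_i)$ jointly for $i\le k(N)$, where the planted eigenvalues $\theta^\ast_i$ have empirical quantile converging to $Q_\eta$. A Riemann-sum argument then gives, uniformly in $\theta$ on any compact set,
\[
\frac{1}{k(N)}\sum_i J\bigl(\sqrt{\gamma}\theta_i, \lambda_i(Y), \sigma\bigr) \longrightarrow \int_0^1 J\bigl(\sqrt{\gamma}Q_{\nu_\theta}(x), f(\sqrt{\gamma}Q_\eta(x)), \sigma\bigr) dx,
\]
where $\nu_\theta = \tfrac{1}{k(N)}\sum_i\delta_{\theta_i}$.

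Consequently the integrand of the outer integral has the exponential form $\exp\bigl(Nk(N)\,[\Psi(\nu_\theta) + o(1)]\bigr)$, where
\[
\Psi(\nu) = -\frac{\gamma}{4}\int x^2\,d\nu + \frac{1}{2}\int_0^1 J\bigl(\sqrt{\gamma}Q_\nu(x), f(\sqrt{\gamma}Q_\eta(x)), \sigma\bigr)dx.
\]
Assumption~\ref{assum:eigdist} furnishes a large deviations principle for $\nu_\theta$ under $P_D$ at speed $Nk(N)$ with good rate function $\Gamma$, so Varadhan's lemma gives convergence in probability to $\sup_\nu(\Psi(\nu)-\Gamma(\nu))$, and bounded convergence---justified by the a priori bound $|\ln I_N|\le \tfrac{N}{2}\|\sqrt{\gamma}Y\|_{\mathrm{op}}\|D\|_{\mathrm{op}}$ together with standard Gaussian operator-norm estimates---transfers this to the expectation over $Y$. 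Subtracting from the first term produces the claimed formula.

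The principal obstacle is the uniform joint control of the BBP convergence across all $k(N)\to\infty$ outliers \emph{and} across the integration variable $\theta$. The map $\lambda\mapsto J(\theta,\lambda,\sigma)$ has a corner at the BBP threshold $G_\sigma^{-1}(\theta)$ where the auxiliary point $v(\lambda,\theta)$ of Definition~\ref{def:J} switches regime, so the Riemann-sum convergence must be verified continuously through this transition; this relies on the sublinear-rank spiked rigidity results cited above together with an explicit continuity modulus for $J$ in its first two arguments. Once this uniformity is in hand, the final Varadhan/Laplace computation on the compact space of probability measures supported in $[-M,M]$ is essentially standard.
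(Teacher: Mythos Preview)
Your proposal is correct and follows essentially the same strategy as the paper: apply Theorem~\ref{maintheo} to the inner spherical integral, identify the outlier eigenvalues of $Y_N$ via the BBP transition, pass to quantile-paired integrals, and conclude with Varadhan's lemma at speed $Nk(N)$. The only organizational difference is that the paper first replaces the random $Y_N$ by a deterministic matrix $A_N$ carrying the typical BBP outliers (Lemma~\ref{lem:deterministicdisorder}), which packages your concentration and bounded-convergence steps into a single lemma and renders the remaining computation purely deterministic; your homogeneity identity $J(\theta,c\lambda,c\mu)=J(c\theta,\lambda,\mu)$ is then unnecessary since the paper reads the integral directly as $I_N(\sqrt{\gamma}D_N,Y_N)$.
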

	
	\begin{rem}
		If $k$ is independent of $N$ and the signal $X = U^\top D_k U$ has deterministic eigenvalues $D_k = \diag(\theta_1, \dots, \theta_k)$ then the result from Proposition~\ref{prop:mutualinfo} simplifies to
		\[
		\lim_{N\to \infty} \frac{1}{Nk}I_N(\gamma) = \frac{\gamma}{2} \sum_{i = 1}^k \theta^2_i - \frac{1}{2k} \sum_{i = 1}^k F(\gamma, \theta_i)
		\]
		where
		\[
		F(\gamma, \theta_i) = \begin{cases}
			\frac{\gamma \theta_i^2}{2} & \gamma \leq \frac{1}{\theta_i^2}
			\\\gamma \theta_i^2 - \ln(\gamma \theta_i^2) - \frac{1}{2 \gamma \theta_i^2}  & \gamma > \frac{1}{\theta_i^2}.
		\end{cases}
		\]
	\end{rem}
	
	
	\subsection{Outline of the Paper}A first elementary remark on the HCIZ integral is that it is invariant in each of its arguments by conjugation by a unitary matrix. There one can assume that both are real diagonal matrices. 
	
	One can then notice that when $B$ is diagonal of rank $k$ such that the $k$ non-zero eigenvalues are first on the diagonal, $\tr(AUBU^*)$ only depends on the first $k$ columns of $U$. Therefore in the case where $k$ remains finite, one can proceed by successive conditionning on the columns of $U$ (see \cite{GuHu21}). However this method becomes a lot less tractable when the rank goes to infinity with $N$ because the number of conditionings is no longer constant. Instead, we introduce a new approach and break up our problem in two parts. One part will involve the $k(N) \times k(N)$ topmost leftmost submatrix of $U$ and the matrix containing the extremal eigenvalues of $A_N$ and the second one involves the $N -k(N) \times k(N)$ bottommost leftmost submatrix of $U$. 
=
		We will need the following definitions: 
		\begin{defi}\label{def:Dk} Let $k \geq 1$ and reals $\theta_1, \dots, \theta_k$ such that $\theta_1 \geq \dots \geq \theta_k$. Given a matrix $H \in \mathcal{H}_k^{\beta}$, let $\mathrm{spec}(H) = (\phi_1, \dots, \phi_k)$ denote its spectrum arranged in decreasing order. Given $\overline{\theta} = (\theta_1, \dots, \theta_k)$, we define
			\begin{align*} \mathcal{D}^{\overline{\theta}}_{k} :=\{ ( \overline{\phi}, \overline{\psi}) \in (\R^{ k})^2 &: \text{there exists $H_1, H_2 \in \mathcal{H}_k^{\beta}$ such that } \overline{\phi} = \mathrm{spec}(H_1), \overline{\psi} = \mathrm{spec}(H_2),  \overline{\theta} = \mathrm{spec}(H_1 + H_2)	
				\\&\qquad \text{and  $\overline{\phi}, \overline{\psi}$ have non-negative entries} 
			 \}. \end{align*}
\end{defi}

\begin{defi}\label{defi:optimization}
	Let $k\geq 1$, $N \geq 2k$ and $\mu \in \mathcal{P}( \R)$ a compactly supported measure. Let $\overline{\lambda}=( \lambda_1, \dots, \lambda_k)$ such that  $\lambda_1 \geq \dots \lambda_k \geq r(\mu) $ and $\overline{\theta}= (\theta_1, \dots ,\theta_k)$ such that $\theta_1 \geq \dots \geq \theta_k \geq 0$. For $(\overline{\phi}, \overline{\psi}) \in \mathcal{D}_k^{\overline{\theta}}$ we define:
	\[\mathcal{F}(\mu,\overline{\lambda},\overline{\theta},\overline{\phi}, \overline{\psi}) :=  \sum_{i=1}^{k} \left[ \lambda_i \psi_i + J(\phi_i, \lambda_k, \mu) + (\ln(\phi_i) - \ln(\theta_i)) \right]. \]
	We also define: 
	\[ \mathcal{M}(\mu,\overline{\lambda},\overline{\theta}) := \sup_{(\overline{\phi}, \overline{\psi}) \in \mathcal{D}_k^{\overline{\theta}}} \mathcal{F}(\mu,\overline{\lambda},\overline{\theta},\overline{\phi}, \overline{\psi}).	\]
	\end{defi}
	We now describe the structure of the paper. In Section~\ref{sec:upbd}, we begin by proving an intermediate upper bound for positive matrices $D_N$.
	\begin{theo}\label{theo:positivetheo}
		If $(A_N)_{N \in \N}$ and $(D_N)_{N \in \N}$ are two sequences of self-adjoint matrices that satisfy Assumption~\ref{assum:A1} and if $D_N$ is positive then: 
		\[\limsup_{N \to \infty}  \Big[ \frac{2}{\beta k(N) N} \ln I_N(D_N,A_N) -  \frac{1}{k(N)} \sum_{i=1}^{k(N)} J(\theta_i, \lambda_i,\mu ) \Big]\leq 0. \]
	\end{theo}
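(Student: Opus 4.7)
My plan is to exploit the column-redundancy of the integral. By invariance of Haar measure under $U \mapsto U'U$ and of the trace under simultaneous conjugation, I would first assume $A_N=\diag(\lambda_1,\dots,\lambda_N)$ with $\lambda_1\geq\dots\geq\lambda_N$ and $D_N=\diag(\theta_1,\dots,\theta_{k(N)},0,\dots,0)$ with $\theta_i\geq 0$. Then $\tr(A_NUD_NU^*)$ only sees the first $k=k(N)$ columns of $U$, which form an orthonormal $k$-frame $V$. Splitting $V=\bigl(\!\begin{smallmatrix}U_{11}\\ U_{21}\end{smallmatrix}\!\bigr)$ into blocks of sizes $k\times k$ and $(N-k)\times k$, and setting $A_1=\diag(\lambda_1,\dots,\lambda_k)$, $A_2=\diag(\lambda_{k+1},\dots,\lambda_N)$, $D_k=\diag(\theta_1,\dots,\theta_k)$, one has
\[ \tr(A_NVD_kV^*) = \tr(A_1\,U_{11}D_kU_{11}^*) + \tr(A_2\,U_{21}D_kU_{21}^*). \]

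Because $U_{11}^*U_{11}+U_{21}^*U_{21}=I_k$, I would parametrize $U_{11}=W_1\Sigma Z^*$ and $U_{21}=W_2(I-\Sigma^2)^{1/2}Z^*$ with $W_1\in\mathcal U_k^\beta$, $W_2$ a $(N-k)\times k$ Stiefel matrix, $Z\in\mathcal U_k^\beta$, and $\Sigma=\diag(\sigma_1,\dots,\sigma_k)\in[0,1]^k$. Introducing the ``Horn witness'' $P:=D_k^{1/2}U_{11}^*U_{11}D_k^{1/2}=D_k^{1/2}Z\Sigma^2Z^*D_k^{1/2}$ --- a $k\times k$ PSD matrix with $0\leq P\leq D_k$ --- the pair $(P,D_k-P)$ consists of PSD matrices summing to $D_k$, whose spectrum is $\overline\theta$. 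Defining $\psi:=\mathrm{spec}(P)$ and $\phi:=\mathrm{spec}(D_k-P)$ thus puts $(\phi,\psi)\in\mathcal D_k^{\overline\theta}$; by cyclic spectral invariance, $\psi$ is also the spectrum of the top block $U_{11}D_kU_{11}^*=W_1(\Sigma Z^*D_kZ\Sigma)W_1^*$, while $\phi$ is the nonzero spectrum of the bottom block $U_{21}D_kU_{21}^*$. The Haar measure decomposes as $dV=J(\Sigma)\,d\Sigma\,dW_1\,dW_2\,dZ$ with an explicit Jacobian whose logarithm, after changing variables to $(\phi,\psi)$, produces precisely the $\sum_i(\ln\phi_i-\ln\theta_i)$ correction appearing in $\mathcal F$ of Definition~\ref{defi:optimization}.

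Conditionally on $(\Sigma,Z)$ the inner integral factors. The $W_1$-integral is a $k$-dimensional HCIZ with ``large parameter'' $N$, and the rearrangement bound $\sup_{W\in\mathcal U_k^\beta}\tr(BWCW^*)\leq\sum_i\lambda_i(B)\lambda_i(C)$ (decreasing order) together with the $O(k^2\ln N)$ log-volume of $\mathcal U_k^\beta$ gives
\[ \tfrac{2}{\beta N}\ln\!\int\!\exp\!\Bigl(\tfrac{\beta N}{2}\tr(\Sigma Z^*D_kZ\Sigma\,W_1^*A_1W_1)\Bigr)dW_1\;\leq\;\sum_{i=1}^k\lambda_i\psi_i+o(k). \]
The $W_2$-integral is a sublinear-rank spherical integral in dimension $N-k$, for which the empirical distribution of $A_2$ still converges weakly to $\mu$ (only $k=o(N)$ eigenvalues are removed); a H\"older decoupling using that each Stiefel column has sphere marginal, combined with the single-column asymptotics of Theorem~\ref{theo:rank1}(1) applied to $A_2$, yields
\[ \tfrac{2}{\beta N}\ln\!\int\!\exp\!\Bigl(\tfrac{\beta N}{2}\tr((I-\Sigma^2)^{1/2}Z^*D_kZ(I-\Sigma^2)^{1/2}\,W_2^*A_2W_2)\Bigr)dW_2\;\leq\;\sum_{i=1}^k J(\phi_i,\lambda_k,\mu)+o(k). \]
Combining these with the log-Jacobian gives the intermediate upper bound $\tfrac{2}{\beta N}\ln I_N(D_N,A_N)\leq\mathcal M(\mu,\overline\lambda,\overline\theta)+o(k)$.

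The concluding step is the purely variational inequality $\mathcal M(\mu,\overline\lambda,\overline\theta)\leq\sum_{i=1}^{k(N)}J(\theta_i,\lambda_i,\mu)+o(k)$. I would differentiate $\mathcal F$ in $\phi_i$: the stationary equation reads $G_\mu(v)=\phi_i$, which matches the equation defining $v(\lambda_i,\theta_i)$ in Definition~\ref{def:J}; simultaneously, the Horn-type constraint $(\phi,\psi)\in\mathcal D_k^{\overline\theta}$ --- together with Schur--Horn majorization applied to $(P,D_k-P)$ --- forces the optimal pairing $\psi_i\leftrightarrow\lambda_i$ to be sorted-matching in the limit, and this reassembles $\lambda_i\psi_i+J(\phi_i,\lambda_k,\mu)+\ln\phi_i-\ln\theta_i$ termwise into $J(\theta_i,\lambda_i,\mu)$. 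The main obstacle is precisely this variational identification, because $\mathcal D_k^{\overline\theta}$ is not a product set and couples $\phi,\psi,\theta$ through non-trivial Horn-type inequalities, so showing that the optimizer asymptotically decouples into the rank-one answer is delicate. A secondary hurdle is making sure all Jacobian, Vandermonde, and log-volume corrections contribute genuinely $o(Nk)$ uniformly over the parameter region, and that the $W_2$-bound is proved invoking only the rank-one Theorem~\ref{theo:rank1}(1), with no circular dependence on the target Theorem~\ref{maintheo}.
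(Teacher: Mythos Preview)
Your overall architecture --- split the $k$-frame into a top $k\times k$ block and a bottom $(N-k)\times k$ block, condition on the Gram matrix, bound the top block by the rearrangement inequality and the bottom block by rank-one asymptotics, and reduce everything to the variational quantity $\mathcal M(\mu,\overline\lambda,\overline\theta)$ of Definition~\ref{defi:optimization} --- is exactly the paper's strategy. Two of your three steps, though, have real gaps.

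\emph{The $W_2$-integral.} H\"older decoupling does not give $\sum_i J(\phi_i,\lambda_k,\mu)$. Applying H\"older to $\exp\bigl(\tfrac{\beta N}{2}\sum_i\phi_i\langle w_i,A_2w_i\rangle\bigr)$ over the Stiefel measure produces factors $\E[\exp(\tfrac{\beta N}{2}k\phi_i\langle w,A_2w\rangle)]^{1/k}$ with inflated temperatures $k\phi_i$, yielding $\tfrac{1}{k}\sum_iJ(k\phi_i,\cdot,\mu)$, which is off by a factor of $k$ in general (e.g.\ for small $\phi$ and $\mu=\sigma$, $J(\phi,2,\sigma)=\phi^2/2$ versus $\tfrac1kJ(k\phi,2,\sigma)=k\phi^2/2$). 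The paper instead peels off columns one at a time via successive conditioning, using the positivity of the remaining temperatures together with eigenvalue interlacing (their Lemma~\ref{cond}): after removing one column, the remaining $(k-1)$-frame lives on the orthogonal complement, and the compressed $A_2$ there has eigenvalues dominated entrywise by those of $A_2$ with its smallest eigenvalue removed, so monotonicity in the matrix argument gives a rank-one sphere integral times a lower-dimensional integral of the same form. Positivity of $D_N$ is used precisely in this monotonicity step, which your sketch does not exploit.

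\emph{The variational identification.} Your plan to prove $\mathcal M(\mu,\overline\lambda,\overline\theta)\leq\sum_iJ(\theta_i,\lambda_i,\mu)$ by differentiating $\mathcal F$ in $\phi_i$ and invoking Schur--Horn/Ky Fan constraints on $\mathcal D_k^{\overline\theta}$ is exactly the route the authors attempted first and could not close (see the Remark at the end of Section~\ref{sec:varprob}): the Ky Fan inequalities yield the bound only when $G_\mu(\lambda_{k-l+1})\leq\theta_{k-l+1}$ for all $l$, and the general case seems to require sharper Horn-type input that is not easily packaged. The paper sidesteps this with an indirect argument. Since $\mathcal M$ depends only on the finite data $(k,\mu,\overline\lambda,\overline\theta)$ and not on $N$, they build an explicit sequence $A_N$ whose top $k$ eigenvalues are exactly $\overline\lambda$ and whose bulk converges to $\mu$, take $D=\diag(\overline\theta)$ of \emph{fixed} rank $k$, show by localizing the integral in \eqref{eq1} near any candidate optimizer that $\liminf_N\tfrac{2}{\beta N}\ln I_N(D,A_N)\geq\mathcal M(\mu,\overline\lambda,\overline\theta)$, and then invoke the already-established fixed-$k$ limit (Theorem~\ref{theo:rank1}(2), from \cite{GuHu21}) to get $\lim_N\tfrac{2}{\beta N}\ln I_N(D,A_N)=\sum_iJ(\theta_i,\lambda_i,\mu)$. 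This indirect use of the finite-rank result is the key idea you are missing; it is not circular (it uses only the fixed-$k$ theorem, not Theorem~\ref{maintheo}) and it bypasses the Horn analysis altogether.
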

 To this end we will first prove:
	\begin{theo}\label{theo:positivetheobis}
	If $(A_N)_{N \in \N}$ and $(D_N)_{N \in \N}$ are two sequences of self-adjoint matrices that satisfy Assumption~\ref{assum:A1} and if $D_N$ is positive then, with $\overline{\theta}^k $ the $k(N)$ largest eigenvalues of $D_N$ and $\overline{\lambda}^k$ the $k$ largest eigenvalues of $A_N$, 
	\[\limsup_{N \to \infty}  \Big[ \frac{2}{\beta k(N) N} \ln I_N(D_N,A_N) - \mathcal{M}(\mu,\overline{\lambda}^k,\overline{\theta}^k) \Big]\leq 0. \]
\end{theo}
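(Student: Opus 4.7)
My plan is to decouple the HCIZ integral into two coupled pieces corresponding to how the non-zero spectrum of $D_N$ is distributed between the extremal $k \times k$ block of $A_N$ and its $(N-k) \times (N-k)$ bulk block. By bi-invariance of the Haar measure I assume $A_N = \diag(\lambda_1, \ldots, \lambda_N)$ and $D_N = \diag(D_k, 0_{N-k})$ with $D_k = \diag(\theta_1, \ldots, \theta_k)$ and $k = k(N)$. Only the first $k$ columns of $U$ enter the integrand; I write them in block form $U_1 = \binom{V}{W}$ with $V$ of size $k \times k$ and $W$ of size $(N-k) \times k$, subject to the Stiefel constraint $V^*V + W^*W = I_k$. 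Splitting $A_N = \diag(A_{\mathrm{top}}, A_{\mathrm{bot}})$, the exponent becomes
\[
\tr(A_N U D_N U^*) = \tr(A_{\mathrm{top}} V D_k V^*) + \tr(A_{\mathrm{bot}} W D_k W^*).
\]

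The natural identity $D_k = H_1 + H_2$ with $H_1 := D_k^{1/2} W^* W D_k^{1/2}$ and $H_2 := D_k^{1/2} V^* V D_k^{1/2}$ exhibits two positive semi-definite matrices; setting $\overline{\phi} := \mathrm{spec}(H_1)$ and $\overline{\psi} := \mathrm{spec}(H_2)$ produces a point $(\overline{\phi}, \overline{\psi}) \in \mathcal{D}_k^{\overline{\theta}}$. Since $V D_k V^*$ shares its spectrum with $H_2$ (namely $\overline{\psi}$), and the non-zero spectrum of $W D_k W^*$ equals $\overline{\phi}$, I bound the two traces separately along the fibres parametrized by $(\overline{\phi}, \overline{\psi})$. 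Von Neumann's trace inequality gives $\tr(A_{\mathrm{top}} V D_k V^*) \leq \sum_{i=1}^k \lambda_i \psi_i$. Factoring $W D_k W^* = \tilde W \Phi \tilde W^*$ with $\tilde W$ an $(N-k) \times k$ Stiefel frame and $\Phi = \diag(\overline{\phi})$, the remaining integral in $\tilde W$ is essentially the finite-rank spherical integral $I_{N-k}(\Phi, A_{\mathrm{bot}})$.

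I would then apply a quantitative refinement of the iterative Guionnet--Maïda estimate (conditioning column by column of $\tilde W$) to obtain, uniformly in $\overline{\phi}$ with $|||D_N||| \leq K$,
\[
\frac{2}{\beta (N-k)} \ln I_{N-k}(\Phi, A_{\mathrm{bot}}) \leq \sum_{i=1}^k J(\phi_i, \lambda_{k+i}, \mu) + o(k).
\]
Monotonicity of $J$ in its second argument (non-increasing), together with $\lambda_{k+i} \leq \lambda_k$, then lets me replace each $\lambda_{k+i}$ by $\lambda_k$ in the bound. Combining the two estimates with the disintegration of the uniform measure on the Stiefel manifold along the map $U_1 \mapsto (\overline{\phi}, \overline{\psi}, \text{angular data})$, whose Jacobian produces the correction $\ln \phi_i - \ln \theta_i$ appearing inside $\mathcal{F}$, and then applying Laplace's method on the compact set $\mathcal{D}_k^{\overline{\theta}}$, yields the required upper bound by $\mathcal{M}(\mu, \overline{\lambda}^k, \overline{\theta}^k)$.

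The main obstacle is the quantitative inner bound on $I_{N-k}(\Phi, A_{\mathrm{bot}})$: since $\Phi$ has rank $k(N) \to \infty$, Theorem~\ref{theo:rank1} does not apply as a black box, and one must revisit the iterative conditioning argument to extract an error that is sublinear in $k$ rather than vanishing only after a fixed number of steps. Controlling the drift of the empirical spectral measure after $i$ successive conditionings on columns of $\tilde W$ (so that the accumulated drift is $o(k)$ rather than blowing up) is the heart of the difficulty.
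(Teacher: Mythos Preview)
Your outline is essentially the paper's proof: the same block decomposition $U_1=\binom{V}{W}$, the same pair of spectra $(\overline\phi,\overline\psi)\in\mathcal D_k^{\overline\theta}$ coming from $D_k^{1/2}W^*WD_k^{1/2}$ and $D_k^{1/2}V^*VD_k^{1/2}$, the von Neumann bound for the top block, and the iterated column-conditioning for the bottom block. Two points deserve correction or sharpening.

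First, for $\theta\ge 0$ the function $\lambda\mapsto J(\theta,\lambda,\mu)$ is \emph{non-decreasing} (its derivative equals $\theta-G_\mu(v)\ge 0$), not non-increasing. Your conclusion is still correct: $\lambda_{k+i}\le\lambda_k$ gives $J(\phi_i,\lambda_{k+i},\mu)\le J(\phi_i,\lambda_k,\mu)$. In the paper this step is absorbed into the monotonicity lemma for the spherical integral itself (Lemma~\ref{cond}), which directly yields $J(\phi_i,\lambda_k,\mu)$ without passing through $\lambda_{k+i}$.

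Second, the phrase ``Jacobian produces the correction $\ln\phi_i-\ln\theta_i$'' and ``Laplace's method on $\mathcal D_k^{\overline\theta}$'' hides the one genuinely delicate bookkeeping step. The paper does not disintegrate along $(\overline\phi,\overline\psi)$; it conditions on $M=V^*V$, which has the explicit Jacobi density $Z^{-1}\det(I-M)^{\frac{\beta}{2}(N-k+1)-1}\det(M)^{\frac{\beta}{2}-1}$ on $\{0\le M\le I_k\}$. The factor $\det(I-M)^{\beta N/2}$ is exactly $\exp\big(\tfrac{\beta N}{2}\sum(\ln\phi_i-\ln\theta_i)\big)$, which is where that term comes from. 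There is no Laplace method on a $2k(N)$-dimensional set; instead one takes a pointwise-in-$M$ upper bound on the integrand and then shows, via Selberg's integral, that $\frac{1}{Nk}\ln\big(Z^{-1}\int_{0\le M\le I}\det(M)^{\beta/2-1}dM\big)=o(1)$. This volume control is what makes the growing-$k$ argument go through and is not automatic from a generic Laplace principle.

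Finally, the obstacle you flag is resolved exactly as you guess: the recursive use of Lemma~\ref{cond} replaces the conditional remainder at step $i$ by the matrix with the $N-k-i$ largest eigenvalues of $A''_N$, whose top eigenvalue is always $\lambda_{k+1}$ and whose empirical measure still converges to $\mu$ since $k=o(N)$; the rank-one estimate of Theorem~\ref{theo:rank1} then applies uniformly in $i\le k(N)$ and in $\phi_i\in[0,K]$.
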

We will study the variational problem that defines $\mathcal{M}$ and using the finite rank case result, we will prove that: 
\begin{theo}\label{thm:variational} If $k,N$ are integers and $\overline{\lambda}, \overline{\theta} \in \R^k$ and $\mu \in \mathcal{P}(\R)$ which satisfy the conditions of Definition~\ref{defi:optimization}:
	\[ \mathcal{M}(\mu, \overline{\lambda}, \overline{\theta}) \leq \sum_{i=1}^k J( \theta_i, \lambda_i, \mu). \]
\end{theo}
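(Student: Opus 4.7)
The plan is to prove the pointwise inequality
\[
\mathcal{F}(\mu, \overline{\lambda}, \overline{\theta}, \overline{\phi}, \overline{\psi}) \leq \sum_{i=1}^{k} J(\theta_i, \lambda_i, \mu)
\]
for every $(\overline{\phi}, \overline{\psi}) \in \mathcal{D}_k^{\overline{\theta}}$ and then take the supremum. The two sides will be identified with logarithmic asymptotics of the \emph{same} spherical integral: the right-hand side is the exact limit given by Theorem~\ref{theo:rank1}, while $\mathcal{F}(\overline{\phi},\overline{\psi})$ will emerge as a lower asymptotic bound for that same integral through a block decomposition of $U \in \mathcal{U}_N$.

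To set this up, pick $H_1, H_2 \in \mathcal{H}_k^{\beta}$ psd with $\mathrm{spec}(H_1) = \overline{\phi}$, $\mathrm{spec}(H_2) = \overline{\psi}$, and (up to a common unitary conjugation, which we perform) $H_1 + H_2 = D_k := \mathrm{diag}(\overline{\theta})$. Choose a sequence $(A_N)_{N}$ satisfying Assumption~\ref{assum:A1} whose empirical spectral distribution converges weakly to $\mu$, whose top $k$ eigenvalues are $\lambda_1, \dots, \lambda_k$, and whose next $k$ eigenvalues are all pinned at $\lambda_k$ (we are free to arrange this within the hypotheses). Theorem~\ref{theo:rank1}(2) then gives
\[
\lim_{N \to \infty} \frac{2}{\beta N} \ln I_N(A_N, D_k^{(N)}) = \sum_{i=1}^{k} J(\theta_i, \lambda_i, \mu).
\]

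The core of the argument is the matching lower bound $\liminf_{N\to\infty} \frac{2}{\beta N}\ln I_N(A_N, D_k^{(N)}) \geq \mathcal{F}(\overline{\phi}, \overline{\psi})$. After diagonalizing $A_N$, write the first $k$ columns of $U$ as $U_1 = (R^{\top}, S^{\top})^{\top}$ with $R \in M_{k\times k}$, $S \in M_{(N-k)\times k}$ subject to $R^*R + S^*S = I_k$; then $\tr(A_N U D_k^{(N)} U^*) = \tr(A_{11} R D_k R^*) + \tr(A_{22} S D_k S^*)$, and the pair $(RD_kR^*, SD_kS^*)$ automatically realizes an element of $\mathcal{D}_k^{\overline{\theta}}$. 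Restrict the Haar integral to configurations where these two spectra are $\varepsilon$-close to $\overline{\psi}$ and $\overline{\phi}$ respectively and $RD_k R^*$ is diagonal in the $A_{11}$-basis. On this event: (i) the top-block trace is $\sum \lambda_i \psi_i + O(\varepsilon)$ by rearrangement; (ii) the bottom-block integral, after diagonalizing $SD_k S^*$, is a rank-$k$ HCIZ on $\mathcal{H}_{N-k}^\beta$ with perturbation eigenvalues $\overline{\phi}$ acting on a matrix whose top eigenvalues converge to $\lambda_k$ (by the pinning of $\lambda_{k+1},\dots,\lambda_{2k}$) and whose bulk is $\mu$, giving asymptotic log-value $\sum J(\phi_i, \lambda_k, \mu)$ by Theorem~\ref{theo:rank1}; (iii) the Haar volume of the restricted set, computed through the Weyl integration formula on the Stiefel manifold together with the Harish--Chandra $c$-function, contributes the entropy $\sum (\ln \phi_i - \ln \theta_i)$ at exponential scale $\exp(\frac{N\beta}{2}\,\cdot\,)$. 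Summing the three contributions and letting $\varepsilon \to 0$ gives the claimed liminf bound.

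Comparing the two asymptotics produces $\mathcal{F}(\overline{\phi}, \overline{\psi}) \leq \sum_i J(\theta_i, \lambda_i, \mu)$; taking the supremum over $(\overline{\phi}, \overline{\psi}) \in \mathcal{D}_k^{\overline\theta}$ yields the theorem. The main obstacle is ingredient (iii): precisely extracting the entropy $\sum (\ln \phi_i - \ln \theta_i)$ from the Weyl/Jacobian of the Stiefel parametrization conditioned on the joint spectra of $RD_k R^*$ and $SD_k S^*$. This is the combinatorial heart of the argument and is the step where the identification of $\mathcal{F}$ as a lower asymptotic for $\frac{2}{\beta N}\ln I_N$ -- and thus as a bound from below by an expression whose true limit is known via Theorem~\ref{theo:rank1} -- becomes operational.
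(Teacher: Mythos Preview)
Your overall strategy is essentially the same as the paper's: construct a sequence $(A_N)$ with fixed top-$k$ eigenvalues $\lambda_1,\dots,\lambda_k$ and the next $k$ pinned at $\lambda_k$, prove that $\liminf_N \frac{2}{\beta N}\ln I_N(D,A_N)\ge \mathcal F(\overline\phi,\overline\psi)$ for every $(\overline\phi,\overline\psi)\in\mathcal D_k^{\overline\theta}$ by localizing the Haar integral, and then compare with the exact limit $\sum_i J(\theta_i,\lambda_i,\mu)$ from the finite-rank result.

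Where you and the paper differ is exactly at your ingredient (iii), which you correctly flag as the obstacle. You propose to extract the entropy $\sum_i(\ln\phi_i-\ln\theta_i)$ from a Weyl integration formula on the Stiefel manifold together with the Harish--Chandra $c$-function; this is vague and, as you acknowledge, not carried out. The paper sidesteps this entirely by using the \emph{explicit} density of $M:=U_1^{*}U_1$ on $\{0\le M\le I_k\}$,
\[
\frac{1}{Z}\det(I-M)^{\frac{\beta}{2}(N-k+1)-1}\det(M)^{\frac{\beta}{2}-1}\,dM,
\]
so that $I_N(D,A_N)$ factorizes as an integral over $M$ of $I^{(1)}(\sqrt{M}D\sqrt{M},A')\cdot I^{(2)}(\sqrt{I-M}D\sqrt{I-M},A'')$ against this density. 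After localizing $M$ in a ball around the $L$ with $\sqrt{I-L}D\sqrt{I-L}$ having spectrum $\overline\phi$, the entropy term drops out for free from
\[
\ln\det(I-L)=\ln\det(\sqrt{I-L}D\sqrt{I-L})-\ln\det D=\sum_i(\ln\phi_i-\ln\theta_i),
\]
and the volume/normalization factors $Z^{-1}\int_V\det(M)^{\beta/2-1}\,dM$ are $e^{o(N)}$ because $k$ is fixed. So no Weyl formula or $c$-function is needed; the Jacobi-type density (originally due to Collins) does all the work. Your steps (i) and (ii) are handled the same way in the paper: (i) via a Laplace-method argument giving $\max_U\tr(U^*\Psi UA')=\sum\lambda_i\psi_i$, and (ii) via the finite-rank spherical-integral limit applied to $A''_N$.

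In short, there is no genuine error in your plan, and the architecture matches the paper's Lemma~\ref{lem:lowboundM}; but the part you leave open is precisely the part the paper resolves with a one-line determinant identity once the correct parametrization (integrate over $M=U_1^*U_1$ rather than over $(R,S)$ directly) is in place.
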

\noindent Those two results clearly implies Theorem~\ref{theo:positivetheo}. 

The restriction to positive $D_N$ is useful since then $I_N(D_N,\cdot)$ becomes an increasing function. Next, we explain how to deal with negative temperatures in  Subsection~\ref{sec:negtemp}, to extend Theorem~\ref{theo:positivetheo} to the setting with negative temperatures.
	
	\begin{prop}\label{prop:UB}
		If $(A_N)_{N \in \N}$ and $(D_N)_{N \in \N}$ are two sequences of self-adjoint matrices that satisfy Assumption~\ref{assum:A1}, then: 
		\[\limsup_{N \to \infty}  \frac{2}{\beta k(N) N} \ln I_N(D_N,A_N) - \left[ \frac{1}{k(N)} \sum_{i=1}^{l(N)} J(\theta_i, \lambda_i,\mu ) + \frac{1}{k(N)} \sum_{i=1}^{k(N)- l(N)} J(\theta_{l(N) +i}, \lambda_{ N + i -k(N) + l(N)},\mu ) \right] \leq  0 .\]
	\end{prop}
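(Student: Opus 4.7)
The plan is to reduce the mixed-sign case to two applications of Theorem~\ref{theo:positivetheo} through a conditional Stiefel integration that decouples the columns of $U$ associated with the positive $\theta_i$ from those associated with the negative ones. By the $\mathcal{U}_N^{\beta}$-invariance of $I_N(\cdot, A_N)$, I may assume that $D_N$ is diagonal with $\theta_1, \dots, \theta_{l(N)} \geq 0$ placed first and $\theta_{l(N)+1}, \dots, \theta_{k(N)} \leq 0$ placed next, the remaining $N - k(N)$ diagonal entries being zero. Decomposing $D_N = D_N^+ - D_N^-$ with $D_N^+, D_N^- \geq 0$ of orthogonal ranges and ranks $l(N)$ and $k(N)-l(N)$, I use the fact that $\mathrm{tr}(D_N U A_N U^*)$ only depends on the first $k(N)$ columns of $U$ to reformulate $I_N(D_N, A_N)$ as an integral over Stiefel matrices $V$, and split $V = [V^+, V^-]$ accordingly.

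For each $V^+$, conditionally $V^-$ is Haar on the Stiefel manifold of isometries into $\mathrm{range}(V^+)^\perp$, which produces the factorization
\[
I_N(D_N,A_N) \;=\; \int e^{\frac{\beta N}{2} \mathrm{tr}(\tilde D^+ (V^+)^* A_N V^+)} \, I_{\mathrm{inner}}(V^+) \, dV^+,
\]
where $I_{\mathrm{inner}}(V^+)$ is the inner $V^-$-integral. Using $I(-D,-A) = I(D,A)$, I rewrite $I_{\mathrm{inner}}(V^+)$ as a positive-temperature spherical integral in dimension $N-l(N)$ against $|D_N^-| \geq 0$ (which has rank $k(N)-l(N) = o(N-l(N))$) and the reflected restriction $-A_N|_{\mathrm{range}(V^+)^\perp}$, whose empirical eigenvalue distribution converges to the reflection $\check\mu$ of $\mu$. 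After the $1+o(1)$ temperature rescaling arising from the factor $N/(N-l(N))$, Theorem~\ref{theo:positivetheo} supplies an upper bound; combined with the reflection symmetry $J(-\theta,-\lambda,\check\mu) = J(\theta,\lambda,\mu)$ and Cauchy interlacing between $A_N|_{(V^+)^\perp}$ and $A_N$, this gives an upper bound on $\ln I_{\mathrm{inner}}(V^+)$ that is uniform in $V^+$ and matches $\frac{\beta N k(N)}{2} \cdot \frac{1}{k(N)} \sum_{j=1}^{k(N)-l(N)} J(\theta_{l(N)+j}, \lambda_{N+j-k(N)+l(N)}, \mu) + o(N k(N))$. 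Extracting this uniform bound from the outer integral leaves precisely $I_N(D_N^+, A_N)$, to which I apply Theorem~\ref{theo:positivetheo} directly; the positive-part sum $\frac{1}{k(N)} \sum_{i=1}^{l(N)} J(\theta_i,\lambda_i,\mu)$ appears (the $k(N)-l(N)$ zero-eigenvalue terms yielding $J(0,\cdot,\mu) = 0$ in the limit), and the two contributions combine into the inequality stated in Proposition~\ref{prop:UB}.

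The main obstacle is establishing the uniformity of the inner bound in $V^+$. Cauchy interlacing only places the bottom $k(N)-l(N)$ eigenvalues of $A_N|_{(V^+)^\perp}$ within index-windows of width $l(N)$ around the corresponding bottom eigenvalues of $A_N$, so one must show that this discrepancy, once passed through $J$, contributes only $o(k(N))$ to the sum uniformly in $V^+$. This should follow from the uniform continuity of $J(\theta,\cdot,\mu)$ on compact subsets of parameter space together with the convergence of the empirical spectrum of $A_N|_{(V^+)^\perp}$ to $\mu$ (inherited from $A_N$ up to an $O(l(N)/N)$ perturbation in any weak-topology metric), with extra care at the edge of the support of $\mu$ where $J$ switches between its two pieces in Definition~\ref{def:J}.
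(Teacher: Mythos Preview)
Your approach is essentially the paper's: split $D_N$ into its positive and negative parts, condition on the columns attached to one sign, and apply Theorem~\ref{theo:positivetheo} to each factor (using the reflection $I_N(-D,-A)=I_N(D,A)$ to make the negative part positive). The paper packages exactly this into Lemma~\ref{cond} and then invokes Theorem~\ref{theo:positivetheo} twice, with the only cosmetic difference that it conditions on the negative-temperature columns first.

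The one point where your write-up diverges from the paper is the last paragraph, where you flag uniformity in $V^+$ as ``the main obstacle'' and propose to handle it via uniform continuity of $J$ and weak convergence of the restricted spectrum. This is unnecessary, and the route you sketch is more delicate than what is actually needed. The clean resolution, which is precisely the content of Lemma~\ref{cond}, is to use \emph{monotonicity} rather than continuity: since after reflection the inner temperatures $|D_N^-|$ are nonnegative, $M\mapsto I_{N-l(N)}(|D_N^-|,M)$ is nondecreasing in the Loewner order. Cauchy interlacing gives, for every $V^+$, that $-A_N|_{(V^+)^\perp}$ is dominated (after a common conjugation) by the \emph{deterministic} diagonal matrix with entries $-\lambda_N,\dots,-\lambda_{l(N)+1}$. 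Hence $I_{\mathrm{inner}}(V^+)$ is bounded above by a spherical integral against a fixed sequence of matrices, to which Theorem~\ref{theo:positivetheo} applies directly; the $j$-th largest eigenvalue of that fixed matrix is exactly $-\lambda_{N+1-j}$, so there is no index shift to absorb and your ``$o(k(N))$ discrepancy'' never arises. Once you insert this monotonicity step, the rest of your outline goes through verbatim.
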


	Lastly, we prove the matching lower bound in Section~\ref{sec:lwbd}.
	
	\begin{prop}\label{prop:lwbd}
		If $(A_N)_{N \in \N}$ and $(D_N)_{N \in \N}$ are two sequences of self-adjoint matrices that satisfy Assumption~\ref{assum:A1}, then: 
		\[\liminf_{N \to \infty}  \frac{2}{\beta k(N) N} \ln I_N(D_N,A_N) - \left[ \frac{1}{k(N)} \sum_{i=1}^{l(N)} J(\theta_i, \lambda_i,\mu ) + \frac{1}{k(N)} \sum_{i=1}^{k(N)- l(N)} J(\theta_{l(N) +i}, \lambda_{ N + i -k(N) + l(N)},\mu ) \right] \geq  0. \]
	\end{prop}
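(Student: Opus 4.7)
The plan is to establish the matching lower bound by restricting the Haar integral to an explicit subset of $\mathcal{U}_N^\beta$ on which the integrand is close to the target asymptotic, via a Gaussian tilting and Gram--Schmidt construction paralleling \cite{GuMa05, GuHu21}.

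By the bi-invariance of the Haar measure one may assume $A_N$ and $D_N$ are both diagonal. Conjugating by a fixed permutation matrix in $\mathcal{U}_N^\beta$, one may further rearrange $A_N$ so that its first $k(N)$ diagonal entries are $\lambda_1,\dots,\lambda_{l(N)},\lambda_N,\lambda_{N-1},\dots,\lambda_{N-k(N)+l(N)+1}$, i.e.\ so that the diagonal position of each $\theta_i$ is paired with the ``correct'' eigenvalue $\lambda_i^\pm$ appearing in the claimed asymptotic. Writing $\tr(D_N U A_N U^*) = \sum_{i=1}^{k(N)} \theta_i \langle u_i, A_N u_i\rangle$ where $u_i$ is the $i$-th column of $U$, it suffices to construct a family of configurations on which each quadratic form $\langle u_i, A_N u_i\rangle$ is close to the one-dimensional optimizer $v_i := v(\theta_i,\lambda_i^\pm,\mu)$ of Definition~\ref{def:J}, and to estimate the Haar measure of this set from below.

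Realizing the first $k(N)$ columns of a Haar-distributed unitary by Gram--Schmidt applied to i.i.d.\ standard Gaussian vectors, I would introduce independent tilted Gaussians $\tilde g_1,\dots,\tilde g_{k(N)}$, with $\tilde g_i$ having covariance proportional to $(v_i I_N - A_N)^{-1}$ (regularized when $v_i$ is near the spectrum of $A_N$), orthonormalize them into $u_1,\dots,u_{k(N)}$, complete to a unitary matrix, and restrict to the set
\[
\mathcal{G}_N \;=\; \bigl\{ U \in \mathcal{U}_N^\beta : |\langle u_i, A_N u_i\rangle - v_i| \leq \delta \text{ for all } i \leq k(N) \bigr\},
\]
sending $\delta > 0$ to $0$ at the end. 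On $\mathcal{G}_N$ the integrand is at least $\exp\!\bigl(\tfrac{\beta N}{2}\sum_i \theta_i (v_i - \delta|\theta_i|)\bigr)$, so the argument reduces to a lower estimate for the Haar measure of $\mathcal{G}_N$. The Radon--Nikodym derivative of the tilted construction with respect to the Haar measure is computable explicitly from the Gaussian density and the Jacobian of the Gram--Schmidt orthonormalization, and the asymptotic identities $\tfrac{1}{N}\ln\det(v_i I_N - A_N) \to \int \ln|v_i - x|\, d\mu(x)$ and $\tfrac{1}{N}\tr((v_i I_N - A_N)^{-1}) \to G_\mu(v_i)$ reproduce, term by term, the constants defining $J(\theta_i,\lambda_i^\pm,\mu)$ after subtracting $\tfrac{\beta N}{2}\theta_i v_i$ from the exponent. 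The orthogonalization correction contributes only $o(N k(N))$ to the log since $k(N) = o(N)$ implies that independent Gaussian vectors in $\R^N$ (or $\C^N$) are asymptotically orthogonal after rescaling.

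The main obstacle is the \emph{supercritical} regime in which $|\theta_i|$ exceeds the BBP threshold $|G_\mu(r(\mu))|$ (or $|G_\mu(l(\mu))|$) and $v_i = \lambda_i^\pm$ coincides with the edge of or an outlier near the spectrum of $A_N$. The tilt $(v_i I_N - A_N)^{-1}$ is then singular or ill-conditioned, and one cannot directly apply the construction above. The standard remedy is to replace $v_i$ by $v_i + \varepsilon$, carry out the construction for $\varepsilon > 0$, and let $\varepsilon \to 0$ using the continuity of $J(\theta,\lambda,\mu)$ in $(\theta,\lambda)$ together with the uniform boundedness from Assumption~\ref{assum:A1}. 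Making this regularization work uniformly over a possibly growing number $k(N)$ of supercritical indices, and obtaining a matching uniform control on the Gram--Schmidt correction over $k(N) = o(N)$ tilted Gaussian vectors, constitutes the delicate quantitative step of the argument.
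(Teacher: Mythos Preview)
Your approach via Gaussian tilting and Gram--Schmidt is the natural extension of the finite-rank lower bound of \cite{GuMa05, GuHu21}, but it is \emph{not} the route the paper takes. The authors explicitly remark in the outline that the column-by-column method underlying your construction ``becomes a lot less tractable when the rank goes to infinity with $N$ because the number of conditionings is no longer constant.'' Instead, the paper's lower bound (Section~\ref{sec:lwbd}) reuses the same decomposition that drove the upper bound: one conditions on the Gram matrix $M = U_1^* U_1$ of the first $k(N)$ columns of $U$, localizes $M$ in an $\epsilon$-neighbourhood of an explicit diagonal matrix $L_N$ whose entries $m_i^\pm$ encode whether each $\theta_i$ is sub- or supercritical, and then lower-bounds the two resulting factors $I^{(1)}(\Psi_N,A_N')$ and $I^{(2)}(\Phi_N,A_N'')$ separately. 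The bulk factor $I^{(2)}$ is handled by iterating the one-dimensional Theorem~\ref{theo:rank1} via the recursive Lemma~\ref{cond}; the $k(N)\times k(N)$ factor $I^{(1)}$ by localizing the Haar measure on $\mathcal{U}_{k(N)}^\beta$ near the identity. The supercritical/subcritical dichotomy you flag as the main obstacle is absorbed cleanly into the definition of $L_N$: one sets $m_i^\pm = 0$ in the subcritical case and $m_i^\pm = 1 - G_\mu(\lambda_i^\pm)/\theta_i^\pm$ otherwise, and a direct check shows the localized integral reproduces $\sum_i J(\theta_i^\pm,\lambda_i^\pm,\mu)$ exactly.

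Your sketch is not obviously wrong, but the uniform Gram--Schmidt control over $k(N)$ tilted Gaussians with \emph{distinct} covariances $(v_i I_N - A_N)^{-1}$ is not available off the shelf for growing rank, and you would also need to show that the tilted-measure mass of $\mathcal{G}_N$ is not exponentially small at scale $Nk(N)$, which interacts with the supercritical regularization $v_i \to v_i + \varepsilon$ in a way you have not quantified. The paper's decomposition sidesteps both issues by reducing to the already-proved one-dimensional asymptotic plus a crude volume estimate on $\{0\le M\le I_{k(N)}\}$, at the price of a somewhat opaque choice of the localization point $L_N$.
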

	
	The applications to the large deviations of the extremal empirical measures will be discussed in Section~\ref{sec:largesteigenvalues}. The large deviations of the extreme eigenvalues of deformations of a Gaussian matrix will be explained in Section~\ref{sec:extremeeig}. Lastly, we explain how to compute the free energies of spherical spin glasses in Section~\ref{sec:spinglass} and the mutual information of spiked matrices in Section~\ref{sec:matestimation}.

	\section{Upper bound by temperature conditioning}\label{sec:upbd}
	
	The main goal of this section is to prove Theorem~\ref{theo:positivetheo}. Throughout the paper, since the function $I_N$ is invariant by conjugation by elements from $\mathcal{U}_N^{(\beta)}$, we will assume that both $A_N$ and $D_N$ are diagonal matrices. The general idea of this upper bound is to break up the trace in the integral into two terms: a first term that depends on the $k(N)$ largest eigenvalues of $A_N$ and the $k(N)$ first coordinates of the matrix $U$ (whose matrix will be denoted $U_1$) and a second term involving the remainder of the eigenvalues of $A_N$ as well as the remainder of the coordinates of $U$. We will then condition on $M = U_1^* U_1$ whose law is explicit. After this conditioning, we are left to deal with a product of two spherical integrals but each with two different sets of temperatures $\Theta_N'$ and $\Theta''_N$ depending on $M$ (hence the name ``temperature conditioning"). The first integral is going to be bounded by a simple maximal bound, and for the second integral, we use the positivity of $D_N$ to use a monotonicity argument. Then putting the two bounds together along with the density of the law of $M$ gives an upper bound in the form of an optimization problem on $(\R^+)^{k(N)} \times (\R^+)^{k(N)}$ that we solve using  the finite rank formulas in Subsection~\ref{sec:varprob}. The extension of the upper bound to the setting with negative temperatures will be done in Subsection~\ref{sec:negtemp}.

	 First, we will prove a decomposition lemma that will be very useful for the rest of the paper: 
	\begin{lemma} \label{cond}
		Let $N \in \N$, $P = \diag( p_1,\dots,p_N)$ and $Q = \diag(q_1,\dots,q_N)$ where $p_1 \geq \cdots \geq p_N$. For $k \in [ 1,N]$, we denote $ P^{(k +)}:= \diag( p_1,\dots, p_{N - k})$ and $P^{(k-)}:= \diag(p_{k+1},\dots,p_N)$. We also denote $Q_1^{(k)} = \diag(q_1,\dots,q_k)$ and $Q_2^{(k)} = \diag(q_{k+1},\dots,q_N)$. If $k$ is such that the entries of $Q_2^{(k)}$ are non-negative then, we have 
		\[ I_N(Q_1^{(k)},P) I_{N - k} \Big(Q_2^{(k)}, \frac{N}{N -k} P^{(k-)} \Big) \leq I_N(Q,P) \leq I_N( Q_1^{(k)},P) I_{N - k} \Big(Q_2^{(k)}, \frac{N}{N -k} P^{(k+)} \Big). \]
		If all the entries of $Q_2^{(k)}$ are non-positive, we have the same inequalities with $\leq$ replaced by $\geq$. 
	\end{lemma}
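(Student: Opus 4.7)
The plan is to split $U \in \mathcal{U}_N^{\beta}$ into its top $k$ rows $R_1$ and bottom $N-k$ rows $R_2$ and apply Fubini by conditioning on $R_1$. Since both $P$ and $Q$ are diagonal, the exponent splits additively as
\[
\tr(QUPU^*) = \sum_{i=1}^N q_i \sum_{\ell=1}^N p_\ell |U_{i\ell}|^2 = \tr(Q_1^{(k)} R_1 P R_1^*) + \tr(Q_2^{(k)} R_2 P R_2^*).
\]
The first summand depends only on $R_1$, and the standard identification of its marginal (which is the same as the marginal obtained from extending $Q_1^{(k)}$ by zeros to $N \times N$ as in Definition~\ref{def:sphericalint}) gives $\int \exp(\tfrac{\beta N}{2} \tr(Q_1^{(k)} R_1 P R_1^*))\, dR_1 = I_N(Q_1^{(k)}, P)$.

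To handle the second summand, I condition on $R_1$. By a classical disintegration of the Haar measure on $\mathcal{U}_N^{\beta}$, the conditional law of $R_2$ given $R_1$ can be represented as $R_2 = \tilde W^* V^*$, where $V$ is any deterministic $N \times (N-k)$ isometry whose columns span $\operatorname{row}(R_1)^{\perp}$ and $\tilde W$ is Haar on $\mathcal{U}_{N-k}^{\beta}$. A one-line cyclic-trace manipulation together with the scaling $\tfrac{\beta N}{2} = \tfrac{\beta(N-k)}{2}\cdot \tfrac{N}{N-k}$ then gives
\[
\int \exp\Bigl(\tfrac{\beta N}{2} \tr(Q_2^{(k)} R_2 P R_2^*)\Bigr)\, d\tilde W = I_{N-k}\!\Bigl(\tfrac{N}{N-k} Q_2^{(k)},\, V^* P V\Bigr).
\]

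The crux is to eliminate the dependence on $V$. By Cauchy interlacing, the ordered eigenvalues $\mu_1 \geq \cdots \geq \mu_{N-k}$ of the compression $V^* P V$ satisfy $p_{i+k} \leq \mu_i \leq p_i$; after diagonalising $V^* P V$ and using the unitary invariance of $I_{N-k}$ in each argument, one may therefore compare $V^* P V$ with the diagonal matrices $P^{(k\pm)}$ in the Loewner order. When $Q_2^{(k)} \geq 0$, the map $B \mapsto I_{N-k}(Q_2^{(k)}, B)$ is monotone nondecreasing in the Loewner order, since $\tr(CWBW^*) \leq \tr(CWB'W^*)$ pointwise in $W$ whenever $C \geq 0$ and $B \leq B'$. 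This yields, uniformly in $V$,
\[
I_{N-k}\!\Bigl(\tfrac{N}{N-k} Q_2^{(k)}, P^{(k-)}\Bigr) \leq I_{N-k}\!\Bigl(\tfrac{N}{N-k} Q_2^{(k)}, V^* P V\Bigr) \leq I_{N-k}\!\Bigl(\tfrac{N}{N-k} Q_2^{(k)}, P^{(k+)}\Bigr).
\]
Integrating these $V$-independent bounds against the marginal of $R_1$ and absorbing the prefactor $\tfrac{N}{N-k}$ into the second argument via the scaling identity $I_m(sA,B) = I_m(A,sB)$ (and the symmetry $I_m(A,B)=I_m(B,A)$) produces the claimed double inequality. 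For $Q_2^{(k)} \leq 0$ the Loewner monotonicity reverses and so do the inequalities. I expect the only genuinely delicate step to be the Haar disintegration of $U$ into $(R_1, \tilde W)$; the interlacing-plus-monotonicity argument that drives the main bound is then essentially mechanical.
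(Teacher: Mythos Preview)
Your proof is correct and follows essentially the same route as the paper's: split the trace, condition on the first $k$ rows (the paper conditions on the first $k$ columns after an implicit $U\mapsto U^*$ change of variable), use Cauchy/Weyl interlacing on the $(N-k)\times(N-k)$ compression $V^*PV$, and conclude via Loewner monotonicity of $I_{N-k}(Q_2^{(k)},\cdot)$ when $Q_2^{(k)}\ge 0$. Your explicit mention of the scaling identity $I_m(sA,B)=I_m(A,sB)$ is a nice touch that the paper leaves implicit.
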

	\begin{proof}
		For $U$ our Haar-distributed unitary matrix, we will denote $U_1^{(k)}$ the rectangular matrix formed by taking the first $k$ columns of $U$ and $U_2^{(k)}$ the matrix formed by taking the remaining last $N - k$ columns. Then we can condition the spherical integral by $U_1^{(k)}$,
		\begin{eqnarray*}
			I_N(Q,P) &=& \E[\exp( N  \tr( U^* P UQ))] \\
			&=& \E[\exp( N  \tr( (U_1^k )^* P U_1^{k}Q_1^{(k)})+ N \tr((U_2^k)^* P U_2^kQ_2^{(k)}) )] \\ & = &
			\E[\exp( N  \tr( (U_1^k )^* P  U_1^{k}Q_1^{(k)})) \E[ \exp( N \tr((U_2^k)^* P U_2^kQ_2^{(k)})) | U_1^k  ]]. 
		\end{eqnarray*}
	 	Conditionally on $U_1^{(k)}$, the law of $U_2^{(k)}$ is that of $WV$ where $W$ is a deterministic and arbitrary $N \times N- k$ matrix whose columns are perpendicular to the columns of $U_1^{(k)}$ and $V$ is a Haar- distributed matrix on $\mathcal{U}_{N-k}^{\beta}$. 
		Therefore, if we write $R = (U_2^k)^* P U_2^k$ conditionally on $U_1^{(k)}$, $R$ has the same law as $V^* M V $ where $M = W^* P W$ so one can write: 
		\[ I_N(Q,P) =\E[\exp( N  \tr( (U_1^k )^* P  U_1^{k}Q_1^{(k)})) \E[ \exp( N \tr(V^* M V Q_2^{(k)})) | U_1^{(k)}  ]] .\]
		Furthermore, using Weyl's  formulas which gives the $i$-th largest eigenvalue of an Hermitian matrix $A$ as:
		\[ \lambda_i(A) = \max_{ V \text{subspace of } \R^N \atop dim(V) = i} \min_{ u \in V \setminus \{ 0 \}} \frac{ \langle A u, u \rangle}{\langle u, u \rangle}  \text{ and } \lambda_{N - i}(A)= \min_{ V \text{subspace of } \R^N \atop dim(V) = i} \max_{ u \in V \setminus \{ 0 \}} \frac{ \langle A u, u \rangle}{\langle u, u \rangle}\]
		one can see that if one denotes $\lambda_i(M)$ the $i$-th largest eigenvalue of $M$ that $ p_i \geq \lambda_i( M) \geq p_{i + k}$ for $i = 1,\dots, N- k$. And so, we have that exists $U'\in \mathcal{U}^{(\beta)}_{N-k}$  such that $P^{(k-)} \leq U' MU'^* \leq P^{(k+)}$. Since $Q_2^{(k)} \geq 0$, $ M \mapsto \tr(VMV^* Q_2^{(k)})$ is an increasing function and therefore 
		\[ \E[ \exp( N \tr(V^* P^{(k-)} V Q_2^{(k)})) ] \leq \E[ \exp( N \tr(V^* M V Q_2^{(k)})) | U_1^k  ] \leq \E [ \exp( N \tr(V^* P^{(k+)} V Q_2^{(k)})) ] .\]
		The result follows easily. 	
	\end{proof}
	
	Given our hypothesis in Assumption~\ref{assum:A1}, we can in fact assume that:
	\[ D_N = \diag(\theta_1, \dots , \theta_{k(N)},0\dots,0) \text{ and } A_N = \diag(\lambda_1,\dots ,\lambda_N) .\]
	Then, given the definition of $I_N(A,B)$ we gave for matrices $A$ and $B$ that are not $N \times N$ in Definition~\ref{def:sphericalint},  we can choose to consider instead $D_N = \diag(\theta_1, \dots , \theta_{k(N)})$. 
	
	If we consider a unitary matrix $U$ that is Haar distributed in $\mathcal{U}_N^{\beta}$, we can write $U_1$ and $U_2$ the respective $k(N) \times k(N)$ and $N- k(N) \times k(N)$ matrices such that: 
	\[ U = \begin{pmatrix} U_1 & *  \\ U_2 & *  \end{pmatrix} . \]
	Let $M = U_1^{*} U_1$. $M$ follows the following law on the set of Hermitian matrices $\{ H \in \mathcal{H}_N^{\beta}: 0 \leq H \leq I_{k(N)}\}$ 
	 \begin{equation} \label{eq:density}  \frac{1}{Z}  \det( I -M)^{\frac{\beta}{2} ( N - k(N) +1) -1 } \det (M)^{ \frac{\beta}{2} -1} dM \end{equation} 
	(see for instance the proof of this result in \cite[Section 2]{Col05}). Since we assumed $A_N$ diagonal, up to permutation of row and columns, we can assume that the eigenvalues are ordered decreasingly on the diagonal.
	Let us denote $A'_N$ the matrix extracted from $A_N$ by taking its first $k(N)$ rows and columns and $A''_N$ the matrix extracted from $A_N$ by taking its last $N - k(N)$ rows and columns so that: 
	\[ A_N = \begin{pmatrix} A'_N & 0 \\
		0 & A''_N \end{pmatrix} .\]
	
	We can write:
	\[ \exp\Big( N \frac{\beta}{2} \tr( U^* A_N U D_N) \Big)= \exp \Big( N \frac{\beta}{2} \tr(U^*_1 A'_N U_1 D_N) \Big) \exp\Big( N  \frac{\beta}{2} \tr(U^*_2 A''_N U_2 D_N) \Big). \]
	We can write $U_1 = V_1 \sqrt{M}$ and $U_2 = V_2 \sqrt{ I_{k(N)} - M}$ where $V_1$ and $V_2$ are both independent, $V_1$ is Haar distributed on $\mathcal{U}^{\beta}_{k(N)}$ and $V_2$ has the same distribution as the $k(N)$ first columns of a Haar distributed matrix on $\mathcal{U}^{\beta}_{N -k(N)}$. We have the following decomposition,
	
	\begin{equation} \label{eq1} I_N(D_N,A_N) = \frac{1}{Z}\int_{M \in \mathcal{H}_N \atop 0 \leq M \leq I_{k(N)} } \det( I -M)^{\frac{\beta}{2} ( N - k(N) +1) -1 } \det (M)^{ \frac{\beta}{2} -1} I^{(1)}( \Theta'_N,A'_N) I^{(2)}(\Theta''_N, A''_N) dM
	\end{equation}
	where $\Theta'_N = \sqrt{M}D_N \sqrt{M}$ and $\Theta''_N = \sqrt{I_{k(N)} - M}D_N \sqrt{I_{k(N)} - M}$, and 
	
	\[ I^{(1)}(\Theta'_N,A'_N) = \E\Big[\exp\Big(N \frac{\beta}{2}\tr(U^*A'_N U\Theta'_N) \Big)\Big] \]
	with $U$ being Haar-distributed on $\mathcal{U}^{\beta}_{k(N)}$ and
	\[ I^{(2)}(\Theta''_N,A''_N) = \E\Big[\exp\Big(N \frac{\beta}{2} \tr(U^*A''_N U\Theta''_N) \Big) \Big] \]
	with $U$ being distributed as the $ k(N)$ first columns of a unitary Haar-matrix in $\mathcal{U}^{\beta}_{N- k(N)}$. Our main goal is to prove the following upper bound corresponding to the first bound in Theorem~\ref{theo:positivetheobis}. 
	\begin{lemma} \label{lem1}
		We have the following upper bound:
		\begin{equation}\label{eq:lem1}
		 \frac{2}{\beta k(N) N} \ln I_N(D_N,A_N) \leq \mathcal{M}(\mu,\overline{\lambda},\overline{\theta})  + o_N(1)
		\end{equation}
	where $\overline{\theta}$, $\overline{\lambda}$ are the corresponding $k(N)$ largest eigenvalues of $D_N, A_N$
	\end{lemma}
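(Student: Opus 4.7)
The plan is to start from the decomposition \eqref{eq1} and estimate the integrand at each fixed $M$ on the exponential scale, so that after taking the supremum over $M$ the bound matches the variational quantity $\mathcal{F}(\mu,\overline{\lambda},\overline{\theta},\overline{\phi}(M),\overline{\psi}(M))$. The three contributions to $\mathcal{F}$ (the trace term, the rank-one limits, and the logarithmic correction) will arise respectively from bounding $I^{(1)}$ by a maximum-trace inequality, from an iterated application of Lemma~\ref{cond} to $I^{(2)}$ combined with the rank-one asymptotic of Theorem~\ref{theo:rank1}, and from the explicit Jacobi-type density \eqref{eq:density} together with its normalization $Z$.

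For $I^{(1)}(\Theta'_N,A'_N)$ the Von Neumann trace inequality gives directly
\[
\tfrac{2}{\beta N}\ln I^{(1)}(\Theta'_N,A'_N)\leq\sum_{i=1}^{k(N)}\lambda_i\psi_i,
\]
where $\psi_1\geq\cdots\geq\psi_{k(N)}$ are the eigenvalues of $\Theta'_N=\sqrt{M}D_N\sqrt{M}$ and $\lambda_1\geq\cdots\geq\lambda_{k(N)}$ are the top $k(N)$ eigenvalues of $A_N$. For $I^{(2)}(\Theta''_N,A''_N)$, viewed as $I_{N-k(N)}(\tfrac{N}{N-k(N)}\Theta''_N,A''_N)$ after rescaling, I diagonalize $\Theta''_N$ (with eigenvalues $\phi_1\geq\cdots\geq\phi_{k(N)}\geq 0$) and iterate Lemma~\ref{cond} $k(N)$ times, peeling one temperature at a time. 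Since $\Theta''_N\geq 0$, each step is an upper bound, and the successive $P^{(k+)}$-submatrices retain $\lambda_{k(N)+1}$ as their largest eigenvalue while their empirical spectral measures still converge to $\mu$. Applying Theorem~\ref{theo:rank1} to each rank-one factor and using the monotonicity of $J(\theta,\cdot,\mu)$ in its second argument for $\theta\geq 0$ to replace $\lambda_{k(N)+1}$ by $\lambda_{k(N)}$ gives
\[
\tfrac{2}{\beta N}\ln I^{(2)}(\Theta''_N,A''_N)\leq\sum_{i=1}^{k(N)}J(\phi_i,\lambda_{k(N)},\mu)+o(k(N)).
\]

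The logarithmic term of $\mathcal{F}$ emerges from the density \eqref{eq:density}. The key identity $\det(\Theta''_N)=\det(I-M)\det(D_N)$ gives $\sum_{i=1}^{k(N)}(\ln\phi_i-\ln\theta_i)=\sum_{i=1}^{k(N)}\ln(1-m_i)$, where $m_i=\mathrm{spec}(M)_i$, so the factor $\det(I-M)^{\frac{\beta}{2}(N-k(N)+1)-1}$ contributes $\frac{\beta N}{2}\sum_i(\ln\phi_i-\ln\theta_i)$ up to an $o(k(N)N)$ correction (after truncating away from $m_i\to 1$, which is harmless because $J(\phi,\lambda,\mu)+\ln\phi$ stays bounded as $\phi\to 0^+$). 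The remaining $\det(M)^{\frac{\beta}{2}-1}$ factor, the Selberg/Jacobi beta normalization $Z$, and the polynomial-in-$N$ volume of the compact $M$-domain all contribute $o(k(N)N)$ by standard asymptotic computations. Finally, for every $M$, one has $(\overline{\phi}(M),\overline{\psi}(M))\in\mathcal{D}^{\overline{\theta}}_{k(N)}$, witnessed by $\tilde H_1=\sqrt{D_N}M\sqrt{D_N}$ and $\tilde H_2=\sqrt{D_N}(I-M)\sqrt{D_N}$, which are positive, share the spectra of $\Theta'_N,\Theta''_N$, and sum to $D_N$. Taking the supremum over $M$ in the combined bound thus yields the estimate $\mathcal{M}(\mu,\overline{\lambda},\overline{\theta})$ claimed in \eqref{eq:lem1}.

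The main technical obstacle is ensuring that the rank-one asymptotic in Theorem~\ref{theo:rank1} is uniform across the $k(N)$ factors produced by the iteration of Lemma~\ref{cond}. Each individual application carries only an $o(1)$ error, and these must sum to $o(k(N))$ to survive in the final bound. This requires either a uniform-in-temperature modulus of continuity for $J$ together with a quantitative rate in Theorem~\ref{theo:rank1} (extractable from \cite{GuMa05,GuHu21}), or a direct revisit of the rank-one proof to obtain uniformity over the compact parameter set $\phi_i\in[0,\|D_N\|]$ and over the truncated, rescaled spectral profiles that appear at each step.
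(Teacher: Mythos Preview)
Your proposal is correct and follows essentially the same route as the paper. The paper packages the three estimates you outline (the Von Neumann trace bound for $I^{(1)}$, the iterated use of Lemma~\ref{cond} plus the rank-one asymptotic for $I^{(2)}$, and the Selberg/volume control of the normalization $Z$) into a separate Lemma~\ref{lemmaUB}, and then combines them with the determinant identity $\ln\det(I-M)=\sum_i(\ln\phi_i-\ln\theta_i)$ exactly as you do; the uniformity issue you flag in your last paragraph is precisely equation~\eqref{eq:uniformbound} in the paper, which is asserted as a consequence of Theorem~\ref{theo:rank1} over the compact temperature range $[0,K]$ and the uniformly converging truncated spectra.
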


	We are going to prove this upper bound by letting $\bar{\psi}$ and $\bar{\phi}$ be the respective spectra of $\Theta'_N$ and $\Theta''_N$ ordered decreasingly. Then, in equation \eqref{eq1} we will bound the determinants and the quantities $I^{(1)}$ and $I^{(2)}$. First, we notice that $\Theta'_N$ and $\Theta''_N$ are respectively similar to $\sqrt{D_N} M \sqrt{D_N}$ and $\sqrt{D_N} (I_{k(N)} -M) \sqrt{D_N}$ and their spectra are also respectively the squares of the singular values of $\sqrt{D_N} \sqrt{M}$ and $\sqrt{D_N} \sqrt{ I_{k(N)} -M}$. Since those two Hermitian matrices sum up to $D_N$ we have that indeed $\bar{\psi},\bar{\phi} \in \mathcal{D}_{k(N)}$. Then we will need the following Lemma:
	\begin{lemma}\label{lemmaUB}
		We have:
		\begin{equation}\label{eq:UBeq1}
			\frac{1}{N k(N)} \left[ \ln  \int_{M \in \mathcal{H}_N \atop 0 \leq M \leq I_{k(N)} } \det (M)^{ \frac{\beta}{2} -1} dM - \ln Z \right] = o_N(1) 
		\end{equation}
	where $Z$ is the renormalizing constant appearing in the equation \eqref{eq:density}.
		Uniformly for $M \in \mathcal{H}_N^{\beta}$ such that $0 \leq M \leq I_{k(N)}$, we have: 
		\begin{equation}\label{eq:firstrate}
			\frac{2}{\beta N k(N)} \ln I^{(1)}(\Theta'_N,A'_N) \leq \frac{1}{k(N)}\sum_{i=1}^{k(N)} \lambda_i \psi_i
		\end{equation}
		and 
		\begin{equation}\label{eq:UBeq3}
			\frac{2}{\beta N k(N)} \ln I^{(2)}(\Theta''_N,A''_N) \leq \frac{1}{k(N)} \sum_{i=1}^{k(N)} J(\phi_i, \lambda_{k(N)}, \mu) + o_N(1) .
		\end{equation}
	\end{lemma}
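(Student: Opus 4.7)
The three bounds being independent, I will prove them separately. For \eqref{eq:firstrate}, the plan is simply to invoke the von Neumann trace inequality: since $A'_N$ and $\Theta'_N$ are both Hermitian $k(N)\times k(N)$ matrices whose decreasingly ordered eigenvalues are $\lambda_1, \dots, \lambda_{k(N)}$ and $\psi_1, \dots, \psi_{k(N)}$ respectively, the inequality $\tr(U^* A'_N U \Theta'_N) \leq \sum_{i=1}^{k(N)} \lambda_i \psi_i$ holds for every $U \in \mathcal{U}_{k(N)}^\beta$; pushing this under the integral yields the bound directly with no error term, uniformly in $M$.

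For \eqref{eq:UBeq3}, I first rewrite $I^{(2)}(\Theta''_N, A''_N)$ as a standard HCIZ integral on $\mathcal{U}_{N-k(N)}^\beta$:
\[ I^{(2)}(\Theta''_N, A''_N) = I_{N-k(N)}\bigl(A''_N,\, \tfrac{N}{N-k(N)}\tilde\Theta''_N\bigr), \]
where $\tilde\Theta''_N = \diag(\phi_1, \dots, \phi_{k(N)}, 0, \dots, 0)$ is the zero-padded extension of $\Theta''_N$ to dimension $N-k(N)$. Since all the $\phi_i$ are non-negative, I will iterate the upper bound in Lemma~\ref{cond} (with $k=1$) exactly $k(N)$ times, peeling off the temperatures one at a time, to obtain a product of rank-one HCIZ integrals
\[ I^{(2)}(\Theta''_N, A''_N) \leq \prod_{i=1}^{k(N)} I_{N-k(N)-i+1}\bigl(\tfrac{N}{N-k(N)}\phi_i,\, P_i\bigr), \]
with $P_i = \tfrac{N-k(N)}{N-k(N)-i+1}\diag(\lambda_{k(N)+1}, \dots, \lambda_{N-i+1})$. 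Applying the rank-one asymptotic of Theorem~\ref{theo:rank1} to each factor, the top eigenvalue of $P_i$ equals $\tfrac{N-k(N)}{N-k(N)-i+1}\lambda_{k(N)+1} = \lambda_{k(N)+1}(1 + o(1))$ and the empirical spectral measure $\mu_{P_i}$ still converges weakly to $\mu$ (as only $o(N)$ eigenvalues are removed), so each factor contributes $J(\phi_i, \lambda_{k(N)+1}, \mu) + o(1)$. Summing and using both the prefactor $\tfrac{N-k(N)-i+1}{N k(N)} = \tfrac{1}{k(N)}(1 + o(1))$ and the monotonicity $J(\phi_i, \lambda_{k(N)+1}, \mu) \leq J(\phi_i, \lambda_{k(N)}, \mu)$ (which holds for $\phi_i \geq 0$) will yield the desired bound.

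For \eqref{eq:UBeq1}, I will use the identity
\[ \frac{\int_{0 \leq M \leq I_{k(N)}} \det(M)^{\beta/2 - 1}\,dM}{Z} = \E_{\mathrm{Beta}}\bigl[\det(I - M)^{-\alpha}\bigr], \qquad \alpha = \tfrac{\beta}{2}(N-k(N)+1) - 1, \]
where the expectation is under the density \eqref{eq:density}. Under this density $M$ has the law of $U_1^* U_1$ for $U_1$ the top-left $k(N) \times k(N)$ block of a Haar unitary on $\mathcal{U}_N^\beta$; in particular $\E[\tr(M)] = k(N)^2/N$ and $|||M||| = O(k(N)/N) = o(1)$ with overwhelming probability. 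On the event $\{|||M||| \leq 1/2\}$ the elementary inequality $-\ln(1-m_j) \leq 2 m_j$ gives $-\alpha \ln\det(I-M) \leq 2\alpha\, \tr(M)$, and a routine concentration/MGF bound on $\tr(M)$ then yields $\E_{\mathrm{Beta}}[\det(I-M)^{-\alpha}] \leq \exp(O(k(N)^2)) = \exp(o(Nk(N)))$. Alternatively, the Selberg integral formula provides explicit expressions for both $Z$ and $\int \det(M)^{\beta/2-1} dM$ as products of Gamma functions, and a direct Stirling expansion recovers the same estimate.

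The main obstacle is the uniformity of the rank-one asymptotic across the $k(N)$ iterations in \eqref{eq:UBeq3}: Theorem~\ref{theo:rank1} is stated as a pointwise convergence for one sequence at a time, whereas in the iteration the triple (temperature, top eigenvalue, empirical spectral measure) varies with both $i$ and $N$. I plan to resolve this either by extracting a quantitative convergence rate from the proofs in \cite{GuMa05, GuHu21}, or by a compactness argument: since the $\phi_i$, $\lambda_1(P_i)$, and measures $\mu_{P_i}$ stay in a fixed compact set and $J$ is continuous in all arguments, any failure of uniform convergence would extract a subsequence contradicting the continuity of $J$.
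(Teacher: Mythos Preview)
Your arguments for \eqref{eq:firstrate} and \eqref{eq:UBeq3} are exactly what the paper does: the von Neumann trace inequality for the first, and an iteration of Lemma~\ref{cond} followed by the rank-one asymptotic for the second. Your concern about uniformity in the $k(N)$ applications of Theorem~\ref{theo:rank1} is legitimate; the paper simply asserts the uniform bound \eqref{eq:uniformbound} without further comment, and your compactness argument (the parameters $(\phi_i,\lambda_1(P_i),\mu_{P_i})$ stay in a fixed compact set and $J$ is continuous, so any violation of uniformity would extract a convergent subsequence contradicting Theorem~\ref{theo:rank1}) is the right way to justify it.

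For \eqref{eq:UBeq1}, your Selberg alternative (b) is essentially the paper's route. The paper actually splits the ratio as
\[
\frac{\int_D \det(M)^{\beta/2-1}\,dM}{\int_D dM}\cdot\frac{\int_D dM}{Z},
\]
handles the first factor by Selberg, and bounds the second by localizing $Z$ from below on a small ball $B(\epsilon I,\epsilon)$ where $\det(I-M)\geq(1-2\epsilon)^{k(N)}$; your proposal to compute both integrals directly via Selberg and Stirling is an equally valid (and perhaps cleaner) variant.

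Your primary approach (a), however, does not work as stated. The inequality $-\ln(1-m)\leq 2m$ loses a constant factor which is fatal when you multiply by $\alpha\sim \beta N/2$: already for $k=1$ and $\beta=2$ one computes
\[
\E_{\mathrm{Beta}}\bigl[e^{2\alpha M}\bigr]=(\alpha+1)\int_0^1 e^{2\alpha m}(1-m)^\alpha\,dm\asymp (e/2)^{\alpha}=e^{\Theta(N)},
\]
whereas the true quantity $\E_{\mathrm{Beta}}[(1-M)^{-\alpha}]$ is only polynomial in $N$. In general the MGF of $\tr(M)$ at parameter $2\alpha$ is $e^{\Theta(Nk(N))}$, not $e^{O(k(N)^2)}$, because it is governed by a saddle point away from $0$ rather than by the typical value $\tr(M)\approx k(N)^2/N$. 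Restricting to $\{|||M|||\leq 1/2\}$ does not help, and on the complement $\det(I-M)^{-\alpha}$ is unbounded. So drop approach~(a) and use Selberg.
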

	\begin{proof}
		To prove the first point \eqref{eq:UBeq1}, recalling the normalization term in \eqref{eq:density}, 
		one can first notice that 
		\begin{eqnarray*} \frac{\int_D \det (M)^{ \frac{\beta}{2} -1} dM}{Z} &=& \frac{ \int_D \det (M)^{ \frac{\beta}{2} -1} dM}{\int_D \det(I - M)^{\alpha(N)-1} \det (M)^{ \frac{\beta}{2} -1} dM} \\ & = & \frac{ \int_D \det (M)^{ \frac{\beta}{2} -1} dM}{\int_D dM} \frac{\int_D dM}{\int_D \det(I - M)^{\alpha(N)-1} \det (M)^{ \frac{\beta}{2} -1}dM}
		\end{eqnarray*}
		where $\alpha(N) = \frac{\beta}{2}( N - k(N) +1) -1$ and the domain $D$ is $\{ M \in \mathcal{H}^{\beta}_{k(N)}: 0 \leq M \leq I\}$. We can use Selberg formula to compute the first term \cite{Selberg,AndersonSelberg}. Indeed, we have: 
		\[ \frac{ \int_D \det (M)^{ \frac{\beta}{2} -1} dM}{\int_D dM} = \frac{ \int_C \Delta(x)^{\beta} \prod_{i=1}^{k(N)} x_i dx }{\int_C \Delta(x)^{\beta} dx} = \frac{ S_{k(N)}(1,\frac{\beta}{2},\frac{\beta}{2})}{S_{k(N)}(1,1,\frac{\beta}{2})} \]
		where $C$ is the hypercube $[0,1]^{k(N)}$ and $S$ is given by: 
		\[ S_n(a,b,c) = \prod_{j=1}^n \frac{\Gamma(a +jc) \Gamma(b + jc) \Gamma(1 + (j+1)c)}{\Gamma(a + b + (n+j-1)c) \Gamma(1 + c)}. \]
		For $\beta =2$, this quotient is trivially $1$. With $\beta=1$, the quotient simplifies to: 
		
		\[ \frac{ \int_D \det (M)^{ \frac{\beta}{2} -1} dM}{\int_D dM} = \frac{\Gamma(\frac{1}{2})  \Gamma(\frac{k(N)+3}{2})}{\Gamma(\frac{k(N)+1}{2}) \Gamma(\frac{3}{2})}.\]
		Using Stirling's equivalent, we have that the $\ln$ of this ratio divided by $N k(N)$ is $o(1)$. For the second term, let us first denote $B_n(A,r)$ the ball of center $A$ and radius $r$ in $\mathcal{H}^{\beta}_n$ for the operator norm and let $V(r)$ denote its volume for the measure $dM$. We have $D \subset B_{k(N)}(0,1)$, therefore the numerator on the second term is less than $V(1)$. Let $\epsilon >0$. For $M \in B_{k(N)}( \epsilon I, \epsilon)$, we have $\det(I-M) \geq (1 - 2 \epsilon)^{k(N)}$, therefore by localizing the integral on $B(\epsilon I, \epsilon)$ we have:
		\[ \int_D \det(I - M)^{\alpha(N)-1} \det (M)^{ \frac{\beta}{2} -1}dM \geq V(\epsilon) (1 -2 \epsilon)^{\alpha(N)k(N)},\]
		and so: 
		\[
		\frac{\int_D dM}{\int_D \det(I - M)^{\alpha(N)-1} \det (M)^{ \frac{\beta}{2} -1}dM}
		\leq \frac{V(1)}{V(\epsilon)} (1 -2 \epsilon)^{\alpha(N) k(N)}.\]
		Therefore, noticing that  since $\mathcal{H}_{k(N)}^{\beta}$ has dimension $ \frac{\beta k(N)( k(N) -  1)}{2} + k(N)$
		\begin{equation}\label{eq:volumeball}
			V(\epsilon) = \epsilon^{\frac{\beta k(N)( k(N) -  1)}{2} + k(N)} V(1)
		\end{equation}
		we have that 
		\[ \limsup_{N \to \infty} \frac{1}{N k(N)} \ln  \frac{\int_D dM}{\int_D \det(I - M)^{\alpha(N)-1} \det (M)^{ \frac{\beta}{2} -1}dM} \leq 0.\]
		
		To prove the second point \eqref{eq:firstrate}, recall that if $A,B$ are both Hermitian matrices with respective spectrum $\lambda_1 \geq \dots \geq \lambda_n$ and $\psi_1 \geq \dots \geq \psi_n$ , then 
		\[ \tr(AB) \leq \sum_{i=1}^n \lambda_i \psi_i .\]
		The inequality \eqref{eq:firstrate} is thus trivial.

		To prove the third point \eqref{eq:UBeq3}, we use Lemma \ref{cond} and its notation. With an immediate recursion, we have 
		\[ I^{(2)}( \Theta''_N , A''_N) \leq \prod_{i=0}^{k(N)-1} I_{N - k - i} \Big( \phi_{i + 1}, \frac{N}{N -k -i} A''^{(i  +)} \Big) .\]
		
		Then to conclude, we notice that the one dimensional result in Theorem \ref{theo:rank1} implies that
		\begin{equation}\label{eq:uniformbound}
			\lim_{N \to + \infty} \sup_{ i \in [0,k(N) - 1] }\left| \frac{2}{ \beta N} \ln I_{N - k - i} \Big( \phi_{i + 1}, \frac{N}{N -k -i} A''^{(i  +)} \Big) - J(\phi_{i + 1}, \lambda_{k(N)}, \mu) \right| = 0.
		\end{equation}
	\end{proof}
	
	The proof of Lemma~\ref{lem1} is now immediate.
	
	\begin{proof}[Proof of Lemma~3.2]
	Notice that since $\Theta''_N = \sqrt{I_{k(N)} - M} D_N \sqrt{I_{k(N)} - M}$ , then 
	\[\ln \det( I_{k(N)} - M) = \ln \det \Theta''_N - \ln \det D_N = \sum_{i=1}^{k(N)} (\ln \phi_i - \ln \theta_i) \]
	Therefore, using equations \eqref{eq:firstrate} and \eqref{eq:UBeq3}, we have that uniformly in $M$ such that $0 \leq M \leq I$, 
	
	\begin{multline}
		\frac{2}{ \beta N k(N)} \ln \det( I - M)^{\alpha(N)} I^{(1)}_N( \Theta'_N, A'_N) I^{(2)}( \Theta''_N,A''_N) \leq \\
		\frac{1}{k(N)} \sum_{i=1}^{k(N)}\left[ \lambda_i \psi_i + J(\phi_i, \lambda_{k(N)}, \mu) + (\ln(\phi_i) - \ln(\theta_i)) \right] + o_N(1).
	\end{multline}
	Using this bound in conjunction with equation ~\eqref{eq1} and equation ~\eqref{eq:UBeq1} from Lemma~\ref{lem1}, one gets the result of Lemma~\ref{eq:lem1}. 
	\end{proof}

	\subsection{Solving the Variational Problem}\label{sec:varprob}
	
	To finish the proof of Theorem~\ref{theo:positivetheobis}, we have to show that the supremum appearing in the upper bound Lemma~\ref{lem1} is indeed the expected limit. We start with an upper bound of the supremum.  In order to do this, we first notice that the dependence in $N$ has been completely removed. The variational problem that defines $\mathcal{M}$ only depends on a finite of parameters, $\mu, k, \overline{\theta}$ and $\lambda_1,\dots, \lambda_k$. We will use this to our advantage by using the results that have already been established in the case of $k$ finite. 
		First let us take $k$ fixed, $\mu \in \mathcal{P}( \R)$ $\overline{\lambda} = ( \lambda_1, \dots, \lambda_k)$ such that $\lambda_k \geq r(\mu)$, and $\overline{\theta} = (\theta_1, \dots , \theta_k)$. Let  $\overline{\xi}^N =( \xi_1^N, \dots,\xi_{N -2k}^N)$ a sequence such that $r(\mu) \geq  \xi_1^N \geq \dots \geq \xi_{N -2k}^N \geq l( \mu)$ for all $N \geq 2k$ and such that 
		\[ \lim_{N \to \infty} \frac{1}{N -2k} \sum_{i=1}^{N -2k} \delta_{\xi_i^N} = \mu.\]
		We then define for all $N \geq 2k$: 
		\[ A_N = \text{diag}( \lambda_1, \dots, \lambda_k, \underbrace{\lambda_k, \dots, \lambda_k}_{ k \text{ times }}, \xi_1^N, \dots, \xi_{N -2k}^N ) \]
		and 
		\[ D= \text{diag}( \theta_1, \dots, \theta_k).\]
		Let us prove the following lemma:
	\begin{lemma}\label{lem:lowboundM}
		\[ \liminf_{N \to \infty} \frac{2}{\beta N} \ln I_N(D,A_N) \geq \mathcal{M}(\mu, \overline{\lambda}, \overline{\theta}). \]
		\end{lemma}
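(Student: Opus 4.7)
The strategy is to invert the decomposition used in Lemma~\ref{lem1}. Starting from the identity \eqref{eq1} for $I_N(D, A_N)$, we restrict the integral over $M \in \{H \in \mathcal{H}_{k}^{\beta} : 0 \leq H \leq I_k\}$ to a small neighborhood of a suitably chosen $M_0$ at which $\Theta'_N = \sqrt{M_0}\, D \sqrt{M_0}$ and $\Theta''_N = \sqrt{I_k - M_0}\, D \sqrt{I_k - M_0}$ have prescribed spectra $\overline{\psi}$ and $\overline{\phi}$ arising from an arbitrary element $(\overline{\phi}, \overline{\psi}) \in \mathcal{D}_k^{\overline{\theta}}$. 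On this neighborhood each factor in \eqref{eq1} will be bounded below by one of the terms in $\mathcal{F}$.

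Fix $(\overline{\phi}, \overline{\psi}) \in \mathcal{D}_k^{\overline{\theta}}$ and first assume $D > 0$; the general case will follow by replacing $D$ with $D + \delta I_k$ and using continuity of $I_N$ in $D$ and of $\mathcal{F}$ in $\overline{\theta}$. By definition of $\mathcal{D}_k^{\overline{\theta}}$, one may choose $H_1, H_2 \in \mathcal{H}_k^{\beta}$ with $\mathrm{spec}(H_1) = \overline{\phi}$, $\mathrm{spec}(H_2) = \overline{\psi}$ and $H_1 + H_2 = D$ exactly (obtained by conjugating the witnesses of $\mathcal{D}_k^{\overline{\theta}}$ so that their sum is diagonalized to $D$). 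Set $M_0 := D^{-1/2} H_2 D^{-1/2}$; then $0 \leq M_0 \leq I_k$ and direct computation gives $D^{1/2} M_0 D^{1/2} = H_2$ and $D^{1/2}(I_k - M_0) D^{1/2} = H_1$. Since $\sqrt{M_0} D \sqrt{M_0}$ is similar to $D^{1/2} M_0 D^{1/2}$, the spectra of $\Theta'_N, \Theta''_N$ at $M = M_0$ are exactly $\overline{\psi}, \overline{\phi}$. By continuity, for $M \in B(M_0, \epsilon) \cap \{0 \leq M \leq I_k\}$ these spectra remain within $\eta(\epsilon) \to 0$ of the targets.

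Restricting \eqref{eq1} to this set, we estimate each factor uniformly. The identity $\det(\sqrt{I_k - M} D \sqrt{I_k - M}) = \det(D) \det(I_k - M)$ gives $\ln \det(I_k - M) = \sum_{i=1}^k (\ln \phi_i - \ln \theta_i) + O(\eta(\epsilon))$. The volume of the neighborhood together with the normalization $Z^{-1}$ and the factor $\det(M)^{\beta/2 - 1}$ contribute only $o(N)$ in the logarithm, via the same Selberg-type estimates as in \eqref{eq:UBeq1} (using an infinitesimal further perturbation of $H_1, H_2$ to place $M_0$ in the interior of $\{0 < M < I_k\}$). The first factor $I^{(1)}(\Theta'_N, A'_N)$ is a $k \times k$ HCIZ integral at diverging temperature $N$; restricting the Haar integral to a small neighborhood of the unitary that aligns the eigenbases of $A'_N$ and $\Theta'_N$ gives, by a Laplace-type lower bound, $\tfrac{2}{\beta N} \ln I^{(1)} \geq \sum_{i=1}^k \lambda_i \psi_i - \delta(\epsilon) + o(1)$. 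Finally, $I^{(2)}(\Theta''_N, A''_N) = I_{N - k}\bigl(\Theta''_N, \tfrac{N}{N - k} A''_N\bigr)$: by construction of $A_N$, the matrix $\tfrac{N}{N - k} A''_N$ has empirical measure converging to $\mu$ and top $k$ eigenvalues all equal to $\tfrac{N}{N - k} \lambda_k \to \lambda_k$, so Theorem~\ref{theo:rank1} yields $\tfrac{2}{\beta N} \ln I^{(2)} \to \sum_{i=1}^k J(\phi_i, \lambda_k, \mu)$.

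Combining the four bounds, passing to the $\liminf$ as $N \to \infty$ and then letting $\epsilon \to 0$, gives $\liminf_N \tfrac{2}{\beta N} \ln I_N(D, A_N) \geq \mathcal{F}(\mu, \overline{\lambda}, \overline{\theta}, \overline{\phi}, \overline{\psi})$; taking the supremum over $(\overline{\phi}, \overline{\psi}) \in \mathcal{D}_k^{\overline{\theta}}$ produces $\mathcal{M}(\mu, \overline{\lambda}, \overline{\theta})$ and finishes the proof. The main technical obstacle is controlling the boundary cases, namely when some $\theta_i$ vanishes, when $M_0$ touches $\partial\{0 \leq M \leq I_k\}$, or when $(\overline{\phi}, \overline{\psi})$ lies on the boundary of $\mathcal{D}_k^{\overline{\theta}}$; these are handled by a preliminary interior perturbation combined with continuity of $\mathcal{F}$ and of $I_N$, but care is needed to keep the error terms uniform so that they survive both the limit in $N$ and the supremum in $(\overline{\phi}, \overline{\psi})$.
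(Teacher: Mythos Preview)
Your proof is correct and follows essentially the same approach as the paper: both localize the integral \eqref{eq1} near a matrix $M_0$ (called $L$ in the paper) chosen so that $\sqrt{M_0}D\sqrt{M_0}$ and $\sqrt{I_k-M_0}D\sqrt{I_k-M_0}$ have the prescribed spectra $\overline{\psi},\overline{\phi}$, bound $I^{(1)}$ below by a Laplace argument giving $\sum_i\lambda_i\psi_i$, and bound $I^{(2)}$ using the finite-rank result (Theorem~\ref{theo:rank1}, equivalently Proposition~1 of \cite{GuHu21}). The only minor difference is in the treatment of degenerate cases: the paper dispatches $\phi_i=0$ directly by observing that then $\mathcal{F}=-\infty$, and handles $M_0\in\partial\{0\le M\le I_k\}$ by the explicit shift $L''=(1-\eta/2)L+(\eta/2)I_k$, whereas you propose perturbing $D$ to $D+\delta I_k$; both work, though the paper's route avoids having to argue continuity of $\mathcal{F}$ in $\overline{\theta}$.
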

	\begin{proof}
		To prove this lemma, we need only to prove that for any $(\overline{\phi},\overline{\psi}) \in\mathcal{D}_k^{\overline{\theta}}$ defined in Definition~\ref{def:Dk}, we have 
			\[ \liminf_{N \to \infty} \frac{2}{\beta N} \ln I_N(D,A_N) \geq \mathcal{F}(\mu, \overline{\lambda}, \overline{\theta},\overline{\phi},\overline{\psi} ). \]
				For this, given such $(\overline{\phi},\overline{\psi})$, we have by definition two positive matrices $H_1$ whose spectrum is $\overline{\phi}$ and $H_2$ whose spectrum is $\overline{\psi}$ such that $H_1 + H_2 = D$. We can then find a positive matrix $L$ such that $0 \leq L \leq I_k$ and $H_1 = \sqrt{D}( I_k -L) \sqrt{D}$ and $H_1 = \sqrt{D}L \sqrt{D}$. We can also assume that $I_k - L$ is invertible, otherwise we have that at least one $\phi_i$ is zero, which implies $\mathcal{F}( u, \overline{\lambda}, \overline{\theta}, \overline{\phi}, \overline{\psi}) = - \infty$ in which case our result is trivial. Furthermore since equation \ref{eq1} still holds with here 
				\[ A'_N = A' = \text{diag}( \lambda_1, \dots, \lambda_k) \]
				and 
				\[ A''_N = \text{diag}( \underbrace{\lambda_k, \dots, \lambda_k}_{ k \text{ times }}, \xi_1^N, \dots, \xi_{N -2k}^N ) \]
				 and since $k$ here remain finite, the set of matrices $M$ we integrate over is not $N$-dependent anymore. For $M=L$, we have through a classical Laplace method: 
				\begin{equation}\label{lim1} \lim_{N\to \infty} \frac{1}{ \beta N} \ln I_N^{(1)}( \sqrt{I_k -L} D \sqrt{I_k -L}, A') = \max_{U \in \mathcal{U}_k^{\beta} } \text{ Tr}( U^* \sqrt{I_k -L} D \sqrt{I_k -L} U A') = \sum_{i=1}^k \lambda_i \psi_i 
				\end{equation}
				since $\overline{\psi}$ is the spectrum of $\sqrt{I_k -L} D \sqrt{I_k -L}$. 
				and using Proposition 1 of \cite{GuHu21}, we have since $\overline{\phi}$ is the spectrum of $\sqrt{M}D \sqrt{M}$ that 
				
				\begin{equation}\label{lim2} \lim_{N\to \infty} \frac{2}{ \beta N} \ln I_N^{(2)}( \sqrt{I_k -L} D \sqrt{I_k -L},  A''_N) = \sum_{i=1}^k J(\phi_i, \lambda_k, \mu).
				\end{equation} 
				
			At last, we have: 
			\begin{equation}
				\det( I_k - L) = \sum_{i=1}^k ( \ln \phi_i - \ln \theta_i). 
				\end{equation}

				Therefore can use the equicontinuity of the functions 
				\[
				D\mapsto \frac{1}{N}\ln I^{(1)}( \sqrt{L} D \sqrt{L},A) \text{ and } D \mapsto \frac{1}{N}\ln I^{(2)}(  \sqrt{I_k - L} D \sqrt{I_k - L},A)
				\]
				 and then localize our integral $I_N( D, A_N)$ in a neighborhood of $L$. We prove such an equicontinuity in Lemma \ref{equicont} in the following section. We delay the proof until then since here thanks to $k$ remaining finite, we are in a simpler case. In other terms, for every $\epsilon >0$, one can find $\eta$ small enough such that for any $N >0$ any $L'$ such that $||| L- L'||| \leq \eta$,
				
				\begin{equation}\label{approxim1} \Big|  \frac{1}{N} \ln I^{(1)}(\sqrt{L} D \sqrt{L}, A') - \frac{1}{N} \ln I^{(1)}(\sqrt{L'} D \sqrt{L'}, A') \Big| \leq \epsilon 
				\end{equation}
				
				\begin{equation}\label{approxim2} \Big|  \frac{1}{N} \ln I^{(1)}(\sqrt{I_k - L} D \sqrt{I_k - L}, A''_N) - \frac{1}{N} \ln I^{(1)}(\sqrt{I_k - L'} D \sqrt{I_k - L'}, A''_N) \Big| \leq \epsilon 
				\end{equation}
				and 
				\begin{equation}\label{approxim3} | \ln \det( I_k -L') - \ln \det(I_k -L) | \leq \epsilon.
				\end{equation}
				 Last, to prevent against the fact that $L$ may not be in the interior of $\{M \in \mathcal{H}_N^{\beta}: 0 \leq M \leq I_k \}$, we let $L'' = (1 - \eta/2) L + \eta/2 I_k$ and we consider $V = B( L'', \eta/2) \subset B(L, \eta)$. 
				 Therefore, localizing our integral on $V$, if one denotes $\Phi = \sqrt{I_k -L} D \sqrt{I_k -L}$ and $\Psi = \sqrt{L} D \sqrt{L}$ and then using equations \eqref{approxim1}, \eqref{approxim2}, \eqref{approxim3}: 
				 \begin{eqnarray*}
				 	\frac{2}{ \beta N} \ln I_N(D,A_N) &=& \frac{2}{ \beta N} \ln \int_{ 0 \leq M \leq I_k} \det(M)^{\beta/2 -1} \det(I_k -M)^{ \frac{\beta}{2} ( N -k +1) -1} I^{(1)}( \Theta'_N, A') I^{(2)}( \Theta''_N, A''_N) dM - \ln Z \\
				 	&\geq& \frac{2}{\beta N} \ln \int_{ M \in V} \det(M)^{\beta/2 -1} dM + \frac{2}{\beta N} \ln I^{(1)}(\Psi, A') + \frac{2}{\beta N} \ln I^{(2)}( \Phi, A''_N) - \frac{2}{\beta N} \ln Z \\
				 	&& + \frac{ N - k + 1 - \frac{2}{\beta}}{N}\ln \det( I_k - L)  -\epsilon.
				 \end{eqnarray*}
			 Taking the $\liminf_{N \to \infty}$ and then $\epsilon$ to $0$, on has: 
			 
			 \[ \liminf_N \frac{2}{ \beta N} \ln I_N(D,A_N) \geq \mathcal{F}(\mu, \overline{\lambda}, \overline{\theta},\overline{\phi},\overline{\psi} ). \]				
								
				Optimizing in $(\overline{\phi}, \overline{\psi})$ then gives the result. 
\end{proof}				 
			
		Then, to conclude with Theorem, one has to just notice that we can apply Proposition 1 of \cite{GuHu21} to $I_N(D,A_N)$ which gives that
	\[ \lim_{N \to \infty} \frac{1}{\beta N} \ln I_N( D_N, A_N) = \sum_{i=1}^k J( \theta_i, \lambda_i, \mu) \]
which proves the bound. It is actually easy to see then that the inequality in Theorem~\ref{thm:variational} is an equality. One need only to apply Lemma~\ref{lem1} with the same $A_N$ and $D_N$ we used in the preceding proof with $k(N)=k$ fixed and then use again the results of \cite{GuHu21} to get the reverse inequality.  

\begin{rem}
	In a previous version of this paper, the authors tried to solved the variational problem that defines $\mathcal{M}(\mu, \overline{\lambda}, \overline{\theta})$ directly. That attempt used the Ky Fan inequalities that $(\overline{\phi}, \overline{\psi})$ must satisfy if $(\overline{\phi}, \overline{\psi}) \in \mathcal{D}_k^{\overline{\theta}}$. Those inequalities stipulate that if the $\theta_i$ are ordered decreasingly, and reminding that we assume that the $\phi_i$ and $\psi_i$ are ordered decreasingly, we have for any $l \in [1,k]$ 
	\[ \psi_1 + \dots + \psi_l + \phi_{k} + \dots + \phi_{k -l+1} \leq \theta_1 + \dots + \theta_l. \]
	Using these inequalities, one can prove that: 
	
	\begin{eqnarray*} \mathcal{F}(\mu, \overline{\lambda}, \overline{\theta}, \overline{\phi}, \overline{\psi})  &\leq& \sum_{i=1}^k \Big[ \lambda_{i} \theta_i -  \lambda_i \phi_{k -i +1}  + J(\phi_{k-i+1}, \lambda_k, \mu) + ( \ln \phi_i - \ln \theta_i) \Big] \\
		&\leq& \sum_{i=1}^k F_i(\phi_i) 
		\end{eqnarray*}
	with
	\[ F_l( \phi) = \lambda_{ k-l+1} ( \theta_{k-l+1} - \phi) + J( \phi, \lambda_k, \mu) + (\ln( \phi) - \ln(\theta_{k-l+1})) .\]
	One can then differentiate $F_i$,
	\[ F'_l( \phi) = \begin{cases}  \lambda_k - \lambda_{k -l +1} \text{ when } \phi \geq G_{\mu}(\lambda_{k - l +1} ) \\
		\lambda_k - G_{\mu}^{-1}(\theta_{k-l+1}) \text{ when } \phi < G_{\mu}(\lambda_{k - l +1} )  \end{cases} \] 
	where we take the convention $G_{\mu}^{ -1}( \theta) = r(\mu)$ if $\theta > G_{\mu}( r(\mu))$. 
	Therefore we can compute the maximum of $F_i$ on $\R^+$ and if $G_{\mu}(\lambda_{k-l +1} )\leq \theta_{k-l+1}$ for all $l$, we have:
	\[ \max_{\R^+} F_{l}( \phi) = F_{i}( G_{\mu}^{-1}(\theta_{k-l+1})) = J( \theta_{ k -l+1}, \lambda_{k+l -1} , \mu) \]
	and the upper bound is satisfied. In the previous version of this paper, we incompletely argued that in the general case (if at least one inequality $G_{\mu}(\lambda_{k-l +1} )\leq \theta_{k-l+1}$ fails) we could enforce the additional constraints $\phi_l \in [0, \theta_{k -l+1}]$ for all $l$, which would yield the upper bound we seek. Since we were not able to solve thius variationnal problem directly this way in the general case, we instead chose to argue using the results already known in the case of finite rank from \cite{GuHu21}. However it is our belief that using constraints on $(\overline{\phi}, \overline{\psi}) \in \mathcal{D}_k^{\overline{\theta}}$ other than the Ky Fan inequalities, it should be possible to directly solve this problem directly without relying on the results of \cite{GuHu21}. Descriptions of the necessary and sufficient constraints on $(\overline{\phi}, \overline{\psi})$ for it to be in $\mathcal{D}_k^{\overline{\theta}}$, also known as Horn's problem, were first given by Helmke and Rosenthal in \cite{HelRos95}, who proved their necessity. Klyashko then proved their sufficiency in \cite{Kly98}. In \cite{KTW04}, Knutson, Tao and Woodward also provide a beautiful description of those constraints in terms of honeycomb networks. 
	\end{rem}

	\subsection{Inclusion of negative temperatures}\label{sec:negtemp}
	We now consider the general case by adding negative temperatures $\theta_i^-$ to the matrix $D_N$ in order to prove Proposition~\ref{prop:UB}. We take two sequences of matrices $(A_N)_{N \in \N}$ and $(D_N)_{N \in \N}$ that satisfy Assumption~\ref{assum:A1}. To simplify the notations, we let $\lambda^{+}_i = \lambda_i, \theta^{+}_i = \theta_i$ for $1 \leq i \leq l(N)$ and $ \lambda^{-}_i = \lambda_{N-i+1}, \theta^{-}_i = \theta_{k(N) - i+1}$ for $1 \leq i \leq k(N) - l(N)$. We need to prove that under Assumption~\ref{assum:A1} on $A_N$ and $D_N$, that
		\[\limsup_{N \to \infty}  \frac{2}{\beta k(N) N} \ln I_N(D_N,A_N) - \left[ \frac{1}{k(N)} \sum_{i=1}^{l(N)} J(\theta_i, \lambda_i,\mu ) + \frac{1}{k(N)} \sum_{i=1}^{k(N)- l(N)} J(\theta_{l(N) +i}, \lambda_{ N + i -k(N) + l(N)},\mu ) \right] \leq  0 .\]
	\begin{proof}[Proof of Proposition~\ref{prop:UB}]
	We write: 
	\[ D_N = \begin{pmatrix} D_N^- & 0 \\
		0 &   D_N^+  \end{pmatrix} \]
	with 
	\[D_N^+ = \diag(\theta_1^+,\dots.\theta_{l(N)}^+ ) \text { and } D_N^- = \diag(\theta_1^-,\dots.\theta_{k(N) - l(N)}^- ).\]
	First, we can write using Lemma \ref{cond}: 
	\[ I_N(D_N,A_N) \leq I_N(D_N^{-},A_N) I_{N - k(n) + l(N)} \bigg( D_N^{+}, \frac{N}{N - k(N) + l(N)} A_N^{((k(N) -l(N) )+)} \bigg) \]
	since $ I_N(- D_N^{-}, - A_N) = I_N( D_N, A_N)$ and $- D_N^{-}$ is nonnegative. We can now use Theorem \ref{theo:positivetheo} to state that:
	\[  \limsup_{N \to \infty} \bigg( \frac{1}{N (k(N) - l(N))} \ln I_N( D_N^{-}, A_N) - \frac{\beta}{2 k(N) - l(N)} \sum_{i=1}^{k(N) - l(N)} J( \theta_i^{-}, \lambda^{-}_{i}, \mu) \bigg) \leq 0
	\]
	and since $k(N) \geq l(N)$,
	\[  \limsup_{N \to \infty}  \bigg( \frac{1}{N k(N)} \ln I_N( D_N^{-} , A_N ) - \frac{\beta}{2 N k(N)} \sum_{i=1}^{k(N) - l(N)} J( \theta_i^{-}, \lambda^{-}_{i}, \mu) \bigg) \leq 0.
	\]
	For the second term of the product, using that $N / (N -k(N) +l(N))$ tends to $1$ and that (for $N$ large enough) the $l(N)$ largest eigenvalue of $A_N^{((k(N) - l(N))+)}$ are the same as the $l(N)$ largest eigenvalues of $A_N$ and that its empirical measure converges to $\mu$ since $k(N) - l(N) = o(N)$, we have using Theorem \ref{theo:positivetheo}:
	
	\begin{multline} \limsup_{N \to \infty} \frac{2}{\beta (N - k(N) + l(N))  l(N)} \Big\{ \ln I_{N - k(N) + l(N)} \bigg( D_N^{+}, \frac{N}{N - k(N) + l(N)} A_N^{((k(N) -l(N) )+)}\bigg) - \\ \frac{\beta}{2 l(N)} \sum_{i=1}^{l(N)} J( \theta_i^{-+}, \lambda^{+}_{i}, \mu) \Big\} \leq 0 \end{multline}
	which implies easily since $k(N) = o(1)$ that 
	\begin{multline} \limsup_{N \to \infty} \frac{2}{\beta N k(N)} \Big\{ \ln I_{N - k(N) + l(N)} \bigg( D_N^{+}, \frac{N}{N - k(N) + l(N)} A_N^{((k(N) -l(N) )+)} \bigg) - \\ \frac{\beta}{2 l(N)} \sum_{i=1}^{l(N)} J( \theta_i^{+}, \lambda^{+}_{i}, \mu) \Big\} \leq 0. \end{multline}	
	Putting these two bounds together, finishes the proof of Proposition~\ref{prop:UB}. 
	\end{proof}

	\section{Lower Bound} \label{sec:lwbd}

	In this section, we prove the lower bound in Proposition~\ref{prop:lwbd}. We first adapt our notations to include negative temperature. Given the spectra of $A_N$ and $D_N$, we can assume that
	\[ A_N = \begin{pmatrix} A'_N & 0 \\
		0 & A''_N \end{pmatrix} \]
	where 
	$A'_N = \diag(\lambda_1,\dots,\lambda_{l(N)}, \lambda_{ N - k(N) + l(N)+1},\dots,\lambda_N)$
	and 
	$A''_N = \diag( \lambda_{l(N)+ 1 },\dots,\lambda_{N -k(N) + l(N)})$. 
	We can also assume
	$D_N = \diag( \theta_1,\dots,\theta_{k(N)})$. Then for every $N\in \N$, we define the following diagonal matrix 
	\[
	L_N = \diag( m_1^{+},\dots,m_{l(N)}^+, m^{-}_{k(N) - l(N)},\dots,m_1^- )
	\]
	where for $i = 1, \dots, l(N)$
	\[m_i ^+ = \begin{cases} 1 - \frac{G_{\mu}( \lambda^+ _i)}{\theta_i^{+}} &\text{ if } \theta_i^{+} \geq G_{\mu}(\lambda_i^{+})\\
		0 &\text{otherwise} \end{cases}\]
	and  for $i =1,\dots,k(N) - l(N)$
	\[m_i ^- = \begin{cases} 1 - \frac{G_{\mu}( \lambda^- _i)}{\theta_i^{-}} &\text{ if } \theta_i^{-} \leq G_{\mu}(\lambda_i^{-})\\
		0 &\text{otherwise} \end{cases}.\]
	
	This definition implies
	\begin{equation}\label{eq:psidef}
	\Psi_N:= \sqrt{L_N} D_N \sqrt{L_N} = \diag(\psi_1^+,\dots,\psi_{l(N)}^+, \psi_{k(N) - l(N)}^{-} \dots, \psi^-_1) 
	\end{equation}
	and 
	\begin{equation}\label{eq:phidef}
	\Phi_N:= \sqrt{ I_N - L_N} D_N \sqrt{ I_N - L_N} = \diag(\phi_1^+,\dots,\phi_{l(N)}^+, \phi_{k(N) - l(N)}^{-} \dots, \phi^-_1)
	\end{equation}
	where for $i = 1, \dots, l(N)$: 
	\[\psi_i ^+ = \begin{cases}\theta_i^{+}  - G_{\mu}( \lambda^+ _i) &\text{ if } \theta_i^{+} \geq G_{\mu}(\lambda_i^{+})\\
		0 &\text{otherwise} \end{cases}\]
	and  for $i =1,\dots,k(N) - l(N)$
	\[\psi_i ^- = \begin{cases} \theta_i^{-} - G_{\mu}( \lambda^- _i) &\text{ if } \theta_i^{-} \leq G_{\mu}(\lambda_i^{-})\\
		0 &\text{otherwise} \end{cases}\]
	and where $\phi_i^{\pm} = \theta_i^{\pm} -  \psi_i^{\pm}.$
	
	We now localize the integral in equation \eqref{eq1} on an $\epsilon$-neighborhood of $L_N$ to prove the lower bound, that is
	\begin{equation}\label{eq:localized}
		I_N(A_N,D_N) \geq \frac{1}{Z}\int_{M \in B(L_N, \epsilon)  \atop 0 \leq M \leq I_{k(N)} } \det( I -M)^{\frac{\beta}{2} ( N - k(N) +1) -1 } \det (M)^{ \frac{\beta}{2} -1} I^{(1)}( \Theta'_N,A'_N) I^{(2)}(\Theta''_N, A''_N) dM .
	\end{equation}
	We recall that $B(L_N,\epsilon)$ denotes the ball of radius $\epsilon$ and center $L_N$ in the space of Hermitian matrices $\mathcal{H}_N^{\beta}$.  Our goal is to estimate the localized integral in the lower bound. 
	
	We first prove a modification of the third point of Lemma~\ref{lemmaUB} to its corresponding lower bound.
	
	\begin{lemma} We have
		\begin{eqnarray}\label{eq:lwbdineq4}
			\frac{2}{\beta N k(N)} \ln I^{(1)}(\Psi_N,A'_N) \geq \frac{1}{k(N)} \sum_{i=1}^{l(N)} \psi_i^{+} \lambda_i + \frac{1}{k(N)} \sum_{i=1}^{k(N) -l(N)} \psi_i^{-} \lambda_{N -i +1} + o_N(1) 
		\end{eqnarray}
	and
		\begin{eqnarray}\label{eq:lwbdineq0}
			\frac{2}{\beta N k(N)} \ln I^{(2)}(\Phi_N,A''_N) &\geq& \frac{1}{k(N)} \sum_{i=1}^{l(N)} J(\phi^+_i, \lambda_{l(N) + i + 1}, \mu) \nonumber \\
			& &\quad + \frac{1}{k(N)} \sum_{i=1}^{k(N) -l(N)} J(\phi^-_i, \lambda_{N - k(N) - i - 1}, \mu) + o_N(1).
		\end{eqnarray}

	\end{lemma}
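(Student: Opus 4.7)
The plan is to prove the two inequalities separately. Inequality \eqref{eq:lwbdineq4} is a finite-dimensional matter and will be handled by a direct Laplace-type localization of the Haar integral on $\mathcal{U}_{k(N)}^{\beta}$. Inequality \eqref{eq:lwbdineq0} reduces, via iterated application of Lemma~\ref{cond}, to a product of rank-one spherical integrals, to which the uniform rank-one asymptotic \eqref{eq:uniformbound} can be applied.

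For \eqref{eq:lwbdineq4}, since $A'_N$ and $\Psi_N$ are both $k(N)\times k(N)$ diagonal matrices with signed spectra, the signed version of von Neumann's trace inequality identifies the maximum of the map $U \mapsto \tr(U^* A'_N U \Psi_N)$ on $\mathcal{U}_{k(N)}^{\beta}$ as
\[
M_N := \sum_{i=1}^{l(N)} \psi_i^+ \lambda_i + \sum_{j=1}^{k(N)-l(N)} \psi_j^- \lambda_{N-j+1},
\]
attained at a permutation matrix $U^\star$ pairing each $\psi_i^+$ with the largest $\lambda$'s and each $\psi_j^-$ with the smallest $\lambda$'s. I would then localize the Haar integral to a ball $B(U^\star,\delta)$ and use that $U \mapsto \tr(U^* A'_N U \Psi_N)$ is $O(k(N))$-Lipschitz on this ball in operator norm (with constant depending only on $|||A'_N|||$ and $|||\Psi_N|||$) to obtain
\[
\ln I^{(1)}(\Psi_N, A'_N) \geq N\tfrac{\beta}{2}\bigl(M_N - C k(N)\delta\bigr) + \ln \mathrm{Vol}\bigl(B(U^\star,\delta)\bigr).
\]
Since $\dim \mathcal{U}_{k(N)}^\beta = O(k(N)^2)$, one has $\ln \mathrm{Vol}(B(U^\star,\delta)) = O(k(N)^2 \log \delta)$, so after dividing by $\tfrac{\beta}{2} N k(N)$ the hypothesis $k(N)=o(N)$ turns this into a $o(1)$ correction for fixed $\delta$. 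Sending $\delta \to 0$ after $N\to\infty$ yields \eqref{eq:lwbdineq4}.

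For \eqref{eq:lwbdineq0}, first rewrite $I^{(2)}(\Phi_N, A''_N) = I_{N-k(N)}\bigl(\tilde\Phi_N, \tfrac{N}{N-k(N)} A''_N\bigr)$ where $\tilde\Phi_N$ is $\Phi_N$ padded with zeros to size $N-k(N)$. Separating $\Phi_N$ into its nonnegative block $\diag(\phi_1^+,\dots,\phi_{l(N)}^+)$ and its nonpositive block $\diag(\phi_{k(N)-l(N)}^-,\dots,\phi_1^-)$, I peel one diagonal entry at a time using Lemma~\ref{cond} with $k=1$, choosing the sign-appropriate direction of the inequality at every step: peeling a positive temperature $\phi_i^+$ uses the lower bound valid when the residual block is nonnegative and removes the top eigenvalue of the running $A$-matrix, while peeling a negative temperature $\phi_j^-$ uses the reversed lower bound valid when the residual is nonpositive and removes the bottom eigenvalue. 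After $l(N)$ top-peels and $k(N)-l(N)$ bottom-peels,
\[
I^{(2)}(\Phi_N, A''_N) \geq \prod_{i=1}^{l(N)} I_{N_i}\bigl(\phi_i^+, c_{N,i} A^{(i)}_N\bigr) \prod_{j=1}^{k(N)-l(N)} I_{N'_j}\bigl(\phi_j^-, c'_{N,j} A'^{(j)}_N\bigr),
\]
where $N_i, N'_j = N - O(k(N))$, $c_{N,i}, c'_{N,j} = 1 + O(k(N)/N)$, and the top (resp.\ bottom) eigenvalue of $A^{(i)}_N$ (resp.\ $A'^{(j)}_N$) is $\lambda_{l(N)+i+1}$ (resp.\ $\lambda_{N-k(N)-j-1}$) up to an index shift that is immaterial because $\phi_i^\pm$ is subcritical so $J(\phi_i^\pm, \cdot, \mu)$ is locally constant near the edges of $\mathrm{supp}(\mu)$. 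Since only $o(N)$ eigenvalues have been removed, the empirical spectral distribution of each residual matrix still converges to $\mu$.

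Applying the uniform rank-one asymptotic \eqref{eq:uniformbound} to each factor yields
\[
\tfrac{2}{\beta N_i} \ln I_{N_i}\bigl(\phi_i^+, c_{N,i} A^{(i)}_N\bigr) = J\bigl(\phi_i^+, \lambda_{l(N)+i+1}, \mu\bigr) + o(1)
\]
uniformly in $i \in \{1,\dots,l(N)\}$, and analogously for the negative side. Summing, dividing by $\tfrac{\beta}{2} N k(N)$, and collecting error terms into a single $o_N(1)$ gives \eqref{eq:lwbdineq0}. The main technical hurdle is justifying the \emph{uniformity} of the $o(1)$ error over the $k(N) \to \infty$ peels; this relies crucially on $k(N) = o(N)$, which guarantees that at every intermediate stage the empirical spectral distribution remains within $o(1)$ of $\mu$ (in any metric for weak convergence) and the rescaling factors $c_{N,i}$ equal $1+o(1)$, so that the one-dimensional proof of Theorem~\ref{theo:rank1} carries through with a uniform error estimate.
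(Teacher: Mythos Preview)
The proposal is correct and follows essentially the same route as the paper: localization of the Haar integral near the optimizer for \eqref{eq:lwbdineq4}, and a block split of $\Phi_N$ into its positive and negative parts followed by recursive rank-one peeling via Lemma~\ref{cond} for \eqref{eq:lwbdineq0}, with the uniform rank-one asymptotic \eqref{eq:uniformbound} controlling the errors. One small point of care: the initial ``separating into blocks'' must itself be done by a single application of Lemma~\ref{cond} with $k=l(N)$ (taking $Q_2=\Phi_N^{-}$, which is nonpositive), since peeling a single entry from the full mixed-sign $\Phi_N$ would leave a residual of indefinite sign to which the lemma does not apply; after that split, your one-at-a-time peeling with $k=1$ within each sign-definite block proceeds exactly as you describe.
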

	
	\begin{proof}
		For the first inequality we remind that 
		\[ \tr( \Psi_N A_N') = \sum_{i=1}^{l(N)} \psi_i^{+} \lambda_i^{+}  + \sum_{i=1}^{k(N) -l(N)} \psi_i^{-} \lambda_{N - i +1} \] 
		
		If $V$ is another unitary matrix, one has: 
		\begin{eqnarray*} |\tr(N U^*\Psi_N U A'_N)  - \tr(N V^*\Psi_N V A'_N) | &\leq& K^2 N  \sum_{i,j} | u_{i,j} - v_{i,j}| | u_{i,j} + v_{i,j}|  \\ &\leq& K^2 N  ||U - V||_2 ||U + V ||_2 \\
			&\leq& 2K^2 \sqrt{k(N)}||U - V||_2
		\end{eqnarray*}
		since $\max |\lambda_i \psi_j| \leq K^2$. 
		We then localize the expectation on an $\sqrt{k(N) } \epsilon$-neighborhood of $I_{k(N)}$ for $\|\cdot \|_2$ 
		
		\[ I^{(1)}(\Psi_N,A'_N) \geq \Pp[ ||I_{k(N)} - U ||_2 \leq \sqrt{k(N)}\epsilon ] \exp \Big( \frac{\beta}{2} N  \Big( \sum_{i=1}^{l(N)} \psi_i^{+} \lambda_i^{+}  + \sum_{i=1}^{k(N) -l(N)} \psi_i^{-} \lambda_{N - i +1} - 2k(N)M \epsilon \Big) \Big)\]
		So one only needs to show that
		\[ \lim_{N \to \infty} \frac{1}{N k(N)} \ln \Pp[ ||I_{k(N)} - U ||_2 \leq \sqrt{k(N)} \epsilon ] = 0. \]
		This is done by noticing that the ball $B_2(0,1)$ in $\mathcal{M}_{k(N)} (\C)$ can be covered by $(C\epsilon \vee 1)^{ - 2k(N)^2} $ balls of radius $\epsilon$ where $C$ is some constant and therefore $B_2(0,\sqrt{k(N)})$ 
		can be covered by $(C\epsilon \vee 1)^{ 2k(N)^2} $ balls of radius $\sqrt{k(N)}\epsilon$. Since $\mathcal{U}^{\beta}_{k(N)} \subset B_2(0,\sqrt{k(N)})$,then, it can also be covered by $(2C\epsilon^{-1} \vee 1 )^{ 2k(N)^2} $ balls of radius $\sqrt{k(N)}\epsilon$.  Using the invariance of $\|\cdot\|_2$ by left and right multiplication by elements of $\mathcal{U}^{\beta}_{k(N)}$, we have: 
		\[ (2C\epsilon^{-1} \vee 1)^{ 2k(N)^2} \Pp[ ||U - I_{k(n)} ||_2 \leq \sqrt{k(N)} \epsilon]  \geq 1, \]
		so
		\[  \Pp[ ||U - I_{k(n)} ||_2 \leq \sqrt{k(N)} \epsilon]  \geq ( 2C \epsilon^{-1}  \vee 1)^{-2 k(N)^2} , \]
		which proves the limit above since $\lim_{N \to +\infty} N^{-1} k(N) = 0$.

		For the second equation we can write: 
		\[ \Phi_N = \begin{pmatrix} \Phi_N^+ & 0 \\ 0 & \Phi_N^{-} \end{pmatrix} \]
		with
		\[ \Phi_N^+ = \diag(\phi_1^+,\dots,\phi_{l(n)}^+) \] 
		and 
		\[ \Phi^-_N:=  \diag( \phi_{k(N) - l(N)}^{-} \dots, \phi^-_1) \]
		
		Using Lemma~\ref{cond} and the fact that $\Phi_N^{-}$ is negative,
		\begin{eqnarray*}
			I^{(2)}(\Phi_N, A''_N) & = & I_{N - k(N)}\bigg( \frac{N}{N - k(N)}\Phi_N, A''_N\bigg) \\
			& \geq& I_{N - k(N)}\bigg( \frac{N}{N - k(N)}\Phi^- _N, A''_N\bigg) I_{ N -k(N) -l(N)}\bigg( \frac{N}{N - k(N) -l(N)}\Phi^+ _N, A''^{(l(N) +)}_N \bigg).
		\end{eqnarray*}

		This is proved the same way as the converse upper bound except we use the converse bound in Lemma~\ref{cond}. Applying the lower bound in Lemma~\ref{cond} recursively implies
		\begin{align*}
			\frac{2}{\beta N k(N)} \ln I_{N - k(N)}\bigg(\frac{N}{N -k(N)}\Phi^+_N, A_N''\bigg) &\geq \frac{2}{\beta N k(N)} \ln  \prod_{i = 0}^{l(N)-1} I_{N - k(N) - i} \Big(\phi^+_i, \frac{N}{N - k(N) - i} A_N''^{(i-)} \Big).
		\end{align*}
		The uniform bound in \eqref{eq:uniformbound} applied to the $l(N)$ eigenvalues eigenvalues of $A''_N$ implies that 
		\[
		\lim_{N \to + \infty} \sup_{ i \in [0,l(N)-1] }\left| \frac{2}{ \beta N} \ln I_{N - k - i} \Big( \phi^+_i, \frac{N}{N -k -i} A_N''^{(i  -)} \Big) - J(\phi_i, \lambda_{l(N) + i +1 }, \mu) \right| = 0,
		\]
		so
		\[
		\frac{2}{\beta N k(N)} \ln I^{(2)}\bigg(\frac{N}{N -k(N)}\Phi_N^{+}, A_N''\bigg)  \geq  \sum_{i=1}^{l(N)} J(\phi^+_i, \lambda_{k(N) + i +1 }, \mu) + o_N(1).
		\]
		
		We following is proved in the same way
		\[ \frac{2}{\beta N k(N)} \ln I_{ N -k(N) -l(N)}\left( \frac{N}{N - k(N) -l(N)}\Phi^+ _N, A''^{(l(N) +)}_N \right) = \frac{1}{k(N)} \sum_{i=1}^{k(N) - l(N)}J(\phi^-_i, \lambda_{N -k(N) -1 -i}, \mu) + o_N(1).\]
	\end{proof}
	
	We now control the volume of the integral in \eqref{eq:localized}. Let us remind that $D = \{ M \in \mathcal{H}_N^{\beta}: 0_{k(N)} \leq M \leq I_{k(N)}\}$. Since for $M \in D$, we do not have necessarily $B(M, \epsilon) \subset D$ we will first need to prove that: 
	\begin{equation}\label{eq:lwbdineq1}
		\lim_{N \to \infty} \frac{1}{N k(N)}  \ln \frac{\int_{ B(L_N,\epsilon)\cap D} dM}{Z} = 0,
	\end{equation}
	where $Z$ is the normalization factor in \eqref{eq:localized}.  We remind that we proved in Lemma~\ref{lemmaUB} that 
	\[ \lim_{N \to \infty} \frac{1}{N k(N)} \Big| \ln Z - \ln \int_{D} \det(M)^{ \beta/2 -1} dM \Big| =0 .\]
	Furthermore Using Selberg's formula in the proof of this same Lemma we also proved that:
	\[ \lim_{N \to \infty} \frac{1}{N k(N)} \Big| \ln \int_{D} dM  - \ln \int_{D} \det(M)^{ \beta/2 -1} dM \Big| =0 \]
	Using the fact that $B(I_{k(N)}/2,1/2) \subset D \subset B(0,1)$, we have that 
	
	\[  V(1/2) \leq \int_{D} dM \leq V(1) \]
	Since 
	\[ V(1/2) = V(1) 2^{ - O(k(N)^2)} \]
	we have 
	\[ \lim_{N \to \infty} \frac{1}{N k(N)} \Big| \ln \int_{D} dM  - \ln V(1) \Big| =0. \]
That leads to 
	\begin{equation}\label{eq:ZV}
	\lim_{N \to \infty} \frac{1}{N k(N)} \Big| \ln Z  - \ln V(1) \Big| =0. 
	\end{equation}
	
	 Furthermore, since with $L'_N := (1- \epsilon/2)L_N + \epsilon/2 I_N$,   $B(L'_N/2,\epsilon/2) \subset D \cap B(L_N,\epsilon) \subset B(L_N,\epsilon)$, 
				\[ \frac{V(\epsilon/2)}{Z} \leq \frac{\int_{ B(L_N,\epsilon)\cap D} dM}{Z} \leq \frac{V(\epsilon)}{Z}, \]
		 Since $V(\epsilon) = \epsilon^{ O( k(N)^2)} V(1)$ by \eqref{eq:volumeball} and \eqref{eq:ZV} \eqref{eq:lwbdineq1} then promptly follows.
	
	Next we need equicontinuity in $M$ of $\ln I^{(1)}(\Theta'_N,A'_N)$ and $\ln I^{(2)}(\Theta''_N,A''_N)$. Recall that the dependence of these functionals on $M$ is through $\Theta'_N$ and $\Theta''_N$ given below \eqref{eq1}. For this, we will first use the following lemma: 
	\begin{lemma}\label{equicont}
		The functions
		\[ f_N^{(1)}: M \mapsto \frac{1}{N k(N)} \ln I^{(1)}(\Theta'_N,A'_N) \]
		and 
		\[ f_N^{(2)}: M \mapsto \frac{1}{N k(N)} \ln I^{(2)}(\Theta''_N,A''_N) \]
		satisfy for every $M,M' \in \mathcal{H}_N^{\beta}$ such that $0 \leq M,M' \leq I_N$ and $M \neq M'$; 
		\[ | f_N^{(j)}(M) - f_N^{(j)}(M') | \leq 2 K |||M - M'|||^{1/2}. \]
	\end{lemma}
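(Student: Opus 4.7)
The plan is to exploit that both $f_N^{(1)}$ and $f_N^{(2)}$ are normalized logarithms of exponential moments whose potentials are linear in the matrix argument $\Theta$. The elementary inequality $|\ln \E e^X - \ln \E e^Y| \leq \|X - Y\|_\infty$, applied to the potentials $\frac{N\beta}{2} \tr(U^* A U \Theta)$, yields
\[
|\ln I^{(j)}(\Theta, A) - \ln I^{(j)}(\Theta', A)| \leq \frac{N\beta}{2} \sup_U |\tr(U^* A U (\Theta - \Theta'))|,
\]
where $\Theta$ and $\Theta'$ are the images of $M$ and $M'$ under the map $M \mapsto \sqrt{M} D_N \sqrt{M}$ for $j=1$ and $M \mapsto \sqrt{I_{k(N)} - M}\, D_N \sqrt{I_{k(N)} - M}$ for $j=2$, and the supremum runs over $U$ with orthonormal columns, so $|||U^* A U||| \leq |||A||| \leq K$. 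Combining with $|\tr(XY)| \leq k \cdot |||X|||\, |||Y|||$ for $k \times k$ matrices (from $\|X\|_1 \leq k |||X|||$ and $|\tr(XY)| \leq \|X\|_1\, |||Y|||$) and dividing by $Nk(N)$ reduces the problem to bounding $|||\Theta - \Theta'|||$ by a constant multiple of $|||M - M'|||^{1/2}$.

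For this step, the triangle inequality together with submultiplicativity gives
\[
|||\sqrt{M} D_N \sqrt{M} - \sqrt{M'} D_N \sqrt{M'}||| \leq \bigl(|||\sqrt{M}||| + |||\sqrt{M'}|||\bigr) \cdot |||D_N||| \cdot |||\sqrt{M} - \sqrt{M'}|||,
\]
and since $0 \leq M, M' \leq I_{k(N)}$ one has $|||\sqrt{M}|||, |||\sqrt{M'}||| \leq 1$, while $|||D_N||| \leq K$. The crucial analytic input is then the operator Hölder continuity of the square root on positive semidefinite matrices,
\[
|||\sqrt{M} - \sqrt{M'}||| \leq |||M - M'|||^{1/2},
\]
a classical consequence of the operator monotonicity of $x \mapsto \sqrt{x}$. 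Chaining the three estimates produces a bound of the form $C \cdot |||M - M'|||^{1/2}$ with $C$ a constant of order $\beta K^2$, which is of the desired shape.

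The case $j = 2$ is handled identically once one observes that $M \mapsto I_{k(N)} - M$ is an isometry for $|||\cdot|||$, so the same square-root estimate applies to $\sqrt{I_{k(N)} - M}$, and that the rectangular $U$ appearing in $I^{(2)}$ (distributed as the first $k(N)$ columns of a Haar element of $\mathcal{U}^\beta_{N - k(N)}$) still has orthonormal columns, so $|||U^* A_N'' U||| \leq K$. The only genuinely non-routine ingredient in the whole argument is the operator square-root Hölder bound, which is essential in order to produce the exponent $1/2$; everything else is a direct application of the variational characterization of $\ln \E e^X$ and basic norm inequalities. I therefore do not expect any substantive obstacle beyond invoking the right classical estimate at the right moment.
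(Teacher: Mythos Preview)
Your proposal is correct and follows essentially the same route as the paper: the paper also uses the operator square-root H\"older bound $|||\sqrt{M}-\sqrt{M'}|||\le |||M-M'|||^{1/2}$, the telescoping estimate $|||\sqrt{M}D_N\sqrt{M}-\sqrt{M'}D_N\sqrt{M'}|||\le 2K|||\sqrt{M}-\sqrt{M'}|||$, and then bounds the trace uniformly in $U$ via $|\tr(U^*A U(\Theta-\Theta'))|\le k(N)\,|||A|||\,|||\Theta-\Theta'|||$ to conclude. Your observation that the resulting constant is of order $\beta K^2$ rather than the stated $2K$ is consistent with what the paper's own proof actually yields; the exponent $1/2$ and the structure of the argument are what matter here.
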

	\begin{proof}
		First, we remind that for any positive matrices $M,M'$
		\[ |||\sqrt{M} - \sqrt{M'}|||\leq |||M - M'|||^{1/2} .\]
		Therefore, since $|||\sqrt{M}|||, |||\sqrt{M'}||| \leq 1$, and $|||D_N||| \leq K$, 
		
		\begin{eqnarray*}
			|||\sqrt{M} D_N \sqrt{M} - \sqrt{M'} D_N \sqrt{M'}||| & \leq& |||(\sqrt{M} - \sqrt{M'}) D_N \sqrt{M} - \sqrt{M'} D_N (\sqrt{M'} - \sqrt{M})||| \\
			&\leq& |||\sqrt{M} - \sqrt{M'} ||| ( |||D_N |||( |||\sqrt{M}||| + |||\sqrt{M'}|||)) \\
			& \leq & 2 K |||\sqrt{M} - \sqrt{M'} |||.
		\end{eqnarray*}
		Then, for any matrix $V \in \mathcal{U}_{k(N)}^{\beta}$ ,since $||| A'_N||| \leq K$, we have that: 
		\begin{eqnarray*}
			|\tr(N V^* A_N' V \sqrt{M} D_N \sqrt{M}) - \tr(N V^* A_N' V \sqrt{M'} D_N \sqrt{M'})|  &= &|\tr(N  V^* A_N' V (\sqrt{M} D_N \sqrt{M} - \sqrt{M'} D_N \sqrt{M'}))| \\ 
			&\leq& N k(N) |||\sqrt{M} D_N \sqrt{M} - \sqrt{M'} D_N \sqrt{M'} |||. |||A'_N||| \\
			&\leq& 2N k(N) K^2 |||M - M'|||^{1/2} .
		\end{eqnarray*}
		From this we easily deduce the result for $f_N^{(1)}$. The only modification for $f_N^{(2)}$ is that we replace $M$ by $I_N - M$ which doesn't change anything since $0 \leq I_N - M \leq I_N$, and one has to notice that if $V$ is a $( N- k(N)) \times k(N)$  matrix whose columns are orthonormal, $V^* A_N'' V$ is a $k(N) \times k(N)$ matrix whose operator norm is less than $|||A_N''|||\leq K$.  
	\end{proof}
	
	Lastly, we simplify the determinants appearing in \eqref{eq:localized} when $M$ is localized near $L_N$. Using the definition of $L_N$, one can see that $|||(I - L_N)^{-1}||| \leq K'$ where 
	\[ K':= \frac{K}{\min(G_{\mu}(K),\min(G_{\mu}(- K)))}. \] 
	
	Denoting for a non-negative matrix $M$, $\sigma(M)$ its smallest eigenvalue, we have, $\sigma( I_N- L_N) \geq  K'^{-1}$. Therefore, if $\epsilon \leq K'^{-1}/2$, then $\sigma(I_N - M) \geq 1/(2K')$ for $M \in B(L_N, \epsilon)$. Then, using that the $\ln$ is $2 K'$-Lipschitz on $[1/ (2 K'),1]$, we can write for such $M$ that:  
	\[   \Big| \ln \det( I_N - L_N) -  \ln \det( I_N - M) \Big| \leq k(N) 2 K' \epsilon.  \]
	Looking at the definition of $L_N$, one can see that 
	\begin{equation}\label{eq:lwbdineq3}
		\frac{1}{N} \ln \det(I_N - L_N ) = \sum_{i=1}^{l(N)} (\ln \phi^+_i - \ln \theta^+_i) + \sum_{i=1}^{k(N)-l(N)} (\ln (-\phi^-_i) - \ln (-\theta^-_i))
	\end{equation}
	
	We now have the tools to estimate the integral in \eqref{eq:localized} and finish the proof of the lower bound.
	
	\begin{proof}[Proof of Proposition~\ref{prop:lwbd}]
	
	We now estimate the lower bound, from \eqref{eq:localized} for $\epsilon \leq 1/2 K'$. Recalling the definition of the matrices $\Psi_N$ and $\Phi_N$ in \eqref{eq:psidef} and \eqref{eq:phidef}, we take the $\ln$, divide by $N k(N)$ and use the preceding estimates \eqref{eq:lwbdineq4}, \eqref{eq:lwbdineq0}, \eqref{eq:lwbdineq1} and \eqref{eq:lwbdineq3} to conclude that
	\begin{align*}
		\frac{1}{N k(N)} \ln 	I_N(A_N,D_N) & \geq \frac{\ln(V(\epsilon/2) ) - \ln(Z)}{N k(N)} + \frac{\ln I^{(1)}(\Psi_N,A_N') + \ln I^{(2)}(\Phi_N,A_N'')}{N k(N)} 
		\\&\quad  + O(K^{2} \epsilon^{1/2}) + \frac{\beta \ln \det( I_N -L_N)}{2 N k(N)} + O(2K' \epsilon) +o_N(1) \\
		& \geq \frac{\beta}{2 k(N)} \sum_{i=1}^{l(N)} \Big( \lambda_i \psi^+_i + J( \phi^+_i, \lambda_{l(N)+i}, \mu) + \ln(\phi^+_i) - \ln \theta^+_i \Big) \\
		&\quad + \frac{\beta}{2 k(N)}\sum_{i=1}^{k(n) -l(N)} \Big( \lambda_{N -i +1} \psi^-_i + J( \phi^-_i, \lambda_{N - k(N)-i}, \mu) + \ln(-\phi^-_i ) - \ln(- \theta^-_i) \Big) \\
		&\quad + o_N(1) + O(2K' \epsilon) + O( K^{2} \epsilon^{1/2})
	\end{align*}
	
	For $i =1,\dots, l(N)$, we notice using the expression of $J$ that
	\begin{itemize}
		\item If $\theta_i^+ \leq G_{\mu}(\lambda_i)$, then $\psi_i^+ = 0$, and $\phi_i^+ = \theta_i^+$ and then since $\lambda_{l(N)+i} \leq \lambda_i$  
		\[ \psi_i^+ \lambda_i + J(\phi_i^+, \lambda_{l(N) + i},\mu) + \ln(\phi_i^+) - \ln(\theta_i^+) =  	J(\theta_i^+, \lambda_{l(N) + i},\mu) = J(\theta_i^+, \lambda_{ i},\mu)   \]
		\item If $\theta_i^+ \geq G_{\mu}(\lambda_i)$, then $\psi_i^+ = \theta_i^+ - G_{\mu}(\lambda_i^+)$ and $\phi_i^+  = G_{\mu}(\lambda_i^+)$ then:
		\begin{eqnarray*}
			\psi_i^+ \lambda_i + J(\phi_i^+, \lambda_{l(N) + i},\mu) &= & \lambda_i (\theta_i^+  - G_{\mu}(\lambda_i)) + J( G_{\mu}(\lambda_i), \lambda_{l(N) +i},\mu) + \ln(G_{\mu}(\lambda_i)) - \ln(\theta_i^+) \\
			& =& \lambda_i (\theta_i^+  - G_{\mu}(\lambda_i)) + J( G_{\mu}(\lambda_i), \lambda_{i},\mu) + \ln(G_{\mu}(\lambda_i)) - \ln(\theta_i^+) \\
			&= &J(\theta_i^+, \lambda_i, \mu). 
		\end{eqnarray*}

	\end{itemize} The same way, for $i=1, \dots k(N) - l(N)$, we notice that 
	\[ \Big( \lambda_{N -i +1} \psi^-_i + J( \phi^-_i, \lambda_{N - k(N)-i}, \mu) + \ln(-\phi^-_i ) - \ln(- \theta^-_i) \Big) =J( \theta, \lambda_{N_i+1} ,\mu) \] which concludes the proof.
	\end{proof}
	
	\section{Application to large deviations of the largest eigenvalues of random matrices}\label{sec:largesteigenvalues}
	In this section, we will use our main result to extend the large deviation principle for the largest eigenvalue of Wigner matrices with sharp sub-Gaussian entries proved in \cite{HuGu} to a growing number of eigenvalues. First, we recall the definition of a sharp sub-Gaussian random variable. 
	\begin{defi}\label{sharpsubG}
		If $X$ is a random variable on $\R^n$ with $n \in \N$, we say that $X$ is sharp sub-Gaussian if for every $t \in \R^n$ we have that: 
		
		\[ \E[ \exp( \langle t, X \rangle) ] \leq \exp \Big( \frac{ \langle t, \text{Cov}(X) t \rangle}{2} \Big) \]
		where $\langle \cdot , \cdot \rangle $ is the canonical scalar product on $\R^n$ and $Cov(X)$ is the covariance of $X$. If $X$ is a random variable with values in $\C$, we extend this definition by identifying $\C$ to $\R^2$ with the canonical base being $(1,i)$. 
	\end{defi}
	Next, we state the definition of the Wigner matrix we will use and the asssumptions we will need on its entries. The following definition is taking into account both the real and the complex case
	
	\begin{defi}\label{Wigner}
		Let $N \in \N$,  $\{ a_{i,j} \}_{1 \leq i < j \leq N }$ be a family of  real (resp. complex) independent centered variables such that $\E[ |a_{i,j}|^2] =1$ for all $i,j$ and $\{ d_i \}_{1 \leq i \leq N}$ be a family of real independent centered variables with finite variance. We call real (resp. complex) Wigner matrix the random matrix defined by
		\[ X_N(i,j) = \begin{cases} \frac{a_{i,j}}{\sqrt{N}} \text{ when } i < j \\
			\frac{d_i}{\sqrt{N}} \text{ when } i = j	\\
			\frac{a_{j,i}}{\sqrt{N}} \text{ when } i > j \end{cases}.\]
	\end{defi}
	
	It is well known that the empirical measure of a sequence of Wigner matrices whose diagonal entries have a bounded variance converges weakly in probability toward the semi-circular measure $\sigma =(2 \pi)^{-1} \mathds{1}_{[-2,2]}(x) \sqrt{x^2 -4} dx$. In order to approximate the empirical measure by $\sigma$ for the regime of large deviation we will be considering, we will need the following assumption.
	
	\begin{assum}\label{assum2}
		Let us assume that there exists a sequence of positive real numbers $\epsilon(N)$ converging to $0$ and a distance that metrizes the weak convergence in $\mathcal{P}( \R)$ such that the sequence of empirical measures $\hat{\mu}_N$  of $X_N$ satisfies: 
		\[ \lim_{N \to \infty} \frac{1}{k(N)N} \ln \Pp[ d(\hat{\mu}_N, \sigma ) > \epsilon(N) ] = - \infty .\]
	\end{assum}
	This assumption is for instance satisfied when the law of the unrenormalized entries $a_{i,j}$ and $d_i$ satisfy log-Sobolev inequalities  or have their support in some compact set independent of $N$ as stated in the following Lemma. 
	
	\begin{lemma}\label{lem:convergenceempirical}
		If the laws of the $a_{i,j}$ and $d_i$ satisfy one of the the following assumptions:
		\begin{enumerate}
			\item There is a compact $K \subset \C$ independent of $N$, such that the laws of the $a_{i,j}$ and $d_i$are supported in $K$.
			\item If $\beta=1$, there is a constant $c >0$ independent of $N$ such that the laws of the $a_{i,j}$ and $d_i$ satisfy a log-Sobolev inequality with constant $c$. We remind that we say that $\mu$ satisfy a log-Sobolev inequality if for every smooth function $f$,
			\[ \int f^2 \ln \frac{f^2}{ \mu(f^2)} d \mu \leq c \mu( ||\nabla f||_2^2 ) .\]
			If $\beta=2$, there is for every $N \in \N$, $i,j \in [A,N]$ such that $0 \leq i < j \leq N$, $u \in \C$ such that $|u|=1$ such that the laws of the $\Re ( u a_{i,j})$ and $\Im (u a_{i,j})$ are independent and $c >0$ independent on $N$ such that all $\Re( u a_{i,j}),\Im( u a_{i,j})$ and $d_{ i}$ satisfy a log-Sobolev inequality with constant $c >0$. 
			\item If $\beta =1$, the laws of $a_{i,j}, d_i$ are uniformly sub-Gaussian (in the sense that there exists $A > 0$ such that for all $N\in \N$, $i,j \in [1,N]$ such that $0 \leq i < j \leq N$ $ \E[ \exp( t \sqrt{N} a_{i,j})] \leq e^{A t^2}$ and for all $N\in \N$, $i \in [1,N]$, $ \E[ \exp( t \sqrt{N} d_i)] \leq e^{A t^2}$  and all $t \in \R$) and their distribution are symmetric with log-concave tails, in the sense that the functions $x \mapsto \Pp[ | a_{i,j}| \geq x]$ and $x \mapsto \Pp[ | d_i| \geq x]$ are log-concave. If $\beta=2$, there is for every $N \in \N$, $i,j \in [0,N]$ such that $0 \leq i < j \leq N$, $u \in \C$ such that $|u|=1$ such that the laws of the $\Re ( u a_{i,j})$ and $\Im (u a_{i,j})$ are independent and such that all $\sqrt{N} \Re( u a_{i,j}), \sqrt{N}\Im( u a_{i,j})$ and $\sqrt{N}d_{ i}$ are symmetric, uniformly sub-Gaussian and with log-concave tails.  
			\end{enumerate}
		then Assumption \ref{assum2} is satisfied. 
		\end{lemma}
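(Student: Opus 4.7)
The plan is to decompose $d(\hat\mu_N, \sigma) \leq d(\hat\mu_N, \E\hat\mu_N) + d(\E\hat\mu_N, \sigma)$ for a bounded Lipschitz metric $d$, control the deterministic second term by a quantitative version of Wigner's theorem, and prove that the random term concentrates at Gaussian speed $N^2$. Since $k(N) = o(N)$ gives $k(N)N = o(N^2)$, any Gaussian-rate concentration leaves ample room to choose $\epsilon(N) \to 0$ satisfying Assumption~\ref{assum2}.

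For Lipschitz control, Hoffman-Wielandt combined with Cauchy-Schwarz gives, for any $1$-Lipschitz $f$ and any $X, Y \in \mathcal{H}_N^\beta$, the bound $|\hat\mu_X(f) - \hat\mu_Y(f)| \leq \|X - Y\|_2 / \sqrt{N}$. Since the entries of $X_N$ are rescaled by $1/\sqrt{N}$, the map from the unnormalized entries $a$ to $\hat\mu_{X_N}(f)$ is $C/N$-Lipschitz in the Euclidean norm on the entry variables.

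Concentration around the mean is then case-specific. In case (2), tensorization produces a log-Sobolev inequality with constant $c$ for the joint law of the entries, and Herbst's argument yields $\Pp[|\hat\mu_N(f) - \E\hat\mu_N(f)| > t] \leq 2\exp(-c' N^2 t^2)$ for each $1$-Lipschitz $f$. In case (1) the entries lie in a fixed compact, so Talagrand's convex concentration inequality, or equivalently Azuma-Hoeffding applied to the row-revealing martingale, yields the same Gaussian bound by bounded differences. Case (3) is the main obstacle: sharp sub-Gaussianity alone does not imply a log-Sobolev inequality, and one must invoke the concentration results for Wigner matrices with symmetric, log-concave tailed entries established by Guionnet-Husson in \cite{HuGu} (themselves based on Talagrand-type two-level inequalities for sums of symmetric log-concave variables) to obtain a bound of the form $\exp(-c' N^2 t^2)$.

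To pass from pointwise concentration to control of the bounded-Lipschitz metric, one first localizes the empirical measure in a large interval $[-M, M]$ (using that $\hat\mu_N(x^2) = \tr(X_N^2)/N$ concentrates to a finite constant, so $\hat\mu_N([|x| > M]) \leq C/M^2$ with overwhelming probability), then takes an $\eta$-net of size $\exp(O(M/\eta))$ in the unit ball of $1$-Lipschitz, $[-1, 1]$-valued functions on $[-M, M]$, and applies a union bound with $t = \eta$. Setting $\epsilon(N) = (k(N)/N)^{1/3} \to 0$ one has $N^2 \epsilon(N)^2 = N^{4/3} k(N)^{2/3} \gg k(N) N$, so the union bound is of order $\exp(-\omega(k(N) N))$, which combined with $d(\E\hat\mu_N, \sigma) \to 0$ yields Assumption~\ref{assum2}. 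The only non-routine input is the concentration step in case (3); cases (1) and (2), the covering argument, and the transfer from pointwise to uniform control are standard.
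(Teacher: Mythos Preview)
The paper does not write out a proof of this lemma: it simply says parts (1) and (2) follow from \cite[Theorems~1.3 and~1.5]{GZ} and part (3) from \cite[Proposition~8.1]{AuGuHu} (note: \cite{AuGuHu}, not \cite{HuGu}). Your outline is essentially an expansion of what those references carry out---Lipschitz constant $C/N$ for linear statistics via Hoffman--Wielandt, then Gaussian concentration at speed $N^{2}$, then a net argument---so you are on the same track as the sources the paper invokes.

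One correction is needed. The Azuma--Hoeffding alternative you offer for case (1) does \emph{not} deliver the $\exp(-cN^{2}t^{2})$ bound you need. Revealing one row is a rank-$2$ perturbation; by Hoffman--Wielandt (or by interlacing, for bounded $f$) the change in $\hat\mu_{N}(f)$ is $O(1/\sqrt{N})$, and with $N$ martingale increments Azuma's exponent is only of order $Nt^{2}$. Since you need the probability to be $o(e^{-Ck(N)N})$ with $\epsilon(N)\to 0$, speed $N$ is useless as soon as $k(N)\to\infty$. The $N^{2}$ speed in the bounded-entry case genuinely requires the full Euclidean Lipschitz constant $C/N$, and in \cite{GZ} this is obtained via Talagrand's convex-distance inequality together with Klein's lemma (which gives convexity of $X\mapsto\tr g(X)$ for convex $g$), after decomposing a Lipschitz $f$ into convex pieces. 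Your invocation of Talagrand is the correct tool; the ``or equivalently Azuma--Hoeffding'' should be dropped.
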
 
	
Part (1) and (2) of this lemma is a consequence of \cite[Theorem 1.3, Theorem 1.5]{GZ}. Part (3) is a slight generalization of Proposition 8.1 in \cite{AuGuHu} (Note that the uniformly sub-Gaussian part of this assumption becomes automatic once one has the sharp sub-Gaussian assumption). We will also need the following sharp sub-Gaussian hypothesis. 
	
	\begin{assum}\label{assum1}
		Let us assume that the $a_{i,j}$ and $d_i$ of Definition~\ref{Wigner} are sharp sub-Gaussian. 
		In the real case, let us also assume that they are such that: 
		\[ \forall 1 \leq i< j \leq N, \E[ a_{i,j}^2] = 1, \forall 1 \leq i \leq N, E[ d_i^2] =2. \]
		In the complex case, let us assume that they are such that:
		\[ \forall 1 \leq i< j \leq N, ~\E[\Re (a_{i,j})^2] =E[\Im (a_{i,j})^2]=\frac{1}{2}, \forall 1 \leq i \leq N, E[ d_i^2] =1. \]
	\end{assum}	
	The result we are going to prove is the following: 
	\begin{theo} \label{LDPTheo}
		Let $(X_N)_{N \in \N}$ be a sequence of Wigner matrices satisfying Assumptions \ref{assum1} and \ref{assum2}. 
		Let $(k(N))_{N \in \N}$ such that  $k(N) = o(N/ \ln N)$. 
		Let $\lambda_1^N \leq \dots \leq  \lambda_N^N$ denote the eigenvalues of $X_N$ and let us define $\hat{\nu}_N$ the \emph{extremal empirical measure} as follows: 
		\[ \hat{\nu}_N =\frac{1}{2 k(N)} \bigg(
		\sum_{i=1}^{k(N)} \delta_{\lambda_i^N} + \sum_{i=1}^{k(N)} \delta_{\lambda_{N - i +1}^N} \bigg) \]
		Then $\hat{\nu}_N \in \mathcal{P}_N(\R)$ statisfy a large deviation principle for the weak topology with speed $2 N k(N)$ and good rate function $\frac{\beta}{2} \mathcal{I}$ where $\mathcal{I}$ is defined as:
		\begin{equation}\label{eq:calI}
		\mathcal{I}(\nu) = \begin{cases}
			\int_{\R} I(x) d \nu(x) &\text{if  $\nu(] - \infty, -2]) = \nu([2, + \infty[) = \frac{1}{2}$}\\
			+ \infty & \text{ otherwise }   \end{cases} 
		\end{equation}
		where $I$ is the function defined by
		\begin{equation}\label{eq:Ix}
		I(x) = \begin{cases} \int_{2}^x \sqrt{t^2 - 4} dt \text{ when } x \geq 2 \\
			\int_{x}^{-2} \sqrt{t^2 - 4} dt \text{ when } x \leq -2 \\
			0 \text{ when } - 2 < x < 2 \end{cases} .
		\end{equation}
	\end{theo}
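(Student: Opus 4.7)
The plan is to extend the Markov/tilting method of \cite{HuGu,GuHu21} from finite rank to rank $2k(N)$, using Theorem~\ref{maintheo} as a rank-$2k(N)$ spherical integral asymptotic. Exponential tightness follows from sharp sub-Gaussianity, which gives $\Pp[\||X_N\||>M] \leq e^{-cN^2}$, negligible at speed $2Nk(N)$.

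For the \textbf{upper bound}, fix a target measure $\nu$ and test parameters $(\theta_i^\pm)_{i=1}^{k(N)}$, and let $D_N$ be diagonal with these $2k(N)$ values as its non-zero eigenvalues. Assumption~\ref{assum1} gives, for any self-adjoint $B$, $\E[\exp(\frac{\beta N}{2}\tr(BX_N))]\leq \exp(\frac{\beta N}{4}\tr(B^2))$ by direct computation entry-by-entry, so integrating over Haar yields $\E[I_N(D_N, X_N)]\leq \exp(\frac{\beta N}{4}\tr(D_N^2))$. On the event $G_N = \{d(\hat\mu_N,\sigma)\leq\epsilon(N)\} \cap \{\hat\nu_N\in B(\nu,\delta)\}$, whose complement has probability $e^{-\omega(Nk(N))}$ by Assumption~\ref{assum2}, the extreme eigenvalues of $X_N$ approximate the quantiles of $\nu$, so Theorem~\ref{maintheo} yields
\[
\ln I_N(D_N, X_N) = \tfrac{\beta N}{2}\sum_{i} J(\theta_i^\pm, \lambda_i^\pm(\nu), \sigma) + o(Nk(N)),
\]
and Markov's inequality produces
\[
\tfrac{1}{2Nk(N)}\ln\Pp[\hat\nu_N\in B(\nu,\delta)] \leq -\tfrac{\beta}{4k(N)}\sum_i\left[J(\theta_i^\pm, \lambda_i^\pm(\nu), \sigma) - \tfrac{(\theta_i^\pm)^2}{2}\right] + o(1).
\]
Optimizing each summand via the Legendre-type identity $\sup_\theta[J(\theta,\lambda,\sigma)-\theta^2/2] = I(\lambda)$ (established in \cite{HuGu}) yields the bound $-\frac{\beta}{2}\int I \, d\nu$. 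When $\nu$ violates the mass condition, the constraint $\hat\mu_N\approx\sigma$ combined with $k(N)=o(N)$ forces the $k(N)$-th extreme eigenvalue into an $o(1)$-neighborhood of $\pm 2$, making such $\nu$ inaccessible at speed $Nk(N)$ and yielding rate $+\infty$.

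For the \textbf{lower bound}, given $\nu$ with $\mathcal{I}(\nu) < \infty$, pick $\theta_i^\pm$ as the Legendre optimizers so that $\lambda_i^\pm(\nu) = \theta_i^\pm + 1/\theta_i^\pm$ (the BBP map). Define the tilted measure $d\widetilde{\Pp}_N = I_N(D_N, X_N)/Z_N\, d\Pp_N$ with $Z_N = \E[I_N(D_N, X_N)]$, so that
\[
\Pp_N[\hat\nu_N\in B(\nu,\delta)] \geq \frac{Z_N \, \widetilde{\Pp}_N[E_N]}{\sup_{E_N}I_N(D_N, X_N)},\qquad E_N = G_N\cap \{\hat\nu_N\in B(\nu,\delta)\}.
\]
The argument closes by controlling $\sup_{E_N}I_N$ via Theorem~\ref{maintheo}; establishing a matching lower bound $Z_N \geq \exp(\tfrac{\beta N}{4}\tr(D_N^2) - o(Nk(N)))$ (automatic for GOE/GUE where the MGF is saturated, and for general sharp sub-Gaussian via Taylor-expanding the per-entry gap); and finally showing that $\hat\nu_N$ concentrates at $\nu$ under $\widetilde{\Pp}_N$.

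The \textbf{main obstacle} is this last concentration step. In the rank-one setting of \cite{HuGu}, it reduces to analyzing a single tilted top eigenvalue, whose location is pinned to the BBP map. For growing rank, we must simultaneously control $2k(N)\to\infty$ tilted eigenvalues, which will likely require a quantitative stability version of Theorem~\ref{maintheo}, i.e.\ Lipschitz continuity of $\frac{1}{Nk(N)}\ln I_N$ in the spectra with effective constants. The stronger hypothesis $k(N) = o(N/\ln N)$ rather than $o(N)$ plausibly enters here through accumulation of error terms of size $O(k(N)^2\ln N / N)$, reminiscent of the Selberg-type volume factors $V(\epsilon) = \epsilon^{O(k(N)^2)}V(1)$ encountered in the proof of Lemma~\ref{lemmaUB}.
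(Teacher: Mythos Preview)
Your overall architecture matches the paper's: exponential tightness, then the Markov/tilting upper bound using the annealed spherical integral, then a lower bound by change of measure, with concentration under the tilt as the delicate step. However, several of the claimed shortcuts are wrong, and you have misplaced where the hypothesis $k(N)=o(N/\ln N)$ is actually used.

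\textbf{Exponential tightness.} The bound $\Pp[\,|||X_N|||>M\,]\le e^{-cN^2}$ is false: large deviations for the operator norm of a sharp sub-Gaussian Wigner matrix occur at speed $N$, not $N^2$ (already for GOE, $\Pp[\lambda_1>M]\asymp e^{-NI(M)}$). At speed $2Nk(N)$ with $k(N)\to\infty$, a speed-$N$ bound is worthless. The paper does \emph{not} control $|||X_N|||$; it controls the second moment $\hat\nu_N(x^2)$ of the extremal empirical measure via an $\epsilon$-net on orthonormal $2k(N)$-frames (Proposition~\ref{exptight} and Lemma~\ref{Net}), which produces a bound $\Pp[\hat\nu_N(x^2)\ge b]\le e^{-cNk(N)b}$ at the correct speed. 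This is a genuine gap in your proposal; the upper bound also relies on it, since Theorem~\ref{maintheo} is applied only after restricting to $\{\hat\nu_N(x^2)\le M\}$ through the extension in Theorem~\ref{generalizedSIconv}.

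\textbf{Where $k(N)=o(N/\ln N)$ enters.} It is not in the tilted concentration, and it is unrelated to the Selberg volume factors from Lemma~\ref{lemmaUB}. It enters precisely in the step you dismiss as routine: the \emph{lower} bound $\E[I_N(D_N,X_N)]\ge\exp(\tfrac{\beta N}{4}\tr D_N^2 - o(Nk(N)))$ for general sharp sub-Gaussian entries (Proposition~\ref{AnnealedSI}). The Taylor expansion you invoke requires that $\sqrt{N}\,(U^*D_NU)_{i,j}$ be uniformly small for all $i,j$; the paper proves this in Lemma~\ref{excessmass}, and the union bound over $N^2$ entries against a Beta-tail estimate is exactly what forces $k(N)=o(N/\ln N)$.

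\textbf{Concentration under the tilt.} The paper does not need a quantitative stability version of Theorem~\ref{maintheo}. It proves (Lemma~\ref{tiltUB}) that for every $\nu'\neq\nu$ there is a second test matrix $D_N'$ (built from the quantiles of $\nu'$) such that a double-tilt Markov bound gives $\limsup \tfrac{1}{Nk(N)}\ln\Pp^{\theta^N}[d(\hat\nu_N,\nu')<\epsilon]<0$; combined with exponential tightness under $\Pp^{\theta^N}$ (Lemma~\ref{tiltexptight}), this forces $\hat\nu_N\to\nu$ in $\Pp^{\theta^N}$-probability. This is the same mechanism as in G\"artner--Ellis, rephrased through spherical integrals, and uses Theorem~\ref{maintheo} only in its stated form.
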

	The ideas of the proof remain largely the same as in \cite{GuHu21}, although considering measures formed from extremal eigenvalues rather than simply a $k$-tuple introduces some topological difficulties.
	
	First, we will need the following extension of our result in the case where the matrix $(A_N)_{N \in\N}$ is not bounded in operator norm any more. For this we will need the following variation on Assumption~ \ref{assum:A1}.
	
	\begin{assum}\label{assum:A2}
	Suppose that $(A_N)$ and $(D_N)$ satisfies the assumptions in Assumption~\ref{assum:A1} with the following modification to point (2): instead of assuming that $|||A_N||| \leq K$ in point (2) of Assumption~\ref{assum:A1}, we weaken the assumption and instead require that there exists a $M$ finite such that for every $N \in \N$: 
	\begin{equation}\label{eq:empiricalsquare}
	\hat \nu_N(x^2) =\frac{1}{2k(N)} \Big[\sum_{i=1}^{k(N)} \lambda_i^2 + \sum_{i=1}^{k(N)} \lambda_{N -i+1}^2 \Big] \leq M.
	\end{equation}
	\end{assum}

	The result in Theorem~\ref{maintheo} also holds under the weaker Assumption~\ref{assum:A2}
	
	\begin{theo}\label{generalizedSIconv}
		If $(A_N)$ and $(D_N)$ are two sequences of self-adjoint matrices that satisfy Assumption \ref{assum:A2}, then: 
		\[\lim_{N \to \infty} \left| \frac{2}{\beta k(N) N} \ln I_N(D_N,A_N) - \left[ \frac{1}{k(N)} \sum_{i=1}^{k(N)} J(\theta_i, \lambda_i,\mu ) + \frac{1}{k(N)} \sum_{i=1}^{k(N)} J(\theta_{l(N) +i}, \lambda_{ N + i -k(N)},\mu ) \right]\right| = 0. \]	
	\end{theo}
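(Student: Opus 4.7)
The strategy is to reduce Theorem~\ref{generalizedSIconv} to Theorem~\ref{maintheo} by spectrally truncating the unbounded matrix $A_N$ at a large level $T$ to obtain a bounded matrix $\tilde A_N$ that satisfies Assumption~\ref{assum:A1}, and then controlling the truncation error both in the spherical integral and in the sum of $J$-terms. The crucial first observation is that the hypothesis \eqref{eq:empiricalsquare} forces every bulk eigenvalue $\lambda_{k(N)+1}, \ldots, \lambda_{N-k(N)}$ to satisfy $|\lambda_j| \leq \sqrt{2M}$ uniformly in $N$. Indeed, if $\lambda_{k(N)} \geq 0$, then $\lambda_i \geq \lambda_{k(N)} \geq 0$ for $i \leq k(N)$ gives $\sum_{i=1}^{k(N)}\lambda_i^2 \geq k(N)\lambda_{k(N)}^2$, hence $\lambda_{k(N)}^2 \leq 2M$; if instead $\lambda_{k(N)} < 0$, then $\lambda_i \leq \lambda_{k(N)} < 0$ for $i \geq k(N)+1$, and in particular for the bottom $k(N)$ indices, yielding $\sum_{i=N-k(N)+1}^N \lambda_i^2 \geq k(N)\lambda_{k(N)}^2$ and again $\lambda_{k(N)}^2 \leq 2M$. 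A symmetric argument handles $\lambda_{N-k(N)+1}$, so all bulk eigenvalues are bounded by $\sqrt{2M}$.

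Fix $T > \sqrt{2M}$ and let $\tilde A_N$ be obtained from $A_N$ by truncating its spectrum at $\pm T$. By the previous step the bulk is unaffected, so only extremal eigenvalues with $|\lambda_i|>T$ are modified; moreover $|||\tilde A_N||| \leq T$, and since the empirical measure of $\tilde A_N$ differs from that of $A_N$ on at most $2k(N)=o(N)$ entries, it still converges weakly to $\mu$. Hence $(\tilde A_N, D_N)$ satisfies Assumption~\ref{assum:A1} and Theorem~\ref{maintheo} yields the desired asymptotics for $\ln I_N(D_N,\tilde A_N)$ with the truncated eigenvalues $\tilde\lambda_i$ in place of $\lambda_i$.

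The truncation error in the spherical integral is controlled uniformly in $U$: writing $E = A_N - \tilde A_N$, von Neumann's trace inequality combined with $\mathrm{rank}(D_N)\leq k(N)$ and $|||D_N|||\leq K$ yields $|\tr(UEU^*D_N)| \leq K \sum_j |E_{jj}|$. Since $|E_{jj}|=(|\lambda_j|-T)_+$ is supported on the extremal indices, Cauchy--Schwarz together with the Markov estimate $\#\{i\in\mathrm{ext}:|\lambda_i|>T\} \leq 4Mk(N)/T^2$ gives $\sum_j |E_{jj}| \leq 4Mk(N)/T$, and hence
\[
\bigg|\frac{2}{\beta k(N)N}\ln I_N(D_N,A_N) - \frac{2}{\beta k(N)N}\ln I_N(D_N,\tilde A_N)\bigg| \leq \frac{4MK}{T}.
\]
An analogous bound for the $J$-term error follows because $\lambda \mapsto J(\theta,\lambda,\mu)$ is Lipschitz with constant at most $|\theta|$: in the regime $v = G_\mu^{-1}(\theta)$ the map is constant in $\lambda$, while in the regime $v = \lambda$ its derivative $\theta - G_\mu(\lambda)$ stays between $0$ and $\theta$. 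This yields $\frac{1}{k(N)}\sum_i|J(\theta_i,\lambda_i,\mu)-J(\theta_i,\tilde\lambda_i,\mu)| \leq 4MK/T$. Combining these two estimates with the conclusion of Theorem~\ref{maintheo}, taking $\limsup_{N\to\infty}$ at fixed $T$, and then letting $T\to\infty$ concludes the proof.

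The main obstacle is the bulk-control step: uniform boundedness of bulk eigenvalues is not automatic from weak convergence of the empirical measure to a compactly supported $\mu$, which only forces $o(N)$ outliers rather than a uniform bound on $\lambda_{k(N)+1}$. The argument crucially exploits that Assumption~\ref{assum:A2} bounds the squared sum of \emph{both} the top and the bottom $k(N)$ eigenvalues, so that regardless of the sign of $\lambda_{k(N)}$ one of the two sums provides the required lower bound.
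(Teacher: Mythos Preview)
Your proof is correct and follows essentially the same truncation strategy as the paper: replace $A_N$ by its spectral clipping at level $T$ (the paper writes $L$), bound the resulting error in $\ln I_N$ via a von Neumann trace estimate together with the second-moment control \eqref{eq:empiricalsquare}, bound the error in the $J$-sums using that $\lambda\mapsto J(\theta,\lambda,\mu)$ is $|\theta|$-Lipschitz, apply Theorem~\ref{maintheo} to the clipped matrices, and send $T\to\infty$. Your explicit verification that all bulk eigenvalues satisfy $|\lambda_j|\leq\sqrt{2M}$ is a detail the paper leaves implicit (it simply asserts ``for $L>\sqrt{M}$''), and your Cauchy--Schwarz/Markov bound on $\sum_j|E_{jj}|$ is a slight variant of the paper's direct estimate $|\lambda_i|\mathds{1}_{|\lambda_i|\geq L}\leq \lambda_i^2/L$, but both give the same $O(1/T)$ control.
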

	
	\begin{proof}
		Let us take $L >0$ and let us call $g^{(L)}$ the function defined for a real $x$ by $g^{(L)}(x)= (x \vee -L) \wedge L$ consider for every $N \in \N$, $A^{(L)}_N= g^{(L)}(A_N)$ 
		
		For $L > \sqrt{M}$ with $\Delta_N^{(L)}:= A_N - A_N^{(L)}$  using the Von-Neumann inequality  we have
		\begin{eqnarray*}
			 |\tr(U A_N U^* D_N) -  \tr(U A^{(L)}_N U^* D_N)| &\leq&  |\tr(U\Delta_N^{(L)}U^* D_N) | \\
			 &\leq& K \Big( \sum_{i=1}^{k(N)} |\lambda_i| \mathds{1}_{|\lambda_i| \geq L}+ \sum_{i=1}^{k(N)} |\lambda_{N-i+1}| \mathds{1}_{|\lambda_{N-i+1}| \geq L}\Big) \\ &\leq& k(N) \frac{K M}{L } .
		 \end{eqnarray*}
		
		Therefore, one has that 
		\[ \Big| \frac{1}{N k(N)} \ln I_N(A_N,D_N) - \frac{1}{N k(N)} \ln I_N(A^{(L)}_N,D_N)\Big| \leq \frac{K M}{L } . \]
		Using the fact that $|J(\theta,., \mu)|$ is a $\theta$-Lipschitz function, we have that 
		\[ \Big|\frac{1}{k(N)} \sum_{i=1}^{k(N)} J(\theta_i, \lambda^{(L)}_i,\mu ) - \frac{1}{k(N)} \sum_{i=1}^{k(N)} J(\theta_i, \lambda_i,\mu ) \Big| \leq \frac{1}{k(N)}\sum_{i=1}^{k(N)} |\theta_i| \lambda_i \mathds{1}_{ \lambda_i^{(L)} < \lambda_i} \leq K \frac{M}{L} .\]
		We have the same inequality for the indices $i = l(N)+1,\dots,k(N)$. Using the Theorem \ref{maintheo} for $A_N^{(L)}$ and then making $L$ tend to $\infty$ proves the result. 
	\end{proof}

	Here, if $\lambda_1^N \leq \cdots \leq \lambda_N^N$ are the eigenvalues of a Wigner matrix $X_N$, to capture the rare events that involves $k(N)$ extremal eigenvalues, we will use the ``extremal" empirical measure: 
	\begin{equation}\label{eq:empiricalextremalmeasure}
	\hat{\nu}_N = \frac{1}{2 k(N)}\Big[ \sum_{i=1}^{k(N)} \delta_{\lambda_i^N} + \sum_{i=1}^{k(N)} \delta_{\lambda_{N - i +1}^N} \Big].
	\end{equation}
	
	The proof of the Theorem \ref{LDPTheo} follows closely the proof of the large deviation principle in \cite{GuHu21}. In that paper the main steps of the proof are as follows: 
	\begin{enumerate}
		\item Exponential  tightness (Lemma 1.8 and Section 2). 
		\item Asymptotics of the annealed spherical integral (Theorem 1.17 and Section 3). 
		\item Large deviation upper bound using those asymptotics (Theorem 1.9 and Corollary 1.16 and identification of the rate in Section 4).
		\item Large deviation lower bound using a tilt(Theorem 1.10 and Section 5). 
	\end{enumerate}
	Here the main difference is that we do not have only one parameter $\theta$ but a number $k(N)$ of parameters $\theta_{i}$ which varies in $N$. Here are the adaptations we will make to the original proof: 
	
	\begin{enumerate}
		\item For the exponential tightness, we will prove the following proposition: 
		\begin{prop}\label{exptight}
			If $(X_N)_{N \in \N}$ are Wigner matrices satisfying Assumptions \ref{assum1} and \ref{assum2} then for every $L > 0$, there is $M > 0$ such that: 
			\[ \limsup_{N \to \infty} \frac{1}{ k(N)N} \ln \Pp[ \hat{\nu}_N(x^2) \geq M ] \leq - L.\]
		\end{prop}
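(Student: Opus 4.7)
The plan is to dominate $\hat\nu_N(x^2)$ by the square of a convex, $1$-Lipschitz norm on $\mathcal{H}_N^{\beta}$ and apply a Gaussian-type concentration inequality. Set $f_N(X) := \bigl(\sum_{i=1}^{2k(N)} \sigma_i^{\downarrow}(X)^2\bigr)^{1/2}$, the truncated Schatten-$2$ norm keeping the top $2k(N)$ singular values. As the supremum of $\|PX\|_2$ over rank-$\leq 2k(N)$ orthogonal projections $P$, $f_N$ is a norm on $\mathcal{H}_N^{\beta}$, hence convex and $1$-Lipschitz with respect to $\|\cdot\|_2$. Since the $k(N)$ largest and $k(N)$ smallest eigenvalues of $X_N$ are contained among its top $2k(N)$ singular values,
\[
\hat\nu_N(x^2) \leq \frac{f_N(X_N)^2}{2k(N)}.
\]

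Next I would establish a Gaussian-type concentration bound
\[
\Pp\bigl[\,f_N(X_N) - m(f_N) \geq t\,\bigr] \leq 2\exp(-cNt^2),
\]
where $m(f_N)$ is the median and $c>0$ is independent of $N$. The convexity and $1$-Lipschitz property of $f_N$ in $\|\cdot\|_2$ combine with the sharp sub-Gaussianity of the entries (Assumption~\ref{assum1}) to yield this, either through Talagrand's concentration for convex Lipschitz functions or, more directly, through the Hubbard--Stratonovich identity
\[
\E\bigl[\exp(N\theta\|X_NV\|_2^2)\bigr] \leq (1-4\theta)^{-Nk(N)}, \qquad \theta \in (0,1/4),
\]
valid for any $V$ of size $N\times 2k(N)$ with $V^*V=I$. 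This identity is obtained by representing $\exp(N\theta\|X_NV\|_2^2)$ as a Gaussian integral $\int \exp(\sqrt{2N\theta}\langle G, X_NV\rangle)\,d\gamma(G)$ over an auxiliary matrix $G$, then applying the sharp sub-Gaussian bound to the linear functional $\tr(X_N(VG^*+GV^*))$ and integrating out $G$. In parallel, the median obeys $m(f_N)\leq C\sqrt{k(N)}$: Jensen gives $(\E f_N)^2 \leq \E f_N^2 \leq 2k(N)\,\E |||X_N|||^2$, and Assumption~\ref{assum2} forces $\E|||X_N|||^2\leq C$ uniformly in $N$ because $|||X_N|||$ remains in a bounded neighborhood of $2$ outside an event of probability decaying at speed $Nk(N)$.

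Combining, for $M$ large enough,
\[
\Pp[\hat\nu_N(x^2) \geq M] \leq \Pp\bigl[f_N(X_N) - m(f_N) \geq \sqrt{2k(N)M} - C\sqrt{k(N)}\bigr] \leq 2\exp\bigl(-cNk(N)M/4\bigr),
\]
which yields $\limsup_N \tfrac{1}{Nk(N)}\ln\Pp[\hat\nu_N(x^2)\geq M] \leq -cM/4$; choosing $M\geq 4L/c$ completes the proof.

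The main obstacle is securing the rate $cN$ in the concentration of $f_N$ with the constant $c$ independent of $k(N)$. Under the specific sufficient conditions for Assumption~\ref{assum2} recorded in Lemma~\ref{lem:convergenceempirical} (bounded support, log-Sobolev, or sub-Gaussian with symmetric log-concave tails) this is standard via Talagrand's or Herbst's inequality; in the general case one must combine the Gaussian-integration bound above with a carefully chosen net on the Grassmannian of rank-$2k(N)$ projections, in which the assumption $k(N)=o(N/\ln N)$ is precisely what absorbs the metric entropy of the net into the exponential rate.
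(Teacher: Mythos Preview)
Your proposal is essentially correct and, at its analytical core, coincides with the paper's proof: both rely on the Hubbard--Stratonovich identity to dominate $\E[\exp(aN\sum_i\|X_Ne_i\|_2^2)]$ by its Gaussian (GOE/GUE) counterpart, compute the latter explicitly, and then discretize the choice of orthonormal frame via an $\epsilon$-net. The paper, however, takes a more direct route: it bounds $\hat\nu_N(x^2)\le\frac{1}{2k(N)}\sum_{i\le 2k(N)}\mu_i(X_N^2)$, proves the exponential moment bound $\E[\exp(aN\sum_{i\le k}\|X_Ne_i\|_2^2)]\le e^{Ck(N)N}$ for any fixed orthonormal $(e_1,\dots,e_k)$, applies Markov, and union-bounds over a net on $\mathcal{R}^{(\beta)}_{N,2k(N)}$ of cardinality $(6\sqrt{2})^{2\beta Nk(N)}$. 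Your detour through concentration of the truncated Schatten norm $f_N$ around its median is unnecessary: once you have the exponential moment bound for a fixed frame and a net of log-cardinality $O(Nk(N))$, Markov plus the union bound already give $\Pp[\hat\nu_N(x^2)\ge M]\le e^{Nk(N)(C'-cM)}$ directly, without any median estimate. This also shows that your final remark is inaccurate: the net entropy is $O(Nk(N))$ and is absorbed by the exponential rate $Nk(N)$ for \emph{any} $k(N)=o(N)$; the extra hypothesis $k(N)=o(N/\ln N)$ plays no role in this proposition (it is used elsewhere, in the annealed spherical integral computation). Two small corrections: $f_N$ is only a seminorm when $2k(N)<N$, though convexity and $1$-Lipschitzness are unaffected; and your sentence ``the $k(N)$ largest and smallest eigenvalues are contained among the top $2k(N)$ singular values'' is false as stated, but the inequality $\hat\nu_N(x^2)\le f_N(X_N)^2/(2k(N))$ you actually need holds simply because any sum of $2k(N)$ squared eigenvalues is dominated by the sum of the $2k(N)$ largest.
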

		\item Regarding the asymptotics of the annealed spherical integral, we will prove the following result: 
		\begin{prop}\label{AnnealedSI}
			If $(X_N)_{N \in \N}$ are Wigner matrices satisfying Assumption \ref{assum2}, $k(N) = O( N/ \ln N)$ for some $\epsilon>0$ and $D_N$ is a sequence of deterministic diagonal matrices of the form: 
			\[ D_N = \diag(\theta_1 ,\dots,  \theta_{2 k(N)},0,\dots,0) \]
			with $\theta_1 \geq \dots \geq \theta_{k(N)} \geq 0 \geq \theta_{2 k(N)} \geq \dots \geq \theta_{k(N) +1}$,
			and such that $||D_N|| \leq K
			$ for some $K > 0$.
			Then
			\[ \limsup_{N \to \infty} \frac{1}{k(N)} \Big| \frac{2}{\beta N} \ln \E[ I_N(X_N,D_N)] - \sum_{i=1}^{2 k(N)} \frac{\theta_i^2}{2}  \Big| = 0 .\]
		\end{prop}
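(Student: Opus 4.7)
My plan is to establish matching upper and lower bounds. The sharp sub-Gaussian hypothesis of Assumption~\ref{assum1} is needed for the upper bound; it is implicit here since this proposition is applied inside the LDP framework of Theorem~\ref{LDPTheo}.

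For the upper bound, Fubini gives
\[
\E[I_N(X_N,D_N)] = \int dU\, \E_{X_N}\bigl[\exp\bigl(\tfrac{\beta N}{2}\tr(X_N U D_N U^*)\bigr)\bigr].
\]
For each fixed $U$, the inner expectation is the moment generating function at $s=\beta N/2$ of the linear functional $Y_U:=\tr(X_N U D_N U^*)$ of the independent entries of $X_N$. Sharp sub-Gaussianity gives $\E[\exp(s Y_U)]\leq\exp(\tfrac{s^2}{2}\Var(Y_U))$, and the specific variances prescribed by Assumption~\ref{assum1} are calibrated precisely so that $\Var(Y_U)=\tfrac{2}{\beta N}\|UD_NU^*\|_2^2=\tfrac{2}{\beta N}\sum_i\theta_i^2$, uniformly in $U$ and for both $\beta\in\{1,2\}$. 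Hence $\E_{X_N}[\exp(\tfrac{\beta N}{2}Y_U)]\leq\exp(\tfrac{\beta N}{4}\sum_i\theta_i^2)$, and integrating in $U$ yields the desired upper bound $\tfrac{2}{\beta N}\ln\E[I_N]\leq\sum_i\theta_i^2/2$.

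For the lower bound I would combine the quenched asymptotics of Theorem~\ref{generalizedSIconv} with a joint lower tail estimate on the extremal eigenvalues of $X_N$. For each nonzero $\theta_i$, set the optimal outlier position $\lambda_i^*:=\theta_i+1/\theta_i$ when $|\theta_i|>1$ and $\lambda_i^*:=\mathrm{sgn}(\theta_i)\cdot 2$ otherwise; a direct computation with the explicit Stieltjes transform of the semicircle yields the classical identity $J(\theta_i,\lambda_i^*,\sigma)-I(\lambda_i^*)=\theta_i^2/2$ (valid uniformly across the BBP threshold $|\theta|=1$, since $I(\lambda_i^*)=0$ when $|\theta_i|\leq 1$). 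Let $\mathcal{A}_N^\eta$ denote the event that each of the $2k(N)$ extremal eigenvalues of $X_N$ lies within $\eta$ of its corresponding $\lambda_i^*$, under the pairing used in Theorem~\ref{maintheo}, and that the empirical measure of $X_N$ is within $\epsilon(N)$ of $\sigma$. On $\mathcal{A}_N^\eta$, Theorem~\ref{generalizedSIconv} gives $\tfrac{2}{\beta N}\ln I_N(X_N,D_N)\geq\sum_i J(\theta_i,\lambda_i^*,\sigma)-o_\eta(k(N))$ uniformly, so by the key identity the lower bound reduces to showing
\[
\tfrac{2}{\beta N}\ln\Pp(\mathcal{A}_N^\eta) \geq -\sum_i I(\lambda_i^*)-o_\eta(k(N)).
\]

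The main obstacle is this joint lower large deviation estimate for $2k(N)$ extremal eigenvalues at specified positions. My plan is to obtain it by iteratively applying the rank-one lower bound of \cite{HuGu}: after conditioning on the top eigenvalue of $X_N$ being near $\lambda_1^*$, the matrix restricted to the orthogonal complement of its eigenvector is, up to an asymptotically negligible rank-one perturbation, again a sharp sub-Gaussian Wigner matrix of size $N-1$, so the rank-one LDP lower bound applies to its top eigenvalue; iterating through the $2k(N)$ successive conditionings yields the joint estimate. The hypothesis $k(N)=o(N/\ln N)$ is precisely what ensures that the $O(\log N)$ sub-leading entropy losses at each of the $O(k(N))$ steps do not accumulate to dominate the leading contribution $N\sum_i I(\lambda_i^*)$.
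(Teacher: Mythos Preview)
Your upper bound is essentially the paper's argument: integrate over $U$ after applying the sharp sub-Gaussian MGF bound entrywise, which collapses to $\exp(\tfrac{\beta N}{4}\sum_i\theta_i^2)$ uniformly in $U$.

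Your lower bound, however, takes a genuinely different route and has a real gap. The paper does \emph{not} go through any eigenvalue large deviation estimate. Instead it stays entirely on the annealed side: writing $\E_X[I_N]=\E_U\exp\bigl(\sum_{i<j}L_{i,j}(\beta\sqrt{N}(U^*D_NU)_{i,j})+\sum_i L_{i,i}(\tfrac{\beta}{2}\sqrt{N}(U^*D_NU)_{i,i})\bigr)$ with $L_{i,j}$ the log-MGF of the unrescaled entries, it uses the Taylor lower bound $L_{i,j}(z)\geq |z|^2(1-\delta(|z|))/2^{\beta-\mathds{1}_{i=j}}$ and then proves (Lemma~\ref{excessmass}) that under Haar measure the ``excess mass'' $A_N^{(\epsilon)}=\tfrac{1}{k(N)}\sum_{i,j}\mathds{1}_{\beta\sqrt{N}|(U^*D_NU)_{i,j}|/2\geq\epsilon}|(U^*D_NU)_{i,j}|^2$ tends to $0$ in probability when $k(N)=o(N/\ln N)$. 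Restricting to $\{A_N^{(\epsilon)}\leq\eta\}$ and letting $\epsilon,\eta\to 0$ gives the lower bound directly. This is both shorter and structurally important, because in the paper Proposition~\ref{AnnealedSI} is an \emph{input} to the LDP of Theorem~\ref{LDPTheo}, not a consequence of it.

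The specific gap in your plan is the iteration step. After conditioning on $\{\lambda_1(X_N)\approx\lambda_1^*\}$, the restriction of $X_N$ to the orthogonal complement of the top eigenvector is \emph{not} a sharp sub-Gaussian Wigner matrix: the conditioning correlates the entries through the (random) eigenvector, so the rank-one LDP of \cite{HuGu} does not apply to the next eigenvalue. There is no simple ``asymptotically negligible rank-one perturbation'' that restores independence here. Even for GOE/GUE, where rotational invariance helps, making this iteration rigorous for $k(N)\to\infty$ steps with controlled errors is nontrivial; and for general sharp sub-Gaussian entries the only known route to the joint lower bound on $\mathcal{A}_N^\eta$ is precisely the tilting argument of Section~\ref{sec:largesteigenvalues}, which itself relies on Proposition~\ref{AnnealedSI}. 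So your lower bound strategy is either circular or rests on an unproven independence-preservation claim. Your identity $J(\theta,\lambda^*,\sigma)-I(\lambda^*)=\theta^2/2$ is correct and is exactly what makes the tilt in the paper's LDP proof work, but it belongs downstream of this proposition, not in its proof.
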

		
		\item For the upper bound, since we are using a number of parameters $\theta_i$ that grows with $N$, we will first, with $N$ fixed, optimize on the $\theta_i$ and than we will look at the limit of this optimum as $N$ tends to infinity. 
		\item For the lower bound, we will first restrict ourselves to neighborhood of ``nice" $\nu$, that is measures $\nu$, supported on a compact subset of $] - \infty, -2 [ \cup ] 2, + \infty[$ and whose partition function is continuous. We will then use tilts of the measure of the form $I_N(X,D_N)/ \E[I_N(X_N,D_N)]$ where $\theta_i+ \frac{1}{\theta_i}$ is the quantile of level $i/N$ of $\nu$. Under this tilt, we will apply a technique similar to the proof of the Gartner-Ellis theorem to show that $\hat{\nu}_N$ converges toward $\nu$. That proof makes use of large deviation upper bounds for the tilted measure. Finally we will explain how to approximate any measure by ``nice" measures. 
	\end{enumerate}
	
	\subsection{Exponential tightness: Proof of Proposition \ref{exptight}}\label{proofexptight}
	This proof will use a classical argument on the cardinality of an $\epsilon$-net. First, let us state the cardinality bound we will need: 
	
	\begin{lemma}\label{Net}
		Let $\beta = 1,2$, $N\in \N$ and $k \leq N$. Let us consider $\mathcal{R}^{(\beta)}_{N,k}$ the subset of $(\Ss^{\beta N -1})^k$ of families of orthonormal vectors (for the real scalar product if $\beta=1$ and the complex one if $\beta=2$). We consider on $\mathcal{R}^{(\beta)}_{N,k}$ the distance induced by the following norm on $(\R^{\beta N})^k$, $||u|| = \max (||u_1||_2, \dots,||u_k||_2)$ (where $||.||_2$ is the classical Euclidean norm on $\R^{\beta N }$). Then, there exists an $\epsilon$-net $\mathcal{N}^{(\beta)}_{N,k}(\epsilon)$ of $\mathcal{R}^{(\beta)}_{N,k}$ such that:
		\[ |\mathcal{N}^{(\beta)}_{N,k}(\epsilon)| \leq \Big( \frac{6}{\epsilon} \Big)^{ \beta N k}. \]
	\end{lemma}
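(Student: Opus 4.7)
The plan is to build $\mathcal{N}^{(\beta)}_{N,k}(\epsilon)$ by starting from a standard volumetric net of the sphere, taking the $k$-fold product, and then projecting the result onto the Stiefel-type manifold $\mathcal{R}^{(\beta)}_{N,k}$. Since the problem is essentially metric-geometric, no random matrix input is needed; the only thing we use is compactness of $\mathcal{R}^{(\beta)}_{N,k}$ and the triangle inequality.

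First, I would recall the classical volume-comparison argument: the unit sphere $\Ss^{\beta N - 1} \subset \R^{\beta N}$ admits, for every $r \in (0,1]$, an $r$-net of cardinality at most $(1 + 2/r)^{\beta N} \leq (3/r)^{\beta N}$ for the ambient Euclidean norm (and one can cover $\Ss^{\beta N-1}$ by a single point when $r \geq 2$). Applying this with $r = \epsilon/2$ (harmless since $\mathcal{R}^{(\beta)}_{N,k}$ has diameter at most $2$, so we may assume $\epsilon \leq 2$) produces an $\epsilon/2$-net $\mathcal{S}$ of $\Ss^{\beta N -1}$ with $|\mathcal{S}| \leq (6/\epsilon)^{\beta N}$. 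Taking the $k$-fold Cartesian product $\mathcal{P} = \mathcal{S}^{k}$ gives a subset of $(\Ss^{\beta N -1})^k$ with $|\mathcal{P}| \leq (6/\epsilon)^{\beta N k}$ which is an $\epsilon/2$-net for the max norm $\|u\| = \max_i \|u_i\|_2$ used in the statement.

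Next I would turn $\mathcal{P}$ into a net of the subset $\mathcal{R}^{(\beta)}_{N,k}$ itself. For each $y \in \mathcal{P}$ with $\operatorname{dist}(y,\mathcal{R}^{(\beta)}_{N,k}) \leq \epsilon/2$ (distance measured in $\|\cdot\|$), choose one point $z_y \in \mathcal{R}^{(\beta)}_{N,k}$ achieving distance at most $\epsilon/2$ from $y$; such a minimizer exists because $\mathcal{R}^{(\beta)}_{N,k}$ is compact. Define $\mathcal{N}^{(\beta)}_{N,k}(\epsilon)$ to be the collection of all such $z_y$. By construction $|\mathcal{N}^{(\beta)}_{N,k}(\epsilon)| \leq |\mathcal{P}| \leq (6/\epsilon)^{\beta N k}$. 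To check the net property, take any $x \in \mathcal{R}^{(\beta)}_{N,k}$; there is $y \in \mathcal{P}$ with $\|x - y\| \leq \epsilon/2$, and then $y$ satisfies $\operatorname{dist}(y,\mathcal{R}^{(\beta)}_{N,k}) \leq \epsilon/2$ (witnessed by $x$), so it contributes some $z_y \in \mathcal{N}^{(\beta)}_{N,k}(\epsilon)$ with $\|y - z_y\| \leq \epsilon/2$. The triangle inequality yields $\|x - z_y\| \leq \epsilon$, as required.

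There is really no conceptual obstacle here; the only thing to watch is the constant in the sphere-covering estimate, which is where the factor $6$ comes from (one factor of $2$ from using $\epsilon/2$-nets to absorb the subsequent projection step, and the factor $3$ from the volumetric $(1+2/r)^{\beta N}$ bound on the sphere). Everything else is bookkeeping: picking an $\epsilon/2$-net at each of the two scales and chaining them with the triangle inequality.
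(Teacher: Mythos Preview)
Your proposal is correct and follows essentially the same argument as the paper: take an $\epsilon/2$-net of the sphere with the standard $(3/r)^{\beta N}$ cardinality bound, form the $k$-fold product to get an $\epsilon/2$-net in the max norm, and then project onto $\mathcal{R}^{(\beta)}_{N,k}$ by picking a nearby orthonormal tuple for each product point close to the manifold, concluding via the triangle inequality.
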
  
	
	\begin{proof} We can find an $\epsilon$-net $\mathcal{N}(\epsilon)$ of  $\Ss^{\beta N -1}$ of cardinality at most $(3/ \epsilon)^{\beta N }$, so $(\mathcal{N}(\epsilon/2))^k$ is an $\epsilon/2$-net on $(\Ss^{\beta N -1})^k$. We build $\mathcal{N}_{N,k}(\epsilon)$ by choosing for each $x \in (\mathcal{N}(\epsilon/2))^k$ such that $B(x, \epsilon/2) \cap \mathcal{R}^{(\beta)}_{N,k} \neq \emptyset$, some $x'$ in $B(x, \epsilon/2) \cap \mathcal{R}^{(\beta)}_{N,k}$ arbitrarily. Then, it is easy to see that the set of such $x'$ is an $\epsilon$-net of $ \mathcal{R}^{(\beta)}_{N,k}$ with the stated bound on its cardinality. 
	\end{proof}  
	Next we will use the fact that for any self-adjoint positive matrix $M$ whose eigenvalues are $\mu_1 \geq \dots \geq \mu_N$: 
	\begin{equation} \label{eqNet}
		\max_{(e_1,\dots,e_k) \in\mathcal{N}^{(\beta)}_{N,k}(1/\sqrt{2}) } \sum_{i=1}^k \langle e_i, M e_i \rangle  \leq \max_{(e_1,\dots,e_k) \in\mathcal{R}^{(\beta)}_{N,k} } \sum_{i=1}^k \langle e_i, M e_i \rangle \leq 16 \max_{(e_1,\dots,e_k) \in\mathcal{N}^{(\beta)}_{N,k}(1/\sqrt{2}) } \sum_{i=1}^k \langle e_i, M e_i \rangle \end{equation}
	
	First, let us recall that the maximum in the middle term is attained when $e=
	(e_1,\dots,e_k) = (u_1,\dots,u_k)$ are the unitary eigenvectors for the respective largest eigenvalues $\mu_1, \dots,\mu_k$ and is equal to $\sum_{i=1}^k \mu_i$.
	The first inequality is trivial. For the second one, let $e=(e_1,\dots,e_k) \in \mathcal{N}^{(\beta)}_{N,k}$ such that $||u_i - e_i||_2 \leq 1/ \sqrt{2}$ for all $i \in [1,k]$. Then $ \Re \langle u_i,e_i \rangle \geq \frac{1}{4}$. Therefore, using that $M$ is positive, we have that 
	$\langle e_i, M e_i \rangle \geq \frac{\mu_i}{16}$. Summing over $i$ gives the desired inequality. 
	
	Furthermore, if we denote $\mu^N_1 \geq \dots \geq \mu^N_N$ the eigenvalues of $X_N^2$, it is easy to see that the quantity defined in \eqref{eq:empiricalsquare} satisfies
	\begin{equation} \label{bound}
		\hat{\nu}_N(x^2) \leq \frac{1}{2 k(N)} \sum_{i=1}^{2k(N)} \mu_i .\end{equation}
	Therefore, it will be sufficient to prove the exponential tightness of 
	
	\[ \frac{1}{2k(N)}\max_{(e_1,\dots,e_k) \in\mathcal{R}^{(\beta)}_{N,2 k(N)} } \sum_{i=1}^{2 k(N)} \langle e_i, X_N^2 e_i \rangle = \sum_{i=1}^{2 k(N)} \frac{1}{2 k(N)} ||X_N e_i||^2 \]
	
	First, let us prove the following lemma 
	
	\begin{lemma}
		Let $N \in \N$, $k \leq N$, and  $(e_1,\dots,e_{k}) \in \mathcal{R}_{N,k}^{(\beta)} $ and let us assume that $(X_N)_{N \in \N}$ is a sequence of Wigner matrices that satisfy Assumption \ref{assum2}. For $a < \beta/4$, we have
		\[ \E\left[ \exp( a N \sum_{i=1}^{k} ||X_N e_i||_2^2 ) \right] \leq  \Big( \frac{1}{\sqrt{1 -(2/\beta)^2 a}} \Big)^{\beta k(k-1)/2} \Big( \frac{1}{\sqrt{1 -4a/ \beta}} \Big)^{k} \Big(\frac{1}{\sqrt{1- 2 a/\beta^2}} \Big)^{\beta(N-k)k}.\]
	\end{lemma}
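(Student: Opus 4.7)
The plan is to linearize the squared Frobenius norm $\|X_N E\|_F^2 = \sum_{i=1}^k \|X_N e_i\|_2^2$ via the Hubbard--Stratonovich identity, invoke the sharp sub-Gaussian hypothesis on the entries of $X_N$ to bound the resulting exponential of a linear form in $X_N$, and then evaluate the residual Gaussian integral by exploiting the orthonormality of $(e_1,\dots,e_k)$.

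More concretely, writing $E=(e_1|\cdots|e_k)$ which is $N\times k$ with $E^*E=I_k$, I introduce an $N\times k$ auxiliary matrix $G$ with i.i.d.\ standard real ($\beta=1$) or complex ($\beta=2$) Gaussian entries and apply Hubbard--Stratonovich to obtain
\[
\exp\bigl(aN\|X_NE\|_F^2\bigr)=\E_G\exp\bigl(\sqrt{2aN}\,\Re\langle G,X_NE\rangle_F\bigr).
\]
Using Fubini and the Hermitian symmetry of $X_N$, the exponent rewrites as $\sqrt{2aN}\,\tr(X_N B)$ with $B=(EG^*+GE^*)/2$ Hermitian. Splitting this trace into its diagonal and off-diagonal contributions and invoking Assumption~\ref{assum1} on each independent entry $d_j$ and $a_{j,l}$ yields
\[
\E_{X_N}\bigl[\exp(\sqrt{2aN}\,\tr(X_N B))\bigr]\leq\exp\bigl(c_\beta a\|B\|_F^2\bigr)
\]
for an explicit constant $c_\beta$ that depends only on $\beta$ and the prescribed variances.

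The remaining task is to compute $\E_G\exp(c_\beta a\|B\|_F^2)$. I complete $E$ to a full orthonormal basis $\tilde E=[E,F]$ and use the rotational invariance of the Gaussian law to decompose the rotated matrix $\tilde E^*G$ into two independent Gaussian blocks, $U:=E^*G\in\mathbb{C}^{k\times k}$ and $V:=F^*G\in\mathbb{C}^{(N-k)\times k}$. A direct expansion of $B^2$ (using $E^*E=I_k$ and the cyclic property of the trace) gives the identity
\[
\tr(B^2)=\tfrac{1}{2}\bigl(\|V\|_F^2+\|U\|_F^2+\Re\tr(U^2)\bigr),
\]
and the symmetric/antisymmetric decomposition of $U$ (further split into real and imaginary parts when $\beta=2$) collapses the $U$-piece into the sum of squares of entries of three independent Gaussian matrices. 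These three blocks --- the $k$ diagonal symmetric entries, the $\tfrac{k(k-1)}{2}$ off-diagonal symmetric/antisymmetric entries, and the $(N-k)k$ free entries of $V$ --- each yield a product of one-dimensional $\chi^2$ moment generating functions, which assemble exactly into the three factors in the statement. The restriction $a<\beta/4$ is precisely what is needed for all three $\chi^2$ integrals to converge.

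The step that requires the most care, and is the main obstacle, is the bookkeeping of variances across the two cases $\beta=1,2$: the Hubbard--Stratonovich normalization, the sharp sub-Gaussian constants attached to $d_j$ and $a_{j,l}$, and the resulting variances of the Gaussian blocks $U$ and $V$ all differ, so one has to track a $\beta$-dependent scaling through each step to match the advertised exponents. Once this bookkeeping is fixed, every individual computation reduces to an explicit chi-squared Laplace transform.
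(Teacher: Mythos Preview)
Your proposal is correct and uses essentially the same idea as the paper: linearize the positive quadratic form via Hubbard--Stratonovich, apply the sharp sub-Gaussian bound on the resulting linear exponential, and finish with an explicit Gaussian computation. The paper packages the first two steps as the abstract lemma ``$\E[e^{\phi(X)}]\le\E[e^{\phi(G)}]$ for any centered sharp sub-Gaussian $X$, Gaussian $G$ with the same covariance, and positive quadratic $\phi$'', which immediately reduces the problem to $Y_N$ from the GOE/GUE; it then uses the rotational invariance of $Y_N$ to take $e_i$ to be the standard basis vectors, so that $\sum_i\|Y_Ne_i\|_2^2$ is just a sum of squared entries and the three product factors drop out with no further algebra. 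Your route is equivalent but inverts the order: you keep $E$ general, rotate the auxiliary Gaussian $G$ via the completion $\tilde E=[E,F]$, and then compute $\tr(B^2)$ by hand --- this reproduces exactly the same block structure (diagonal of $U$, off-diagonal of $U$, and $V$) at the cost of the extra identity $\tr(B^2)=\tfrac12(\|V\|_F^2+\|U\|_F^2+\Re\tr U^2)$. Both arguments rely on Assumption~\ref{assum1} (sharp sub-Gaussian), not Assumption~\ref{assum2} as the lemma's hypothesis literally reads; the paper's own proof does the same, so this is a typo in the statement rather than a gap in your argument.
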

	
	For this, we will use the following lemma on sharp sub-Gaussian variables:
	\begin{lemma}
		Let $X$ be a centered sharp sub-Gaussian random variable in $\R^d$ and $G$ be a centered Gaussian variable with the same covariance matrix as $X$. For any positive quadratic form $\phi$, we have: 
		\[ \E[ e^{\phi(X)}] \leq \E[ e^{\phi(G)}]. \]
	\end{lemma}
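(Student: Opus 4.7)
The plan is to linearize the quadratic form $\phi$ by integrating against an auxiliary Gaussian variable, and then apply the sharp sub-Gaussian hypothesis directly to the resulting linear exponential. Since $\phi$ is a positive quadratic form on $\R^d$, we may write $\phi(x) = \frac{1}{2}\langle A x, x\rangle$ for some symmetric positive semidefinite matrix $A$. Let $Y \sim \mathcal{N}(0,A)$ be a Gaussian vector independent of $X$, so that the Gaussian Laplace transform gives the pointwise identity
\[
e^{\phi(x)} \;=\; \E_{Y}\bigl[\,e^{\langle Y, x\rangle}\,\bigr] \qquad \text{for every } x \in \R^d.
\]

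First I would apply Fubini (legitimate since the integrand is non-negative) to write $\E[e^{\phi(X)}] = \E_Y \E_X[e^{\langle Y, X\rangle}]$. Then, conditionally on $Y$, the sharp sub-Gaussian property of $X$ yields
\[
\E_X\bigl[e^{\langle Y, X\rangle}\bigr] \;\leq\; \exp\!\Bigl(\tfrac{1}{2}\langle \mathrm{Cov}(X)\,Y, Y\rangle\Bigr),
\]
so that
\[
\E\bigl[e^{\phi(X)}\bigr] \;\leq\; \E_Y\bigl[\,e^{\frac{1}{2}\langle \mathrm{Cov}(X) Y, Y\rangle}\,\bigr].
\]

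Finally, I would recognize the right-hand side as $\E[e^{\phi(G)}]$ by running the Gaussian decoupling in reverse: introducing $G \sim \mathcal{N}(0,\mathrm{Cov}(X))$ independent of $Y$, another use of the Gaussian Laplace identity followed by Fubini gives
\[
\E_Y\bigl[e^{\frac{1}{2}\langle \mathrm{Cov}(X)Y,Y\rangle}\bigr] \;=\; \E_Y\E_G\bigl[e^{\langle Y,G\rangle}\bigr] \;=\; \E_G\E_Y\bigl[e^{\langle Y,G\rangle}\bigr] \;=\; \E_G\bigl[e^{\phi(G)}\bigr],
\]
which is the desired inequality.

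The only point requiring care is well-definedness of the Gaussian integrals. If $\E[e^{\phi(G)}] = +\infty$ the claim is vacuous, so we may assume it is finite; this is equivalent to the spectrum of $A\,\mathrm{Cov}(X)$ lying strictly below $1$, and a short determinant computation then shows all Fubini applications above are justified. This is really the only subtle step: the argument itself is just Gaussian integration by parts combined with the defining inequality of sharp sub-Gaussianity.
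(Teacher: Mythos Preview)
Your proof is correct and follows essentially the same approach as the paper: both linearize the quadratic $\phi$ via a Gaussian integral, apply the sharp sub-Gaussian bound to the resulting linear exponential, and then undo the linearization. The only cosmetic difference is that the paper first reduces to $\phi(x)=\|x\|^2/2$ by a change of variables and then integrates against a standard Gaussian, whereas you work directly with $Y\sim\mathcal N(0,A)$; the substance is identical.
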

	\begin{proof}
		First, there exists a symmetric positive matrix $A$ such that $\phi(x) = ||Ax||^2$. Therefore, since $AX$ is still sharp sub-Gaussian with the same covariance matrix as $AG$, without loss of generality, we can assume $\phi(x) = ||x||^2/2$. Then, we remind that for any $x \in \R^k$
		
		\[ e^{\frac{||x||^2}{2}} = \Big(\frac{1}{2 \pi} \Big)^{\frac{d}{2}} \int_{ \R^{d}} e^{  \langle t,x \rangle } e^{- \frac{ ( ||t||^2 ) }{2}} dt.  \] 
		Substituting $X_N$ for $x$, taking the expectation and using the sharp sub-Gaussianity, we obtain
		
		\begin{eqnarray*}
			\E[ e^{\frac{||X||^2}{2}}] &=& \Big(\frac{1}{2 \pi} \Big)^{\frac{d}{2}} \int_{ \R^{d}} \E[e^{  \langle t,X \rangle }] e^{- \frac{ ( ||t||^2 ) }{2}} dt \\
			& \leq& \Big(\frac{1}{2 \pi} \Big)^{\frac{d}{2}} \int_{ \R^{d}} \E[e^{  \langle t,G \rangle }] e^{- \frac{ ( ||t||^2 ) }{2}} dt  \\
			&\leq &\E[ e^{\frac{||G||^2}{2}}].
		\end{eqnarray*}
		Therefore, for $\phi(X) = a N \sum_{i=1}^k ||X e_i||^2$, which is a positive quadratic form on the set of Hermitian matrices, we have that 
		\[ \E\left[ \exp\left( a N \sum_{i=1}^{k} ||X_N e_i||_2^2 \right) \right] \leq \exp\left( a N \sum_{i=1}^{k} ||Y_N e_i||_2^2 \right)  \]
		where $Y_N$ is a GOE matrix if $\beta =1$ or a GUE matrix $\beta =2$. Then, using the orthogonal/unitary invariance of the law of $Y_N$, we can also assume that in the right hand side, $e_i$ is the $i-th$ vector of the canonical basis. Let us compute then $ \E[ \exp( a N \sum_{i=1}^{k} ||Y_N e_i||_2^2)]$. Using the orthogonal/unitary invariance of the law of $Y_N$, we can assume without loss of generality that $e_i$ is the $i$-th vector of the canonical basis. Therefore, we have that 
		
		\begin{eqnarray*}
			\E\left[ \exp( a N \sum_{i=1}^{k} ||Y_N e_i||_2^2)\right] &=&  \exp \Big( a  \Big( \sum_{1 \leq i < j \leq k } 2 |a_{i,j}|^2 + \sum_{i=1}^k |d_i|^2  + \sum_{i = k+1}^N \sum_{j=1}^k |a_{i,j}|^2 \Big).
		\end{eqnarray*}
		In the case $\beta =1$, we have that the $a_{i,j}$ are of law $\mathcal{N}(0,1)$ and the $d_i$ are of law $\mathcal{N}(0,2)$ and therefore, computing the expectation above we have that provided $a< 1/4$: 
		\begin{eqnarray*}
			\E\left[ \exp( a N \sum_{i=1}^{k} ||Y_N e_i||_2^2)\right] &=& \Big(\frac{1}{\sqrt{1- 4a}}\Big)^{k(k+1)/2} \Big(\frac{1}{\sqrt{1- 2a}} \Big)^{(N-k)k}. 
		\end{eqnarray*}  
		In the case $\beta =2$, we have that the $a_{i,j}$ are such that $\Im a_{i,j}$ and $\Re a_{i,j}$ are independent of law $\mathcal{N}(0,1/2)$ and $d_i$ is of law $\mathcal{N}(0,1)$ so, provided $a < 1/2$, 
		\begin{eqnarray*}
			\E \Big[ \exp( a N \sum_{i=1}^{k} ||Y_N e_i||_2^2) \Big] &=& \Big(\frac{1}{\sqrt{1- a}}\Big)^{k(k-1)} \Big( \frac{1}{\sqrt{1 -2a}} \Big)^{k} \Big(\frac{1}{1- a/2} \Big)^{(N-k)k}.
		\end{eqnarray*}  
		
		Therefore for $a =1/8$, we can find some explicit constant $C$ such that
		\[ \E[ \exp( N \sum_{i=1}^{k} ||Y_N e_i||_2^2 /8) \leq \exp( kCN).\]
		Markov's inequality then gives for any $b \geq 0$
		\[ \Pp [ \sum_{i=1}^{k} ||Y_N e_i||_2^2 \geq k b ] \leq \exp( kN (C - b)). \]
		Now using equation \eqref{eqNet}, equation \eqref{bound} and Lemma \ref{Net}, there is some constant $C'$ such that: 
		\[ \Pp[ \hat{\nu}_N (x^2) \geq b ] \leq \exp(k N ( C' -b))  \]
		which prove the exponential tightness.

	\end{proof}

	\subsection{Asymptotics of the annealed spherical integral: Proof of Proposition \ref{AnnealedSI}}
	This computation is very similar to \cite{GuHu21}. For the upper bound, we show
	\[ \limsup_{N} \frac{1}{k(N)} \Big( \frac{2}{\beta N} \ln \E[ I_N(X_N,D_N)] - \sum_{i=1}^{2 k(N)} \frac{\theta_i^2}{2}  \Big) \leq  0 \]
	we refer the reader to \cite{GuHu21}  and in particular to the fact that: 
	\begin{eqnarray*}
		\E_X[I(X_N,D_N)] & =& \E_{X,U} \left[ \exp \Big( \sum_{1 \leq i < j \leq  N}  L_{i,j}( \beta \sqrt{N} (U^* D_N U)_{i,j}) + \sum_{1 \leq i \leq N} L_{i,i}(\frac{\beta}{2} \sqrt{N} (U^* D_N U)_{i,i}) \Big) \right] \end{eqnarray*}
	with $U$ a Haar -distributed matrix and $L_{i,j}$ being the Laplace transform of the unrenormalized entry $(i,j)$ of $X_N$. Because $L_{i,j}( z) \leq \frac{|z|^2}{2 \beta}$ for $i \neq j$ and $L_{i,i}(z) \leq \frac{\Re(z)^2}{\beta}$ we get the upper bound.  
	For the lower bound, we want to use a Taylor expansion of the $L_{i,j}$ near zero. For this, we want to prove that the quantities $\sqrt{N} (U D_N U)_{i,j}$ remain small for all off-diagonal entries. More precisely, we have the following lemma, whose proof will be deferred to Appendix~\ref{app:A}:
	\begin{lemma}\label{excessmass}
		For $k(N) = o(N/\ln N)$ and for $\epsilon > 0$ let  $A^{(\epsilon)}_N$ be the following random variable: 
		\[ A^{(\epsilon)}_N:= \frac{1}{k(N)}\sum_{i,j}  \mathds{1}_{ \beta\sqrt{N}| ( U^* D_N U)_{i,j}|/2 \geq \epsilon} |( U^* D_N U)_{i,j}|^2  \]
		Then  $A^{(\epsilon)}_N$ converges in probability toward $0$. 
	\end{lemma}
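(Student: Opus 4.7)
The plan is to show $A^{(\epsilon)}_N \to 0$ in probability by splitting the sum defining $A^{(\epsilon)}_N$ into diagonal ($i=j$) and off-diagonal ($i\neq j$) contributions and controlling each on a good event where the random rank-$2k(N)$ orthogonal projection $P := U^* \pi U$ (with $\pi$ the projection onto the support of $D_N$) has uniformly small diagonal entries. Writing $M := U^* D_N U$ and denoting the columns of $U$ by $u^{(1)},\dots,u^{(N)}$, the key probabilistic input will be concentration of measure on the unit sphere $S^{N-1}$ applied conditionally on each $u^{(i)}$.

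Concretely, I would first introduce the event
\[
\mathcal{G}_N := \Bigl\{\max_{1 \leq i \leq N} P_{ii} \leq C\,\frac{k(N)+\ln N}{N}\Bigr\}.
\]
Since $P_{ii} = \|\pi u^{(i)}\|^2$ is distributed as the squared norm of the projection of a uniform unit vector onto a fixed $2k(N)$-dimensional subspace, standard chi-squared tail estimates yield $\Pp[P_{ii} \geq C(k(N)+\ln N)/N] \leq e^{-c(k(N)+\ln N)}$, and a union bound over $i$ gives $\Pp[\mathcal{G}_N]\to 1$ once $C$ is chosen large enough. On $\mathcal{G}_N$, the diagonal contribution to $A^{(\epsilon)}_N$ is immediately controlled using $|M_{ii}| \leq K P_{ii}$ together with $\sum_i P_{ii} = \tr P = 2k(N)$:
\[
\frac{1}{k(N)}\sum_i M_{ii}^2 \;\leq\; \frac{K^2(\max_i P_{ii})}{k(N)}\sum_i P_{ii} \;\leq\; 2CK^2\,\frac{k(N)+\ln N}{N} \;\longrightarrow\; 0.
\]

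The off-diagonal part is the substantive step. For each fixed pair $i\neq j$, I would condition on $u^{(i)}$: then $u^{(j)}$ is uniform on the unit sphere inside the orthogonal complement of $u^{(i)}$, and $M_{ij} = \langle D_N u^{(i)}, u^{(j)}\rangle = \langle a^\perp, u^{(j)}\rangle$, where $a^\perp$ is the component of $D_N u^{(i)}$ orthogonal to $u^{(i)}$, so $\|a^\perp\|^2 \leq \|D_N u^{(i)}\|^2 \leq K^2 P_{ii}$. L\'evy's concentration inequality on the sphere then gives, on $\mathcal{G}_N$,
\[
\Pp\!\Bigl[|M_{ij}| \geq \tfrac{2\epsilon}{\beta\sqrt{N}} \,\Big|\, u^{(i)},\,\mathcal{G}_N\Bigr] \;\leq\; 2\exp\!\Bigl(-\frac{c\epsilon^2 N}{K^2(k(N)+\ln N)}\Bigr).
\]
The hypothesis $k(N) = o(N/\ln N)$ is equivalent to $N/(k(N)+\ln N) \gg \ln N$, so the right-hand side is $o(N^{-3})$; a union bound over the $N^2$ pairs $(i,j)$ then shows $\max_{i\neq j}|M_{ij}| < 2\epsilon/(\beta\sqrt{N})$ with probability tending to one, making the off-diagonal contribution vanish identically.

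The main delicate point is the off-diagonal concentration: a direct Lipschitz bound of $M_{ij}$ as a function on the full unitary/orthogonal group only provides a Lipschitz constant of order $K$, producing a tail bound that does not decay with $N$ at the scale $1/\sqrt{N}$. The essential refinement is the conditioning on $u^{(i)}$, which replaces the effective Lipschitz constant by $K\sqrt{P_{ii}} = O\bigl(K\sqrt{(k(N)+\ln N)/N}\bigr)$; combined with the uniform bound on $\max_i P_{ii}$ provided by $\mathcal{G}_N$, this gain is exactly what allows the union bound over $N^2$ pairs to succeed, and it is also what dictates the precise growth hypothesis $k(N) = o(N/\ln N)$.
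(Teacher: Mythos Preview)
Your proof is correct and follows essentially the same strategy as the paper's: split into diagonal and off-diagonal terms, control $\max_i P_{ii}$ via Beta/chi-squared tails, and for the off-diagonal entries condition on one column and use spherical concentration (the paper via explicit Beta computations, you via L\'evy's inequality) before a union bound over the $N^2$ pairs. One small imprecision: in your off-diagonal step you condition on both $u^{(i)}$ and $\mathcal{G}_N$, but $\mathcal{G}_N$ depends on all columns and so destroys the uniformity of $u^{(j)}$; the clean fix --- which the paper makes explicit --- is to intersect instead with the $u^{(i)}$-measurable event $\{P_{ii}\le C(k(N)+\ln N)/N\}\supset\mathcal{G}_N$.
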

	
	We now prove the lower bound. First we remind that thanks to the sharp sub-Gaussian character of our entries, there is a function $\delta: \R^+ \to \R^+$ converging to $0$ in $0$ and such that: 
	\[ L_{i,j}(z) \geq \frac{|z|^2(1 - \delta(|z|))}{ 2^{\beta - \mathds{1}_{i = j}}}. \]
	 Indeed, this i just a Taylor expansion and for this we only to prove that the third derivative of $L_{i,j}$ is uniformly bounded in a neighborhood of zero. For convenience sake, let us just look at the case $i\neq j$ and $\beta=1$ as the other are very similar. We have 
	
\begin{eqnarray*}
	 |L'''_{i,j} (t) |
	&=&| \E[a_{i,j}^3e^{ta_{i,j}}] |\\
	& \leq& \E[|a_{i,j}^3|e^{ta_{i,j}}] \\
	& \leq & \E[(a_{i,j}^2 + a_{i,j}^4)e^{ta_{i,j}}] \\
	&\leq & 24 \E[ \cosh( a_{i,j}) e^{ta_{i,j}}] \\
	& \leq & 12 L_{i,j}( t- 1) + 12 L_{i,j}(t + 1) \\
	& \leq & 12( e^{(t-1)^2/2} +  e^{(t+1)^2/2})
	\end{eqnarray*}
	which leads to the desired bound on $L'''_{i,j}$ and then to the existence of $\delta$.
	For $\epsilon > 0$,
	\begin{align*}
		&\Big( \sum_{1 \leq i < j \leq  N}  L_{i,j}( \beta \sqrt{N} (U^* D_N U)_{i,j}) + \sum_{1 \leq i \leq N} L_{i,i}(\frac{\beta}{2} \sqrt{N} (U^* D_N U)_{i,i}) \Big) 
		\\&\geq \beta  N(1 - \delta(\epsilon) ) \Big( \sum_{1 \leq i < j \leq  N}  |(U^* D_N U)_{i,j}|^2 \mathds{1}_{|\sqrt{N}(U^* D_N U)_{i,j}| \leq \epsilon} + \frac{1}{2} \sum_{1 \leq i \leq N}  (U^* D_N U)_{i,i}^2 \mathds{1}_{|\sqrt{N}(U^* D_N U)_{i,i}| \leq \epsilon}\Big)
		\\&\geq \beta(1 - \delta(\epsilon)) N k(N) \Big( \frac{1}{2 k(N)} \sum_{1 \leq i \leq 2k(N)} \frac{(\theta^N_i)^2}{2} - A_N^{(\epsilon)} \Big).
	\end{align*}
	Therefore, one can write for every $\eta >0$: 
	\[ \E[ I_N(X_N,D_N)] \geq \Pp[ A_N^{(\epsilon)} \leq \eta] \exp\Big( \beta N k(N) (1 - \delta(\epsilon))\Big( \frac{1}{2 k(N)} \Big(\sum \frac{(\theta^N_i)^2}{2} - \eta \Big) \Big) \Big) \]
	and therefore, since $ \Pp[ A_N^{(\epsilon)} \leq \eta]$ converges to $1$, taking the $\ln$, dividing by $N k(N)$ and letting $N$ to $\infty$ and then $\epsilon$ to $0$ and then $\eta$ to $0$ gives the lower bound. 
	
	\subsection{Large deviation upper bound}
	In this subsection and the following subsection, we are going to use frequently the quantile function $Q_{\nu}$ of a probability measure $\nu$. We remind the definition and the classical properties of this function that we will use: 
	
	\begin{defi}
		Let $\mu \in \mathcal{P}(\R)$. We define $Q_{\mu}:]0,1[ \to \R \cup \{ \pm \infty\}$ the \emph{quantile function} of $\mu$ with the following expression for: 
		\begin{equation}\label{eq:quantile}
		Q_{\mu}(p) = \inf \{ x \in \R: p \leq \nu(]- \infty, x ]) \} .
		\end{equation}
	\end{defi}
	In particular, a well-known property of $Q_{\mu}$ is the following ``change of variable" formula:
	\begin{prop}
		If $\mu \in \mathbb{P}(\R)$ and $f \in \mathcal{B}(\R)$ then $f$ in $\mu$-integrable if and only if $f \circ Q_{\mu}$ is Lebesgue integrable on $[0,1]$ and then: 
		\[ \int_{\R} f(x) d \mu(x) = \int_0^1 f(Q_{\mu}(x)) dx.\]
	\end{prop}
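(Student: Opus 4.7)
The plan is to reduce both assertions to the standard transfer theorem for pushforward measures by showing that the quantile function $Q_\mu$ pushes the Lebesgue measure $\lambda$ on $(0,1)$ forward to $\mu$, i.e.\ $(Q_\mu)_*\lambda = \mu$. Once this pushforward identity is established, the transfer theorem immediately gives both that $f$ is $\mu$-integrable if and only if $f\circ Q_\mu$ is $\lambda$-integrable on $(0,1)$ and that
\[ \int_\R f(x)\,d\mu(x) = \int_\R f\,d((Q_\mu)_*\lambda) = \int_{(0,1)} (f\circ Q_\mu)(p)\,dp. \]
Since $\{0,1\}$ is Lebesgue null, the integral over $(0,1)$ agrees with the integral over $[0,1]$ in the statement.

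The key step is thus verifying $(Q_\mu)_*\lambda = \mu$. First I would observe that $Q_\mu$ is nondecreasing by its definition as an infimum, hence Borel measurable. It then suffices to check that the two measures agree on half-lines $(-\infty,x]$ for every $x\in\R$, i.e.\ that
\[ \lambda\bigl(\{p\in(0,1):Q_\mu(p)\le x\}\bigr) = F_\mu(x), \qquad F_\mu(x):=\mu((-\infty,x]). \]
The heart of the argument is the classical equivalence
\[ Q_\mu(p)\le x \iff p\le F_\mu(x), \]
valid for $p\in(0,1)$. The forward direction uses the right-continuity of $F_\mu$: if $Q_\mu(p)\le x$ then for every $\varepsilon>0$ one has $p\le F_\mu(x+\varepsilon)$ by the definition \eqref{eq:quantile} as an infimum, and letting $\varepsilon\downarrow 0$ yields $p\le F_\mu(x)$. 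The reverse direction is immediate from the definition: $p\le F_\mu(x)$ puts $x$ in the set whose infimum is $Q_\mu(p)$. This equivalence identifies $\{p\in(0,1):Q_\mu(p)\le x\}$ with $(0,F_\mu(x)]\cap(0,1)$, whose Lebesgue measure is $F_\mu(x)$.

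I do not expect any serious obstacle; the argument is essentially a bookkeeping exercise in handling atoms of $\mu$ and the endpoints of $(0,1)$, both of which are absorbed by taking $F_\mu$ right-continuous and discarding the $\lambda$-null set $\{0,1\}$. The only point requiring a moment of care is the right-continuity step in the equivalence above, which is what forces the correct treatment of points $x$ at which $F_\mu$ jumps.
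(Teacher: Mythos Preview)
Your proof is correct and is the standard argument for this well-known fact. The paper does not actually prove this proposition; it is stated as a ``well-known property'' of the quantile function and left without proof, so there is nothing to compare against.
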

	Lastly, it is well known that weak convergence is equivalent to the convergence of quantile function: 
	
	\begin{prop}
		Let $(\mu_N)_{N \in \N}$ be a sequence of elements of $\mathcal{P}(\R)$ and $\mu \in \mathcal{P}(\R)$. Then we have: 
		\[ \lim \mu_N = \mu \Leftrightarrow \lim Q_{\mu_N}=Q_{\mu} \text{ Lebesque a.e. on $[0,1]$}. \]
	\end{prop}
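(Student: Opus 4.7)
The plan is to exploit the probability integral transform: if $U$ is uniform on $(0,1)$, then $Q_{\mu}(U)$ has law $\mu$, which is exactly the content of the change of variable formula stated just before. This reduces the equivalence to a comparison between pointwise (Lebesgue) convergence of $Q_{\mu_N}$ and weak convergence of the pushforward measures $\mu_N = (Q_{\mu_N})_* \mathrm{Leb}_{[0,1]}$.

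The easy direction is $(\Leftarrow)$. Assume $Q_{\mu_N} \to Q_\mu$ Lebesgue-a.e.\ on $[0,1]$. For any bounded continuous $f$, applying the change of variable formula and then dominated convergence (using that $|f \circ Q_{\mu_N}(t)| \leq \|f\|_\infty$ uniformly) yields
\[
\int_{\R} f \, d\mu_N \;=\; \int_0^1 f(Q_{\mu_N}(t))\, dt \;\xrightarrow[N\to\infty]{}\; \int_0^1 f(Q_\mu(t))\, dt \;=\; \int_{\R} f \, d\mu,
\]
which is the Portmanteau characterization of weak convergence.

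The converse $(\Rightarrow)$ is the substantive direction. Assume $\mu_N \to \mu$ weakly, equivalently, that $F_{\mu_N}(x) \to F_\mu(x)$ at every continuity point $x$ of $F_\mu$. Since $Q_\mu$ is monotone, its set of discontinuity points is at most countable and hence Lebesgue-negligible, so it suffices to prove $Q_{\mu_N}(t) \to Q_\mu(t)$ at every continuity point $t \in (0,1)$ of $Q_\mu$. Fix such a $t$ and $\varepsilon > 0$. By density of the continuity points of $F_\mu$ in $\R$, pick continuity points $x_- < Q_\mu(t) < x_+$ of $F_\mu$ with $x_+ - x_- < 2\varepsilon$. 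The key observation is the duality $Q_\mu(t) \leq x \iff t \leq F_\mu(x)$, which combined with the continuity of $Q_\mu$ at $t$ gives the strict inequalities $F_\mu(x_-) < t < F_\mu(x_+)$. Weak convergence then yields $F_{\mu_N}(x_-) < t < F_{\mu_N}(x_+)$ for all $N$ large enough, hence $x_- \leq Q_{\mu_N}(t) \leq x_+$, so $|Q_{\mu_N}(t) - Q_\mu(t)| \leq 2\varepsilon$.

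The main subtle point is establishing the strict inequality $F_\mu(x_+) > t$ (and symmetrically $F_\mu(x_-) < t$) from the continuity hypothesis on $Q_\mu$ at $t$: $F_\mu(x_+) \geq t$ is automatic because $x_+ > Q_\mu(t)$, but a flat piece of $F_\mu$ at height $t$ to the right of $Q_\mu(t)$ would violate continuity of $Q_\mu$ at $t$ (it would create a jump of $Q_\mu$ at level $t$). A brief case analysis of the jump/flat duality between $F_\mu$ and $Q_\mu$ handles this; everything else is bookkeeping.
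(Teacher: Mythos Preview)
Your proof is correct. The paper itself does not supply a proof of this proposition: it is introduced with the phrase ``it is well known that weak convergence is equivalent to the convergence of quantile function'' and then simply stated, so there is nothing to compare against. Your argument is the standard one and is sound in both directions.

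One small presentational remark on the $(\Rightarrow)$ direction: the two strict inequalities $F_\mu(x_-) < t$ and $F_\mu(x_+) > t$ are not quite symmetric as you suggest. The first follows immediately from the contrapositive of the Galois duality $Q_\mu(t) \leq x \iff t \leq F_\mu(x)$ applied at $x_-$, with no appeal to continuity of $Q_\mu$ needed. It is only the second, $F_\mu(x_+) > t$, that genuinely requires the continuity of $Q_\mu$ at $t$ via the flat-piece/jump correspondence you describe. This does not affect correctness, only the exposition.
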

	
	For every 
	$\nu \in \mathcal{P}(\R)$ and every $a' < \mathcal{I}(\nu)$, we want to prove that there exists a neighborhood $\mathcal{V}$ of $\nu$ (for the topology of the convergence in law) such that 
	\[\limsup_{N \to \infty} \frac{1}{\beta  N k(N)} \ln \Pp[ \hat{\nu}_N \in \mathcal{V} ] \leq -a'. \]
	
	First, let us look at the case of $\nu$ that are such that either $\nu(] - \infty, -2]) \neq 1/2$ or $\nu([2, + \infty[) \neq 1/2$. In particular, this implies the existence of $a \in ] -2,2[$ such that either $\nu(] - \infty, a]) < 1/2$ or $\nu([ a, + \infty[) < 1/2$. Let us assume that we are in the first case. One can then choose $b < 1/2$ such that $\mathcal{V}:= \{ \nu' \in \mathcal{P}([0,1]): \nu'(] - \infty,a]) <  b \}$ is a neighborhood of $\nu$ for the weak topology. Furthermore, if $\hat{\nu}_N \in \mathcal{V}$, it implies the existence of $i \in [1, k(N)]$ such that, either $\lambda_{N -i+1}^N \geq a$. However, we have for $N$ large enough that 
	\[ \{ \exists i \in [1, k(N)]: \lambda_{N - i+1}^N > a   \} \subset \{ d( \hat{\mu}_N, \sigma) > \epsilon(N) \} \]
	where $\epsilon(N)$ is such that Assumption \ref{assum2} is satisfied. Therefore, using this assumption, the upper bound is satisfied.
	We denote $\mathcal{V}_{\nu,\epsilon}$ the following neighborhood of $\nu$: 
	\[ \mathcal{V}_{\nu,\epsilon}:= \{ \nu' \in \mathcal{P}(\R): d(\nu',\nu) < \epsilon \} \]
	where $d$ is a distance on $\mathcal{P}(\R)$ metrizing the topology of the convergence in law and $\mathcal{A}_{\nu,\epsilon,M}$ the following event:
	\begin{equation}\label{eq:event}
		\mathcal{A}_{\nu,\epsilon,M}:= \{ \hat{\nu}_N(x^2) \leq M, d(\hat{\mu}_N, \sigma) \leq \epsilon(N), \hat{\nu}_N \in \mathcal{V}_{\nu,\epsilon} \}
	\end{equation}
	Then since 
	\[ \Pp[ \hat{\nu}_N \in \mathcal{V}_{\nu,\epsilon} ] \leq  \Pp[ \mathcal{A}_{\nu,\epsilon,M} ] + \Pp[ d( \hat{\mu}_N, \sigma) \geq \epsilon(N)] + \Pp[ \hat{\nu}_N(x^2) \geq M] \]
	Since there exists $M >0$ such that: 
	\[ \limsup_{N \to \infty} \frac{1}{\beta k(N) N} \ln \Pp[ d( \hat{\mu}_N, \sigma) \geq \epsilon(N)] = - \infty \text{ and } \limsup_{N \to \infty} \frac{1}{\beta k(N) N} \ln \Pp[ \hat{\nu}_N(x^2) \geq M] < - a' \]
	it only remains to prove that for any $a'$ such that $0 < a' < \mathcal{I}(\nu)$:
	\[  \limsup_{N \to \infty} \frac{1}{\beta k(N) N} \ln \Pp[\mathcal{A}_{\nu,\epsilon,M}]  \leq - a'. \]
	
	Let us take for every $N\in \N$, $(\theta_i^N)_{-k(N) \leq i \leq k(N) \atop i \neq 0 }$ defined as follows for some parameter $K >0$
	\[\forall i = 1,\dots, k(N), ~\theta_{-i}^N := \max\left( - K, G_{\sigma}\left(Q_{\nu}\left( \frac{ i - 1/2}{2 k(N)}\right) \right)^{-1} \right) \]
	\[ \forall i = 1,\dots, k(N),~\theta_{i}^N := \min\left( G_{\sigma}\left( Q_{\nu}\left(1 - \frac{ i - 1/2}{2 k(N)}\right) \right)^{-1}, K\right).\]
	
	By taking $D_N = \diag( \theta^N_{- k(N)}, \dots \theta^N_{-1}, \theta^N_1 ,\dots, \theta^N_{k(N)},0,\dots,0)$, we have
	
	\begin{eqnarray*}
		\Pp[ \mathcal{A}_{\nu,\epsilon,a'}] &=& \E \Big[ \frac{ I_N( X_N, D_N)}{I_N(X_N,D_N)} \mathds{1}_{ \mathcal{A}_{\mu,\epsilon,a'}} \Big]
		\\
		& \leq  & \E[ \mathds{1}_{ \mathcal{A}_{\nu,\epsilon,M}} I_N(D_N,X_N)] e^{- \beta N k(N) (M_N(\nu,\epsilon,M)  + o(1))} \\
		\end{eqnarray*}
	where
	\[ M_N(\nu,\epsilon,M) = \inf_{ ( \lambda_i) \in \mathcal{E}^N_{\nu,\epsilon,M} } \frac{1}{2 k(N)}\sum_{i= - k(N) \atop i \neq 0}^{k(N)} J(\theta_i^N, \lambda_i, \sigma)  \]
	with
	\[ \mathcal{E}^N_{\nu,\epsilon,M}:= \bigg\{ (\lambda_i)_{- k(N) \leq i \leq k(N) \atop i \neq 0} \in (\R^{-,*})^{k(N)} \times (\R^{+,*})^{k(N)}: d\bigg(\frac{1}{2 k(N)}\sum \delta_{\lambda_i},\nu\bigg) \leq \epsilon, \sum \lambda_i^2 \leq 2 k(N) M \bigg\}. \]
	and where we used Theorem \ref{generalizedSIconv} to argue that 
	\[ \frac{ I_N( X_N, D_N)}{I_N(X_N,D_N)} \mathds{1}_{ \mathcal{A}_{\mu,\epsilon,a'}} \leq e^{- \beta N k(N) (M_N(\nu,\epsilon,M)  + o(1))} \]
	Then, we have  
		
			\begin{eqnarray*}
		\Pp[ \mathcal{A}_{\nu,\epsilon,a'}]	& \leq &\E[I_N(D_N,X_N)] e^{- \frac{\beta}{2}N k(N) (M_N(\nu,\epsilon,M)+o(1)) } \\
		&  \leq & e^{ \beta N k(N) (\frac{1}{2k(N)} \sum_{i=-k(N) \atop i \neq 0}^{k(N)} \frac{(\theta_i^N)^2}{2} - M_N(\nu,\epsilon,M) +o(1)))} \\
 &\leq& e^{  - \beta N k(N) ( \tilde{M}_N(\nu,\epsilon,M) +o(1))} \end{eqnarray*}
	where
	\[ \tilde{M}_N(\nu,\epsilon,M):= M_N(\nu,\epsilon,M) - \frac{1}{2 k(N)}\sum_{i= - k(N) \atop i \neq 0}^{k(N)} \frac{(\theta_i^N)^2}{2}  = \inf_{ ( \lambda_i) \in \mathcal{E}^N_{\nu,\epsilon,M} } \frac{1}{2 k(N)}\sum_{i= - k(N) \atop i \neq 0}^{k(N)} \Big( J(\theta_i^N, \lambda_i, \sigma) - \frac{(\theta_i^N)^2}{2} \Big) \]
	and where we used Proposition \label{AnnealedSI} to approximate $ \E[ I_N(D_N,X_N)]$.
	We only need to prove that for any $ a' < \mathcal{I}(\nu)$, there exists $K >0$ such that:
	\[ \lim_{\epsilon \to 0 } \liminf_{N \to \infty} \tilde{M}_N(\nu,\epsilon,M) > a' ,  \]
	If we denote for $(\lambda_i) \in \mathcal{E}^N_{\nu,\epsilon, M}$, $\hat{\nu}= \frac{1}{2k(N)} \sum \delta_{\lambda_i}$, then one can notice that 
	\[ \sum_{i= - k(N) \atop i \neq 0}^{k(N)} \Big( J(\theta_i^N, \lambda_i, \sigma) - \frac{(\theta_i^N)^2}{2} \Big) = \int_{[0,1]} J( \Theta^N(x), Q_{\hat{\nu}}(x), \sigma) - \frac{(\Theta^N(x))^2}{2} dx  \]
	where $\Theta^N = \sum_{i= 1}^{k(N)}\theta_{-i}^N \mathds{1}_{[ (i -1)/(2 k(N)), i/(2k(N))[} + \sum_{i= 1}^{k(N)}\theta_{i}^N \mathds{1}_{[ 1 - i/(2 k(N)), 1 - (i-1)/(2k(N))[}$
	and therefore
	\[ \lim_{\epsilon \to 0}\liminf_{N \to \infty} \tilde{M}_N(\nu,\epsilon,M) \geq   \lim_{\epsilon \to 0} \liminf_{N \to \infty} \inf_{\tilde{\nu} \in \mathcal{V}_{\nu,\epsilon}} \int_{[0,1]} J( \Theta^N(x), Q_{\tilde{\nu}}(x), \sigma) - \frac{(\Theta^N(x))^2}{2} dx.  \]
	Therefore, if $a'< \mathcal{I}(\nu)$, one only needs to find a parameter $K >0$ such that the right hand side of the preceding equation is greater than $a'$.
	
	This is equivalent to finding $K >0$ such that for any sequence of probability measure $(\nu_N)_{N \in \N}$ converging toward $\nu$: 
	\[ \liminf_{N \to \infty} \int_{[0,1]} J( \Theta^N(x), Q_{{\nu_N}}(x), \sigma) - \frac{(\Theta^N(x))^2}{2} dx  > a' \]
	Therefore, we need only the following lemma:

	\begin{lemma}\label{approx}
		Let $\nu \in \mathcal{P}(\R)$ such that  $\nu(] - \infty,-2]) = \nu([2, + \infty[) = 1/2$ and $(\nu_N)_{N \in \N}$ a sequence of probability measures that converges in law toward $\nu$ and such that  $\nu_N(] - \infty,-2]) = \nu_N([2, + \infty[) = 1/2$. Then, for every $a' < \mathcal{I}(\nu)$ there is some $K > 0$ such that: 
		\[\liminf_{N \to \infty} \int_{[0,1]} \Big( J( \Theta^N(x), Q_{\nu_N}(x), \sigma) - \frac{(\Theta^N(x))^2}{2} \Big)dx > a'. \]
	\end{lemma}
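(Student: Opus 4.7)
The plan is to exploit the variational identity
\[I(\lambda) = \sup_{\theta \geq 0} \Big[ J(\theta, \lambda, \sigma) - \theta^2/2 \Big] \qquad (\lambda \geq 2),\]
with supremum attained at $\theta^\star(\lambda) := G_\sigma(\lambda)^{-1} = (\lambda + \sqrt{\lambda^2-4})/2 \geq 1$ (and its mirror for $\lambda \leq -2$). The construction of the $\theta_i^N$ is precisely a discretization at the quantile levels of $\nu$ of this optimizer, truncated at $K$. I therefore pass to the limit in two successive stages: first $N \to \infty$ for fixed cap $K$ via Fatou's lemma, then $K \to \infty$ via monotone convergence.

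For the first stage, $Q_\nu$ has only countably many discontinuities and $G_\sigma$ is continuous on $\R \setminus [-2,2]$, so for a.e.\ $x \in (0,1) \setminus \{1/2\}$ one obtains the pointwise limit $\Theta^N(x) \to \Theta_K(x)$, where $\Theta_K(x) = \min(\theta^\star(Q_\nu(x)), K)$ for $x > 1/2$ and the mirror for $x < 1/2$. Since $\nu_N \to \nu$ weakly with matching mass on $(-\infty,-2]$ and $[2,\infty)$, quantile convergence $Q_{\nu_N}(x) \to Q_\nu(x)$ holds Lebesgue-a.e.\ on $(0,1)$, with $|Q_{\nu_N}(x)| \geq 2$. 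Joint continuity of $(\theta, \lambda) \mapsto J(\theta, \lambda, \sigma) - \theta^2/2$ on $\{|\theta| \in [1,K], \, |\lambda| \geq 2\}$ (noting $|\Theta^N| \geq 1$ because $|\theta^\star| \geq 1$) then yields pointwise convergence of the integrand to $J(\Theta_K(x), Q_\nu(x), \sigma) - \Theta_K(x)^2/2$.

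Next, I would apply Fatou's lemma. The key observation is that $J(\theta, \lambda, \sigma) \geq 0$ for all $\theta > 0$ and $\lambda \geq 2$ (and symmetrically): this follows from the computation $J \equiv \theta^2/2$ on the first regime $(0, G_\sigma(\lambda)]$ (where substituting $v = G_\sigma^{-1}(\theta) = \theta + 1/\theta$ gives $J - \theta^2/2 = \theta v - \theta^2/2 - \ln\theta - U_\sigma(v) - 1$ with derivative $v - \theta - 1/\theta \equiv 0$), combined with the monotonicity of $J$ in $\theta$ on $[G_\sigma(\lambda), \infty)$. Consequently $J(\Theta^N, Q_{\nu_N}, \sigma) - (\Theta^N)^2/2 \geq -(\Theta^N)^2/2 \geq -K^2/2$, a uniform constant minorant, so Fatou yields
\[\liminf_N \int_0^1 \!\!\Big[ J(\Theta^N(x), Q_{\nu_N}(x), \sigma) - \Theta^N(x)^2/2 \Big] dx \geq \int_0^1 \!\!\Big[ J(\Theta_K(x), Q_\nu(x), \sigma) - \Theta_K(x)^2/2 \Big] dx.\]

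Finally, I would send $K \to \infty$. The same regime analysis shows $\theta \mapsto J(\theta, \lambda, \sigma) - \theta^2/2$ is identically $0$ on $(0, G_\sigma(\lambda)]$ and strictly increasing from $0$ to $I(\lambda)$ on $[G_\sigma(\lambda), \theta^\star(\lambda)]$. Since $|\Theta_K(x)|$ is non-decreasing in $K$ and stays within $(0, \theta^\star(|Q_\nu(x)|)]$, the truncated integrand is monotonically non-decreasing in $K$ and converges pointwise to $I(Q_\nu(x))$. Monotone convergence gives
\[\int_0^1 \Big[ J(\Theta_K(x), Q_\nu(x), \sigma) - \Theta_K(x)^2/2 \Big] dx \nearrow \int_0^1 I(Q_\nu(x)) dx = \mathcal{I}(\nu),\]
so for any $a' < \mathcal{I}(\nu)$ one chooses $K$ large enough that the right-hand side exceeds $a'$. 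The main technical obstacle is verifying the nonnegativity of $J$ and the monotonicity structure of $\theta \mapsto J - \theta^2/2$, both of which reduce to careful (but standard) case analysis on the two regimes in the definition of $J$; once these are in hand, the passage to the limit is routine.
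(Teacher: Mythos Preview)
Your proposal is correct and follows essentially the same route as the paper: pointwise a.e.\ convergence of $\Theta^N$ and $Q_{\nu_N}$ to the truncated optimizer $\Theta_K$ and $Q_\nu$, then Fatou's lemma to pass to the $\liminf$, then monotone convergence in $K$ to reach $\mathcal{I}(\nu)$. The paper introduces the truncated rate $I^K(x)=\sup_{|\theta|\leq K}(J(\theta,x,\sigma)-\theta^2/2)$ and chooses $K$ with $\int I^K\,d\nu>a'$ \emph{before} applying Fatou, whereas you apply Fatou first and take $K\to\infty$ afterwards, but the two orderings are equivalent. Your explicit verification that $J\geq 0$ on the relevant range (yielding the uniform minorant $-K^2/2$ needed for Fatou) is a detail the paper leaves implicit.
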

	\begin{proof}
		For $K > 1$, we let for $x \geq 0$
		\[ I^K(x) = \sup_{ 0 \leq \theta \leq K} \Big( J(\theta,x,\sigma) - \frac{\theta^2}{2}  \Big) \]
		and for $x \leq  0$ $I^K(x) = I^K (-x) = \sup_{ 0 \geq  \theta \geq - K} \Big( J(\theta,x,\sigma) - \frac{\theta^2}{2}  \Big).$
		It is clear that $I^K \geq 0$ and for every $x$, $K \mapsto I^K(x)$ is increasing on $\R^+$ and converges toward $I(x)$. Therefore, given that $\mathcal{I}(\nu) > a'$, there is some $K >$ such that 
		\[ \int_{\R} I^K(x)dx > a'. \]

		Furthermore, for $x \geq 0$ we have that: 
		\[ \mathrm{argmax}_{K \geq \theta\geq 0}\Big( J(\theta,x,\sigma) - \frac{\theta^2}{2}  \Big) = \begin{cases} 0 \text{ if } x \leq 2 \\
			\min(G_{\sigma}(x)^{-1},K) \text{ if } x \geq 2 \end{cases} \]
		and 
		\[ \mathrm{argmax}_{-K \leq \theta\leq 0}\Big( J(\theta,x,\sigma) - \frac{\theta^2}{2}  \Big) = \begin{cases} 0 \text{ if } x \leq 2 \\
			\max(G_{\sigma}(x)^{-1},-K) \text{ if } x \geq 2 \end{cases}. \]
		
		For every $x$ such that $Q_{\nu}$ is continuous in $x$, we have that for $x > 1/2$,  $\lim_{N \to \infty}\Theta^N(x) = \Theta(x):= \min (G_{\sigma}^{-1}(Q_{\nu}(x)),K)$ and for $x <1/2$, $\lim_{N \to \infty}\Theta^N(x) = \max (G_{\sigma}^{-1}(Q_{\nu}(x)),- K)$. So since $Q_{\nu}$ has an at most countable number of points of discontinuity, $x \mapsto ( J( \Theta^N(x), Q_{\nu_N}(x), \sigma) - \frac{(\Theta^N(x))^2}{2} )$ converges Lebesgue almost everywhere toward
		\[ x \mapsto \Big( J( \Theta(x), Q_{\nu}(x), \sigma) - \frac{(\Theta(x))^2}{2} \Big) = I^{K}(x). \]
		
		Using Fatou's lemma finishes the proof. 
	\end{proof}

	Therefore we conclude that 
	\[ \limsup_{N \to \infty} \frac{1}{\beta k(N)}  \ln \Pp[\mathcal{A}_{\nu,\epsilon,M}] \leq -a' \]
	and the large deviation upper bound is proved. 
	
	\subsection{Large deviation lower bound}
	In fact, we are going to prove the following large deviation lower bound 
	\begin{prop}\label{prop:lowerboundLDP}
		If $\nu \in \mathcal{P}(\R)$ such that there is $K >0$ so that $\nu( [ - K ; -2  ]) =  \nu( [ 2, K] ) = 1/2$. Then for the rate function $\mathcal I$ defined in \eqref{eq:calI},
		\[	\lim_{\epsilon \to 0} \liminf_N \frac{1}{\beta N k(N)} \ln \Pp[ d(\hat{\nu}_N, \nu) \leq \epsilon] \geq  - \mathcal{I}(\nu). \]
	\end{prop}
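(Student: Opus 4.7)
The plan is to prove the lower bound via an exponential change of measure driven by the spherical integral, as the paper's outline indicates. Define
\[
\frac{d\tilde{\Pp}_N}{d\Pp} \;:=\; \frac{I_N(X_N,D_N)}{\E[I_N(X_N,D_N)]},
\]
where $D_N = \diag(\theta^N_{-k(N)},\ldots,\theta^N_{-1},\theta^N_1,\ldots,\theta^N_{k(N)},0,\ldots,0)$ is chosen exactly as in the upper bound: the positive temperatures $\theta^N_i$ for $i\geq 1$ equal the BBP--inverse $G_\sigma(Q_\nu(1-(i-1/2)/(2k(N))))^{-1}$, and symmetrically for $i\leq -1$; equivalently, $\theta^N_i + 1/\theta^N_i$ is the $i$-th quantile of $\nu$. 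Since $\nu$ is supported on $[-K,-2]\cup[2,K]$, the $|\theta^N_i|$ lie in a compact subset of $[1,\infty)$; a preliminary approximation argument reduces the claim to the ``nice'' case where $\nu$ has support in the interior of $(-\infty,-2)\cup(2,\infty)$, so that the $|\theta^N_i|$ stay uniformly above $1$ and no BBP--threshold subtleties arise.

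With $\mathcal{A}_{\nu,\epsilon,M}$ as in \eqref{eq:event}, the change of measure gives
\[
\Pp[d(\hat{\nu}_N,\nu)\leq \epsilon]\;\geq\;\E[I_N(X_N,D_N)]\cdot \tilde{\E}_N\!\left[\mathds{1}_{\mathcal{A}_{\nu,\epsilon,M}}/I_N(X_N,D_N)\right].
\]
On $\mathcal{A}_{\nu,\epsilon,M}$, Theorem~\ref{generalizedSIconv} upper bounds $\ln I_N(X_N,D_N)$ by $\beta Nk(N)\int_0^1 J(\Theta(x),Q_\nu(x),\sigma)\,dx + o(Nk(N))$, where $\Theta$ denotes the step function on $[0,1]$ with values $\theta^N_i$, while Proposition~\ref{AnnealedSI} lower bounds $\ln \E[I_N]$ by $\tfrac{\beta Nk(N)}{2}\int_0^1 \Theta(x)^2\,dx - o(Nk(N))$. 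Since $\Theta(x)$ is by construction the pointwise maximizer of $\theta\mapsto J(\theta,Q_\nu(x),\sigma)-\theta^2/2$, whose value equals $I(Q_\nu(x))$, combining these bounds yields that the integrand $\E[I_N]/I_N$ is at least $\exp(-\beta Nk(N)\mathcal{I}(\nu) - o(Nk(N)))$ uniformly on $\mathcal{A}_{\nu,\epsilon,M}$. It remains only to show that $\tilde{\Pp}_N[\mathcal{A}_{\nu,\epsilon,M}]\to 1$, which together with Assumption~\ref{assum2} (for $d(\hat{\mu}_N,\sigma)\leq \epsilon(N)$) and Proposition~\ref{exptight} (for $\hat{\nu}_N(x^2)\leq M$) yields the desired $\liminf$.

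The concentration of $\hat{\nu}_N$ on $\nu$ under $\tilde{\Pp}_N$ is the central step and is handled \`a la G\"artner--Ellis: for any $\nu'\neq \nu$ with $\mathcal{I}(\nu')<\infty$ and any small neighborhood $\mathcal{V}'$ of $\nu'$, $\tilde{\Pp}_N[\hat{\nu}_N\in\mathcal{V}'] \leq \E[I_N \mathds{1}_{\hat{\nu}_N\in\mathcal{V}'}]/\E[I_N]$ is controlled by combining Theorem~\ref{generalizedSIconv} (to approximate $I_N$ on $\mathcal{V}'$) with the LDP upper bound already proven in Subsection~5.3 (to bound $\Pp[\hat{\nu}_N\in\mathcal{V}']$) and Proposition~\ref{AnnealedSI} (for $\E[I_N]$), giving an exponential rate of the form $\int_0^1 [J(\Theta(x),Q_{\nu'}(x),\sigma) - \Theta(x)^2/2]\,dx - \mathcal{I}(\nu')$. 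The pointwise inequality $J(\Theta(x),y,\sigma) - \Theta(x)^2/2 \leq I(y)$ (with equality iff $y=Q_\nu(x)$, by pointwise optimality) makes this quantity non-positive, with equality iff $\nu'=\nu$. The main obstacle is upgrading this to a uniform strict gap on a compact set of $\nu'$ at distance $\geq \epsilon$ from $\nu$, needed to run a covering argument once exponential tightness (Proposition~\ref{exptight}) has been transferred to $\tilde{\Pp}_N$ to compactify the complement of a neighborhood of $\nu$. Continuity of $(\theta,y)\mapsto J(\theta,y,\sigma)$ on the relevant compact and of $\nu'\mapsto Q_{\nu'}$ in the Lebesgue-a.e.~sense, together with lower semicontinuity of $\mathcal{I}$, should suffice; extra care is needed near $y=\pm 2$ where $I$ is flat on one side, which is exactly what the reduction to ``nice'' $\nu$ in the first paragraph accommodates.
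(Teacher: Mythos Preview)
Your overall architecture matches the paper's: tilt by $I_N(X_N,D_N)/\E[I_N(X_N,D_N)]$ with $\theta^N_i=G_\sigma(Q_\nu(\cdot))^{-1}$, bound $I_N$ on $\mathcal{A}_{\nu,\epsilon,M}$ via Theorem~\ref{generalizedSIconv} and $\E[I_N]$ via Proposition~\ref{AnnealedSI}, then show $\tilde{\Pp}_N[\mathcal{A}_{\nu,\epsilon,M}]\to 1$ by proving that every $\nu'\neq\nu$ has a neighborhood with strictly negative tilted exponential rate, and finish with exponential tightness (Lemma~\ref{tiltexptight}) plus a compactness/covering argument.

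The one genuine methodological difference is in how you obtain the strictly negative rate at $\nu'\neq\nu$. You bound $\tilde{\Pp}_N[\mathcal V']\le(\sup_{\mathcal V'}I_N)\,\Pp[\mathcal V']/\E[I_N]$ and invoke the \emph{already-proven} LDP upper bound to supply the $-\mathcal I(\nu')$ term, landing on
\[
\int_0^1\!\big[J(\Theta(x),Q_{\nu'}(x),\sigma)-\tfrac{\Theta(x)^2}{2}\big]\,dx-\mathcal I(\nu')<0.
\]
The paper instead performs a \emph{second} tilt inside the first: it introduces temperatures $\theta'^N_i$ adapted to $\nu'$ (with a cutoff $K'$), inserts $I_N(X_N,D'_N)/I_N(X_N,D'_N)$, and arrives at
\[
\int_0^1\!\Big[\tfrac{\Theta'(x)^2}{2}-J(\Theta'(x),Q_{\nu'}(x),\sigma)+J(\Theta(x),Q_{\nu'}(x),\sigma)-\tfrac{\Theta(x)^2}{2}\Big]\,dx,
\]
which tends (as $K'\to\infty$) to exactly the same quantity you get. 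So the two routes converge on the same strict inequality; yours is more economical because it recycles the upper bound, while the paper's double tilt is self-contained and avoids re-quoting Section~5.3. Two minor points: your worry about a ``uniform strict gap'' is unnecessary---pointwise strict negativity per $\nu'$ plus tightness plus a finite cover is all either argument uses; and your reduction to $\nu$ supported in the \emph{interior} of $(-\infty,-2)\cup(2,\infty)$ is not needed (the paper works directly with $\nu$ supported on $[-K,-2]\cup[2,K]$, where $|\theta^N_i|\in[1,G_\sigma(K)^{-1}]$ is already bounded and $J$ is continuous there).
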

	
	Let us first prove that this proposition is sufficient for our large deviation lower bound. If we take $\nu$ such that $\mathcal{I}(\nu) < + \infty$. Then one can define for every $K >0$, $\nu^{(K)}$ the following probability measure 
	
	\[ \forall A \in \mathcal{B}(\R), \nu^{(K)}(A) = \frac{\nu(A \cap ( [-K, -2 ] \cup [ 2 , K]))}{\nu(  [-K, -2 ] \cup [ 2 , K])}. \]
	When $K$ tends to $\infty$, $\nu^{(K)}$ tends to $\nu$. Furthermore, since we have that 
	\[ \mathcal{I}( \nu^{(K)}) = \frac{1}{\nu(  [-K, -2 ] \cup [ 2 , K])} \int_{ -K}^{ K} I(x) d \nu(x) \]
		it is easy to see that $\mathcal{I}(\nu^{(K)})$ also tends to $\mathcal{I}(\nu)$.

	 Therefore, if $b > \mathcal{I}(\nu)$, for every $\epsilon >0$ there is $K$ such that $\mathcal{I}(\nu^{(K)})  < b $ and  $d( \nu^{(K)}, \nu) \leq \epsilon/2$
	Then $\Pp[ d( \hat{\nu}_N, \nu) \leq \epsilon] \geq \Pp[ d( \hat{\nu}_N, \nu^{(K)}) \leq \epsilon/2]$. Therefore, using the preceding proposition, we have
	\[ \liminf_{N \to \infty} \frac{1}{\beta N k(N)} \ln \Pp[ d(\hat{\nu}_N, \nu^{(K)}) \leq \epsilon/ 2] \geq  - \mathcal{I}(\nu^{(K)}) \geq -b \]
	and so:
	\[	\lim_{\epsilon \to 0} \liminf_{N \to \infty} \frac{1}{\beta N  k(N)} \ln \Pp[ d(\hat{\nu}_N, \nu) \leq \epsilon] \geq  - b. \]
	Since this is true for any $b > \mathcal{I}(\nu)$, the large deviation lower holds for any $\nu$.

	We will now prove Proposition~\ref{prop:lowerboundLDP}: 
	\begin{proof}[Proof of Proposition~\ref{prop:lowerboundLDP}]
		Let $\nu$ be as is in the assumption of the Proposition, then for every $N$ and $ i= 1,\dots, k(N)$, we define $\theta_i^N$ and $\theta_{-i}^N$ by
		\[ \theta_i^N = G_{\sigma} \left( Q_{\nu} \left( 1 - \frac{ i - 1/2}{2 k(N)} \right) \right)^{-1} \quad \text{and} \quad   \theta_{-i}^N = G_{\sigma}\left( Q_{\nu}\left( \frac{ i- 1/2}{2 k(N)} \right) \right)^{-1}. \]
		We define $\Pp^{\theta^N}$ as the following tilt on $\Pp$: 
		\[ d \Pp^{\theta^N}(X_N) = \frac{ I_X(D_N,X_N)}{\E_X[I(D_N,X_N)]} d \Pp(X_N) .\]
		
		Then, going back to the computation on the upper bound, we can write that 
		\begin{eqnarray*}
			\Pp[ \mathcal{A}_{\nu,\epsilon,M}] &=& \E \Big[ \frac{ I_N( X_N, D_N)}{I_N(X_N,D_N)} \mathds{1}_{ \mathcal{A}_{\nu,\epsilon,M}} \Big]
			\\
			& \geq  & \E[ \mathds{1}_{ \mathcal{A}_{\nu,\epsilon,M}} I_N(D_N,X_N)] e^{- \beta N k(N) (S_N(\nu,\epsilon,M)  + o(1))} \\
			& \geq &\frac{\Pp^{\theta^N}[\mathcal{A}_{\nu, \epsilon,M}]}{ \E[I_N(D_N,X_N)]} e^{- \frac{\beta}{2}N k(N) (S_N(\nu,\epsilon,M)+o(1)) } \\
			&  \geq &\Pp^{\theta^N}[\mathcal{A}_{\nu, \epsilon,M}] e^{ \beta N k(N) (\frac{1}{2k(N)} \sum_{i=-k(N) \atop i \neq 0}^{k(N)} \frac{(\theta_i^N)^2}{2} - S_N(\nu,\epsilon,M) +o(1)))} 
		\end{eqnarray*}
		where 
		\[ S_N(\nu,\epsilon,M) = \sup_{ ( \lambda_i) \in \mathcal{E}^N_{\nu,\epsilon,M} } \frac{1}{2 k(N)}\sum_{i= - k(N) \atop i \neq 0}^{k(N)} J(\theta_i^N, \lambda_i, \sigma).  \]
		If we let 
		\[\tilde{S}_N( \nu, \epsilon,M) = S_N( \nu, \epsilon,M) -\sum_{i=-k(N) \atop i \neq 0}^{k(N)} \frac{(\theta_i^N)^2}{2} \]
		we have:
		\[ \Pp[ \mathcal{A}_{\nu,\epsilon,M}] \geq  \Pp^{\theta^N}[ \mathcal{A}_{\nu,\epsilon,M}] e^{- \beta N k(N)( \tilde{S}_N(\nu,\epsilon,M) + o(1))}. \]

		First, we want to prove that: 
		\[ \lim_{\epsilon \to 0} \limsup_{N \to \infty}  \tilde{S}_N(\nu,\epsilon,M) \leq \mathcal{I}(\nu).  \]
		Again one can notice that: 
		\[  \tilde{S}_N(\nu,\epsilon,M) =  \sup_{(\lambda_i) \in \mathcal{E}_{\nu, \epsilon,M}} \int_{[0,1]} \Big( J( \Theta^N(x), Q_{\hat{\nu}}(x), \sigma) - \frac{(\Theta^N(x))^2}{2} \Big) dx .\]
		And so 
		\[ \lim_{\epsilon \to 0} \limsup_N  \tilde{S}_N(\nu,\epsilon,M) \leq \lim_{\epsilon \to 0} \limsup_{N \to \infty} \sup_{\tilde{\nu} \in \mathcal{V}_{\nu,\epsilon,M}} \bigg( \int_{[0,1]} J( \Theta^N(x), Q_{\tilde{\nu}}(x), \sigma) - \frac{(\Theta^N(x))^2}{2} dx \bigg) \]
		where:
		\[ \mathcal{V}_{\nu,\epsilon,M} = \{ \nu' \in \mathcal{P}(\R): d(\nu',\nu) \leq \epsilon, \nu'(x^2) \leq M \}.\]
		
		Therefore, one only needs to prove that the right hand side is lower than $\mathcal{I}( \nu)$. This is again equivalent to proving that for every sequence $(\nu_N)_{N \in}$ converging toward $\nu$ such that $\nu_N(x^2) \leq M$ for every $N$.
		\[ \limsup_{N \to \infty} \sup_{\tilde{\nu_N} \in \mathcal{V}_{\nu,\epsilon,M}} \int_{[0,1]} J( \Theta^N(x), Q_{\tilde{\nu}_N}(x), \sigma) - \frac{(\Theta^N(x))^2}{2} dx= \mathcal{I}(\nu). \]
		\begin{lemma}\label{approx2}
			Let $(\nu_N)_{N \in \N}$ be a sequence of probability measures that converges in law toward $\nu$ and such that $\nu_N(\R^{ -} ) = \nu_N(\R^{ +} ) = 1/2$ and $\nu_N(x^2) \leq M$. 
			\[\limsup_N \int_{[0,1]} \Big( J( \Theta^N(x), Q_{\nu_N}(x), \sigma) - \frac{(\Theta^N(x))^2}{2} \Big)dx = \mathcal{I}(\nu). \]
		\end{lemma}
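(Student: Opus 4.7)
The plan is to show that the integrand $F_N(x) := J(\Theta^N(x), Q_{\nu_N}(x), \sigma) - \Theta^N(x)^2/2$ converges in $L^1([0,1])$ to $I(Q_\nu(x))$, whose Lebesgue integral equals $\mathcal{I}(\nu)$ by the change-of-variable formula for quantile functions. The limsup identity then follows at once from convergence of the integrals.

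For the pointwise limit, fix $x\in(0,1)\setminus\{1/2\}$ at which $Q_\nu$ is continuous (all but countably many points). By construction, $\Theta^N(x)\to\Theta(x):=G_\sigma(Q_\nu(x))^{-1}$ as $N\to\infty$, and $Q_{\nu_N}(x)\to Q_\nu(x)$ by weak convergence. The map $(\theta,\lambda)\mapsto J(\theta,\lambda,\sigma)$ is jointly continuous on the relevant domain (the two cases in the definition of $v$ agree continuously at $\theta=G_\sigma(\lambda')$, and $J$ depends on $\lambda$ only through $\lambda'=\max(\lambda,2)$), so $F_N(x)\to J(\Theta(x),Q_\nu(x),\sigma)-\Theta(x)^2/2$. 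By the argmax computation carried out inside the proof of Lemma~\ref{approx}, sending the truncation parameter $K$ to infinity, this pointwise limit is exactly $I(Q_\nu(x))$.

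For uniform integrability, I rely on the compactness of $\mathrm{supp}(\nu)\subset[-K,-2]\cup[2,K]$: the values $\Theta^N(x)$ lie in a fixed compact subset of $\R\setminus[-1,1]$ uniformly in $N$ and $x$. For $\theta$ in such a set, direct inspection of the formula for $J(\theta,\lambda,\sigma)$ yields
\[
|J(\theta,\lambda,\sigma)-\theta^2/2|\leq C(1+|\lambda|),
\]
since $|\theta|\geq 1\geq G_\sigma(\lambda')$ forces the outlier regime $v=\lambda'$, in which the dominant $\theta\lambda'$ contribution is linear in $|\lambda|$, $\ln|\theta|$ is bounded, and $\int\ln|\lambda'-y|\,d\sigma(y)$ is at most logarithmic in $|\lambda|$. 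The hypothesis $\int_0^1 Q_{\nu_N}(x)^2\,dx=\nu_N(x^2)\leq M$ then gives $\{F_N\}$ bounded in $L^2([0,1])$, hence uniformly integrable on this finite-measure space. Vitali's convergence theorem delivers $\int_0^1 F_N\,dx\to\int_0^1 I\circ Q_\nu\,dx=\int I\,d\nu=\mathcal{I}(\nu)$.

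The main obstacle is the uniform integrability step: the $\nu_N$ are controlled only through their second moments, so $Q_{\nu_N}$ may take arbitrarily large values on small sets, and one must verify that $J-\theta^2/2$ grows no faster than linearly in $|\lambda|$ whenever $\theta$ ranges over a compact set bounded away from zero. The compact-support hypothesis on $\nu$ (which forces $\Theta^N$ away from zero, keeping the argument in the well-behaved outlier regime) together with the second-moment bound on $\nu_N$ are precisely the ingredients needed to close this argument; the inequality would fail if $\Theta^N$ were allowed to approach zero on a set of positive measure, since the logarithmic terms in $J$ would then blow up.
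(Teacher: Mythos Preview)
Your argument is correct and follows essentially the same route as the paper: pointwise a.e.\ convergence of the integrand to $I\circ Q_\nu$, a linear-in-$|\lambda|$ bound on $J(\theta,\lambda,\sigma)-\theta^2/2$ obtained from the uniform bound $|\Theta^N|\le K$ coming from the compact support of $\nu$, and then $L^2$-boundedness via $\nu_N(x^2)\le M$ to upgrade to $L^1$ convergence. The paper phrases the last step as ``bounded in $L^2$ and converges a.e., therefore converges in $L^1$'' rather than invoking Vitali, and writes the dominating bound as $K|Q_{\nu_N}(x)|+K^2/2$ directly, but the substance is identical.
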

		\begin{proof}
			Since $\Theta^N$ converges dx-a.e. toward $\Theta$,  $x \to J( \Theta^N(x), Q_{\nu_N}(x), \sigma) - \frac{(\Theta^N(x))^2}{2}$ converges $dx$-almost everywhere toward $x \mapsto J( \Theta(x), Q_{\hat{\nu}}(x), \sigma) - \frac{(\Theta(x))^2}{2}$, where we remind that $\Theta(x) = G_{\sigma}(Q_{\nu}(x))^{-1}$. Furthermore
			
			\[ \left| J( \Theta^N(x), Q_{\nu_N}(x), \sigma) - \frac{(\Theta^N(x))^2}{2} \right| \leq K |Q_{\nu_N}(x)| + \frac{K^2}{2}. \]
			Furthermore,
			\[ \int_0^1  (|Q_{\nu_N}(x)| + \frac{K^2}{2})^2 dx = \int (K|t| + K^2)^2 d\nu_N(t) \leq K^2 M + \sqrt{M} K^3 + K^4 \]
			since $\nu_N(x^2) \leq M$. Therefore, $x \to J( \Theta^N(x), Q_{\nu_N}(x), \sigma) - \frac{(\Theta^N(x))^2}{2}$ is bounded in $L^2( [0,1])$ and converges almost everywhere. Therefore, it converges in $L^1$, and so: 
			\[ \lim_N \int_0^1 \Big(J( \Theta^N(x), Q_{\nu_N}(x), \sigma) - \frac{(\Theta^N(x))^2}{2} \Big) dx = \int_0^1 \Big(J( \Theta(x), Q_{\nu}(x), \sigma) - \frac{(\Theta(x))^2}{2} \Big) dx = \int_{\R} I(x) d \nu(x) = \mathcal{I}(\nu). \]
		\end{proof}
		
		Therefore: 
		\[  \Pp[ \mathcal{A}_{\nu,\epsilon,M}] \geq \Pp^{\theta^N}[ \mathcal{A}_{\nu,\epsilon,M} ] e^{ -N k(N)( \mathcal{I}(\nu) + o(1))}. \]
		To conclude, we only need to prove that 
		\begin{lemma} For $M$ large enough, 
			\[ \lim_N  \Pp^{\theta^N}[ \mathcal{A}_{\nu,\epsilon,M}] = 1.\]
		\end{lemma}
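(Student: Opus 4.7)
The strategy is to bound $\Pp^{\theta^N}[\mathcal{A}_{\nu,\epsilon,M}^c]$ by decomposing the complement into three pieces: the tightness failure $\{\hat{\nu}_N(x^2) > M\}$, the bulk failure $\{d(\hat{\mu}_N, \sigma) > \epsilon(N)\}$, and the main event $E := \{\hat{\nu}_N \notin \mathcal{V}_{\nu,\epsilon}\} \cap \{\hat{\nu}_N(x^2) \leq M\} \cap \{d(\hat{\mu}_N, \sigma) \leq \epsilon(N)\}$. For the first two, I would apply the crude change-of-measure bound
\[
\Pp^{\theta^N}[F] \leq \frac{\sup I_N(D_N, X_N)}{\E I_N(D_N, X_N)}\,\Pp[F] \leq e^{C \beta N k(N)}\Pp[F],
\]
where $C$ depends only on the uniform cap $|\theta_i^N| \leq K$, and then invoke the super-exponential decay provided by Proposition~\ref{exptight} (after choosing $M$ large enough) together with Assumption~\ref{assum2} to kill both terms.

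For the main event $E$, I would use that the slice $\{\nu' \in \mathcal{P}(\R) : \nu'(x^2) \leq M\}$ is weakly compact, so it can be covered by finitely many balls $\mathcal{V}_{\nu'_j, \eta_j}$ with centers $\nu'_j \neq \nu$; it then suffices to show $\Pp^{\theta^N}[\mathcal{A}_{\nu'_j, \eta_j, M}] \to 0$ for each $j$. On such an event, Theorem~\ref{generalizedSIconv} controls the numerator and Proposition~\ref{AnnealedSI} controls the denominator, yielding
\[
\frac{I_N(D_N, X_N)}{\E I_N(D_N, X_N)} \leq \exp\Bigl(\beta N k(N)\bigl[\hat{J}_K(\nu'_j) + o(1)\bigr]\Bigr),\quad \hat J_K(\nu') := \int_0^1 \Bigl[J(\Theta(x), Q_{\nu'}(x), \sigma) - \tfrac{\Theta(x)^2}{2}\Bigr]\,dx,
\]
where $\Theta$ is the step function of the $\theta_i^N$ (this is exactly the limiting value of $\tilde S_N$ appearing in the previous subsection). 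Combining with the LDP upper bound $\Pp[\mathcal{A}_{\nu'_j, \eta_j, M}] \leq \exp(-\beta N k(N)[\mathcal{I}(\nu'_j) + o(1)])$ just established, I obtain
\[
\Pp^{\theta^N}[\mathcal{A}_{\nu'_j, \eta_j, M}] \leq \exp\Bigl(-\beta N k(N)\bigl[\mathcal{I}(\nu'_j) - \hat{J}_K(\nu'_j) + o(1)\bigr]\Bigr).
\]

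The decisive observation is the pointwise duality $J(\theta, y, \sigma) - \theta^2/2 \leq I(y)$, with equality if and only if $\theta = G_\sigma(y)^{-1}$; this is the first-order condition already exploited in Lemma~\ref{approx}. Because the tilt parameters were built precisely so that $\Theta(x) = G_\sigma(Q_\nu(x))^{-1}$ on $\{Q_\nu \leq f(K)\}$, choosing $K$ large enough that $f(K)$ exceeds the (bounded) support of $\nu$ yields $\hat{J}_K(\nu) = \mathcal{I}(\nu)$; for $\nu' \neq \nu$, the set $\{Q_{\nu'} \neq Q_\nu\}$ has positive Lebesgue measure and the integrand is strictly smaller there, forcing $\mathcal{I}(\nu') - \hat{J}_K(\nu') > 0$, and a compactness argument promotes this to a uniform positive gap on each $\mathcal{V}_{\nu'_j, \eta_j}$. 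The principal obstacle is to control the $o(1)$ terms uniformly in $\hat\nu_N \in \mathcal{V}_{\nu'_j, \eta_j}$ and to justify simultaneously the sup- and inf-passages to the continuum limit in $\tilde S_N$ and $\tilde M_N$ as $\eta \to 0$, both of which are handled by the quantile-convergence framework of Lemmas~\ref{approx} and \ref{approx2}.
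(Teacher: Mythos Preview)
Your approach to the main event $E$ is essentially the same as the paper's Lemma~\ref{tiltUB}: cover the compact slice by finitely many balls away from $\nu$, and on each ball use the spherical-integral asymptotics together with the strict inequality $\hat J_K(\nu') < \mathcal I(\nu')$ for $\nu'\neq\nu$. The paper phrases this as a double tilt (introducing a second family $\theta'^N_i$ adapted to $\nu'$), but the content is the same duality $J(\theta,y,\sigma)-\theta^2/2\le I(y)$ with equality only at $\theta=G_\sigma(y)^{-1}$.

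The gap is in your treatment of the tightness failure $\{\hat\nu_N(x^2)>M\}$. The bound
\[
\Pp^{\theta^N}[F]\le \frac{\sup_{X_N} I_N(D_N,X_N)}{\E I_N(D_N,X_N)}\,\Pp[F]\le e^{C\beta Nk(N)}\Pp[F]
\]
is simply false: $I_N(D_N,X_N)\le \exp(\beta N k(N) K\,|||X_N|||)$ and $|||X_N|||$ is unbounded, so the supremum is infinite. On the event $\{\hat\nu_N(x^2)>M\}$ you have precisely no control on the extremal eigenvalues, hence none on $I_N$. Proposition~\ref{exptight} only gives decay at an arbitrarily large but \emph{finite} exponential rate, which cannot absorb an unbounded Radon--Nikodym factor.

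The paper circumvents this via Lemma~\ref{tiltexptight}, whose proof (in the appendix) truncates on $\{I_N(D_N,X_N)\le e^{Nk(N)C}\}$ and controls the complementary piece by Cauchy--Schwarz against $\E[I_N(D_N,X_N)^2]$, which is bounded by $e^{2\beta Nk(N)K^2}$ thanks to sharp sub-Gaussianity. This yields
\[
\Pp^{\theta^N}[F]\le \Pp[F]\,e^{Nk(N)(C-\beta K^2/2+o(1))}+e^{\tfrac{Nk(N)}{2}(3\beta K^2/2-C+o(1))},
\]
and one then chooses $C$ large, followed by $M$ large (via Proposition~\ref{exptight}). Your argument would be repaired by invoking this lemma directly rather than the crude ratio bound; note also that the same issue arises for the bulk failure unless you first intersect with $\{\hat\nu_N(x^2)\le M\}$, where the von Neumann inequality does give $I_N\le e^{\beta Nk(N)K\sqrt M}$.
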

		To prove this result, we will first the following exponential tightness lemma for the tilted measure: 
		\begin{lemma}\label{tiltexptight}
			There exist a positive sequence $\epsilon'(N)$ converging toward $0$ such that
			\[ \limsup_{N \to \infty} \frac{1}{N k(N)} \ln \Pp^{\theta^N} [ d( \hat{\mu}_N, \sigma) \geq \epsilon'(N)]  = - \infty .\]
			For every $M>0$, there exists $L' > 0$ such that 
			\[ \limsup_{N \to \infty} \frac{1}{N k(N)} \ln \Pp^{\theta^N} [ \hat{\nu}(x^2) \geq L ] \leq - M. \]
			In particular $\hat{\nu}_N$ is exponentially tight.
		\end{lemma}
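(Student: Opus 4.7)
The plan is a standard Cauchy--Schwarz change-of-measure argument that transports the exponential tightness known under $\Pp$ (Assumption~\ref{assum2} for the bulk, Proposition~\ref{exptight} for the second moment) to the tilted law $\Pp^{\theta^N}$. For any measurable event $A$,
\[
\Pp^{\theta^N}[A] = \frac{\E[I_N(D_N,X_N)\mathds{1}_A]}{\E[I_N(D_N,X_N)]} \leq \frac{\sqrt{\E[I_N(D_N,X_N)^2]}\,\sqrt{\Pp[A]}}{\E[I_N(D_N,X_N)]}.
\]
If I can show that the ratio $\E[I_N(D_N,X_N)^2]/\E[I_N(D_N,X_N)]^2$ is at most $e^{C N k(N) + o(Nk(N))}$ for some constant $C=C(\beta,K)$, then any event $A$ with $\Pp[A]\leq e^{-\alpha Nk(N)}$ will satisfy $\Pp^{\theta^N}[A]\leq e^{-(\alpha/2-C/2)Nk(N)+o(Nk(N))}$, so superexponential decay passes to superexponential decay and a finite rate $\alpha$ passes to the degraded rate $\alpha/2-C/2$.

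For the ratio, write
\[
\E[I_N(D_N,X_N)^2] = \int\!\!\int \E_{X_N}\bigl[e^{(\beta N/2)\tr(X_N Z(U_1,U_2))}\bigr]\,dU_1\,dU_2, \qquad Z(U_1,U_2):=U_1 D_N U_1^* + U_2 D_N U_2^*.
\]
For fixed $U_1,U_2$ the exponent is linear in the entries of $X_N$, so applying the sharp sub-Gaussian bound entry-by-entry with the variance normalization of Assumption~\ref{assum1} yields exactly the GOE/GUE moment, i.e.\ a bound of the form $\exp(c_\beta N\|Z\|_2^{2})$ with $c_\beta$ an explicit constant (this is the same linear Laplace-transform computation used in the proof of Proposition~\ref{AnnealedSI}, but now with $Z$ instead of $U D_N U^*$). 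Since $\|Z\|_2 \leq 2\|D_N\|_2$ and $\|D_N\|_2^{2} \leq 2k(N)K^2$, the exponent is at most $C\,N k(N)$ uniformly in $U_1,U_2$, giving $\E[I_N(D_N,X_N)^2] \leq e^{C N k(N)}$. For the denominator, Jensen's inequality applied to the Haar integral together with $\E[X_N]=0$ gives $\E[I_N(D_N,X_N)]\geq 1$; alternatively, Proposition~\ref{AnnealedSI} supplies the sharper constant, but the trivial lower bound suffices.

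To conclude, for the first claim I take $\epsilon'(N):=\epsilon(N)$ from Assumption~\ref{assum2}. Since $\frac{1}{Nk(N)}\ln\Pp[d(\hat\mu_N,\sigma)\geq \epsilon(N)]\to-\infty$, the ratio bound immediately upgrades this to $\frac{1}{Nk(N)}\ln\Pp^{\theta^N}[d(\hat\mu_N,\sigma)\geq \epsilon(N)]\to-\infty$. For the second claim, given $M>0$, apply Proposition~\ref{exptight} with the value $2M+C$ to produce an $L>0$ with $\Pp[\hat\nu_N(x^2)\geq L]\leq e^{-(2M+C)Nk(N)}$ for all $N$ large; the Cauchy--Schwarz estimate then delivers $\Pp^{\theta^N}[\hat\nu_N(x^2)\geq L]\leq e^{-M Nk(N)+o(Nk(N))}$, which is the desired bound and which also implies exponential tightness of $\hat\nu_N$ under the tilt.

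The only genuinely delicate point is the second-moment step: one needs the sharp sub-Gaussian hypothesis with precisely the GOE/GUE variance normalization of Assumption~\ref{assum1} in order to reduce $\E[I_N(D_N,X_N)^2]$ to a Gaussian Laplace transform, and then the crucial observation that $\|D_N\|_2^2 = O(k(N))$ (because $D_N$ has rank at most $2k(N)$ and operator norm at most $K$) so that no factor larger than $e^{O(Nk(N))}$ escapes the integration over the pair $(U_1,U_2)$. Everything else is pure bookkeeping.
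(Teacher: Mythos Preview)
Your argument is correct. Both your proof and the paper's rest on the same core estimate, namely that $\E[I_N(D_N,X_N)^2]\le e^{O(Nk(N))}$ via the sharp sub-Gaussian Laplace bound and $\tr\bigl((U_1D_NU_1^*+U_2D_NU_2^*)^2\bigr)\le 4\tr(D_N^2)\le 8k(N)K^2$; this is exactly what the paper computes. The difference is in how this second-moment bound is deployed. You apply Cauchy--Schwarz directly to $\Pp^{\theta^N}[A]=\E[I_N\mathds{1}_A]/\E[I_N]$ and use the elementary Jensen bound $\E[I_N]\ge 1$, yielding $\Pp^{\theta^N}[A]\le e^{CNk(N)/2}\Pp[A]^{1/2}$. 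The paper instead truncates: it splits $\E[I_N\mathds{1}_A]$ at the level $\{I_N\le e^{CNk(N)}\}$, bounds the sub-level piece by $e^{CNk(N)}\Pp[A]$, and controls the super-level piece by a separate Cauchy--Schwarz lemma combined with the annealed asymptotic of Proposition~\ref{AnnealedSI} for $\E[I_N]$. Your route is shorter and avoids both the truncation and the appeal to Proposition~\ref{AnnealedSI}; the price is a halving of the exponential rate inherited from $\Pp$, but since Assumption~\ref{assum2} and Proposition~\ref{exptight} supply arbitrarily large rates, this loss is immaterial for the statement of the lemma.
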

		The proof of this lemma is postponed to the appendix. Then, using this lemma, we want to prove the following lemma: 
		\begin{lemma}\label{tiltUB}
			Let $\nu' \in \mathcal{P}(\R)$ such that $\nu' \neq \nu$. There exists $\epsilon >0$ such that: 
			\[ \limsup_{N \to \infty} \frac{1}{ N k(N)} \ln \Pp[ d(\hat{\nu}_N, \nu') \leq \epsilon ] < 0 \]
		\end{lemma}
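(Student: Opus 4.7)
The plan is to work under the tilted measure $\Pp^{\theta^N}$ (the $\Pp$ in the statement being a typo in context) and to write
\[
\Pp^{\theta^N}\bigl[d(\hat\nu_N,\nu')\le\epsilon\bigr]
=\frac{\E\bigl[I_N(D_N,X_N)\,\mathds{1}_{d(\hat\nu_N,\nu')\le\epsilon}\bigr]}{\E[I_N(D_N,X_N)]}.
\]
First I intersect the event in the numerator with $\mathcal{A}_{\nu',\epsilon,M}$ from \eqref{eq:event}; Lemma~\ref{tiltexptight} ensures that for $M$ large enough the complementary contributions are exponentially negligible under $\Pp^{\theta^N}$. On the restricted event I use the crude bound $\E[I_N\mathds{1}_{\mathcal{A}}]\le \Pp[\mathcal{A}]\cdot\sup_{X\in\mathcal{A}}I_N(D_N,X)$, control the supremum uniformly by Theorem~\ref{generalizedSIconv} (using that on $\mathcal{A}_{\nu',\epsilon,M}$ the empirical measure of $X_N$ approaches $\sigma$ and the extreme eigenvalues have bounded energy), and bound $\Pp[\mathcal{A}_{\nu',\epsilon,M}]$ via the large deviation upper bound already proved in the preceding subsection. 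For the denominator I invoke Proposition~\ref{AnnealedSI}.

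Collecting these three estimates and then letting $\epsilon\to 0$ by an argument parallel to Lemma~\ref{approx2} yields
\[
\limsup_{N}\frac{1}{Nk(N)}\ln\Pp^{\theta^N}\bigl[d(\hat\nu_N,\nu')\le\epsilon\bigr]
\le\beta\Bigl[\int_0^1\!\Bigl(J(\Theta(x),Q_{\nu'}(x),\sigma)-\tfrac{\Theta(x)^2}{2}\Bigr)dx-\mathcal{I}(\nu')\Bigr],
\]
where $\Theta(x)=G_{\sigma}(Q_{\nu}(x))^{-1}$ truncated at $\pm K$ is the profile built into the tilt.

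The heart of the argument is to show that this bracket is strictly negative when $\nu'\neq\nu$. The variational identity for the rate function of the top eigenvalue gives, for every $|y|\ge 2$,
\[
I(y)=\sup_{\theta\in\R}\Bigl[J(\theta,y,\sigma)-\tfrac{\theta^2}{2}\Bigr],
\]
with \emph{unique} maximizer $\theta^\ast(y)=G_\sigma(y)^{-1}$. Applied pointwise,
\[
J(\Theta(x),Q_{\nu'}(x),\sigma)-\tfrac{\Theta(x)^2}{2}\le I(Q_{\nu'}(x)),
\]
with equality iff $\Theta(x)=G_{\sigma}(Q_{\nu'}(x))^{-1}$, i.e.\ iff $Q_{\nu}(x)=Q_{\nu'}(x)$. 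Since $\nu\neq\nu'$ and both are supported on $[-K,-2]\cup[2,K]$ where $G_\sigma$ is injective, the quantile functions differ on a set of positive Lebesgue measure, giving the strict inequality $\int_0^1(J(\Theta,Q_{\nu'})-\Theta^2/2)\,dx<\mathcal{I}(\nu')$ and hence the desired $<0$.

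The main technical obstacle is the uniform control of $\sup_{X\in\mathcal{A}_{\nu',\epsilon,M}}I_N(D_N,X)$: Theorem~\ref{generalizedSIconv} only gives convergence along a fixed sequence of spectra, so upgrading to a supremum over the continuous family of admissible spectra at each $N$ requires combining weak compactness of bounded-energy measures with the Lipschitz-in-spectrum estimates in the style of Lemma~\ref{equicont}, exactly as in the proof of the large deviation upper bound just preceding this lemma.
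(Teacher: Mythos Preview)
Your approach is correct and in fact a bit more streamlined than the paper's. The paper introduces a \emph{second} tilt $D'_N$ with parameters $\theta'^N_i$ built from $\nu'$ (truncated at a level $K'$), writes
\[
\Pp^{\theta^N}[\mathcal{A}'_{\nu',\epsilon,M}]
\le \exp\Bigl(\beta k(N)N\bigl(\tfrac{1}{2k(N)}\textstyle\sum\frac{(\theta'^N_i)^2}{2}-S'_N(\nu',\epsilon,M)+M_N(\nu',\epsilon,M)-\tfrac{1}{2k(N)}\sum\frac{(\theta^N_i)^2}{2}+o(1)\bigr)\Bigr),
\]
and then sends $K'\to\infty$ so that $\tfrac{1}{2k(N)}\sum(\theta'^N_i)^2/2-S'_N\to -\int I^{K'}d\nu'\to -\mathcal{I}(\nu')$. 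You instead bypass the second tilt entirely by bounding $\E[I_N(D_N,X_N)\mathds{1}_{\mathcal{A}}]\le \Pp[\mathcal{A}]\cdot\sup_{\mathcal{A}}I_N(D_N,X_N)$ and invoking the large deviation upper bound for $\Pp[\mathcal{A}_{\nu',\epsilon,M}]$ that was already established in the preceding subsection. This recovers the $-\mathcal{I}(\nu')$ term directly, without the auxiliary $K'$ limit. The final variational step---strict inequality in $\int(J(\Theta,Q_{\nu'},\sigma)-\Theta^2/2)\,dx<\mathcal{I}(\nu')$ via uniqueness of the maximizer $\theta^*(y)=G_\sigma(y)^{-1}$---is identical in both arguments.

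Two small remarks. First, you should separately dispose of the case where $\nu'$ fails the mass condition $\nu'(]-\infty,-2])=\nu'([2,+\infty[)=1/2$; the paper does this via Lemma~\ref{tiltexptight} (concentration of $\hat\mu_N$ under the tilt), which you do cite but not quite for this purpose. Second, in the context of Proposition~\ref{prop:lowerboundLDP} the reference measure $\nu$ is assumed compactly supported in $[-K,-2]\cup[2,K]$, so the tilt parameters $\theta^N_i$ are already bounded and $\Theta$ is \emph{not} truncated---your parenthetical ``truncated at $\pm K$'' is unnecessary here (and your claim that $\nu'$ is also supported in $[-K,-2]\cup[2,K]$ need not hold, though this does not affect the argument since $G_\sigma$ is injective on all of $(-\infty,-2]\cup[2,+\infty)$).
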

		Using the exponential tightness of $\hat{\nu}_N$ under $\Pp^{\theta^N}$, this proves that under $\Pp^{\theta^N}$, $\hat{\nu}_N$ converges in probability toward $\nu$. This finishes the proof of Proposition~\ref{prop:lowerboundLDP}.
		
			\end{proof}
		
		We now prove  Lemma~\ref{tiltUB}.
		
		\begin{proof}[Proof of Lemma \ref{tiltUB}]
			First, if $\nu'$ does not satisfy, $\nu'(] - \infty, -2]) = \nu'([2, +\infty)[) = 1/2$, then the result is a consequence of Lemma \ref{tiltexptight}. So we can assume that $\nu'(] - \infty, -2]) = \nu'([2, +\infty)[) = 1/2$.
			
			For a given $K' > 0$, we define for every $N \in \N$, $i \in[1,k(N)]$
			\[ \theta'^N_i = \max\left(G_{\sigma}\left(Q_{\nu'}\left(1 - \frac{i -1/2}{2 k(N)}\right) \right)^{-1},K' \right), \theta'^N_i = \min\left(G_{\sigma}\left(Q_{\nu'}\left(\frac{i -1/2}{2 k(N)} \right) \right)^{-1},- K' \right) \]
			We also define for $x \in ]0,1[$, 
			\[ \Theta'(x) = \max( \min( G_{\sigma}^{-1}( Q_{\nu'}(x)), K'), - K') \] 
			and for $N \in \N$: 
			\[\Theta'^N = \sum_{i= 1}^{k(N)}\theta'^N_{-i} \mathds{1}_{[ (i -1)/(2 k(N)), i/(2k(N))[} + \sum_{i= 1}^{k(N)}\theta'^N_{i} \mathds{1}_{[ 1 - i/(2 k(N)), 1 - (i-1)/(2k(N))[} \]

			We let $D'_N = \diag( \theta'^N_i)$. Then we can do the same computation as for the upper bound. For $M, \epsilon >0$ let us denote:
\begin{equation}\label{eq:event}
	\mathcal{A}'_{\nu,\epsilon,M}:= \{ \hat{\nu}_N(x^2) \leq M, d(\hat{\mu}_N, \sigma) \leq \epsilon'(N), \hat{\nu}_N \in \mathcal{V}_{\nu,\epsilon} \}
\end{equation}
where we choose $\epsilon'(N)$ as in Lemma \ref{tiltexptight}. 
Here, denoting $\E^{\theta^N}$ the expectation taken according to the probability measure $\Pp^{\theta^N}$, we have			
			\begin{eqnarray*}
				\Pp^{\theta_N}[ \mathcal{A}'_{ \nu', \epsilon,M}] &\leq&  \E^{\theta^N}[ \frac{I(D_N',X_N)}{I(D'_N,X_N)}\mathds{1}_{\mathcal{A}'_{ \nu', \epsilon,M}}] 
				\\
				&\leq & \frac{\E[ I(D_N,X_N) \frac{I(D_N',X_N)}{I(D'_N,X_N)}\mathds{1}_{\mathcal{A}'_{ \nu', \epsilon,M}}]}{\E[ I(D_N,X_N)]} \\
				&\leq&  \E[ \frac{I(D_N',X_N)}{I(D'_N,X_N)}\mathds{1}_{\mathcal{A}'_{ \nu', \epsilon,M}}] \exp \left( \beta k(N) N ( M_N(\nu',\epsilon,M)
				- \frac{1}{2 k(N)} \sum_i  \frac{(\theta_i^N)^2}{2} + o(1)) \right) \\
				& \leq & \exp \Big( \beta k(N) N ( \frac{1}{2 k(N)}\sum_i \frac{(\theta'^N_i)^2}{2} - S'_N(\nu',\epsilon,M)  \\
				& &+ M_N(\nu',\epsilon,M)
				- \frac{1}{2 k(N)} \sum_i\frac{  (\theta^N_i)^2}{2} + o(1)) \Big)
			\end{eqnarray*}
			Where we remind that 
			\[ M_N(\nu',\epsilon,M) = \inf_{ ( \lambda_i) \in \mathcal{E}^N_{\nu',\epsilon,M} } \frac{1}{2 k(N)}\sum_{i= - k(N) \atop i \neq 0}^{k(N)} J(\theta_i^N, \lambda_i, \sigma),  \]
			and 
			\[ S'_N(\nu,\epsilon,M) = \inf_{ ( \lambda_i) \in \mathcal{E}^N_{\nu',\epsilon,M} } \frac{1}{2 k(N)}\sum_{i= - k(N) \atop i \neq 0}^{k(N)} J(\theta'^N_i, \lambda_i, \sigma).  \]
			
			In a similar way as we argued for the upper bound and the lower bound, we have that: 
			
			\begin{multline*}
				\lim_{\epsilon \to 0} \limsup_N \Big( \frac{1}{2 k(N)}\sum_i \frac{(\theta'^N_i)^2}{2} - S'_N(\nu,\epsilon,M) 
				+ M'_N(\nu,\epsilon,M)
				- \frac{1}{2 k(N)} \sum_i\frac{(  \theta^N_i)^2}{2} \Big) \\
				\leq  \int_0^1 \frac{\Theta'^2(x)}{2}  - J(\Theta'(x),Q_{\nu'}(x), \sigma) + J(\Theta(x), Q_{\nu'}(x),\sigma) - \frac{\Theta(x)^2}{2} dx \end{multline*}
			First, we can notice that:
			\[ \int_0^1 \frac{\Theta'^2(x)}{2}  - J(\Theta'(x),Q_{\nu'}(x), \sigma) dx =  - \int I^{K'}(x) d \nu'(x).  \]
			And therefore the limit of this term when $K'$ tends to $+ \infty$ is $- \mathcal{I}(\nu').$
			
			Then, we can notice that:  
			\[ \mathcal{I}(\nu')  > \int_0^1 \left( J(\Theta(x), Q_{\nu'}(x),\sigma) - \frac{\Theta(x)^2}{2} \right)dx. \]
			Indeed, since for $x > 0$, $I(x) = \sup_{\theta \geq 0} (J( \theta, x, \sigma) -  \theta^2 /2)$ and for $x < 0$, $I(x) = \sup_{\theta \leq 0} (J( \theta, x, \sigma) -  \theta^2 /2)$, we have 
			\[ \mathcal{I}(\nu')  \geq  \int_0^1 \left( J(\Theta(x), Q_{\nu'}(x),\sigma) - \frac{\Theta(x)^2}{2} \right)dx. \]
			
			If we had equality, that would mean that for almost all $x \in [0,1]$, $\Theta(x) = G_{\sigma}( Q_{\nu'}(x))^{-1}$. Since  $\Theta(x) = G_{\sigma}( Q_{\nu}(x))^{-1}$, that would mean that $Q_{\nu}(x) = Q_{\nu'}(x)$ for almost all $x$ which implies that $\nu = \nu'$ which is excluded. Therefore we have that 
			\[ \limsup_{N \to \infty} \frac{1}{N k(N)}\ln \Pp^{\theta_N}[ \mathcal{A}'_{\nu',\epsilon,M}] <0 \]
			Then using Lemma \ref{tiltexptight}, we can choose $M$ large enough such that:
			\[ \limsup_{N \to \infty} \frac{1}{N k(N)} \ln \Pp^{\theta^N} [ \hat{\nu}(x^2) \geq M ] \leq -1. \]
			
			For such an $M$, one has:
			\[ \Pp^{\theta_N}[ d( \nu_N, \nu') < \epsilon] \geq \Pp^{\theta_N}[ \mathcal{A}'_{ \nu', \epsilon,M}] - \Pp^{\theta_N}[ \nu_N(x^2) > M ] - \Pp^{\theta_N}[ d(\hat{\mu}_N, \sigma) \geq \epsilon'(N)] \]
			and then using again Lemma \ref{tiltexptight}, one proves that:
			\[ \limsup_{N \to \infty} \frac{1}{N k(N)}\ln \Pp^{\theta_N}[ d( \nu_N, \nu') < \epsilon] <0. \]

		\end{proof}

	\subsection{Strengthening the large deviation principle}
	Using the inverse contraction principle, we can actually strengthen our large deviation principle to the topologies of the associated to the moments of order $p < 2$. More precisely if for $p \in ]0,2 [$, we denote:
	\[ \mathcal{P}_{p}( \R) = \{ \mu \in \mathcal{P}(\R): \int |x|^p d \mu(x)  < + \infty \}\]
	and $d_p$ the disatance on $\mathcal{P}_{p}( \R)$ defined by by:
	\[ d_p(\mu,\nu) = \sup\left\{\Big| \int f d \mu - \int f d \nu \big|: f \in \mathcal{C}(\R) \text{ such that }\forall x, |f(x)|\leq 1 + |x|^p \right\} \]
	
	We denote $\mathcal{T}_p$ the topology induced by $d_p$ on $\mathcal{P}_{p}( \R)$. Then we have the following theorem: 
	\begin{theo}
		For $p \in ]0,2[$, $\mathcal{I}$ is a good rate function on $\mathcal{P}_p(\R)$ withe the topology $\mathcal{T}_p$ and the large deviation principle of Theorem \ref{LDPTheo} extends to $\mathcal{P}_p(\R)$. 
	\end{theo}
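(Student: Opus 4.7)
The plan is to apply a standard strengthening of a large deviation principle, using two ingredients: the exponential tightness in second moment from Proposition \ref{exptight}, and the fact that for $p \in ]0,2[$ the topology $\mathcal{T}_p$ coincides with the weak topology on sets of uniformly bounded second moment. The latter follows from the uniform integrability estimate $\int_{|x|>R} |x|^p \, d\mu \leq R^{p-2} \mu(x^2)$, valid for $p < 2$, which implies that weakly convergent sequences with uniformly bounded second moment converge in $d_p$.

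The first step is to verify that $\mathcal{I}$ is a good rate function on $(\mathcal{P}_p(\R),\mathcal{T}_p)$. Since $\sqrt{t^2-4} \geq t/2$ for $t \geq 2\sqrt{2}$, one has $I(x) \geq (x^2-8)/4$ for all $x \in \R$ (using $I \geq 0$ on $[-2\sqrt{2}, 2\sqrt{2}]$), hence $\nu(x^2) \leq 4\mathcal{I}(\nu) + 8$ for every $\nu \in \mathcal{P}(\R)$. The sublevel set $\{\mathcal{I} \leq c\}$ is therefore contained in $K_M := \{\nu : \nu(x^2) \leq M\}$ with $M = 4c+8$, which is $\mathcal{T}_p$-compact by the uniform integrability estimate above together with Prokhorov's theorem. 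Since $\mathcal{T}_p$ is finer than the weak topology, the weakly lower semicontinuous $\mathcal{I}$ is also $\mathcal{T}_p$-lower semicontinuous, so its sublevel sets are closed, hence compact in $\mathcal{T}_p$.

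For the $\mathcal{T}_p$-upper bound on a closed $F \subseteq \mathcal{P}_p(\R)$, I would fix $L > 0$, choose $M$ via Proposition \ref{exptight} so that $\Pp[\hat\nu_N \notin K_M] \leq e^{-L N k(N)(1+o(1))}$, and use that the two topologies coincide on $K_M$, which makes $F \cap K_M$ weakly closed. Decomposing
\begin{equation*}
\Pp[\hat\nu_N \in F] \leq \Pp[\hat\nu_N \in F\cap K_M] + \Pp[\hat\nu_N \notin K_M],
\end{equation*}
applying the weak LDP upper bound from Theorem \ref{LDPTheo} to the first term, and then sending $L \to \infty$, yields the bound $-\inf_F \frac{\beta}{2}\mathcal{I}$. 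The $\mathcal{T}_p$-lower bound at a $\nu$ with $\mathcal{I}(\nu) < \infty$ is handled symmetrically: for a $\mathcal{T}_p$-neighborhood $V$ of $\nu$ and $M > \nu(x^2)$, the coincidence of topologies on $K_M$ produces a weak neighborhood $V_0$ of $\nu$ with $V_0 \cap K_M \subseteq V$; then $\Pp[\hat\nu_N \in V] \geq \Pp[\hat\nu_N \in V_0] - \Pp[\hat\nu_N \notin K_M]$, and taking $M$ large enough via Proposition \ref{exptight} to absorb the tail reduces the claim to the weak LDP lower bound on $V_0$. The only (mild) technical point is this last passage from $V$ to $V_0 \cap K_M$, which rests entirely on the coincidence of the weak and $\mathcal{T}_p$ topologies on bounded-second-moment sets and is the main structural reason the argument requires $p < 2$.
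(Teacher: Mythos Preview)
Your proposal is correct and takes essentially the same approach as the paper: the paper invokes the inverse contraction principle (Dembo--Zeitouni, Corollary~4.2.6), noting that the sets $\{\nu:\nu(x^{2})\le M\}$ are $\mathcal{T}_p$-compact and that Proposition~\ref{exptight} then yields exponential tightness in $\mathcal{T}_p$; you simply unpack that abstract result by hand, using the same two ingredients.
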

	\begin{proof}
		This is an almost direct application of \cite[Corollary 4.2.6]{DZLD}. For this, one has to see that the $\{ \nu \in \mathcal{P}_p( \R): \nu(x^2) \leq M \} $ are compact sets of $\mathcal{P}_p( \R)$. Then Proposition \ref{exptight} gives the exponential tightness also for the topology $\mathcal{T}_p$.   
	\end{proof}

	\section{Large deviations of the extreme eigenvalues of an additive deformation of a Gaussian matrix}\label{sec:extremeeig}
	In this section, we prove a large deviation principle for a random matrix $X_N = Y_N + D_N$ where $Y_N$ is a GOE matrix if $\beta =1$ or a GUE matrix if $\beta =2$ and $D_N$ is a self adjoint constant matrix. First of all, we recall the following large deviation result for what occurs when $D_N$ is of rank $1$. 
	\begin{theo} \cite[Theorem 3.2]{Mai} 
		Let $\theta \geq 0$, $e \in \Ss^{\beta N -1}$, and $Y_N$ a GOE/GUE matrix. The largest eigenvalue of $X_N = Y_N + \theta e e ^*$ satisfy a large deviation principle with rate function $\beta  I_{\theta}/2$ where:
		\[ I_{\theta}(x) = \begin{cases} + \infty \text{ if }x < 2\\
			I(x) - J(\theta,x,\sigma) - \inf_{y \geq 2} (I(y) - J(\theta,y,\sigma)) \text{ if  }x > 2
		\end{cases}
		\]
		and the function $I$ was defined in \eqref{eq:Ix}.
	\end{theo}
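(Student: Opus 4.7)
The plan is to exploit the explicit density of the eigenvalues of $X_N$ combined with two inputs: the rank-one spherical integral asymptotic of Guionnet--Maïda (Theorem~\ref{theo:rank1}(1)) and the Ben Arous--Dembo--Guionnet LDP for the largest eigenvalue of a GOE/GUE matrix at speed $N$ with rate $\tfrac{\beta}{2}I(\cdot)$. By rotational invariance of the law of $Y_N$, we may take $e = e_1$. Applying the Weyl integration formula to $Y = X - \theta e_1 e_1^*$ under the density proportional to $\exp(-\tfrac{\beta N}{4}\tr(Y^2))$, the joint eigenvalue density of $X_N$ is
\[
p_\theta(\lambda) = \frac{1}{Z_\theta}\Delta(\lambda)^\beta \exp\!\Big(-\tfrac{\beta N}{4}\sum_i \lambda_i^2 -\tfrac{\beta N \theta^2}{4}\Big) I_N(\mathrm{diag}(\lambda),\theta e_1 e_1^*),
\]
so the Radon--Nikodym derivative against the $\theta = 0$ law is an exponential tilt by the spherical integral times the Gaussian factor $\exp(-\beta N \theta^2/4)$.

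I would then interpret this identity as a Varadhan-type perturbation of the GOE/GUE law. Under $\theta = 0$ the empirical measure $\hat\mu_N$ concentrates on the semicircle $\sigma$ at the much faster speed $N^2$, so at speed $N$ the spherical factor can be replaced by a continuous function of $\lambda_{\max}$ alone, namely $\exp(\tfrac{\beta N}{2} J(\theta,\lambda_{\max},\sigma))$. Concretely, for any closed set $F \subset \R$,
\[
\Pp\big(\lambda_{\max}(X_N) \in F\big) = \frac{Z_0}{Z_\theta}\,e^{-\beta N\theta^2/4}\, \E_0\big[\1_{\lambda_{\max}(Y_N)\in F}\, I_N(\mathrm{diag}(\lambda(Y_N)),\theta e_1 e_1^*)\big].
\]
Feeding the rank-one spherical integral asymptotic into the BDG LDP via Varadhan's lemma yields matching upper and lower bounds with candidate rate $\tfrac{\beta}{2}[I(x) - J(\theta,x,\sigma)]$ on $[2,\infty)$. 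Taking $F = \R$ identifies the normalization constant: $-\tfrac{1}{N}\ln(Z_\theta/Z_0) - \tfrac{\beta\theta^2}{4} \to \tfrac{\beta}{2}\sup_{y \geq 2}[J(\theta,y,\sigma)-I(y)]$. Substituting back produces $\tfrac{\beta}{2}I_\theta$ with the claimed infimum term.

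The main technical obstacle is to make the replacement of $\hat\mu_N$ by $\sigma$ inside the spherical integral uniform over the realizations relevant at speed $N$. This requires either a quantitative form of the rank-one Guionnet--Maïda limit (controlled error when $d(\hat\mu_N,\sigma) \leq \epsilon(N)$) or a soft argument combining the $N^2$-speed concentration of $\hat\mu_N$ onto $\sigma$ with continuity of $(\lambda,\mu) \mapsto J(\theta,\lambda,\mu)$ in the weak topology. Once this is in place the two bounds collapse to the stated rate function, and the value $I_\theta(x) = +\infty$ for $x < 2$ is inherited from the BDG lower tail, which is exponentially small at speed $N^2$ and hence infinite at speed $N$.
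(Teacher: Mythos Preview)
The paper does not prove this statement; it merely quotes it from Maïda's paper \cite{Mai} as background for the growing-rank generalization in Section~\ref{sec:extremeeig}. So there is no ``paper's own proof'' to compare against.

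Your sketch is nonetheless correct and is precisely the strategy of \cite{Mai}: write the eigenvalue law of $X_N$ as the GOE/GUE eigenvalue law tilted by the rank-one spherical integral, then combine the Ben Arous--Dembo--Guionnet LDP for $\lambda_{\max}(Y_N)$ at speed $N$ with the Guionnet--Maïda asymptotic $\tfrac{2}{\beta N}\ln I_N \to J(\theta,\lambda_{\max},\sigma)$ via a Varadhan-type argument. The density formula you wrote is right (expand $\tr((X-\theta e_1 e_1^*)^2)$ and integrate over the $\mathcal{U}_N^\beta$-orbit), and the identification of the normalizing constant through $F=\R$ is exactly how the infimum term appears.

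It is worth noting that the present paper uses the same tilt idea for its growing-rank extension: see the paragraph after Theorem~\ref{theo:extremeeigGaussian}, where the law of $UX_NU^*$ is written as $\dfrac{I_N(Y_N,D_N)}{\E[I_N(Y_N,D_N)]}\,dY_N$ and the rate function is obtained as $\mathcal{I}(\nu)+\xi(x^2/2)-\mathcal{J}_\xi(\nu)$, the direct analogue of your $I(x)-J(\theta,x,\sigma)$ plus constant. The technical point you flag---replacing $\hat\mu_N$ by $\sigma$ inside the spherical integral uniformly at speed $N$---is handled in \cite{Mai} and in this paper (for the growing-rank version) by combining the $N^2$-speed concentration of $\hat\mu_N$ with the continuity of $J$ in its measure argument, exactly as you suggest.
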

	We now consider the growing rank case when $D_N = \diag( \theta^N_{- k(N)}, \dots \theta^N_{-1}, \theta^N_1 ,\dots, \theta^N_{k(N)},0,\dots,0)$ where $k(N)= o(N)$ and 
	\[
	\theta^N_{-k(N)} \leq \dots \leq \theta^N_{-1} \leq 0 \leq \theta^N_1\leq \dots \leq \theta_{k(N)}^N.
	\]
	Then we have the following result:
	\begin{theo}\label{theo:extremeeigGaussian}
		Assume that there is some probability measure $\xi$ such that: 
		\[ \lim_{N \to \infty} \frac{1}{2 k(N)} \sum_{i= -k(N),\dots,k(N) \atop i \neq 0} \delta_{\theta^N_i} = \xi \]
		and that there is $M > 0$ such that $\theta^N_i \leq M$. 
		Then, with the same notations as in Theorem~\ref{LDPTheo}, $(\hat{\nu}_N)_{N \in \N}$ satisfy a large deviation principle in speed $2 N k(N)$  with good rate function $\beta \mathcal{I}_{\xi} /2$ defined by:
		\[ \mathcal{I}_{\xi}(\nu) = \begin{cases}
			 \int_0^1 I_{ Q_{\xi}(t)}( Q_{\nu}(t)) d t &\text{if } \nu(] - \infty, -2]) = \nu([2, + \infty[) = \frac{1}{2} \\  + \infty &\text{otherwise}\end{cases} \]
		and the quantile function $Q_{\mu}$ was defined in \eqref{eq:quantile}.
	\end{theo}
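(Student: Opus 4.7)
The approach is to reduce the LDP for $X_N = Y_N + D_N$ to a combination of Theorem~\ref{LDPTheo} applied to $Y_N$ and the spherical-integral asymptotics of Theorem~\ref{maintheo}. Since $Y_N$ is GOE/GUE with density proportional to $e^{-\beta N\tr Y^2/4}$, an exact computation of the Radon--Nikodym derivative followed by integration over eigenvectors (using rotational invariance of $Y_N$) yields, for every Borel set $A$ of probability measures on $\R$,
\[
\Pp\big(\hat\nu_N(X_N)\in A\big)=e^{-\beta N\tr(D_N^2)/4}\,\E_{Y_N}\!\left[\mathbf{1}_A(\hat\nu_N(Y_N))\,I_N(\mathrm{spec}(Y_N),D_N)\right].
\]
Equivalently, the eigenvalue law of $X_N$ is the tilt of the GOE/GUE eigenvalue law by $I_N(\mathrm{spec}(Y_N),D_N)$, i.e.\ exactly the tilted measure already used in the proof of Theorem~\ref{LDPTheo}. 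The assumption $|\theta^N_i|\le M$ also forces $\tfrac{1}{2k(N)}\tr(D_N^2)\to\int x^2\,d\xi(x)$.

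For the upper bound at a target $\nu$ with $\nu((-\infty,-2])=\nu([2,\infty))=1/2$, one restricts the $Y_N$-expectation to the event $\mathcal{A}_{\nu,\epsilon,M}$ of \eqref{eq:event}, on which the bulk of $Y_N$ is close to $\sigma$ and the extremes close to $\nu$. On this event Theorem~\ref{maintheo} applied with $A_N=\mathrm{spec}(Y_N)$ gives, uniformly in the configuration,
\[
\frac{2}{\beta Nk(N)}\ln I_N(\mathrm{spec}(Y_N),D_N)=\frac{1}{k(N)}\sum_i J(\theta^N_i,\lambda_{\pi(i)},\sigma)+o(1),
\]
with $\pi$ the pairing that matches positive $\theta^N_i$'s with top eigenvalues and negative ones with bottom eigenvalues. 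Quantile parametrization rewrites the sum as $2\int_0^1 J(Q_\xi(t),Q_\nu(t),\sigma)\,dt+o(1)$, and combining with the upper bound of Theorem~\ref{LDPTheo} (which applies since GOE/GUE is sharp sub-Gaussian) yields
\[
\limsup_N\frac{1}{2Nk(N)}\ln\Pp\big(\hat\nu_N\in\mathcal{V}_{\nu,\epsilon}\big)\le -\frac{\beta}{2}\Big(\mathcal{I}(\nu)-\int_0^1 J(Q_\xi(t),Q_\nu(t),\sigma)\,dt+\tfrac12\!\int\! x^2\,d\xi\Big)+o_\epsilon(1).
\]
The right-hand side equals $-\tfrac{\beta}{2}\mathcal{I}_\xi(\nu)$ thanks to the identity $\inf_{y\ge2}(I(y)-J(\theta,y,\sigma))=-\theta^2/2$ for $\theta\ge0$ (and its reflection for $\theta\le0$): this is precisely the normalization that makes $I_\theta$ a proper rate function in the rank-one result recalled at the top of Section~\ref{sec:extremeeig}, and is also equivalent to the rank-one case of Proposition~\ref{AnnealedSI}.

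The lower bound runs the same decomposition in reverse. I first reduce to $\nu$ compactly supported in $(-\infty,-2]\cup[2,\infty)$ by the same truncation argument as in Proposition~\ref{prop:lowerboundLDP}, then restrict the $Y_N$-expectation to configurations whose extremal eigenvalues lie in tiny neighborhoods of the quantile targets $\lambda^*_i=Q_\nu\big((i-\tfrac12)/(2k(N))\big)$. The lower bound from Theorem~\ref{LDPTheo} controls the probability of this event and the lower bound from Theorem~\ref{maintheo} controls $I_N$ on it, and the same identification as above closes the argument. Exponential tightness of $\hat\nu_N(X_N)$ is inherited from Proposition~\ref{exptight} via the crude comparison $\|X_N-Y_N\|_{\mathrm{op}}=\|D_N\|_{\mathrm{op}}\le M$, which gives $\hat\nu_N(X_N)(x^2)\le 2\hat\nu_N(Y_N)(x^2)+2M^2$.

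The main technical obstacle is the uniformity of Theorem~\ref{maintheo}: its statement is a pointwise limit for a fixed deterministic sequence $(A_N)$, whereas here $\mathrm{spec}(Y_N)$ is random and only constrained to lie in a weak neighborhood of $\nu$. The required uniform version should follow by combining an equicontinuity estimate for $\frac{1}{Nk(N)}\ln I_N$ in the Hermitian operator norm (in the spirit of Lemma~\ref{equicont}) with continuity of the quantile map in the weak topology on the truncated region. A secondary verification is that the maintheo pairing (positive $\theta$'s with top eigenvalues, negative with bottom) coincides with the order-preserving quantile pairing, so that the discrete sums $\frac{1}{k(N)}\sum J$ converge to the Riemann integral $\int J(Q_\xi(t),Q_\nu(t),\sigma)\,dt$ with no hidden reshuffling.
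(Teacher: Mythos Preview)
Your proposal is correct and follows essentially the same route as the paper: both recognize that the eigenvalue law of $X_N=Y_N+D_N$ is the $I_N(\,\cdot\,,D_N)$-tilt of the GOE/GUE law, apply Theorem~\ref{maintheo} (in its unbounded form, Theorem~\ref{generalizedSIconv}) on the good event $\mathcal{A}_{\nu,\epsilon,M'}$, and combine with Theorem~\ref{LDPTheo} for the base measure together with the identity $\inf_{y\ge 2}(I(y)-J(\theta,y,\sigma))=-\theta^2/2$.

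There are two places where the paper's execution differs slightly from yours and is a bit cleaner. First, the uniformity issue you flag is exactly the content of Proposition~\ref{prop:J}: rather than proving equicontinuity of $\frac{1}{Nk(N)}\ln I_N$ in the spectrum, the paper observes that on $\mathcal{A}_{\nu,\epsilon,M'}$ the discrete sum $\frac{1}{2k(N)}\sum_i J(\theta_i^N,\lambda_i^N,\sigma)$ is literally $\mathcal{J}_{\xi_N}(\hat\nu_N)$, and then proves that the map $(\nu,\xi)\mapsto\mathcal{J}_\xi(\nu)$ is continuous on $\{\nu(x^2)\le M'\}\times\{\operatorname{supp}\xi\subset[-M,M]\}$ by a dominated-convergence argument on quantile functions. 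This reduces uniformity to continuity of a deterministic functional and avoids having to revisit the proof of Theorem~\ref{maintheo}. Second, for the lower bound the paper does not localize eigenvalues at quantile targets; it simply bounds $I_N$ from above on $\mathcal{A}_{\nu,\epsilon,M'}$ via Proposition~\ref{prop:J} and then invokes the already-established LDP lower bound of Theorem~\ref{LDPTheo} for $\Pp[d(\hat\nu_N,\nu)\le\delta]$ under the untilted law, which is shorter than your hands-on construction.
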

	
	The key observation to prove this result is that, if $U$ is a Haar-distributed matrix independent from $X_N$, the law of $\tilde{X}_N= U X_N U^*$ can be expressed with the following density with regards to the law of the Gaussian invariant matrix $Y_N$:
	\[ d \tilde{X}_N = \frac{ I_N(Y_N,D_N)}{\E[I_N(Y_N,D_N)]} dY_N. \]
	
	Let us notice that since conjugating by $U$ does not impact the eigenvalues, we can study $\tilde{X}_N$ instead of $X_N$. Then, we can notice that:
	\[ I(Y_N,D_N) \approx \exp\left( N \beta k(N) \int_0^1 J(Q_{\xi}(t), Q_{\hat{\nu}_N}(t),\sigma)  dt\right) \]
	and so, heuristically, we end up with the following tilted large deviation principle whose rate function is, up to a constant:
	\begin{eqnarray*}
		 \mathcal{I}_{\xi}(\nu) &=& \mathcal{I}(\nu) - \int_0^1 J(Q_{\xi}(t), Q_{\nu_N}(t),\sigma) dt + C\\
		 &=& \int_0^1  I(Q_{\nu}(t)) - J(Q_{\xi}(t),Q_{\nu}(t), \sigma) dt + C \\
		 &=& \int_0^1 I_{ Q_{\xi}(t)}( Q_{\nu}(t)) d t + C. \end{eqnarray*}
	Recall that $\mathcal{I}$ and $I$ were defined in \eqref{eq:calI} and \eqref{eq:Ix}. 
	
	Using this approach, instead of considering $X_N$ as an additive deformation of $Y_N$, we will consider it as a tilt over the law of a Gaussian random matrix. We will denote $\Pp$ the probability measure such that the law of $X_N$ is GOE/GUE and $\tilde{\Pp}$ the tilted law by $I_N( X_N, D_N)$.  
	Here are the rigorous steps of the proof:
	\begin{itemize}
	
		\item We prove the following proposition:
		\begin{prop}\label{prop:J}
			On events of the form $\mathcal{A}_{\nu, \epsilon,M'}$ defined in \eqref{eq:event},	\[ \sup \Big| \frac{1}{2 N k(N)} \ln  I_N(X_N,D_N) -  \frac{\beta}{2}\int_0^1 J(Q_{\xi}(t), Q_{\hat{\nu}_N}(t),\sigma) \, dt \Big| = o(N) \]
			and 
			\begin{equation}\label{eq:intJ}
			\mathcal{J}_{\xi}:\nu \mapsto \int_0^1 J(Q_{\xi}(t),Q_{\nu}(t), \sigma) dt
			\end{equation}
			is continuous for the weak topology on this event.
		\end{prop}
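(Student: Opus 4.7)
The plan is to apply the asymptotic formula for the spherical integral (Theorem~\ref{generalizedSIconv}) pointwise in $X_N$ on the event $\mathcal{A}_{\nu,\epsilon,M'}$, then reorganize the resulting discrete pairing sum as a Riemann integral against the quantile functions $Q_{\hat{\nu}_N}$ and $Q_\xi$, and finally handle the continuity of $\mathcal{J}_\xi$ via a standard $W_2$-type estimate.

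\textbf{Step 1 (Pointwise application of the asymptotic formula).} The matrix $D_N$ has exactly $2k(N)$ nonzero eigenvalues, $k(N)$ of each sign, all bounded in absolute value by $M$. On the event $\mathcal{A}_{\nu,\epsilon,M'}$, the matrix $X_N$ satisfies $d(\hat{\mu}_N,\sigma)\leq\epsilon(N)\to 0$ and $\hat{\nu}_N(x^2)\leq M'$, i.e.~exactly the conditions of Assumption~\ref{assum:A2}. Applying Theorem~\ref{generalizedSIconv} with $(k,l)$ replaced by $(2k(N),k(N))$ yields, writing $\lambda_1\geq\cdots\geq\lambda_N$ for the eigenvalues of $X_N$ (in decreasing order here, for consistency with Assumption~\ref{assum:A1}),
\[
\frac{1}{\beta k(N)N}\ln I_N(D_N,X_N) \;=\; \frac{1}{2k(N)}\sum_{i=1}^{k(N)}\Big[J(\theta_i^N,\lambda_i,\sigma)+J(\theta_{-i}^N,\lambda_{N-i+1},\sigma)\Big]+o_N(1),
\]
where the pairing is dictated by Theorem~\ref{maintheo}. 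The delicate point is that this $o_N(1)$ must be \emph{uniform} over all $X_N\in\mathcal{A}_{\nu,\epsilon,M'}$; this is visible by inspecting the proofs of Propositions~\ref{prop:UB} and \ref{prop:lwbd}, where the error terms depend on $X_N$ only through the operator-norm bound on $D_N$ (uniform), the rate $d(\hat{\mu}_N,\sigma)\leq\epsilon(N)$ (uniform on the event), and the truncation argument of Theorem~\ref{generalizedSIconv}, which is controlled by the second-moment bound $\hat{\nu}_N(x^2)\leq M'$.

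\textbf{Step 2 (Sum to integral).} Introduce the step function
\[
\Theta^N(t):=\sum_{i=1}^{k(N)}\theta^N_{-i}\,\mathbbm{1}_{[(i-1)/(2k(N)),\,i/(2k(N)))}(t)+\sum_{i=1}^{k(N)}\theta^N_{i}\,\mathbbm{1}_{[1-i/(2k(N)),\,1-(i-1)/(2k(N)))}(t).
\]
The quantile $Q_{\hat{\nu}_N}$ is a step function taking precisely the paired eigenvalue values on the matching subintervals, so the sum in Step~1 is \emph{exactly} $\int_0^1 J(\Theta^N(t),Q_{\hat{\nu}_N}(t),\sigma)\,dt$. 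Multiplying through by $\beta/2$ gives
\[
\frac{1}{2Nk(N)}\ln I_N(X_N,D_N)=\frac{\beta}{2}\int_0^1 J\!\left(\Theta^N(t),Q_{\hat{\nu}_N}(t),\sigma\right)dt+o_N(1),
\]
uniformly on $\mathcal{A}_{\nu,\epsilon,M'}$. The hypothesis $\frac{1}{2k(N)}\sum_i\delta_{\theta^N_i}\to\xi$ weakly is equivalent to $\Theta^N\to Q_\xi$ Lebesgue-a.e.~on $(0,1)$, with the uniform bound $|\Theta^N|,|Q_\xi|\leq M$. Since $J(\theta,x,\sigma)$ is jointly continuous in $(\theta,x)$ (continuous at $\theta=0$ with $J(0,x,\sigma)=0$, by direct inspection of Definition~\ref{def:J}) and admits a bound of the form $|J(\theta,x,\sigma)|\leq C_M(1+|x|)$ for $|\theta|\leq M$, the uniform $L^2$-bound $\|Q_{\hat{\nu}_N}\|_{L^2([0,1])}\leq\sqrt{M'}$ on the event together with dominated convergence gives
\[
\sup_{X_N\in\mathcal{A}_{\nu,\epsilon,M'}}\left|\int_0^1 J(\Theta^N,Q_{\hat{\nu}_N},\sigma)\,dt-\int_0^1 J(Q_\xi,Q_{\hat{\nu}_N},\sigma)\,dt\right|\xrightarrow[N\to\infty]{}0,
\]
which completes the first half of the proposition.

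\textbf{Step 3 (Continuity of $\mathcal{J}_\xi$).} The relevant domain is $\{\nu\in\mathcal{P}(\R):\nu(x^2)\leq M'\}$, where weak convergence implies $W_2$-convergence, i.e.~$Q_{\nu_n}\to Q_\nu$ in $L^2([0,1])$. From the explicit formula in Definition~\ref{def:J} one checks that $\partial_x J(\theta,x,\sigma)=\min(\theta,G_\sigma(x\vee 2))$ for $\theta\geq 0$ (and symmetrically for $\theta\leq 0$), so $x\mapsto J(\theta,x,\sigma)$ is $|\theta|$-Lipschitz; Cauchy--Schwarz then gives
\[
|\mathcal{J}_\xi(\nu)-\mathcal{J}_\xi(\nu')|\leq\int_0^1|Q_\xi(t)|\,|Q_\nu(t)-Q_{\nu'}(t)|\,dt\leq\|Q_\xi\|_{L^2}\,\|Q_\nu-Q_{\nu'}\|_{L^2},
\]
which is continuous in $\nu$ for the weak topology on the event, since $Q_\xi$ is bounded by $M$.

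\textbf{Main obstacle.} The substantive difficulty is the \emph{uniformity} in Step~1: Theorems~\ref{maintheo} and \ref{generalizedSIconv} are stated for a fixed sequence of matrices, but here $X_N$ is random and we need convergence uniform over the event $\mathcal{A}_{\nu,\epsilon,M'}$. This forces us to revisit the proof of the asymptotic formula and extract quantitative error bounds depending only on the operator-norm bound on $D_N$, the rank profile of $D_N$, the rate $\epsilon(N)$, and the second-moment bound $M'$. The remaining steps, being quantile-function manipulations and a Wasserstein-type continuity argument, are essentially routine once the uniform limit is available.
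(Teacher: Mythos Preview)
Your approach is essentially the same as the paper's: apply Theorem~\ref{generalizedSIconv}, identify the pairing sum with an integral against quantile functions, and use $L^2$-boundedness of $Q_{\hat\nu_N}$ together with a.e.\ convergence of quantiles to pass to the limit. The paper organizes things slightly differently: it first proves \emph{joint} continuity of $(\nu,\xi)\mapsto\mathcal{J}_\xi(\nu)$ on the compact set $\{\nu(x^2)\leq M'\}\times\{\operatorname{supp}\xi\subset[-M,M]\}$, then observes that the discrete sum is exactly $\mathcal{J}_{\xi_N}(\hat\nu_N)$ with $\xi_N\to\xi$. This handles both your Step~2 (passing from $\Theta^N$ to $Q_\xi$) and Step~3 (continuity in $\nu$) in one stroke, and the compactness of the domain automatically yields the uniformity over $X_N\in\mathcal{A}_{\nu,\epsilon,M'}$ that you correctly flag as the main obstacle.

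There is one genuine slip in your Step~3. Weak convergence on the set $\{\nu(x^2)\leq M'\}$ does \emph{not} imply $W_2$-convergence: take $\nu_n=(1-1/n)\delta_0+(1/n)\delta_{\sqrt{nM'}}$, which has $\nu_n(x^2)=M'$ for all $n$ and $\nu_n\to\delta_0$ weakly, yet $W_2(\nu_n,\delta_0)=\sqrt{M'}$. So your Cauchy--Schwarz bound, which controls $|\mathcal{J}_\xi(\nu)-\mathcal{J}_\xi(\nu')|$ by $\|Q_\nu-Q_{\nu'}\|_{L^2}$, is not enough to give weak continuity. The correct argument---which you already use in Step~2 and which is exactly what the paper does---is: weak convergence $\nu_n\to\nu$ gives $Q_{\nu_n}\to Q_\nu$ a.e., hence $J(Q_\xi,Q_{\nu_n},\sigma)\to J(Q_\xi,Q_\nu,\sigma)$ a.e.\ by continuity of $J$; the bound $|J(Q_\xi(t),Q_{\nu_n}(t),\sigma)|\leq M|Q_{\nu_n}(t)|+C$ together with $\|Q_{\nu_n}\|_{L^2}^2\leq M'$ gives uniform integrability, and Vitali's theorem (not dominated convergence) gives $L^1$-convergence of the integrand. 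With this fix your proof is complete.
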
 
		\item Then, using the exponential tightness result of Lemma \ref{tiltexptight} for $\tilde{\Pp}$, we derive the large deviation upper and lower bound. 
	\end{itemize}
	\begin{proof}[Proof of Proposition~\ref{prop:J}]
	First, let us prove a slightly stronger result, that is that $(\nu, \xi) \mapsto \mathcal{J}_{\xi}(\nu)$ is continuous on the set $\{ (\nu, \xi) \in \mathcal{P}(\R)^2: \nu(] - \infty, -2]) = \nu( [2, + \infty[) = 1/2, \nu(x^2) \leq M', \text{supp } \xi \in [-M,M]\}$ where $M' > 0$. If $(\nu_n, \xi_n)$ is a sequence of couple of  probability measures in the aforementioned set converging weakly to $(\nu, \xi)$, then $Q_{\nu_n}$ converges almost everywhere toward $Q_{\nu}$ and $Q_{\xi_n}$ converges almost everywhere toward $Q_{\xi}$. Using the continuity of the function $J$, $t \mapsto J(Q_{\xi_n}(t),Q_{\nu_n}(t), \sigma)$ converges almost everywhere toward $t \mapsto J(Q_{\xi}(t),Q_{\nu}(t), \sigma)$. Furthermore:
	\[\int_{0}^1 |J(Q_{\xi_n}(t),Q_{\nu_n}(t), \sigma)|^2 dt \leq M^2 \int_0^1 |Q_{\nu_n}(t)|^2 dt \leq M^2 M'\]
	and so $t \mapsto J(Q_{\xi_n}(t),Q_{\nu_n}(t), \sigma)$ is bounded in $L^2$ and so it converges in $L^1$, which gives that $\lim_{n}\mathcal{J}_{\xi_n}(\nu_n) = \mathcal{J}_{\xi}(\nu)$.

	Then, using Theorem \ref{generalizedSIconv}, we conclude since 
	
	\[ \frac{1}{2 k(N)} \sum_{i=- k(N), i \neq 0}^{k(N)} J(\theta_i^N,\lambda_i^N,\sigma) = J_{\xi_N}( \hat{\nu}_N) \]
	where $\xi_N =\frac{1}{2 k(N)} \sum \delta_{\theta_i^N}$. 
	\end{proof}
	
	To conclude the proof of the LDP, we prove the lower bound as follows. Given $\nu \in \mathcal{P}(\R)$ such that $\mathcal{I}_{\xi}(\nu) < + \infty$, $\delta > 0 $, using Lemma~\ref{tiltexptight}, we choose $M' \geq 0$ such that if $E_M'$ is the event $\{\hat{\nu}_N(x^2) \geq M' \text{ or } d( \hat{\mu}_N, \sigma) \geq \epsilon(N) \}$, then both $\tilde{\Pp}[E_{M'}]$ is upper bounded by $\exp( - \beta N k(N) ( \mathcal{I}_{\xi}(\nu) + 1 + o(1)))$ and $\Pp[E_{M'}]$ is upper bounded by $\exp( - \beta N k(N) ( \mathcal{I}(\nu) + 1 + o(1)))$. Then for this $M'$, using Proposition \ref{prop:J}, for any $\eta >0$ we can find $\delta>0$ such that if $X_N$ is such that $d(\hat{\nu}_N,\nu) \leq \delta ,d( \hat{\mu}_N, \sigma) \leq \epsilon(N),  \hat{\nu}_N(x^2) \leq M'$ then:
		
\[\mathcal{J}_{\xi}( \nu) - \eta \leq \liminf_{N \to \infty} \frac{1}{\beta N k(N)} \ln  I_N(X_N,D_N) \leq  \limsup_{N \to \infty} \frac{1}{\beta N k(N)} \ln  I_N(X_N,D_N) \leq \mathcal{J}_{\xi}( \nu) + \eta \]
	
	In particular, we have the following lower bound,
	\begin{eqnarray*}
		\tilde{\Pp}[ d( \hat{\nu}_N, \nu) \leq \delta] &\geq & \tilde{\E} \Big[ \frac{ I_N(X_N,D_N)}{I_N(X_N,D_N)} \mathds{1}_{ \{d( \hat{\nu}_N, \nu) \leq \delta\} \cap E_{M'}^c} \Big] \\
		&\geq& \tilde{\Pp}[ \{d( \hat{\nu}_N, \nu) \leq \delta\} \cap E_{M'}^c ] \\
		& \geq& \exp( \beta k(N) N ( \mathcal{J}_{\xi}(\nu) - \xi( x^2/2)- \eta + o(1))) \Pp[  \{d( \hat{\nu}_N, \nu) \leq \delta\} \cap E_{M'}^c] \\
		& \geq &\exp( \beta k(N) N ( \mathcal{J}_{\xi}(\nu) - \xi( x^2/2) -\eta + o(1))) \Big( \Pp[  d( \hat{\nu}_N, \nu) \leq \delta ] - \Pp[ E_{M'}] \Big) \\
		& \geq& \exp( \beta k(N) N ( \mathcal{J}_{\xi}(\nu) - \xi( x^2/2)- \eta  + o(1))) \Big( \exp( -2 k(N) N \mathcal{I}( \nu)) - \Pp[ E_{M'} ] \Big) \\
		& \geq &\exp ( \beta N k(N) (\mathcal{J}_{\xi}(\nu) - \xi( x^2/2) - \mathcal{I}(\nu) + o(1)))
	\end{eqnarray*}
 Where to go from the first to the second line, we used  the fact that since $X_N$ is Gaussian:
	\[\frac{1}{\beta N k(N)} \ln \E[ I_N(X_N,D_N)] = \frac{1}{2k(N)} \sum_{i= - k(N) \atop i \neq 0}^{ k(N)} (\theta_i^N)^2 \approx \frac{1}{2} \xi(x^2) + o(1) \] and 
	where we used the large deviation principle for $\hat{\nu}_N$ under $\Pp$ given by Theorem \ref{LDPTheo}. 
	We conclude here using the fact that
	\[ \mathcal{I}_{\xi}(\nu) = \mathcal{I}(\nu) +\xi(x^2/2) - \mathcal{J}_{\xi}(\nu). \]
For the upper bound, if we denote $\tilde{\E}$ the expectation under $\tilde{\Pp}$, we have:
	\begin{eqnarray*}
\tilde{\Pp}[ d( \hat{\nu}_N, \nu) \leq \delta]&\leq& \tilde{\Pp}[ \{d( \hat{\nu}_N, \nu) \leq \delta\} \cap E_{M'}^c ] + \tilde{\Pp}[E_{M'}] \\
		 &\leq & \tilde{\E} \Big[ \frac{ I_N(X_N,D_N)}{I_N(X_N,D_N)} \mathds{1}_{ \{d( \hat{\nu}_N, \nu) \leq \delta\} \cap E_{M'}^c} \Big]  + \tilde{\Pp}[E_{M'}] \\
		& \leq& \exp( \beta k(N) N ( \mathcal{J}_{\xi}(\nu) - \xi( x^2/2) + \eta  + o(1))) \Pp[  \{d( \hat{\nu}_N, \nu) \leq \delta\} \cap E_{M'}^c] + \tilde{\Pp}[E_{M'}]\\
		& \leq &\exp( \beta k(N) N ( \mathcal{J}_{\xi}(\nu) - \xi( x^2/2) + \eta + o(1)))  \Pp[  d( \hat{\nu}_N, \nu) \leq \delta ]+ \tilde{\Pp}[E_{M'}] \\
		& \leq& \exp( \beta k(N) N ( \mathcal{J}_{\xi}(\nu) - \xi( x^2/2) + o_{\delta}(1)+ \eta + o(1))) \Big( \exp( - \beta k(N) N (\mathcal{I}( \nu)+ o_{\delta}(1)) \Big) + \tilde{\Pp}[E_{M'}] \\
		& \leq & \exp ( \beta N k(N) (\mathcal{J}_{\xi}(\nu) - \xi( x^2/2) - \mathcal{I}(\nu) + \eta + o(1) +o_{\delta}(1)))
	\end{eqnarray*}
	where $o_{\delta}(1)$ denotes a function of $\delta$ that tends to $0$ as $\delta$ tends to $0$.
	 Taking $\eta$ to $0$ then gives us the large deviation principle.
	
	Now we can generalize this large deviation principle to the case of $D_N$ with unbounded entries.
	\begin{theo}
		Let us assume that there is some probability measure $\xi$ such that: 
		\[ \lim_{N \to \infty} \frac{1}{2 k(N)} \sum_{i= -k(N),\dots,k(N) \atop i \neq 0} \delta_{\theta_i} = \xi. \]
		Furthermore, let us assume that $\xi(x^2) < \infty$. 
		Then, with the same notations as in Theorem~\ref{theo:extremeeigGaussian}, $(\hat{\nu}_N)_{N \in \N}$ satisfy a large deviation principle in speed $2 k(N)$  with good rate function $\beta \mathcal{I}_{\xi} /2$ defined by:
		\[ \mathcal{I}_{\xi}(\nu) = \begin{cases}
		\int_0^1 I_{ Q_{\xi}(t)}( Q_{\nu(t)}) d t & \text{if } \nu(] - \infty, -2]) = \nu([2, + \infty[) = \frac{1}{2} \\ 	+ \infty & \text{otherwise.}\end{cases} \]
	\end{theo}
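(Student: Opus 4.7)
The natural strategy is a truncation argument reducing to the already proven bounded case. For $L > 0$, set $\theta_i^{N,L} := \mathrm{sign}(\theta_i^N)(|\theta_i^N|\wedge L)$, let $D_N^L := \diag(\theta_i^{N,L}, 0, \ldots)$, and write $X_N^L := Y_N + D_N^L$ for the truncated additive deformation, with $\hat{\nu}_N^L$ its extremal empirical measure. Let $\xi^L$ denote the pushforward of $\xi$ under $x \mapsto \mathrm{sign}(x)(|x|\wedge L)$, which is the weak limit of the empirical distribution of the $\theta_i^{N,L}$. The previous theorem (bounded case) immediately yields, for each fixed $L$, the large deviation principle for $(\hat{\nu}_N^L)_N$ at speed $2Nk(N)$ with good rate function $\frac{\beta}{2}\mathcal{I}_{\xi^L}$.

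The next ingredient is convergence of rate functions. Since $Q_{\xi^L}(t) \to Q_{\xi}(t)$ for Lebesgue-a.e.\ $t \in (0,1)$ with $|Q_{\xi^L}(t)| \leq |Q_{\xi}(t)|$ and $\int_0^1 Q_{\xi}(t)^2\,dt = \xi(x^2) < \infty$, applying dominated convergence to the explicit integrand $I_{Q_{\xi^L}(t)}(Q_\nu(t))$ (which enjoys a quadratic envelope $|I_\theta(x)| \leq C(1 + x^2 + \theta^2)$ coming from the explicit form of $J$) gives $\mathcal{I}_{\xi^L}(\nu) \to \mathcal{I}_\xi(\nu)$ for every $\nu$ with finite second moment.

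The heart of the argument is exponential equivalence: for every $\delta > 0$,
\[
\lim_{L\to\infty}\limsup_{N\to\infty} \frac{1}{2Nk(N)} \ln \Pp\bigl[\,d(\hat{\nu}_N, \hat{\nu}_N^L) > \delta\,\bigr] = -\infty.
\]
Since $D_N - D_N^L$ has rank $r_N^L = \#\{i : |\theta_i^N| > L\}$, by Weyl's inequalities the eigenvalues of $X_N$ and $X_N^L$ coincide up to a shift of at most $r_N^L$ positions in the ordering, and the affected eigenvalues of $X_N$ are controlled via the BBP map by the truncated spikes $\theta_i^N$. On the event where $|||Y_N|||$ is bounded (whose complement has probability $\exp(-cNk(N))$ by Theorem \ref{LDPTheo} applied to the undeformed Gaussian case), a quantile-based Wasserstein bound gives the deterministic estimate
\[
d(\hat{\nu}_N,\hat{\nu}_N^L) \;\leq\; \frac{1}{2k(N)}\sum_{i:|\theta_i^N|>L}(|\theta_i^N|+O(1)) \;\leq\; \frac{\xi(x^2)}{L} + o_N(1),
\]
using $\sum_{i:|\theta_i^N|>L}|\theta_i^N|\leq \frac{1}{L}\sum_i(\theta_i^N)^2 \to \frac{2k(N)\xi(x^2)}{L}$ by Cauchy--Schwarz. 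This bound goes to $0$ as $L \to \infty$ uniformly in $N$, so the probability that $d$ exceeds $\delta$ is, for $L$ large enough, controlled by the super-exponentially small probability of the bad event.

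Combining these ingredients via the exponential approximation theorem (e.g.\ Dembo--Zeitouni Theorem 4.2.16), one concludes the LDP for $(\hat{\nu}_N)_N$ at speed $2Nk(N)$ with rate function $\frac{\beta}{2}\mathcal{I}_\xi$. The main obstacle is the exponential equivalence step: one must verify the correct speed $2Nk(N)$ for the tail event on $|||Y_N|||$, which requires invoking the LDP already established in Section~\ref{sec:largesteigenvalues} rather than cheaper concentration bounds at speed $N$. A further subtlety is to ensure that the monotone pointwise convergence $\mathcal{I}_{\xi^L}\uparrow \mathcal{I}_\xi$ is compatible with the weak topology, which is handled by restricting to the level sets $\{\nu : \nu(x^2)\leq M\}$ on which the exponential tightness estimates of Section~\ref{proofexptight} apply.
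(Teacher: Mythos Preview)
Your overall strategy—truncate the spikes at level $L$, use the bounded case to get an LDP for $\hat\nu_N^L$ with rate $\mathcal{I}_{\xi^L}$, then pass to the limit via the exponentially-good-approximation machinery of \cite[Theorem~4.2.16]{DZLD}—is exactly what the paper does. The difference, and the place where your argument breaks, is in how you establish the exponential approximation.

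You claim that on the good event $\{|||Y_N|||\le C\}$ one has a Wasserstein-type bound, and that the complementary event has probability $\exp(-cNk(N))$ ``by Theorem~\ref{LDPTheo}''. This last assertion is false in the regime $k(N)\to\infty$. The LDP of Theorem~\ref{LDPTheo} is for the extremal empirical measure $\hat\nu_N(Y_N)$ at speed $Nk(N)$; a single outlier $\lambda_1(Y_N)>C$ contributes mass only $1/(2k(N))$ to $\hat\nu_N$, which vanishes in the limit and carries no positive rate-function cost. In fact $\Pp[\lambda_1(Y_N)>C]$ decays only like $\exp(-cN)$ (the classical rank-one LDP), which is \emph{not} $o(\exp(-cNk(N)))$ when $k(N)\to\infty$. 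So the probabilistic part of your exponential equivalence fails. The invocation of the BBP map is also misplaced: that is an asymptotic statement, not a deterministic finite-$N$ bound.

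The paper sidesteps this entirely by working with a bounded-test-function distance $d_{BV}$ rather than a Wasserstein distance. The key lemma is that for any two Hermitian $A,A'$, the extremal empirical measures satisfy $d_{BV}(\nu,\nu')\le \tfrac{C}{2k(N)}\,\mathrm{rank}(A-A')$ purely from interlacing and boundedness of the test functions—no control on the size of the eigenvalues is needed. Since $\mathrm{rank}(D_N-D_N^L)/k(N)\to\xi(|x|>L)\to 0$ as $L\to\infty$, the approximation is \emph{deterministically} close, and exponential equivalence is trivial. (An alternative fix within your framework: Lidskii's inequality gives the deterministic bound $\sum_{j=1}^N|\lambda_j(X_N)-\lambda_j(X_N^L)|\le \|D_N-D_N^L\|_*=\sum_{|\theta_i|>L}(|\theta_i|-L)$, hence $W_1(\hat\nu_N,\hat\nu_N^L)\le \tfrac{1}{2k(N)}\sum_{|\theta_i|>L}(|\theta_i|-L)$ without any good event; this also suffices.)

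Finally, your treatment of the rate-function identification is too sketchy. You assert ``monotone pointwise convergence $\mathcal{I}_{\xi^L}\uparrow\mathcal{I}_\xi$'', but monotonicity in $L$ is not obvious for $I_\theta(x)$ as a function of $\theta$. The paper instead uses the quadratic lower bound $I_\theta(x)\ge\tfrac12(x-(\theta\vee 1+\tfrac{1}{\theta\vee 1}))^2$ to obtain uniform second-moment control (equation~\eqref{eq2ndmoment}), which is what makes the three hypotheses of \cite[Theorem~4.2.16]{DZLD}—identification of the limiting rate, goodness, and the closed-set inequality—verifiable via dominated convergence and Fatou arguments.
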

	
	\begin{proof}
		We are going to approximate $\tilde{X}_N$ by $\tilde{X}_N^{(M)}:= U Y_N U ^* + U D_N^{(M)} U^*$ where \newline $D_N^{(M)} = \diag( \theta_1^{N,(M)},\dots,\theta_{k(N)}^{N,(M)}, \theta_{-1}^{N,(M)},\dots,\theta_{- k(N)}^{N,(M)}, 0, \dots,0)$ with $\theta_i^{N,(M)} = \theta_i^N \wedge M$ and $\theta_{-i}^{N,(M)} = \theta_{-i}^N \vee (-M)$. 
		We easily have that: 
		\[ \limsup_{M \to \infty} \limsup_{N \to \infty} \frac{1}{k(N)} \text{rank}( \tilde{X}_N ^{(M)} - \tilde{X}_N ) = 0. \]
		
		Let denote $\mathcal{F}_{BV}$ the following subspace of $\mathcal{C}_c( \R)$:
		\[ \mathcal{F}_{BV}:= \{ f \in \mathcal{ C }_c(\R): f \text{ is 1-Lipshitz and has total variation 1} \} \]
		$d_{BV}$ is defined as: 
		\[ d_{BV}( \mu, \mu') = \sup_{f \in \mathcal{F}_{BV}} \Big| \int f d \mu - \int f d\mu' \Big|. \]
		The distance $d_{BV}$ metrizes the weak topology. We will use the following lemma: 
		
		\begin{lemma}
		  For a universal constant $C >0$, if $A, A' \in \mathcal{H}_{N}^{\beta}$ and if $\nu$ and $\nu'$ are the distribution of the $2 k(N)$ extremal eigenvalues of respectively $A$ and $A'$, we have $d_{BV}( \nu, \nu') \leq \frac{C}{2 k(N)}\text{ rank}( A - A')$.
	\end{lemma}

	\begin{proof}
		For any $f \in \mathcal{F}_{BV}$, there is $g$ and $h$ both $1$-Lipshitz, increasing and uniformly bounded by $1$ such that $f = g-h$. 
		If $A -A'$ is of rank $1$, we have for $k \in [ 2, N-1]$
		\[ \lambda_{k-1}(A') \geq  \lambda_{k}(A) \geq \lambda_{k+1}(A') \]
		so we have 
		\[\sum_{i=1}^{k(N) -1} g(\lambda_i(A')) \geq \sum_{ i=2}^{k(N)} g(\lambda_i(A)) \]
		and so
		\[\sum_{i=1}^{k(N) } g(\lambda_i(A')) \geq \sum_{ i=1}^{k(N)} g(\lambda_i(A)) - 2. \]
		The same steps also imply that:
		\[\sum_{i=1}^{k(N) } g(\lambda_{N -i+1}(A')) \geq \sum_{ i=1}^{k(N)} g(\lambda_{N -i+1}(A)) - 2. \]
		
		Doing the same thing for $h$ and substracting, we have 
		\[ \nu(f) \geq \nu'(f) - \frac{4}{ k(N)} \]
		and symetrically: 
		\[ \nu'(f) \geq \nu(f) - \frac{4}{ k(N)} \]
		so
		\[ d_{BV}( \nu, \nu') \leq \frac{ 4}{ k(N)} .\]
		An obvious recursion on the rank finishes the proof. 
	\end{proof}
		Thus if we denote for every $M >0$, $\hat{\nu}_N^{(M)}$ the extremal eigenvalue distribution of $\tilde{X}_N^{(M)}$, then for $d_{BV}$ the $\hat{\nu}^{(M)}$ are exponential approximations of $\nu^{(M)}$. Furthermore, by construction, it is easy to see that for every $M>0$ the distribution of the $\theta_i^{N,(M)}$ converges toward $\xi^{(M)}$ where $\xi^{(M)}$ is the push forward of $\xi$ by the function $x \mapsto (x \wedge M) \vee -M$. So the $( \hat{\nu}_N^{(M)})_{N \in \N}$ follow a large deviation principle with rate function $\mathcal{I}^{(M)}:= \mathcal{I}_{\xi^{(M)}}$. Therefore using \cite[Theorem 4.2.16]{DZLD}, $(\hat{\nu}_N)_{N \in \N}$ will satisfy a weak large deviation principle with rate function: 
		\[ \mathcal{I}'(\nu) = \limsup_{\delta \to 0} \limsup_{M \to + \infty} \inf_{\nu',d_{BV}(\nu, \nu') \leq \delta} \mathcal{I}^{(M)}( \nu'). \]
		It remains to show the three following things to conclude that we have a (strong) large deviation principle with the wanted rate function:
		\begin{enumerate}
			\item Indeed, we have for every $\nu \in \mathcal{P}(\R)$ such that $\nu(]- \infty,-2]) = \nu([2, + \infty[) = 1/2$, 
			\[ \mathcal{I}'(\nu) = \mathcal{I}_{\xi}(\nu). \]
			\item $\mathcal{I}_{\xi}$ is a good rate function.
			\item For every close set $F$ in the weak topology
			\[ \inf_{\nu \in F} \mathcal{I}(\nu) \leq \limsup_{M \to \infty} \inf_{\nu \in F} \mathcal{I}^{(M)}(\nu). \] 
		\end{enumerate}
		We are first going to prove that $\mathcal{I}_{\xi}$ gives a control over the second moment of $\nu$. For this, we are going to use the following lemma: 
		\begin{lemma}
			For every $\theta \geq 0$ and $x \geq 2$, 
			\[ I_{\theta}(x) \geq \frac{1}{2} \Big( x - \Big( \theta \vee 1	+ \frac{1}{ \theta \vee 1} \Big) \Big)^2 . \]
		\end{lemma}
		This is simply due to the fact that $I_{\theta}( \theta \vee 1	+ \frac{1}{ \theta \vee 1}) =0$ and $I''_{\theta}(x) \geq 1$. From this lemma, one deduces that if $\xi \in \mathcal{P}(\R)$ and if we denote $\tilde{Q}_{\xi}$ the function defined by:
		\[ \tilde{Q}_{\xi}(x) = \begin{cases} (Q_{\xi}(x) \vee 1)  + \frac{1}{ Q_{\xi}(x) \vee 1} \text{ if } x \geq 1/2 \\
			(Q_{\xi}(x) \wedge 1)  + \frac{1}{ Q_{\xi}(x) \wedge 1} \text{ if } x < 1/2	\end{cases} \]
		then with $\nu \in  \mathcal{P}(\R)$ such that $\nu(] - \infty, -2]) = \nu([2, + \infty[) = 1/2$, one has 
		\[ \mathcal{I}_{\xi}(\nu) \geq \frac{1}{2} || Q_{\nu} - \tilde{Q}_{\xi} ||_2^2 .\]
		And therefore, using using the triangle inequality for $|| \cdot ||_2$
		\[ || Q_{\nu}||_2 \leq \sqrt{  2 \mathcal{I}_{\xi}(\nu)} + || \tilde{Q}_{\xi}||_2 .\]
		Moreover $\tilde{Q}_{\xi}^2 \leq ( 2 + Q_{\xi})^2$ and $\xi(x^2) = \int_0^1 Q_{\xi}(t)^2 dt$ so
		\[ || \tilde{Q}_{\xi}||_2 \leq \sqrt{ 4 + 4 \sqrt{ \xi(x^2)} + \xi(x^2)} ,\]
		and so if $\mathcal{I}_{\xi}(\nu)$ and $\xi(x^2)$ are finite, so is $\nu(x^2)$ and 
		\begin{equation} \label{eq2ndmoment}
			\nu(x^2) \leq  \left( \sqrt{2 \mathcal{I}_{\xi}( \nu)} + \sqrt{ 4 + 4 \sqrt{ \xi(x^2)} + \xi(x^2)} \right)^2, \end{equation}
		Note that it also prove the item (2) since our assumption is that $\xi(x^2) < + \infty$ and the sets $\{ \nu \in \mathcal{P}(\R), \nu(x^2) \leq T \}$ are compacts for the weak topology. 
		Conversely it is almost direct from the definition of $I_{\theta}$ that since $I(x) \leq x^2/2$,
		\[ I_{\theta}(x) \leq \frac{x^2 + \theta^2}{2} \]
		implying that if $\nu(x^2)$ and $\xi(x^2)$ are finite, so is $\mathcal{I}_{\xi}(\nu)$ and 
		\[ \mathcal{I}_{\xi}(\nu) \leq \frac{1}{2}( \nu(x^2) + \xi( x^2)). \]
		Let us prove point (1). First the definition implies that 
		\[ \mathcal{I}'(\nu) \leq \liminf_{M \to \infty} \mathcal{I}^{(M)}(\nu). \]
		However, one can notice that $Q^2_{\xi^{(M)}} \leq Q_{\xi}^2$ and so for every $t \in [0,1]$ 
		\[ I_{Q_{\xi^{(M)}}(t)}( Q_{\nu}(t)) \leq \frac{1}{2} ( Q_{\nu}(t)^2 + Q_{\xi}(t)^2). \]
		Therefore, for every $\nu$ such that $\nu(x^2) < + \infty$ the $ t \mapsto I_{Q_{\xi^{(M)}}(t)}( Q_{\nu}(t))$ are equi-integrable. Since these functions converge almost everywhere to $ t \mapsto I_{Q_{\xi}(t)}( Q_{\nu}(t))$, we have that:
		\[ \liminf_{M \to \infty} \mathcal{I}^{(M)}(\nu) = \liminf_{M \to \infty} \int_0^1 I_{Q_{\xi^{(M)}}(t)}( Q_{\nu}(t)) dt = \int_0^1 I_{Q_{\xi}(t)}( Q_{\nu}(t)) dt = \mathcal{I}_{\xi}(\nu) \]
		and therefore 
		\[ \mathcal{I}'(\nu) \leq \mathcal{I}_{\xi}(\nu) \]
		To prove the converse, let us reason by contradiction and assume that 
		\[ \mathcal{I}'(\nu) < \mathcal{I}_{\xi}(\nu) \]
		for some $\nu$. Then using the definition of 
		$\mathcal{I}'$, one can find $\epsilon >0$, a sequence $(\delta_N)$ converging to $0$, a sequence $(M_N)$ diverging to $+ \infty$ a sequence of measure $(\nu_N)$ such that $d_{BV}(\nu, \nu_N) \leq \delta_N$ and so that
		\[ \liminf_{N \to \infty} \mathcal{I}^{(M_N)}( \nu_N) \leq \mathcal{I}_{\xi}(\nu) - \epsilon. \]
		However, since $\nu_N$ converges toward $\nu$ and $\xi^{(M_N)}$ converges toward $\xi$, we have the convergence almost everywhere of $Q_{\nu_N}$, $Q_{\xi^{(M_N)}}$ toward respectively $Q_{\nu}$ and $Q_{\xi}$. Using then the continuity of $(x, \theta) \mapsto I_{\theta}(x)$ and Fatou's lemma as well as the definition of $\mathcal{I}^{(M_N)}$, we get 
		\[ \liminf_{N \to \infty} \mathcal{I}^{(M_N)}( \nu_N) \geq \mathcal{I}_{\xi}(\nu) \]
		which yields the desired contradiction.
		It only remains to show the third point. First we can use equation \eqref{eq2ndmoment} and the fact that $\xi^{(M)}(x^2) \leq \xi(x^2) $ to prove that for every $M > 0$ 
		\[  \mathcal{I}^{(M)}(\nu) \geq \frac{1}{2}\Big( \nu(x^2)^{1/2} - C \Big)^2 \]
		where $C = \sqrt{4 + A \sqrt{\xi(x^2)} + \xi(x^2)}$. Therefore, to prove the third point, we can restrict both the $\inf$ on $F$ to inf on the compact $ F' = F  \cap \{ \nu \in \mathcal{P}(\R): \nu(x^2) \leq K \}$ by choosing $K$ such that $(K^{1/2} - C)^2/2 \geq \inf_{x \in F} \mathcal{I}_{\xi}(x)+1 $. Then, since we have infimums of lower semi-continuous functions, we can replace  $\inf_{\nu \in F'} \mathcal{I}^{(M)}(\nu)$ by $\mathcal{I}( \nu_M)$ with $\nu_M \in F'$. Then the desired inequality becomes 
		\[ \inf_{\nu \in F'} \mathcal{I}_{\xi}(\nu) \leq \limsup_{M \to \infty} \mathcal{I}^{(M)}(\nu_M) . \]
		Now using the compactness of $F'$, there is a sequence $M_N$ increasing to $+ \infty$ and $\nu_{M_N}$ converging to some $\nu_0 \in F$ and such that $\lim_{N \to \infty} \mathcal{I}^{(M_N)}(\nu_{M_N}) =\limsup_{M \to \infty} \mathcal{I}^{(M)}(\nu_M)$. But then, using Fatou's lemma again, we have that
		\[\mathcal{I}_{\xi}(\nu_0) \leq \liminf_{N \to + \infty} I^{(M_N)}(\nu_{M_N}) \]
		and so 
		\[ \inf_{\nu \in F'} \mathcal{I}(\nu) \leq \limsup_{N \to \infty} \mathcal{I}^{(M_N)}(\nu_{M_N}) =  \limsup_{M \to \infty} \mathcal{I}^{(M)}(\nu_M)  \]
		which proves the result. 
	\end{proof}

	\section{Applications to Spin Glasses}\label{sec:spinglass}
	
	In this section, we explain the application of the $o(N)$ spherical integrals to further understand various models appearing in spin glasses. Theorem~\ref{theo:positivetheo} gives an explicit closed form of the free energy of $2$-spin spherical spin glasses with $k(N)$ dimensional spins. This growing rank extension allows us to the take the dimensions of the vector spins to $\infty$ to study the concentration as coupled copies tend to $\infty$.
	
	\subsection{The Spherical SK Model}\label{sec:sphericalSK}
	
	We start by introducing the classical spherical SK model. The Hamiltonian in this model is given by
	\begin{equation}\label{eq:origSKHam}
		\tilde H_N(\bs) = \frac{1}{\sqrt{2N}} \sum_{i,j = 1}^N g_{ij} \sigma_i \sigma_j
	\end{equation}
	where $g_{ij}$ are iid real valued standard Gaussians and the spin variables $\bs \in \R^N$ lie on the sphere of radius $\sqrt{N}$. Given an inverse temperature parameter $\theta > 0$, the free energy associated with this Hamiltonian is
	\[
	\tilde F_N(\theta) = \frac{1}{N} \ln \int_{\| \bs\| = \sqrt{N}} e^{ \theta \tilde H_N(\bs)} \, d\bs
	\]
	where $d \bs$ is the uniform measure on the sphere with radius $\sqrt{N}$. The limit of the free energy is given by the replica symmetric restriction of the Crisanti--Sommers formula \cite{crisanti1992sphericalp,TSPHERE,CASS}
	\begin{equation}\label{eq:cs1dim}
		\lim_{N \to \infty} \E \tilde F_N(\theta) = 	\inf_{q \in [0,1)} \frac{1}{2} \bigg( \frac{\theta^2}{2}(1-q^2)+ \frac{q}{1-q} + \ln(1 - q) \bigg).
	\end{equation}
	Another variational formula for this model using the TAP approach was proved in \cite{BeliusTAP}. The spherical integrals can be used to prove an alternative derivation of the limit of the free energy.

	We first notice that the Hamiltonian \eqref{eq:origSKHam} is equivalent in distribution to a quadratic form of a GOE matrix,
	\[
	H_N(e) = \frac{N}{2} \langle e, G_N e \rangle
	\]
	where $G_N$ is a GOE matrix and $e$ is a vector on the unit sphere.  Indeed, it follows that both $H_N(\bs)$ and $\tilde H_N(e)$ are Gaussian processes with mean $0$ with covariances
	\[
	\E H_N(e^1) H_N(e^2) = \frac{N}{2} (e^1 \cdot e^2)^2 \quad\text{and} \quad \E \tilde H_N(\bs^1) \tilde H_N(\bs^2) = \frac{N}{2} \Big( \frac{\bs^1}{\sqrt{N}} \cdot \frac{\bs^2}{\sqrt{N}}\Big)^2 .
	\]
	Since $\frac{\bs}{\sqrt{N}}$ is on the unit sphere in $\R^N$, both Gaussian processes have the same mean and covariance structure so it suffices to study the Hamiltonian $H_N$ defined on unit vectors.

	A fundamental quantity in the study of spin glasses is the free energy, which in our setting is precisely a spherical integral
	\[
	F_N(\theta) = \frac{1}{N} \E_{G_N} \ln \int_{S_1} e^{ \theta H_N(e)} \, de =  \frac{1}{N} \E_{G_N} \ln  \int \Big[ \exp\Big( \frac{1}{2} N \tr(U^* G_N U D_N) \Big) \Big] \, dU
	\]
	where $D_N = \mathrm{\diag}(\theta,0,\dots,0)$ and the outer expected value is over the randomness in the GOE matrix. Because $D_N$ is rank $1$, in this setting the formula only depends on the first column of $U$ which is uniform on the sphere. 
	
	This is not quite of the form of the spherical integrals $I_N$ we defined earlier because the matrix $G_N$ is random, but we can use the almost sure convergence of its eigenvalues to replace $G_N$ with a deterministic matrix, as we will explain below. If $\lambda_1, \dots, \lambda_N$ denote the random eigenvalues of $G_N$ then the empirical measure of the eigenvalue converges to the semicircle law
	\[
	\frac{1}{N} \sum_{i = 1}^N \delta_{\lambda_i} \to d\sigma(x) = \frac{1}{2\pi} \1_{[-2,2]}(x) \sqrt{4 - x^2} dx. 
	\]
	
	Using the convergence of the empirical distribution, we can diagonalize the random matrix $G_N = U \tilde A_N U^*$ where $\tilde A_N = \diag(\lambda_1, \dots, \lambda_N)$ are the random normalized eigenvalues of $G_N$. We denote the typical values of $\tilde A_N$  by
	\begin{equation}\label{eq:nonrandcoeff}
	a_i = \inf \bigg\{ u ~\bigg\vert~ \int_{-2}^{u}  d \sigma (x) = -2 + (i - 1) \frac{4}{N}  \bigg\}.
	\end{equation}
	If $A_N = \diag(a_1, \dots, a_N)$, then it is clear that the spectral distribution of $A_N$ also converges to $d\sigma(x)$. Because the limit of the spherical integrals only depend on the eigenvalues by rotational symmetry, a simple modification of the argument in Lemma~\ref{lem:deterministicdisorder} implies that
	\begin{equation}\label{eq:FEapprox}
		\lim_{N \to \infty} | \E F_N(\theta) - I_N(A_N, D_N) | = 0.
	\end{equation}
	
	The spherical integral limit Theorem~\ref{theo:positivetheo} when $k(N) = 1$ can be computed explicitly to give the following closed form of the limit of the free energy, which was already proved in \cite{TSPHERE}. 
	\begin{prop}[Spherical SK Model] \label{prop:sphericalSK}
		The limit of the free energy in a spherical $2$-spin model is given by
		\begin{equation}\label{eq:crisantieigenvalue}
				\lim_{N\to \infty} \E F_N(\theta) = \begin{cases}
			\frac{\theta^2}{4} &\quad \theta < 1
			\\\theta - \frac{\ln \theta}{2} - \frac{3}{4}  &\quad \theta \geq 1.
		\end{cases}
		\end{equation}
	\end{prop}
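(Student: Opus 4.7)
The strategy is to apply the $k(N)=1$ case of Theorem~\ref{theo:rank1} and then compute $J(\theta,2,\sigma)$ in closed form using properties of the semicircle law. The preliminary reduction is already present in the surrounding text: by the equivalence in distribution of the Hamiltonians $\tilde H_N$ and $H_N$, and by the rotational invariance of $I_N$ together with the approximation \eqref{eq:FEapprox}, it suffices to show
\[
\lim_{N\to\infty}\frac{2}{\beta N}\ln I_N(A_N,D_N)=\frac{1}{2}J(\theta,2,\sigma),
\]
where $\beta=1$, $D_N=\diag(\theta,0,\dots,0)$, and $A_N=\diag(a_1,\dots,a_N)$ has the deterministic quantile entries \eqref{eq:nonrandcoeff}. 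This is immediate from Theorem~\ref{theo:rank1}(i) since $a_1\to r(\sigma)=2$, the empirical spectral distribution of $A_N$ converges weakly to $\sigma$, and Assumption~\ref{assum:A1} holds trivially with $k(N)=l(N)=1$.

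The bulk of the argument is the evaluation of $J(\theta,2,\sigma)$. For the semicircle law,
\[
G_\sigma(z)=\frac{z-\sqrt{z^2-4}}{2}\qquad (z\geq 2),
\]
so $G_\sigma(2)=1$ and, for $\theta\in(0,1)$, $G_\sigma^{-1}(\theta)=\theta+\theta^{-1}$. Inspection of Definition~\ref{def:J} with $\lambda=\lambda'=2$ then shows that the regimes separate exactly at $\theta=1$: when $\theta\geq 1$ one has $v=2$, while for $\theta<1$ one has $v=\theta+\theta^{-1}$.

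The only nontrivial computation is the logarithmic potential of $\sigma$ at $v\geq 2$. Integrating $G_\sigma$ along the real axis (with the boundary value fixed by the trigonometric evaluation $\int \ln|2-x|\,d\sigma(x)=\tfrac{1}{2}$) yields
\[
\int \ln|v-x|\,d\sigma(x)=\frac{v(v-\sqrt{v^2-4})}{4}+\ln\frac{v+\sqrt{v^2-4}}{2}-\frac{1}{2}.
\]
Plugging in $v=\theta+\theta^{-1}$ (so $\sqrt{v^2-4}=\theta^{-1}-\theta$) collapses this to $\theta^2/2-\ln\theta$. Substituting into the definition of $J$, for $\theta\geq 1$ one obtains $J(\theta,2,\sigma)=2\theta-\ln\theta-\tfrac{3}{2}$, and for $\theta<1$ the cancellation $2\theta+(\theta+\theta^{-1}-2)\theta-\ln\theta-(\theta^2/2-\ln\theta)-1=\theta^2/2$ gives $J(\theta,2,\sigma)=\theta^2/2$. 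Dividing by $2$ recovers \eqref{eq:crisantieigenvalue}.

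There is no structural obstacle: once Theorem~\ref{theo:rank1} is invoked, the proof reduces to an explicit calculus exercise. The only point requiring care is the identification of the phase transition at $\theta=1$ with the branch change in the definition of $v(\lambda,\theta)$, which is what produces the two cases in the Crisanti--Sommers formula and makes it transparent that the critical point $q^*=1-\theta^{-1}$ of \eqref{eq:cs1dim} corresponds precisely to $v=G_\sigma^{-1}(\theta)$ in the variational description of $J$.
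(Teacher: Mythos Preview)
Your proposal is correct and follows essentially the same approach as the paper: invoke the rank-one spherical integral asymptotics to reduce to $\frac{1}{2}J(\theta,2,\sigma)$, then compute $J$ explicitly via the Stieltjes transform and logarithmic potential of the semicircle law, with the phase transition at $\theta=1$ coming from the branch change in $v$. One small slip: in your displayed reduction the left-hand side $\frac{2}{\beta N}\ln I_N$ converges to $J(\theta,2,\sigma)$, not $\tfrac{1}{2}J(\theta,2,\sigma)$; the factor $\tfrac{1}{2}$ appears only after passing to $\E F_N(\theta)=\frac{1}{N}\ln I_N$ with $\beta=1$.
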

	\begin{proof}
		By our observation \eqref{eq:FEapprox} and  Theorem~\ref{theo:positivetheo}, we have
		\[
		\lim_{N\to \infty} \E F_N(\theta) = \frac{1}{2} J(\theta, 2, \sigma)
		\]
		because $\beta = 1$ in the real case and $2$ is the largest point in the support of $\sigma$. Since $\sigma$ is the semicircle distribution, we can explicitly compute the Stieltjes transform and recover a closed form of the rate function $J$ defined in Definition~\ref{def:J}. 
		
		We have
		\begin{equation}\label{eq:stieljessc}
			G_{ \sigma}(z) = \int_{-2 }^{2} \frac{{\sigma}(x)}{z - x} \, dx = \frac{z - \sqrt{z^2 - 4}}{2} \quad \text{ for } z \geq \sqrt{2},
		\end{equation}
		and integrating this gives the logarithmic potential
		\begin{equation}\label{eq:logpotential}
			h_{\sigma}(z):= \int_{-2}^{2} \ln |z - x| \, d {\sigma}(x) = \frac{z^2}{4} - \frac{z \sqrt{z^2 - 4}}{4} + \ln \frac{z + \sqrt{z^2 - 4}}{2}  - \frac{1}{2} \quad \text{ for } z \geq 2.
		\end{equation}
		Since $G_{\sigma}(2) = 1$, we have
		\[
		v = \begin{cases}
			2 & \text{if }1 \leq \theta,\\
			G^{-1}_{\sigma}(\theta) & \text{if } 1 > \theta.
		\end{cases}
		\]
		We now evaluate $J$ on these two regions. On the high temperature region $\theta < 1$, 
		\begin{equation}\label{eq:invstieltjes}
			G^{-1}_{\sigma}(\theta) = \theta + \frac{1}{\theta}	
		\end{equation}
		so
		\begin{align}
			\frac{1}{2} J(\theta,2,{\sigma})=
			\frac{1}{2} \bigg( 2\theta + \bigg( \theta + \frac{1}{\theta}- 2\bigg) \theta - \ln \theta - h_{\sigma}\bigg( \theta + \frac{1}{\theta} \bigg)  -1 \bigg) = \frac{\theta^2}{4} \label{eq:sphericalspinglasslimit}.
		\end{align}
		On the low temperature region, $\theta > 1$, we see that
		\begin{align}
			\frac{1}{2} J(\theta,2,{ \sigma})
			=  \frac{1}{2} \bigg( 2 \theta  + (2 -2) G_\sigma(2) - \ln \theta -h_\sigma(2) -1 \bigg) \notag
			= \theta - \frac{\ln \theta}{2} - \frac{3}{4} \label{eq:sphericalspinglasslimit2}.
		\end{align}
	\end{proof}
	
	\subsection{The Vector Spin Spherical SK Model}
	We can extend the results for the spherical SK model to study a coupled system of $k(N)$ spherical spin glasses. In the case when $k(N) = k$, this model is called the $k$ dimensional vector spin spherical SK model \cite{PTSPHERE,PVS,kosphere}. These models commonly show up when studying the large deviations for the overlap matrices \cite[Theorem~1.13]{GBAspectralgap}  or computing the probability of sampling $k(N)$ configurations from a Gibbs measure \cite{subaglandcapes}. 
	
	Let $k(N) = o(N)$ and consider a set of $k(N)$ configurations $\Sigma = (\bs^1,\dots, \bs^{k(N) }) \in \R^{k(N) \times N}$. Given a positive definite matrix $Q \in \R^{k(N) \times k(N)}$ with diagonal entries $1$, a central quantity is the constrained free energy defined in terms of the Hamiltonian defined in \eqref{eq:origSKHam},
	\begin{equation}\label{eq:vectorspinFE}
	\tilde F^\epsilon_N(Q) = \frac{1}{Nk(N)} \E \ln \int \1( ||| N^{-1} \Sigma \Sigma^\top - Q ||| \leq \epsilon ) e^{\sum_{\ell = 1}^k(N) \theta_\ell H_N(\bs^\ell)} \, d \bs^1 \cdots d \bs^{k(N)}.
	\end{equation}
	The replica symmetric form of the Crisanti--Sommers formula \cite[Theorem~1]{kocs} in the case when $k(N) = k$ provides an upper bound of the free energy 
	\begin{equation}\label{eq:PARFEnoH}
		\lim_{\epsilon \to 0}  \lim_{N \to \infty} \E \tilde F^\epsilon_N(Q) \leq \inf_{M} \frac{1}{2k} \bigg( \frac{1}{2} \theta^\top (Q^{\odot 2} - M^{\odot 2} ) \theta + \ln | Q - M| + \tr( (Q - M)^{-1} M  ) \bigg)
	\end{equation}
	 where the supremum is over positive semidefinite matrices such that $0 \leq M \leq Q$. Similarly, to the one dimensional case, the asymptotics of the spherical integrals can be applied in this setting to derive closed forms of the limit.
	 
	 \begin{rem}
	 	We use the operator norm in the definition of \eqref{eq:vectorspinFE} instead of the infinity norm on matrices that appears in previous works \cite{PTSPHERE,PVS,kosphere}, because the choice of norm is essential if the rank $k(N) \to \infty$. Of course, in the case that $k(N) = k$ is independent of $N$, norm equivalence in finite dimensions implies that \eqref{eq:vectorspinFE} is equivalent to the free energies appearing in the previous work.
	 \end{rem}
	
	If we take $Q = I_{k(N)} \in \R^{k(N) \times k(N)}$ and restrict the inner products of the configurations to be approximately orthogonal, we are essentially integrating uniformly over unitary matrices in the limit as $\epsilon \to 0$. In this setting, our results follow immediately from the finite rank formulas. The main difficulty is showing that the restriction to an approximate identity is equivalent to integrating over the Haar measure on orthogonal matrices.
	
	\begin{lemma}\label{lem:apporxidentity}  Let $D_N$ satisfy Assumption~\ref{assum:A1} and $k(N) = o(N)$. We have
	\[
	\lim_{\epsilon \to 0} \lim_{N \to +\infty} \E \tilde F^\epsilon_N(I)  = \lim_{N \to \infty} I_N(A_N, D_N),
	\]
	where $D_N = \diag(\theta_1, \dots, \theta_{k(N)})$, $A_N = \diag(a_1, \dots, a_N)$ are the non-random coefficients defined in \eqref{eq:nonrandcoeff}.
	\end{lemma}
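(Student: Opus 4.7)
The plan is a two-step reduction: first derandomize the GOE disorder, then parametrize the product-of-spheres measure by its Gram matrix combined with a Stiefel frame, which directly exhibits the spherical integral as the dominant contribution.

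\textbf{Step 1 (Derandomization).} Conditioning on the spectrum $\Lambda=\diag(\lambda_i)$ of $G_N$, we write $G_N=O\Lambda O^{\top}$ with $O$ Haar on $O(N)$. The substitution $\tilde \bs^\ell := O^{\top}\bs^\ell$ leaves both the uniform measure on each sphere and the Gram matrix $\Sigma\Sigma^{\top}$ (hence the indicator $\mathds{1}(|||\Sigma\Sigma^{\top}-I|||\leq\epsilon)$) invariant, so $G_N$ is replaced by $\Lambda$ inside the integral. Using $\|\bs^{\ell}\|=1$, the difference $\bigl|\sum_{\ell}\theta_\ell (\bs^\ell)^{\top}(\Lambda-A_N)\bs^\ell\bigr|$ is at most $k(N)K\,|||\Lambda-A_N|||$, so combined with the rigidity estimate $\E|||\Lambda-A_N|||=o(1)$ for GOE eigenvalues at the $a_i$-quantiles, the free energies with disorders $\Lambda$ and $A_N$ differ by $o(1)$ in the $\frac{1}{Nk(N)}$-scale. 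It then remains to analyze
\[
\mathcal Z_N^\epsilon := \int \mathds{1}(|||\Sigma\Sigma^{\top}-I|||\leq\epsilon)\,\exp\Bigl(\tfrac{N}{2}\tr(D_N\Sigma A_N\Sigma^{\top})\Bigr)\,d\pi(\Sigma),
\]
where $\pi$ is the product of uniform measures on $S^{N-1}$.

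\textbf{Step 2 (Gram--Stiefel parametrization).} Any $\Sigma$ with nondegenerate Gram matrix factors uniquely as $\Sigma=C^{1/2}V$ with $C:=\Sigma\Sigma^{\top}$ and $V$ on the Stiefel manifold of orthonormal $k(N)$-frames in $\R^N$. Bilateral rotational invariance of $\pi$ under $\Sigma\mapsto \Sigma O$ for $O\in\mathcal U_N^\beta$ forces the conditional law of $V$ given $C$ to be the Haar measure on the Stiefel manifold, which coincides with the law of the first $k(N)$ columns of a Haar orthogonal matrix. Writing $\tilde D_N(C):=C^{1/2}D_NC^{1/2}$ extended by zeros to $N\times N$, the identity $\tr(D_N\Sigma A_N\Sigma^{\top})=\tr(\tilde D_N(C)\,V A_N V^{\top})$ yields
\[
\mathcal Z_N^\epsilon = \int_{|||C-I|||\leq\epsilon} I_N\bigl(A_N,\tilde D_N(C)\bigr)\,dP_N(C),
\]
where $P_N$ is the marginal law of $C$ under $\pi$.

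\textbf{Step 3 (Continuity and concentration).} On the event $\{|||C-I|||\leq\epsilon\}$ one has $|||\tilde D_N(C)-D_N|||\leq 2K\epsilon + O(\epsilon^2)$. Since $\tr(VA_NV^{\top})\leq k(N)K$ on the Stiefel manifold, the integrand of $I_N(A_N,\cdot)$ is $\tfrac{Nk(N)K}{2}$-Lipschitz in its second argument in operator norm, so $\tfrac{1}{Nk(N)}\ln I_N(A_N,\tilde D_N(C))$ differs from $\tfrac{1}{Nk(N)}\ln I_N(A_N,D_N)$ by $O(\epsilon)$. Therefore
\[
\tfrac{1}{Nk(N)}\ln\mathcal Z_N^\epsilon = \tfrac{1}{Nk(N)}\ln I_N(A_N,D_N) + \tfrac{1}{Nk(N)}\ln P_N\bigl(|||C-I|||\leq\epsilon\bigr) + O(\epsilon).
\]
A Marchenko--Pastur type bound for Gram matrices of iid uniform unit vectors yields $|||\Sigma\Sigma^{\top}-I|||=O(\sqrt{k(N)/N})\to 0$ in probability whenever $k(N)=o(N)$, so $P_N(\cdot)\to 1$ and its log divided by $Nk(N)$ vanishes. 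Sending $N\to\infty$ then $\epsilon\to 0$ concludes.

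\textbf{The main obstacle} is the parametrization in Step 2: checking that the Stiefel factor is conditionally Haar given $C$, which rests on bilateral rotational invariance but requires some care at degenerate $C$ and a Jacobian computation for the polar-type decomposition $\Sigma=C^{1/2}V$. A secondary subtlety is that the concentration in Step 3 must hold in operator norm rather than Frobenius norm; the Frobenius version would only give $o(1)$ under the stricter assumption $k(N)=o(\sqrt{N})$, whereas the $\sqrt{k/N}$ operator-norm scaling from Marchenko--Pastur is exactly what covers the regime $k(N)=o(N)$.
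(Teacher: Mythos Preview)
Your argument is correct and follows the same overall strategy as the paper: derandomize the GOE disorder, extract a Haar-distributed factor via a polar-type decomposition, use the operator-norm Lipschitz continuity of $\frac{1}{Nk(N)}\ln I_N(A_N,\cdot)$ to replace $\tilde D_N(C)$ by $D_N$ at cost $O(\epsilon)$, and show that the Gram matrix concentrates at $I$ in operator norm so that the normalizing log-probability vanishes.

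Your execution is cleaner in one respect. You apply the Gram--Stiefel factorization $\Sigma=C^{1/2}V$ directly to the $k(N)\times N$ configuration matrix and read off that the conditional law of $V$ given $C$ is Haar on the Stiefel manifold from right-$O(N)$-invariance of the product spherical measure. The paper instead artificially extends the system to $N$ spherical vectors and takes the full polar decomposition $U=(EE^\top)^{-1/2}E$ to land on the full orthogonal group, which is equivalent (since $I_N(A_N,D_N)$ with rank-$k(N)$ $D_N$ only depends on the Stiefel marginal) but carries extra notational overhead. For the derandomization, you invoke eigenvalue rigidity $\E|||\Lambda-A_N|||=o(1)$, while the paper argues via continuity of the spherical integral in the empirical and extremal empirical measures (as in Lemma~\ref{lem:deterministicdisorder}); both work. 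The Gram-matrix concentration in operator norm that you cite as Marchenko--Pastur-type is exactly what the paper isolates as Lemma~\ref{lem:degenerateWishart} after passing to Gaussian vectors.

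One remark on your self-flagged obstacle: no Jacobian computation is actually needed for Step~2. The identity $\mathcal Z_N^\epsilon=\int_{|||C-I|||\leq\epsilon} I_N(A_N,\tilde D_N(C))\,dP_N(C)$ is just the tower property once you know the conditional law of $V$ given $C$ is Haar; you never need the density of $P_N$. The degenerate-$C$ set has $\pi$-measure zero for $k(N)\leq N$, so it is harmless.
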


	\begin{proof}
		After normalizing and replacing the Gaussian disorder matrix with the deterministic diagonal matrix $A_N$ in the steps leading to \eqref{eq:FEapprox} as explained in the last section, it follows that
		\[
		\lim_{\epsilon \to 0} \lim_{N \to +\infty} \E \tilde F^\epsilon_N(I) = \lim_{\epsilon \to 0} \lim_{N \to +\infty}  F^\epsilon_N(I)
		\]
		 and
		\[
		F^\epsilon_N(I) = \frac{1}{Nk} \ln \int  \1( ||| E_k E_k^\top - I ||| \leq \epsilon ) e^{ N \tr( E_k^\top  A_N E_k D_N ) } \, de^1 \dots d e^{k(N)},
		\]
		where $E_k = (e^1, \dots, e^{k(N)}) \in \R^{k(N) \times N}$ and  $e^1,\dots, e^{k(N)}$ are independent and uniform over the unit sphere. We now have to show that we can express the term on the right as an integral over the Haar measure.  
		
		We consider an extended system of $N$ configurations, and define $E = (e^1, \dots, e^N) \in \R^{N \times N}$ where $e^1, \dots, e^N$ are sampled independently and uniformly on the unit sphere without changing the limit of the free energy. Since $D_N$ is a diagonal matrix of rank $k(N)$, we have
		\[
		\tr( E_k^\top  A_N E_k D_N ) = \tr( E^\top  A_N E D_N ),
		\]
		so we can study this enlarged system without changing the limit of the free energy.  The matrix $T = (E E^\top)^{-\frac{1}{2}} \in \R^{N \times N}$ exists almost surely  and the matrix
		\[
		U:=  T E = (E E^\top)^{-\frac{1}{2}} E \in \R^{N \times N}
		\]
		satisfies $U U^\top = I$. Furthermore, the rotational invariance of the product measure on sphere implies that for every orthogonal matrix $M$, $E \stackrel{d}{=} E M$, so
		\[
		U \stackrel{d}{=} (EM (EM)^\top)^{-\frac{1}{2}} EM = (E E^\top)^{-\frac{1}{2}} EM = U M
		\]
		and therefore $U$ is also rotationally invariant and hence its law under $de^1 \cdots de^N$ is the unique Haar measure. 
		
		We next observe that on the set $\{|||E E^\top - I||| < \epsilon \}$ all eigenvalues of the matrix $E E^\top$ are in an epsilon neighbourhood of $1$, so all eigenvalues of $T$ lie in the interval $] \frac{1}{\sqrt{1 + \epsilon}}, \frac{1}{\sqrt{1 - \epsilon}} [$. Therefore, the von Neumann trace inequality implies that
		\begin{align}
		|N \tr( E^\top A_N E D_N ) - N \tr( U^\top A_N U D_N )| &= |N \tr( U^\top A_N U D_N ) - N \tr( T^{-1} U^\top A_N U T^{-1} D_N  ) )| \notag
		\\&= N M k(N) |||  U^\top A_N U - T^{-1} U^\top A_N U T^{-1} ||| \label{eq:vonneumann}
		\end{align}
		because the matrix $D_N$ is of rank at most $k(N)$ and $|||D_N||| \leq M$. Next, notice that on the set $\{ ||| E E^\top - I ||| < \epsilon \} = \{ ||| T^{-2} - I ||| < \epsilon \} $, all eigenvalues of $T^{-1}$ lie in the interval $]\sqrt{1 + \epsilon},\sqrt{1 - \epsilon} [$, so $||| T^{-1} - I ||| = O(\epsilon)$. The triangle inequality and the fact the operator norm is submultiplicative implies 
		\[
		|||  U^\top A_N U - T^{-1} U^\top A_N U T^{-1} ||| \leq 	|||  U^\top A_N U ||| \cdot ||| T^{-1} - I ||| + ||| T^{-1} - I||| \cdot ||| U^\top A_N U T^{-1} ||| = O(\epsilon)
		\]
		so
		\[
		|N \tr( E^\top A_N E D_N ) - N \tr( U^\top A_N U D_N )|  = O( Nk(N) \epsilon) .
		\]
		We have shown that
		\begin{equation}\label{eq:intermediatehaar}
		  \E \tilde F^\epsilon_N(I) = \frac{1}{Nk(N)} \ln \int  \1( ||| E E^\top - I ||| \leq \epsilon ) e^{ N \tr( U^\top  A_N U D_N ) } \, de^1 \dots d e^N + O(\epsilon). 
		\end{equation}
		
		To decouple the constraint on the approximate indicator, we can add and subtract a normalizing constant to conclude that \eqref{eq:intermediatehaar} is equal to
		\begin{equation}\label{eq:decomposition}
		 \frac{1}{Nk(N)} \ln \E_{I_\epsilon} e^{ N \tr(  U^\top  A_N  U D_N ) }  + \frac{1}{Nk(N)} \ln \Pp( |||  E E^\top - I ||| \leq \epsilon )  + O(\epsilon)
		\end{equation}
		where $\E_{I_\epsilon}$ is the average with respect to the restriction of the probability measure $ de^1 \dots d e^N$ to the set $$I_\epsilon = \{ ||| E E^\top - I ||| \leq \epsilon \}.$$ 	We will show later below the second term of \eqref{eq:decomposition} vanishes. Assuming this, notice that $U$ is Haar distributed under $de^1 \cdots de^n$ by construction, so we can conclude that
		\[
		\frac{1}{Nk(N)} \ln \E_{I_\epsilon} e^{ N \tr(  U^\top  A_N  U D_N ) }  + \frac{1}{Nk(N)} \ln \Pp( |||  E E^\top - I ||| \leq \epsilon )  + O(\epsilon) = I_N(A_N, D_N) + o_N(1) + O(\epsilon)
		\]
		which finishes the proof.
		
		We now prove that the second term of \eqref{eq:decomposition} vanishes in the limit. For each $i \leq k(N)$, let $x^i$ be a standard Gaussian vector on $\R^N$.  Notice that the conditional law of $\frac{x^i}{|x^i|}$ conditionally on the event $\{|\frac{|x^i|}{\sqrt{N}} - 1| < \epsilon\}$ is the same as the law of the uniform vector on the unit sphere $e_i$. Let $X = (x^1, \dots, x^{k(N)})$, $\tilde X = (\frac{x^1}{|x^1|}, \dots, \frac{x^{k(N)}}{|x^{k(N)}|})$  and define $W = \frac{1}{N} X X^\top$. It follows that 
		\begin{align*}
		&\frac{1}{Nk(N)} \ln \Pp( |||  E E^\top - I ||| \leq \epsilon ) 
		\\&= \frac{1}{Nk(N)} \ln \Pp\Big( |||  \tilde X \tilde X^\top - I ||| \leq \epsilon , \sup_{i \leq k(N)} \Big|\frac{|x^i|}{\sqrt{N}} - 1 \Big| < \epsilon\Big) -  \frac{1}{Nk(N)} \ln \Pp\Big( \sup_{i \leq k(N)} \Big|\frac{|x^i|}{\sqrt{N}} - 1\Big| < \epsilon\Big).
		\end{align*}
		The term $ \frac{1}{Nk(N)} \ln \Pp\Big(  \sup_{i \leq k(N)} |\frac{|x^i|}{\sqrt{N}} - 1| < \epsilon\Big) = \frac{1}{N} \ln \Pp\Big(  |\frac{|x^1|}{\sqrt{N}} - 1| < \epsilon\Big) \to 0$ by the law of large numbers. On the first event, there exists absolute constants $c_1$ and $c_2$ such that
		\[
		\{ ||| W - I ||| \leq c_1 \epsilon \} \leq \Big\{ |||  \tilde X \tilde X^\top - I ||| \leq \epsilon , \Big|\frac{|x^i|}{\sqrt{N}} - 1\Big| < \epsilon\Big\} \leq \Big\{ |||  W - I ||| \leq c_2 \epsilon \Big\}
		\]
		The $W$ can be seen here as degenerate versions of Wishart matrices. We have the following lemma
			\begin{lemma}\label{lem:degenerateWishart}
				$|||W - I|||$ converges to 0 in probability.
				\end{lemma}
			\begin{proof}
				There are several ways here one can tackle this problem. Here we use a $\epsilon$-net argument. Let $u \in \R^{ k(N)}$ be a unit vector then $\hat{X}^T u$ is a Gaussian vector of covariance matrix $\frac{1}{N} I_N$ so $\langle u, W u \rangle= ||\hat{X}^T u||_2^2$ is a Gamma random variable of shape parameter $N/2$ and scale parameter $2/N$. For such a random variable, it is easy to see via a Laplace method that 
				\[ \Pp[|  \langle u, W u \rangle - 1 | \geq \epsilon] \leq e^{ -N c(\epsilon)} \]
				for some $c(\epsilon) >0$. Then let $\mathcal{N}_{k(N)}(\epsilon)$ be an $\epsilon$-net of $\mathbb{S}^{k(N)-1}$ of cardinal at most $(3/\epsilon)^{k(N)}$. Using that $k(N) =o(N)$, by a simple union bound we have that 
			\[ \Pp[ \forall u \in \mathcal{N}_{k(N)}(\epsilon), |  \langle u, W u \rangle - 1 | \leq \epsilon] \leq e^{ -N (c(\epsilon) +o(1))} \]
			Using this property, and since $W$ is a positive matrix, one easily deduces that with probability going to $1$, $|||W|||$ is bounded and then that for any $\epsilon >0$ with probability going to $1$,  $ \sup_{u \in \mathbb{S}^N} 	|  \langle u, W u \rangle - 1 | \leq \epsilon$. That easily implies the lemma .
			\end{proof}	
	By Lemma~\ref{lem:degenerateWishart}, it follows that for any $c > 0$,
		\[
		\frac{1}{Nk(N)} \ln \Pp( ||| W - I ||| \leq c \epsilon ) = 	\frac{1}{Nk(N)} \ln \Pp(\lambda_{min}(W) \geq 1 -  c \epsilon , \lambda_{max}(W) \leq 1 +  c \epsilon) \to 0
		\]
	Therefore, the second term in \eqref{eq:decomposition} vanishes as required.
	\end{proof}
	
	The limit in \eqref{lem:apporxidentity} can be explicitly computed using Theorem~\ref{theo:positivetheo} and the computations in Proposition~\ref{prop:sphericalSK} to see
	\[
	\lim_{\epsilon \to 0} \lim_{N \to +\infty} \E \tilde F^\epsilon_N(I) = \frac{1}{k(N)} \sum_{i = 1}^n J(\theta_i, 2 , \sigma) = \frac{1}{k(N)} \sum_{i = 1}^{k(N)} f(\theta_i).
	\]
	
	The challenge is to now is to extend this result to the case when the vectors $e_1, \dots, e_k$ are no longer orthogonal, but constrained to a neighbourhood of $Q$. To have a well defined limit in the growing rank case we need some assumptions on the sequences of constraint matrices $(Q_N)_{N \in \N}$.
	\begin{assum} \label{assum:AQ}
		We assume that $(Q_N)_{N \in \N}$ and diagonal matrices $(D_N)_{N \in \N}$ are two sequences of real valued $k(N) \times k(N)$ matrices such that: 
		\begin{enumerate}
			\item $\forall N \in \N$, $Q_N$ is positive definite and $Q_N$ is $1$ on the diagonals. Furthermore, we assume that there exists a $A > 0$ such that $ |||Q_N^{-1} |||  < A $ for all $N$.
			\item $\forall N \in \N$, $D_N$ is positive definite and diagonal.
			\item There exists $K > 0$ such that for all $N \in \N$, $|||D_N||| \leq K$. 
		\end{enumerate}
			
		To state the limit, we also assume that the eigenvalue distribution of $D_N^{\frac{1}{2}} Q_N D_N^{\frac{1}{2}}$, $\tilde \mu_{k(N)} = \frac{1}{k(N)} \sum_{i=1}^{k(N)} \delta_{\tilde \theta_i}$, where $\tilde \theta_i$ are the eigenvalues of $D_N^{\frac{1}{2}} Q_N D_N^{\frac{1}{2}}$, converges weakly toward a compactly supported measure $\tilde \mu$. Likewise, we also assume that the eigenvalue distribution of $Q_N$ converges to a compactly supported measure $\mu$.
	\end{assum}		

	\begin{rem}
	If $k(N) = k$ is independent of $N$, then any fixed positive definite matrix $k \times k$ matrices $D$ and $Q$ satisfies Assumption~\ref{assum:AQ}. This is the vector spin case. A more interesting case is understanding the replica matrix generated from $k(N)$ samples from the Gibbs measures at constant temperatures. Given $q \in (-1,1)$ the replica symmetric matrix $Q_N = I_{k(N)} + q1_{k(N)} - qI_{k(N)}$ which is $1$ on the diagonal and $q$ on the off diagonals satisfies Assumption~\ref{assum:AQ} for all $D_N$ with constants diagonal entries $\theta$.
	\end{rem}
	
	The $k(N)$ dimensional spherical integral formulas do not immediately apply in this setting, but we can reduce this to the orthogonal case by a change of variables and a modification of the temperature matrix $D_N$. When applied to $k(N) = k$, this gives us an alternative proof that the vector spin Crisanti--Sommers formula for the spherical SK model derived in \cite[Theorem~3]{PTSPHERE} is sharp without relying on the standard tools of spin glasses.	
	\begin{prop}[Vector Spin Spherical SK Model]\label{prop:vectorspinspherical}
		For $k(N) = o(N)$, the limit of the free energy in the vector spin spherical $2$-spin model is given by
		\[
		\lim_{\epsilon \to 0} \lim_{N\to \infty} \bigg| \E \tilde F_N^{\epsilon} (Q_N) - \frac{1}{k(N)} \sum_{i = 1}^{k(N)} f(\tilde \theta_i) + \frac{1}{2k(N)} \ln \det (Q_N) \bigg| = 0
		\]
		where $f(\theta)$  was defined in \eqref{eq:crisantieigenvalue} and $\tilde \theta_i$ are the eigenvalues of the matrix $(\sqrt{\theta_i \theta_j} Q_{ij} )_{i,j \leq k(N)} = D_N^{\frac{1}{2}} Q_N D_N^{\frac{1}{2}}$. 
	\end{prop}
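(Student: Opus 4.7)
My plan is to reduce the general constraint $\Sigma \Sigma^\top \approx Q_N$ to the orthogonality constraint $\tilde\Sigma \tilde\Sigma^\top \approx I_{k(N)}$ already handled in Lemma~\ref{lem:apporxidentity}, by splitting the integrand into a ``volume'' factor encoding the density of the overlap matrix and a ``Hamiltonian'' factor in which a linear change of coordinates converts $Q_N$ to the identity. Concretely, I will write
\[
\tilde F_N^\epsilon(Q_N) = \frac{1}{Nk(N)}\ln \Pp\bigl(|||\Sigma\Sigma^\top - Q_N||| < \epsilon\bigr) + \frac{1}{Nk(N)} \ln \E\bigl[e^{\frac{N}{2}\tr(D_N \Sigma G_N \Sigma^\top)} \bigm\vert |||\Sigma\Sigma^\top - Q_N||| < \epsilon\bigr],
\]
where $\Pp$ denotes the product uniform measure on $(S^{N-1})^{k(N)}$.

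For the first summand I will use the fact that, under the product uniform measure on unit spheres, the density of the overlap matrix $S = \Sigma \Sigma^\top$ on the $(k(k-1)/2)$-dimensional space of symmetric matrices with unit diagonals is proportional to $(\det S)^{(N-k-1)/2}$ (the Wishart/Bartlett formula, conditioned on the diagonal being $1$). Because Assumption~\ref{assum:AQ} gives a uniform lower bound on $\det Q_N$, this density is continuous on a neighborhood of $Q_N$, so as $\epsilon \to 0$ and $N \to \infty$ the volume factor yields
\[
\frac{1}{Nk(N)} \ln \Pp\bigl(|||S - Q_N||| < \epsilon\bigr) = \frac{N-k(N)-1}{2Nk(N)} \ln \det Q_N + \frac{1}{Nk(N)} \ln C(\epsilon,N,k(N)) + o(1),
\]
the second term being independent of $Q_N$ and, by comparing to Lemma~\ref{lem:apporxidentity} applied at $Q=I$, contributing $o(1)$.

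For the conditional expectation, rotational invariance of the sphere measure implies that, given $\Sigma\Sigma^\top = Q_N$, the law of $\Sigma$ is the pushforward of the Haar measure on the Stiefel manifold $V_{k(N)}(\R^N)$ under the linear map $V \mapsto Q_N^{1/2} V$. Substituting this into the Hamiltonian gives
\[
\tfrac{N}{2} \tr(D_N \Sigma G_N \Sigma^\top) = \tfrac{N}{2} \tr\bigl(Q_N^{1/2} D_N Q_N^{1/2} \cdot V G_N V^\top\bigr),
\]
and the matrix $\tilde D_N := Q_N^{1/2} D_N Q_N^{1/2}$ has the same eigenvalues as $D_N^{1/2} Q_N D_N^{1/2}$, namely the $\tilde\theta_i$. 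After replacing $G_N$ by a deterministic diagonal matrix $A_N$ whose spectral distribution converges to the semicircle $\sigma$ (as in the proof of Lemma~\ref{lem:apporxidentity}), the conditional expectation becomes a Stiefel integral equal to $I_N(A_N, \tilde D_N')$ with $\tilde D_N'$ being $\tilde D_N$ extended by zeros. Theorem~\ref{theo:positivetheo} (combined with the explicit formula computed in Proposition~\ref{prop:sphericalSK}) then gives the limit $\frac{1}{k(N)} \sum_i J(\tilde\theta_i, 2, \sigma) = \frac{1}{k(N)} \sum_i f(\tilde\theta_i)$. Summing the two contributions yields the claim.

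The main obstacles are two-fold. First, the exact conditioning argument must be upgraded to the approximate constraint $|||\Sigma\Sigma^\top - Q_N||| < \epsilon$; this requires an $\epsilon$-thickening analogous to the one carried out in the proof of Lemma~\ref{lem:apporxidentity}, where Lemma~\ref{lem:degenerateWishart} was used to decouple the indicator from the Hamiltonian in the $Q=I$ case. Here the uniform lower bound on $\det Q_N$ provided by Assumption~\ref{assum:AQ} plays the same role and allows the linear change of coordinates $V \mapsto Q_N^{1/2} V$ to be carried out with controlled Lipschitz constants. Second, applying Theorem~\ref{theo:positivetheo} to $\tilde D_N$ requires checking that the sequence $(\tilde D_N)_N$ satisfies Assumption~\ref{assum:A1}, which follows from $|||\tilde D_N||| \leq |||Q_N||| \, |||D_N||| $ and the assumed convergence of the spectral distribution of $\tilde D_N$ to $\tilde \mu$.
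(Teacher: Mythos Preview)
Your proposal is correct and rests on the same key idea as the paper: reduce the constraint $\Sigma\Sigma^\top \approx Q_N$ to the orthogonality constraint via the linear map $V \mapsto Q_N^{1/2}V$, which produces the Jacobian term $\frac{1}{2k(N)}\ln\det Q_N$ and replaces $D_N$ by $Q_N^{1/2}D_N Q_N^{1/2}$ (with the same spectrum as $D_N^{1/2}Q_N D_N^{1/2}$). The paper, however, implements this differently: it first passes from the product of uniform measures on spheres to a Gaussian measure on $\R^{N\times k(N)}$ via polar decomposition, then on the constraint set replaces the Gaussian weight $e^{-\tr(X^\top X)/2}$ by $e^{-\tr(Q_N^{-1}X^\top X)/2}$ (exploiting $\tr Q_N = k(N)$), performs the Lebesgue change of variables $Y=XQ_N^{-1/2}$ with Jacobian $\det(Q_N)^{N/2}$, and finally undoes the Gaussian approximation to land back on Lemma~\ref{lem:apporxidentity}. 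Your route is more direct, invoking the explicit density $(\det S)^{(N-k-1)/2}$ of the Gram matrix and the conditional law on the Stiefel manifold; this avoids the Gaussian detour entirely and makes the origin of the $\ln\det Q_N$ term transparent. The paper's approach has the minor advantage that the $\epsilon$-thickening is handled automatically by the Gaussian change of variables and the set containments \eqref{eq:setcontainment2}--\eqref{eq:setcontainment}, whereas in your version the passage from exact conditioning to the $\epsilon$-neighborhood, and the control of the normalization constant $C(\epsilon,N,k(N))$ uniformly in $k(N)=o(N)$, require a separate (though routine) argument.
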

	
	\begin{proof}
		Using the same notation as in Section~\ref{sec:sphericalSK}, it follows that asymptotically almost surely
		\begin{align*}
			F^\epsilon_N(Q) &= \frac{1}{Nk(N)} \ln \int \1( ||| E^\top E - Q ||| \leq \epsilon ) e^{ N \tr( E^\top A_N E D_N ) } \, d E
		\end{align*}
		where $E = (e_1, \dots, e_k) \in \R^{N \times k}$ and $dE = de^1 \dots de^k$ is the uniform measure on the product of $k(N)$ unit spheres. To simplify the notation, we dropped the dependence on $N$ of many terms. We begin by approximating the product of uniform measures on a sphere with a Gaussian measure. We will then do a change of variables to recover the formula for the $Q$ constrained overlaps from the $I$ constrained overlaps.
		
		Let $\gamma_N$ be the Gaussian measure on $\R^N$ with variance $\frac{1}{N} I$. By rotational invariance, we can write $x \in \R^{N}$ in its polar form $x = r e$, where its angular part $e$ is on the unit sphere and its radial part $r \in \R^{+}$. If $x$ has law $\gamma_N$, then the random variables $e$ and $r$ are independent and $e$ is uniform on the unit sphere by rotational invariance of the Gaussian. Let $p_r$ denote the law of $r$. 
		
		Since $Q_N^\epsilon:= \{ (e_i)_{i \leq k(N)} ~:~ ||| E E^\top - Q ||| \leq \epsilon \}$ is a measure $0$ set under $\gamma_N$, we consider the $\epsilon$ enlargement of this constraint,
		\[
		\Omega_N^{\epsilon} = \big\{ (r_ie_i)_{i \leq k(N)}  ~:~ (e_i)_{i \le k(N)} \in Q_N^\epsilon, (r_i)_{i \le k(N)} \in [\sqrt{ 1 - \epsilon}, \sqrt{1 + \epsilon}] \big\}.
		\]
		We have
		\[
		\frac{1}{N k(N)} \ln \int \1_{\Omega_N^\epsilon} e^{ N \tr( X^\top A_N X D_N) }  d\gamma^k_N(x) = 	\frac{1}{N k(N)} \ln \int_{ [ \sqrt{1 - \epsilon}, \sqrt{1+\epsilon}]^k } \int \1_{Q_N^\epsilon} e^{ N \tr( (RE)^\top  A_N (R E) D_N) }  dE dp^k_r(r)
		\]
		where $R = \diag(r_1, \dots, r_{k(N)}) \in \R^{k(N) \times k(N)}$. Since $|||R - I||| \leq \epsilon$ on the region of integration, we have by the computation following \eqref{eq:vonneumann} that
		\[
		| N \tr( E^\top  A_N E D_N) - N \tr( (RE)^\top  A_N (R E) D_N)| \leq M N k(N) \epsilon
		\] 
		where the constant $M$ only depends on the norms of the matrices $A_N$ and $D_N$, which are bounded. Therefore, 
		\begin{align}
			&\frac{1}{Nk(N)} \ln \int \1_{\Omega_N^\epsilon} e^{ N \tr( X^\top A_N X D_N) }  d\gamma^k_N(x) \notag \\&= 	\frac{1}{Nk(N)} \ln \int \1_{Q_N^\epsilon} e^{ N \tr( (RE)^\top  A_N (R E) D_N) }  dE  +  \frac{\ln \gamma_N (E_N^\epsilon) }{ k(N)N} + O(\epsilon^2) \label{eq:Gausstosphere}
		\end{align} 
		where
		\[
		E_N^\epsilon = \big\{ x \in \R^N ~:~ \|x\|^2 \in [1 - \epsilon, 1 + \epsilon ]  \big\} .
		\]
		If $x \sim \gamma_N$, then $\E \| x \|^2 = 1$, so the law of large numbers implies that for every fixed $\epsilon > 0$,
		\[
		\gamma_N (E_N^\epsilon) \to 1,
		\]
		so the error term $\frac{\ln \gamma_N (E_N^\epsilon) }{N} = o_N(1)$ for every fixed $\epsilon$. This implies that it suffices to study the Gaussian model.
		
		We now compute
		\begin{equation}\label{eq:gaussianFE}
			\frac{1}{Nk(N)} \ln \int \1_{\Omega_N^\epsilon} e^{ N \tr( X^\top A_N X D_N) }  d\gamma^{k(N)}_N(x) = \frac{1}{Nk(N)} \ln \frac{1}{(2\pi)^{\frac{k(N)N}{2}}} \int \1_{\Omega_N^\epsilon} e^{ N \tr( X^\top A_N X D_N) } e^{-\frac{\tr(X^\top X)}{2}} dX
		\end{equation}
		where $dX$ is the Lebesgue measure on $\R^{N \times k(N)}$. Let 
		\[
		\tilde \Omega(\delta) = \{ ||| X^\top X - Q_N ||| \leq \delta \} .
		\]
		Clearly we can find a $\delta_1(\epsilon)$ and $\delta_2(\epsilon)$ such that
		\begin{equation}\label{eq:setcontainment2}
		\tilde \Omega(\delta_1) \subseteq \Omega_N^\epsilon \subseteq \tilde \Omega(\delta_2).
		\end{equation}
		
		We start by proving an upper bound for \eqref{eq:gaussianFE}, and the lower bound will be similar. On the set $\tilde \Omega(\delta_2)$ we have
		\[
		|\tr( X^\top X) - \tr(Q_N^{-1} X^\top X )| \leq |\tr(Q_N) - \tr(I)| + O(k(N)\epsilon) \leq O(k(N)\epsilon)
		\]
		because $\tr(Q) = k(N)$ by Assumption~\ref{assum:AQ}. Therefore, \eqref{eq:gaussianFE} is upper bounded by
		\begin{equation}\label{eq:FEgauss1}
		 \frac{1}{N k(N)} \ln \frac{1}{(2\pi)^{\frac{k(N)N}{2}}} \int \1_{\tilde \Omega(\delta_2)} e^{ N \tr( X^\top A_N X D_N) } e^{-\frac{\tr(X^\top Q_N^{-1} X)}{2}} dX + O(\epsilon).
		\end{equation}
		Since the entries of $Q_N$ are bounded, there exists absolute constant $A$ that only depends on the uniform lower bound of the operator norm of $Q_N$ in Assumption~\ref{assum:AQ} such that
		\begin{equation}\label{eq:setcontainment}
			\{ ||| X^\top X - Q_N ||| \leq \epsilon \} \subseteq \{ ||| (XQ_N^{-1/2})^\top (XQ_N^{-1/2}) - I ||| \leq A \epsilon \}.
		\end{equation} 
		Therefore, if we do the linear change of variables $Y =  XQ_N^{-1/2}$ then we have the following upper bound of \eqref{eq:FEgauss1}
		\begin{align*}
		&\frac{1}{Nk(N)} \ln \frac{1}{(2\pi)^{\frac{k(N)N}{2}}} \int \1( ||| Y^\top Y - I ||| \leq A\epsilon ) e^{ N \tr( Q^{1/2} Y^\top  A_N Y Q^{1/2} D_N ) } e^{- \tr( \frac{Y^\top Y}{2} )} \det( Q_N^{N/2} ) \, d Y
		\\&\leq \frac{1}{Nk(N)}  \ln\int \1( ||| Y^\top Y - I ||| \leq A\epsilon ) e^{ N \tr(Y^\top  A_N Y D_N(Q_N) ) } \, d \gamma_N^{k(N)}(y) + \frac{1}{2 k(N)} \ln \det(Q_N)
		\end{align*}
		where 
		  $D_N(Q_N) :=Q_N^{1/2}D_NQ_N^{1/2}$. Since the spectrums are invariant under cyclic permutations, $\tilde \theta$ are also the eigenvalues of the matrix $(\sqrt{\theta_i \theta_j} Q_{ij} )_{i,j \leq k}$. Next, we can apply \eqref{eq:Gausstosphere} to replace the Gaussian integral with one over the uniform samples on a sphere, giving the upper bound
		\[
		\frac{1}{N k(N)}  \ln\int \1( ||| E^\top E - I ||| \leq C\epsilon ) e^{ N \tr(E \tilde A_N E^\top D_N(Q_N) ) } \, d e^1 \cdots de^{k(N)} + \frac{1}{2k(N)} \ln \det(Q_N) + o(1)
		\]
		where the $o(1)$ term tends to $0$ as $N\to \infty$ and $\epsilon \to 0$, and the constant $C$ is possibly different from the one appearing in the previous line. 
		We can then apply the result form Lemma~\ref{lem:apporxidentity} to approximate first term with a spherical integral, to arrive at the upper bound
		\[
		I_N( D_N(Q_N),A_N) + \frac{1}{2k(N)} \ln \det(Q_N) + o(1) + O(\epsilon)
		\]
		This is explicitly computed using Theorem \eqref{theo:positivetheo} and Proposition~\ref{prop:sphericalSK} to prove 
		\[
		\lim_{\epsilon \to 0} \lim_{N\to \infty} \left[ F_N^{\epsilon} (Q_N) - \bigg( \frac{1}{2k(N)} \sum_{i = 1}^k f(\tilde \theta_i) + \frac{1}{2k(N)} \ln \det (Q_N) \bigg) \right] \leq 0.
		\]
		
	The matching lower bound is identical and follows from the lower set containment in \eqref{eq:setcontainment2} and \eqref{eq:setcontainment}.
	\end{proof}
	
	\begin{rem}
		When $k(N) = k$ is independent of $N$, the large deviations proof in Proposition~\ref{prop:vectorspinspherical} implies that the upper bound proved using interpolation in \cite[Theorem~1]{PTSPHERE} is sharp. Indeed, since $\det( D_\theta^{1/2} Q D_\theta^{1/2} ) = \prod_{j = 1}^k \theta_j \det(Q)$, 
		\begin{equation}\label{eq:equivratefunc}
			\sum_{i = 1}^k f(\tilde \theta_i) + \frac{1}{2} \ln \det (Q) = \sum_{i = 1}^k f(\tilde \theta_i) + \frac{1}{2}  \ln (\tilde \theta_i) - \frac{1}{2} \ln(\theta_i) 
		\end{equation}
		and simplifying yields
		\[
		f(\tilde \theta_i) + \frac{1}{2}  \ln (\tilde \theta_i) - \frac{1}{2} \ln(\theta_i) = 
		\begin{cases}
			\frac{1}{4} \tilde \theta_i^2 + \frac{1}{2} \ln \tilde \theta - \frac{1}{2} \ln \theta_i & \tilde \theta_i < 1\\
			\tilde \theta_i - \frac{3}{4} - \frac{1}{2} \ln \theta_i & \tilde \theta_i \geq 1.
		\end{cases}
		\]
	\end{rem}

	\section{Application to Matrix Factorization}\label{sec:matestimation}
	The framework for this application is adapted from a recent articles on extensive rank matrix factorization \cite{barbier2021statistical,maillard2021perturbative}. A model of a simple denoising problem, examines spiked matrices of the form
	\begin{equation}\label{eq:spiked}
		Y_N = G_N + \sqrt{\gamma} U^\top D_N U:=  G_N + \sqrt{\gamma} X_N
	\end{equation}
	where $G_N$ is a $N \times N$ GOE matrix, $U$ are random orthogonal matrices sampled according to the Haar measure on the orthogonal group, and $D_N = \diag(\theta_1, \dots, \theta_{k(N)}, 0,\dots,0)$ is a random rank $k(N)$ diagonal matrix with non-negative entries $\theta_i \geq 0$ and joint eigenvalue distribution $P_D$. In applications, the matrix $G_N$ is the noise matrix, $X_N = U^\top D_N U$ is the signal, and the parameter $\gamma$ controls the signal to noise ratio. In this setting, the hidden matrix $X_N$ is a general random rotationally invariant symmetric matrix with $O(\frac{1}{\sqrt{N}})$ entries. 
	
	We also require an assumption on the joint distribution $P_D$ of the diagonals in the matrix $D_{N}$. 
	\begin{assum}\label{assum:eigdist}
		Suppose the empirical distribution  $\frac{1}{k(N)}\sum_{i=1}^{k(N)}\delta_{\theta_{i}}$  converges under $P_D$ almost surely towards a probability measure $\eta$, and that its law 
		satisfies a large deviations principle with good rate function $\Gamma$ and speed $k(N) N$. We moreover assume that $P_D$ is compactly supported in $[-M,M]^{k(N)}$ for some finite $M$. 
	\end{assum}
	
	\begin{rem}
		For example, if we take a deterministic $D_N = (1,0,\dots, 0)$ then this model is the traditional spiked matrix model from a uniform prior on the sphere with signal to noise ratio $\gamma$. 
	\end{rem}
	
	To estimate the matrix $X_N$ from the signal matrix $Y_N$, we study posterior probability measure
	\begin{align}
		dP(X  | Y) &\propto  e^{ -\frac{1}{4} N \tr ( Y_N - \sqrt{ \gamma }U D_N U^\top )^2 } dU dP_D(\theta) \notag
		\\&\propto  \exp \bigg( \frac{N\sqrt{\gamma}}{2} \tr( U^\top Y_N U D_N )  - \frac{N \gamma}{4} \tr(D_N^2) \bigg) dUdP_D(\theta)\label{eq:posteriorig}.
	\end{align}
	
	The main quantity of interest is the mutual information $I(X,Y)$ between the signal $X$ and the data $Y$, which can be computed via the following entropy decomposition, see for example \cite[Equation~7]{barbier2021statistical}
	\begin{align}
		\frac{1}{Nk(N)}I_N(\gamma):=  \frac{\gamma}{4k} \sum_{i = 1}^k \theta^2_i - \frac{1}{Nk(N)} \E_Y \ln \int e^{- \frac{\gamma N}{4} \tr( D_N^2 ) } \bigg(\int e^{ \frac{\sqrt{\gamma} N}{2} \tr( U^\top Y_N U D_N ) } dU \bigg) d P_D(\theta) .  \label{eq:mutualinformationgrowingrank}
	\end{align}
	Given the mutual information, we can apply the I-MMSE Theorem \cite{IMMSE} to compute the minimal mean square error (MMSE)
	\begin{equation}\label{eq:IMMSE}
		\mathrm{MMSE}_N(\gamma) = \frac{1}{2 N k} \E \|X - \E [X ~|~ Y] \|^2_2 = 4\frac{d}{d\gamma}I_N(\gamma) + O(N^{-1})
	\end{equation}
	in the limit. 
	There is a factor $4$ instead of the usual $2$ that appears in the formula in \cite{IMMSE} because we are considering symmetric matrices, which only requires denoising the lower or upper triangle. By convexity of the mutual information with respect to $\gamma$, the I-MMSE theorem can be extended as $N \to \infty$,
	\[
	\lim_{N \to + \infty} \mathrm{MMSE}_N(\gamma) = 4\frac{d}{d\gamma} \lim_{N \to +\infty} I_N(\gamma) 
	\]
	at all points where the limiting mutual information is differentiable.
	
	By rotational invariance of $G_N$ and $X_N$, the spherical integral only depends on the specturms of $Y_N$ and $D_N$. The behavior of the eigenvalues of spiked matrices have been studied extensively in the past for finite rank perturbations in \cite{BGNSpiked} and sublinear rank perturbations in \cite{JYmeso}. We will  use the following result for the behavior of the extreme eigenvalues of a rank $k(N)$ spiked Gaussian matrix $Y_N$.
	\begin{prop}[{ \cite[Theorem~2.1]{BGNSpiked} and \cite[Theorem~2.8]{JYmeso} }] \label{prop:topeig} Suppose that $G_N$ is a GOE matrix and $$D_N = \diag(\theta_1, \dots, \theta_{k(N)}, 0,\dots,0)$$ is a determnistic rank $k(N)$ diagonal matrix with non-negative entries $\theta_i \geq 0$. Let $\lambda_1 \geq \lambda_2 \geq \dots \geq \lambda_N$ denote the eigenvalues of the perturbed matrix 
		\[
		Y_N = G_N + U_N^\top D_N U_N
		\]
		where $U_N$ is $U$ a random orthogonal matrices sampled according to the Haar measure on the orthogonal group. Let $\sigma$ denote the semicircle distribution. For $1 \leq i \leq k$
		\[
		\lambda_i(\theta) \stackrel{d}{\to} \begin{cases}
			2 & \theta_i < \frac{1}{G_\mu(2)}\\
			G_\sigma^{-1}(\theta_i^{-1}) & \theta_i > \frac{1}{G_\sigma(2)}
			
		\end{cases} = \begin{cases}
			2 & \theta_i\leq 1 \\
			\theta_i + \frac{1}{\theta_i} & \theta_i > 1.
		\end{cases}
		\]

	\end{prop}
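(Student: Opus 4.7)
The plan is to follow the classical resolvent/secular equation approach to BBP transitions, together with an isotropic local law strong enough to handle sublinear rank perturbations. First, I would exploit the orthogonal invariance of the GOE distribution: since $G_N \stackrel{d}{=} U_N G_N U_N^{\top}$ for any orthogonal $U_N$, one has
\[
Y_N = G_N + U_N^{\top} D_N U_N \stackrel{d}{=} U_N^{\top}(G_N + D_N) U_N,
\]
so the spectrum of $Y_N$ equals in law that of the deterministic-rank perturbation $G_N + D_N$ with $D_N$ diagonal. This reduces the problem to analyzing outliers of $G_N + D_N$.

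Next, I would write the standard secular equation. Let $\tilde D = \diag(\theta_1,\dots,\theta_{k(N)})$ and $R_N(\lambda)$ be the top-left $k(N)\times k(N)$ block of the resolvent $(\lambda I - G_N)^{-1}$. For $\lambda \notin \mathrm{spec}(G_N)$, $\lambda$ is an eigenvalue of $G_N + D_N$ if and only if
\[
\det\bigl(I_{k(N)} - \tilde D\, R_N(\lambda)\bigr) = 0.
\]
For fixed-rank $k$ one invokes the isotropic semicircle law to conclude that $R_N(\lambda) \to G_\sigma(\lambda) I_k$ entrywise for $\lambda > 2$, which reduces the determinantal equation asymptotically to $\prod_{i=1}^{k}(1 - \theta_i G_\sigma(\lambda)) = 0$. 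The positive solutions of $\theta_i G_\sigma(\lambda) = 1$ with $\lambda > 2$ exist exactly when $\theta_i > 1/G_\sigma(2) = 1$, and in that case the solution is $\lambda = G_\sigma^{-1}(\theta_i^{-1}) = \theta_i + 1/\theta_i$, giving the claimed BBP map. For $\theta_i \leq 1$, there is no outlier and I would invoke edge rigidity (Tracy--Widom type bounds for GOE) to show the corresponding eigenvalue sticks at $2$.

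To upgrade this argument to the sublinear-rank regime $k(N) = o(N)$, the key input is a uniform isotropic local semicircle law giving
\[
\sup_{i,j \leq k(N)} \bigl| R_N(\lambda)_{ij} - \delta_{ij} G_\sigma(\lambda) \bigr| = o(1)
\]
in probability, uniformly for $\lambda$ in a neighborhood of each target location $\theta + 1/\theta$ bounded away from $2$. Combined with the bound $|||\tilde D||| \leq M$ from the compact support assumption, this makes the perturbative correction to $\det(I_{k(N)} - \tilde D R_N(\lambda))$ vanish, and a Rouché-type argument localizes the zeros of the determinant near the deterministic predictions. The matching of these zeros with the ordered eigenvalues $\lambda_i$ is done by monotonicity/interlacing as $\tilde D$ is deformed along a rank-one homotopy, while for subcritical $\theta_i \leq 1$ one uses the rigidity of edge eigenvalues in the sublinear deformation regime.

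The main obstacle is precisely this uniform isotropic local law in the growing-rank setting: one must control $k(N)^2$ resolvent entries simultaneously at the edge, and the standard $O(1/\sqrt{N\eta})$ fluctuation estimates degrade with the number of directions probed. This is the content of the estimates in the JY mesoscopic paper cited in the statement, and invoking those estimates is what lets the finite-rank BBN-style secular-equation argument carry through to $k(N) = o(N)$. A secondary subtlety is the bookkeeping of the ordering of outliers when the $\theta_i$ are close, which is handled by noting that the map $\theta \mapsto \theta + 1/\theta$ is strictly increasing on $(1,\infty)$ so the labelling is preserved.
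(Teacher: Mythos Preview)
The paper does not prove this proposition at all: it is stated as a quotation of results from the literature (Theorem~2.1 of Benaych-Georges--Nadakuditi for fixed rank and Theorem~2.8 of Huang for the sublinear-rank extension), and is used as a black box in the application to matrix factorization. So there is no ``paper's own proof'' to compare against.

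That said, your sketch is a faithful outline of how those cited results are actually established. The reduction by orthogonal invariance to $G_N + D_N$, the secular equation $\det(I_{k(N)} - \tilde D R_N(\lambda)) = 0$, the identification of outliers via $\theta_i G_\sigma(\lambda) = 1$, and the appeal to an isotropic local law uniform over $k(N)^2$ resolvent entries is precisely the strategy underlying the Benaych-Georges--Nadakuditi and Huang proofs. Your identification of the main obstacle (uniform control of the resolvent block when $k(N)\to\infty$) is also correct, and is indeed the technical content that the mesoscopic-perturbation paper supplies. One minor point: in the fixed-rank case the convergence in the cited references is almost sure, not merely in distribution, and in the sublinear case Huang gives quantitative probability bounds rather than just convergence in distribution; the statement here has been slightly weakened for uniformity of presentation.
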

	These phase transitions are a special case of a more general phenonmenom called the BBP transition \cite{BBAP05}. The behavior of the spherical integrals in the extensive rank case when $\frac{k(N)}{N} \to \alpha > 1$ was studied in \cite{aliceexpansion} and the behavior in those models are fundamentally different than what happens when  $\frac{k(N)}{N} \to 0$.
	
	This fact will allow us to replace the random $Y_N$ with a deterministic matrix corresponding to the typical eigenvalues. We define the free entropy as
	\begin{equation}\label{eq:freentropyspiked}
		F_N(Y_N) = \frac{1}{Nk(N)} \ln \int e^{- \frac{\gamma N}{4} \tr( D_N^2 ) } \bigg(\int e^{ \frac{\sqrt{\gamma} N}{2} \tr( U^\top Y_N U D_N ) } dU \bigg) d P_D(\theta) .
	\end{equation}
	Let $A_N$ denote a matrix with eigenvalues $\lambda_1 \geq \lambda_2 \geq \dots \geq \lambda_N$. We choose the eigenvalues $\lambda_i$ for $i \geq k(N)$ such that the empirical distribution of the eigenvalues  $\hat \mu_{A_N}$ converges weakly to the semicircle distribution $\sigma$. Furthermore, the outlier eigenvalues $\lambda_1, \dots, \lambda_{k(N)}$ are given by $\lambda_i = f(\theta_i)$ where $f$ is the BBP transition function
	\begin{equation}\label{eq:bbp}
		f(x) = \begin{cases}
			2 & x \sqrt{\gamma} \leq 1 \\
			x \sqrt{\gamma} + \frac{1}{x \sqrt{\gamma}} & x \sqrt{\gamma} > 1.
		\end{cases}
	\end{equation}
	that appears in Propostion~\ref{prop:topeig}. The outlying eigenvalues have have extremal empirical measure $\hat \nu_N = f_{\#}(\mu_{D_N})$, where $\mu_{D_N}$ is the spectral distribution of $D_N$ under $P_D$. The next lemma states that we can replace the $Y_N$ in the free entropy \eqref{eq:freentropyspiked} with its deterministic counterpart $A_N$.
	
	\begin{lemma}\label{lem:deterministicdisorder}
		If $D_N$ satisfies Assumption~\ref{assum:eigdist}, then
		\[
		\lim_{N\to \infty} | \E_Y F_N(Y_N) - F_N(A_N)  | = 0.
		\]
	\end{lemma}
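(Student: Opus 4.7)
\textbf{Proof proposal for Lemma~\ref{lem:deterministicdisorder}.}

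The plan is to exploit the rotational invariance of the Haar measure to write the inner integral in \eqref{eq:freentropyspiked} as a spherical integral, then apply Theorem~\ref{maintheo} (or its weakened version Theorem~\ref{generalizedSIconv}) to both $Y_N$ and $A_N$, and conclude by matching the asymptotic formulas using the BBP transition of Proposition~\ref{prop:topeig}. Concretely, by rotational invariance
\[
\int e^{\frac{\sqrt\gamma N}{2}\tr(U^\top Y_N U D_N)}\,dU \;=\; I_N(\sqrt\gamma\,D_N(\theta), Y_N),
\]
so both $F_N(Y_N)$ and $F_N(A_N)$ are Laplace-type integrals over $\theta$ whose integrands differ only through the spectrum of the matrix in the second argument of $I_N$. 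Since $P_D$ is supported in $[-M,M]^{k(N)}$, the sequence of matrices $\sqrt\gamma\,D_N(\theta)$ satisfies the boundedness requirement in Assumption~\ref{assum:A1} uniformly in $\theta$.

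The matrix $A_N$ satisfies Assumption~\ref{assum:A1} directly: its bulk eigenvalues are chosen to approximate $\sigma$ and its $k(N)$ outliers $f(\theta_i)$ are bounded by a constant depending only on $\sqrt\gamma M$. For $Y_N$, the operator norm is random, but standard GOE tail bounds and Proposition~\ref{prop:topeig} show that $\hat\nu_{k(N)}(Y_N)(x^2)$ is bounded in probability, so the extension Theorem~\ref{generalizedSIconv} (possibly combined with a truncation of $Y_N$ at a large level $L$, as in the proof of Theorem~\ref{generalizedSIconv}) applies. Thus, uniformly in $\theta$ on the support of $P_D$,
\[
\frac{2}{\beta Nk(N)}\ln I_N(\sqrt\gamma D_N(\theta), Y_N) \;=\; \frac{1}{k(N)}\sum_{i=1}^{k(N)} J(\sqrt\gamma\,\theta_i, \lambda_i(Y_N), \sigma) + o_N(1),
\]
and the analogous identity holds with $Y_N$ replaced by $A_N$.

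Next, Proposition~\ref{prop:topeig} together with the fact that the bulk of $Y_N$ converges in probability to $\sigma$ yields that each outlier $\lambda_i(Y_N)$ converges in probability to $f(\theta_i^\ast) = \lambda_i(A_N)$, where $\theta^\ast$ is the planted signal. Combined with the uniform continuity of $J(\theta,\lambda,\sigma)$ on compact subsets of $\R^+\times[2,\infty)$, this gives
\[
\frac{1}{k(N)}\sum_{i=1}^{k(N)}\bigl[J(\sqrt\gamma\,\theta_i, \lambda_i(Y_N), \sigma) - J(\sqrt\gamma\,\theta_i, \lambda_i(A_N), \sigma)\bigr] \;\xrightarrow{\Pp}\; 0
\]
uniformly in $\theta$ on the support of $P_D$. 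Plugging this in and using the Laplace structure of the outer $\theta$-integral (which only sees this integrand through its log-exponential, bounded uniformly by the compactness of the supports) gives $|F_N(Y_N)-F_N(A_N)| \to 0$ in probability. Since $F_N(Y_N)$ is deterministically bounded in $N$ by the compactness of $\mathrm{supp}(P_D)$ and the operator-norm control of $Y_N$ that enters only through its extremal empirical measure, dominated convergence promotes this to $\E_Y|F_N(Y_N) - F_N(A_N)| \to 0$.

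\textbf{Main obstacle.} The nontrivial part is obtaining uniformity of the approximation in Step~2 jointly in $\theta$ over the compact support of $P_D$ and in the random eigenvalue configuration of $Y_N$, particularly near the BBP threshold $\sqrt\gamma\,\theta_i = 1$ where the map $\theta \mapsto f(\theta)$ is non-smooth. This should be handled by separating the indices $i$ for which $\sqrt\gamma\,\theta_i$ is bounded away from $1$ (where continuity of $J$ gives uniform control) from the boundary regime, in which both $\lambda_i(Y_N)$ and $f(\theta_i)$ are pinned to a neighborhood of $2$ and the contribution to the sum of $J$'s is small. The other technical point is justifying the Laplace limit transfer: because the support of $P_D$ is compact and the approximation error in Step~2 is $o(1)$ uniformly, this reduces to a standard equicontinuity argument.
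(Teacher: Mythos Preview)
Your proposal is essentially correct and aligned in spirit with the paper's argument: both rely on the BBP transition (Proposition~\ref{prop:topeig}) for the outliers of $Y_N$, the weak convergence of the bulk to $\sigma$, and a continuity argument to match the spectral data of $Y_N$ with that of $A_N$. The structural difference is that the paper does not route through Theorem~\ref{maintheo} at all. Instead it introduces the good event $C_N=\{d(\hat\mu_Y,\sigma)+d(\hat\eta_Y,f_\#\eta)\le\delta\}$, uses $\Pp(C_N)\to1$ (via \cite[Corollary~2.10]{JYmeso} and the a.s.\ convergence of $\hat\mu_Y$), argues that on $C_N$ the spectral data of $Y_N$ and $A_N$ coincide in the limit so $\E_Y[\1_{C_N}F_N(Y_N)]-F_N(A_N)\to0$ by continuity of $F_N$ in the spectrum, and then handles $C_N^c$ by the crude von~Neumann bound $|F_N(Y_N)|\le M^2+M\lambda_1(Y_N)$ together with exponential tails of $\lambda_1(Y_N)$. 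This bypasses your uniformity-in-$\theta$ concern for the $o_N(1)$ error in Theorem~\ref{maintheo} altogether.

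Your final dominated-convergence step is where the one genuine gap sits: $F_N(Y_N)$ is \emph{not} deterministically bounded, only bounded by $C\lambda_1(Y_N)$, so to pass from convergence in probability to convergence in $L^1$ you still need the uniform integrability input $\E[\lambda_1(Y_N)\1(\lambda_1(Y_N)>L)]\to0$ as $L\to\infty$. That is precisely the piece the paper isolates in its $C_N^c$ term. Once you add it (or equivalently adopt the paper's good/bad event split), the two arguments are interchangeable; your route is a little longer because it unpacks the spherical-integral asymptotics explicitly, while the paper leaves that inside the ``continuity'' black box.
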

	\begin{proof}
		Let 
		\[
		\hat \mu_Y = \frac{1}{N} \sum_{i = 1}^N \delta_{\lambda_i (Y)}
		\]
		denote the empirical measure of $Y$ and let
		\[
		\hat \eta_Y = \frac{1}{k(N)} \sum_{i = 1}^{k(N)} \delta_{\lambda_i (Y)}
		\]
		denote the extremal empirical measure of $Y$. Recall that Assumption~\ref{assum:eigdist} implies that $\hat \eta_Y \to \eta$. Let $d$ be a distance on $\mathcal{P}(\R)$ metrizing the topology of convergence in law. Consider the event
		\[
		C_N = \{ d(\hat \mu_Y, \sigma) + d(\hat \eta_Y, f_{\#} \eta) \leq \delta \}
		\]
		which denotes the event that both the empirical measure and extremal empirical measure converges to its typical value. By the almost sure convergence of the empirical measures in Lemma~\ref{lem:convergenceempirical}  and \cite[Corollary~2.10]{JYmeso}, have that the probability of $C_N$ tends to $1$ in the limit.
		
		Consider the following decomposition
		\[
		\E_Y F_N(Y_N) = \E_Y \1_{C_N} F_N(Y_N) + \E_Y \1_{C^c_N} F_N(Y_N).
		\]
		By construction,  the empirical measures of $A_N$, $\hat \mu_A$ and $\hat \eta_A$, converge to $\sigma$ and $\eta$ respectively so
		\[
		\lim_{N \to + \infty} | \E_Y \1_{C_N} F_N(Y_N) - F_N(A_N)| = 0
		\]
		by continuity and the fact that the limit only depends on the eigenvalues of $Y_N$ and $A_N$. For the second term,  the von Neumann trace inequality implies the following uniform bound
		\[
		\sup_{D_N} \bigg|\frac{1}{N k(N)} \ln I_N(D_N, Y_N) \bigg| \leq \lambda_1(Y) M.
		\]
		This implies that for any $L > M + \frac{1}{M} + 1$,
		\[
		|\E_Y \1_{C^c_N} F_N(Y_N)| \leq \E_Y (M^2 + \lambda_1(Y) M ) \1_{C^c_N}  \leq (M^2 + ML) \Pp(C^c_N) + M \E [\lambda_1(Y) \1(\lambda_1(Y) > L) ] .
		\]
		The first term is arbitrarily small in the limit because of the almost sure weak convergence of the empirical measures. The second term is arbitrarily small because \cite[Theorem~2.8]{JYmeso} gives exponential control of the top eigenvalue around $\theta_1 + \frac{1}{\theta_1}$. Combining both implies that the upper bound tends to $0$ in the limit. 
	\end{proof}
	
	Our focus now is to compute
	\begin{align}
		\frac{1}{Nk(N)}I_N(\gamma) =  \frac{\gamma}{4 k(N)} \sum_{i = 1}^{ k(N)} \theta^2_i - \frac{1}{Nk(N)}  \ln \int e^{- \frac{\gamma N}{4} \tr( D_N^2 ) } \bigg(\int e^{ \frac{\sqrt{\gamma} N}{2} \tr( U^\top A_N U D_N ) } dU \bigg) d P_D(\theta)  \label{eq:mutualinformationgrowingrankdeterministic}
	\end{align}
	where $A_N$ is a deterministic matrix with limiting empirical measure and extremal empirical measure converging to the same almost sure limit as the empirical measure of $Y_N$ defined in \eqref{eq:bbp}. 
	
	\subsection{Low Rank Matrix Estimation with Deterministic $D_N$}\label{sec:finrank}
	
	We first restrict ourselves to the setting simpler where $k$ is fixed and independent of $N$, and the matrix $D_N:= D_k = \diag(\theta_1, \dots, \theta_k, 0, \dots, 0)$ is deterministic. The random setting with $k = o(N)$ rank will be considered in Subsection~\ref{sec:growingrank}. 
	
	By our simplifying choice of the distribution of $X$, the posterior in \eqref{eq:posteriorig} simplifies to
	\begin{equation}\label{eq:posterior}
		dP(X | Y) \propto e^{-\frac{N\gamma}{4} \sum_{i = 1}^k \theta^2_i} \exp \frac{ \sqrt{\gamma} N }{2} \tr\bigg(   U^\top Y U D_k  \bigg) dU.
	\end{equation}
	Consequently, the mutual information \eqref{eq:mutualinformationgrowingrankdeterministic}, has a simpler structure
	\begin{align}
		\frac{1}{Nk}I_N(\gamma) &=  \frac{\gamma}{4k} \sum_{i = 1}^k \theta^2_i - \frac{1}{Nk} \ln  e^{-\frac{N\gamma}{4} \sum_{i = 1}^k \theta^2_i} \int \exp \frac{ \sqrt{\gamma} N }{2} \tr\bigg(   U^\top A_N U D_k  \bigg) dU \notag
		\\&= \frac{\gamma}{2k} \sum_{i = 1}^k \theta^2_i - \frac{1}{Nk}  \ln  \int \exp \frac{ \sqrt{\gamma} N }{2} \tr\bigg(   U^\top A_N U D_k  \bigg) dU\label{eq:mutualinformation}.
	\end{align}
	
	We can use the spherical integrals to explicitly compute this quantity.	Recall that for the Stieltjes transform is given in \eqref{eq:stieljessc} and in particular $G_\sigma^{-1}(\theta) = \theta + \frac{1}{\theta}$ by \eqref{eq:invstieltjes} and $G_\sigma(2) =1$. By Proposition~\ref{prop:topeig}, it follows that there will be at most $k$ outlying eigenvalues given by
	\begin{equation}\label{eq:spikedtopeigenvalues1}
		\lambda_i(\gamma,D_N) = \lambda_i(\gamma \theta^2_i) = \begin{cases}
			2 & \gamma \leq \frac{1}{\theta_i^2} \\
			\sqrt{\gamma} \theta_i + \frac{1}{\sqrt{\gamma} \theta_i} & \gamma  > \frac{1}{\theta^2_i}
		\end{cases} \qquad \text{for} \qquad  1 \leq i \leq k.
	\end{equation}
	This explicit formula for the eigenvalues from Proposition~\ref{prop:topeig} will allow us explicitly compute $I_N$ and $	\mathrm{MMSE}_N(\gamma)$ with Theorem~\ref{theo:positivetheo}. 
	
	\begin{prop}[Matrix Factorization with Deterministic $D_N$]\label{prop:matfact}
		For fixed $k \geq 1$ and any $\theta_1, \dots, \theta_k > 0$. The mutual information of the spiked matrix model is given by
		\begin{align}
			\lim_{N\to \infty} \frac{1}{Nk}I_N(\gamma) &= \frac{\gamma}{2k} \sum_{i = 1}^k \theta^2_i - \frac{1}{2k} \sum_{i = 1}^k J(\sqrt{\gamma} \theta_i, \lambda_i(\gamma\theta^2_i), \mu) \notag
			\\&=  \frac{1}{k} \sum_{i = 1}^k \frac{\gamma\theta^2_i}{4} \1\Big(\gamma \leq \frac{1}{\theta_i^2}\Big) + \frac{1}{k}  \sum_{i = 1}^k \bigg( \frac{\ln \gamma\theta^2_i}{2} + \frac{1}{4 \gamma\theta^2_i} \bigg) \1\Big(\gamma > \frac{1}{\theta^2_i} \Big) \label{eq:Iformula}
		\end{align}
		and the asymptotic MMSE is given by
		\begin{align}\label{eq:MMSEformula}
			\lim_{N \to + \infty} \mathrm{MMSE}(\theta)	 &= \frac{2}{k} \sum_{i = 1}^k \theta^2_i - \frac{2}{k} \sum_{i = 1}^k \frac{d}{d\gamma} J(\sqrt{\gamma} \theta_i, \lambda_i(\gamma\theta^2_i), \mu) \notag
			\\&= \frac{1}{k} \sum_{i = 1}^k \frac{\theta^2_i}{4} \1\Big(\gamma \leq \frac{1}{\theta_i^2}\Big) + \frac{1}{k}  \sum_{i = 1}^k \bigg( \frac{1}{2\gamma} - \frac{1}{4 \gamma^2\theta^2_i} \bigg) \1\Big(\gamma > \frac{1}{\theta^2_i}  \Big) .
		\end{align}
		where $J$ was defined in Definition~\ref{def:J}.
	\end{prop}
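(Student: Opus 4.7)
The plan is to reduce this finite rank computation to the asymptotics of a single spherical integral with deterministic inputs, and then to evaluate the rate function $J$ explicitly on the semicircle law. Since $P_D$ is a delta mass at $(\theta_1,\dots,\theta_k)$ in this proposition, the only randomness in $Y_N$ comes from the GOE noise and the Haar matrix $U$. I would first invoke Lemma~\ref{lem:deterministicdisorder} to replace $Y_N$ inside \eqref{eq:mutualinformation} by the deterministic matrix $A_N$ whose bulk spectrum discretizes the semicircle $\sigma$ and whose top $k$ eigenvalues are the BBP limits $\lambda_i(\gamma\theta_i^2)$ of Proposition~\ref{prop:topeig}. Because $k$ is fixed and the matrix $\sqrt{\gamma}D_k$ has the $k$ non-negative eigenvalues $\sqrt{\gamma}\theta_i$, the finite rank case of Theorem~\ref{theo:rank1} applies with $\beta=1$ and gives
\[
\lim_{N \to \infty} \frac{1}{Nk} \ln \int \exp\!\left(\tfrac{\sqrt{\gamma}N}{2}\tr(U^{\top}A_N U D_k)\right) dU \;=\; \frac{1}{2k} \sum_{i=1}^{k} J\!\left(\sqrt{\gamma}\theta_i,\lambda_i(\gamma\theta_i^2),\sigma\right).
\]
The pairing is the correct one because the BBP map $x \mapsto \max(2,\sqrt{\gamma}x+1/(\sqrt{\gamma}x))$ is non-decreasing on $[0,\infty)$, so the eigenvalues $\lambda_i(\gamma\theta_i^2)$ are already ordered decreasingly in $i$. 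Substituting into \eqref{eq:mutualinformation} delivers the first equality of \eqref{eq:Iformula}.

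The next step is the explicit evaluation of $J(\sqrt{\gamma}\theta_i,\lambda_i(\gamma\theta_i^2),\sigma)$ using the closed forms of the Stieltjes transform \eqref{eq:stieljessc} and of the logarithmic potential \eqref{eq:logpotential} of the semicircle. Set $\theta=\sqrt{\gamma}\theta_i$. If $\theta\leq 1$ then $\lambda_i=2$, so $\lambda'=2$ and $G_\sigma(2)=1>\theta$, placing us in the ``otherwise'' branch of Definition~\ref{def:J} with $v=G_\sigma^{-1}(\theta)=\theta+1/\theta$; using $\sqrt{(\theta+1/\theta)^2-4}=1/\theta-\theta$ gives $h_\sigma(\theta+1/\theta)=\theta^2/2-\ln\theta$, and after collapsing the linear term $(v-2)G_\sigma(v)=(\theta+1/\theta-2)\theta$ one finds $J=\theta^2/2$. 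If $\theta>1$ then $\lambda_i=\theta+1/\theta=v$, the term $(v-\lambda')G_\sigma(v)$ vanishes, and the opposite sign $\sqrt{(\theta+1/\theta)^2-4}=\theta-1/\theta$ gives $h_\sigma(\theta+1/\theta)=\ln\theta+1/(2\theta^2)$ and $J=\theta^2-\ln(\theta^2)-1/(2\theta^2)$. Inserting these two closed forms into $(\gamma/(2k))\sum\theta_i^2-(1/(2k))\sum J$ produces the explicit piecewise formula on the right of \eqref{eq:Iformula}.

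For the MMSE I would use the I--MMSE identity \eqref{eq:IMMSE}. The limit $F(\gamma):=\lim_N I_N(\gamma)/(Nk)$ is convex in $\gamma$ and differentiable everywhere except at the transition points $\gamma=1/\theta_i^2$, so the $\gamma$-derivative may be interchanged with the $N\to\infty$ limit at these regular points via the standard convexity argument recalled just after \eqref{eq:IMMSE}. Term-by-term differentiation of \eqref{eq:Iformula} on each side of the transition then reproduces \eqref{eq:MMSEformula}.

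Since the three key tools (Lemma~\ref{lem:deterministicdisorder}, Theorem~\ref{theo:rank1} and Proposition~\ref{prop:topeig}) are already in place, there is no serious obstacle. The only mildly delicate point is bookkeeping across the two branches of the piecewise definition of $J$: one must track which side of $\theta\gtrless G_\sigma(\lambda')$ each pair $(\sqrt{\gamma}\theta_i,\lambda_i)$ lies on, and verify that the two branches of the final answer match continuously at the BBP transition $\gamma=1/\theta_i^2$ (both pieces equal $1/4$ there), which confirms that no boundary contribution is missed when passing through the threshold.
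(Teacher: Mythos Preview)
Your proof is correct and follows essentially the same route as the paper: replace $Y_N$ by the deterministic $A_N$ via Lemma~\ref{lem:deterministicdisorder}, apply the finite-rank spherical integral asymptotics, evaluate $J(\sqrt{\gamma}\theta_i,\lambda_i,\sigma)$ explicitly on the two sides of the BBP threshold, and differentiate using the I--MMSE identity. If anything, your version is slightly tidier in citing Theorem~\ref{theo:rank1} for fixed $k$ (the paper cites Theorem~\ref{theo:positivetheo}, which as stated is only an upper bound) and in explicitly noting the monotonicity of the BBP map to justify the pairing of $\sqrt{\gamma}\theta_i$ with $\lambda_i$.
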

	
	\begin{proof}
		It suffices to only compute the mutual information because the MMSE follows immediately from the relationship between the mutual information and minimal mean squared error by \eqref{eq:IMMSE}. By \eqref{eq:mutualinformation}, we have
		\[
		\frac{1}{Nk} I_N(\gamma) =  \frac{\gamma}{2k} \sum_{i = 1}^k \theta^2_i - \frac{1}{Nk} \E \ln \int \exp \frac{ \sqrt{\gamma} N }{2} \tr\bigg(   U^\top A_N U D_k  \bigg) dU.
		\]
		We can use Theorem~\ref{theo:positivetheo} to compute the limiting free entropy of this model. We have
		\[
		\lim_{N \to + \infty} \frac{1}{N} \E \ln \int \exp  \frac{\gamma N}{2} \tr\bigg( U^\top Y U D_k  \bigg) dU  = \frac{1}{k} \sum_{i = 1}^k J(\sqrt{\gamma} \theta_i,\lambda_i(\gamma\theta^2_i), \sigma)
		\]
		because the limiting spectral distribution of $A_N$ is the semicircle law $\mu$. This proves the first equality in \eqref{eq:Iformula}. 
		
		The functional $J(\sqrt{\gamma} \theta_i,\lambda_i(\gamma\theta^2_i), \sigma)$ is explicit and can be computed similarly using the same chain of computations in the spherical SK model. Notice that $G_\sigma(\lambda(\gamma \theta_i^2)) = 1 \wedge \frac{1}{\sqrt{\gamma} \theta_i}$. We will show that
		\begin{equation}\label{eq:Jspikedmatrix}
			J\bigg(\sqrt{\gamma} \theta_i,\lambda_i(\gamma\theta^2_i), \sigma \bigg) = \begin{cases}
				\frac{\gamma \theta_i^2}{2} & \gamma \leq \frac{1}{\theta_i^2}
				\\\gamma \theta_i^2 - \ln(\gamma \theta_i^2) - \frac{1}{2 \gamma \theta_i^2}  & \gamma > \frac{1}{\theta_i^2}.
			\end{cases}
		\end{equation}
		We do the change of variables and consider $x = \gamma \theta^2_i$ and compute
		\[
		J(\sqrt{x}, \lambda_i(\sqrt{x}), \sigma ).
		\]
		
		We first consider the case that $x < 1$. In this case, the computation follows from the computations with the SK model evaluated at inverse temperature $\sqrt{x}$ (see the proof of Proposition~\ref{prop:sphericalSK}) so
		\[
		J(\sqrt{x}, 2, \sigma) = \frac{x}{2}
		\]
		proving the first case in \eqref{eq:Jspikedmatrix}.
		
		We now consider the complicated case when $x > 1$. Notice that $G_\sigma( \sqrt{x} + \frac{1}{\sqrt{x}}) = \frac{1}{\sqrt{x}}$, so we are in the region where $v = \sqrt{x} + \frac{1}{\sqrt{x}}$ in Definition \ref{def:J}. On this region, we have
		\[
		J\bigg(\sqrt{x}, \sqrt{x} + \frac{1}{\sqrt{x}}, \sigma \bigg) = \sqrt{x} \bigg( \sqrt{x} + \frac{1}{\sqrt{x}} \bigg) - \ln \sqrt{x} - h\bigg( \sqrt{x} + \frac{1}{\sqrt{x}} \bigg) - 1.
		\]
		Using the formula for the logarithmic potential $h$ defined in \eqref{eq:logpotential} we have $h( \sqrt{x} + \frac{1}{\sqrt{x}} ) = \frac{\ln x}{2} + \frac{1}{2x}$ for $x > 1$,	proving the second case in \eqref{eq:Jspikedmatrix}. 
		
		Next, using the relationship for the mutual information and the free energy \eqref{eq:IMMSE}, we see that
		\[
		\lim_{N \to \infty} \mathrm{MMSE}_N(\gamma) = 4 \frac{d}{d\gamma} 	I_N(\gamma) = \frac{2}{k} \sum_{i = 1}^k \theta^2_i - \frac{2}{k} \sum_{i = 1}^k \frac{d}{d\gamma} J(\sqrt{\gamma} \theta_i,\lambda_i(\gamma\theta^2_i), \mu) 
		\]
		proving the first equation in \eqref{eq:MMSEformula}. The result 
		\begin{equation}\label{eq:derivJspikedmatrix}
			\frac{d}{d\gamma} J\bigg(\sqrt{\gamma} \theta_i,\lambda_i(\gamma\theta^2_i), \sigma \bigg) = \begin{cases}
				\frac{\theta_i^2}{2} & \gamma \leq \frac{1}{\theta_i^2}
				\\ \theta_i^2 - \frac{1}{\gamma} + \frac{1}{2 \gamma^2 \theta_i^2}  & \gamma > \frac{1}{\theta_i^2}.
			\end{cases}
		\end{equation}
		can be computed by taking the derivatives of \eqref{eq:Jspikedmatrix}, which proves the second equality in \eqref{eq:MMSEformula}.
	\end{proof}
	
	As a consequence of Proposition~\ref{prop:matfact}, the formula in the rank 1 matrix estimation problem, which was proven earlier in \cite{miolanefundamentallimits},  is simple.
	
	\begin{Ex}[Rank 1 Matrix Estimation]\label{ex:rank1}
		We will now demonstrate how one can derive the fundamental limits for rank 1 matrix estimation using the spherical integral formula. Without loss of generality, we take $\theta_1 = 1$. Applying Proposition~\ref{prop:matfact}, we see that the limiting mutual information in this model is
		\[
		\lim_{N \to + \infty} \frac{1}{N}I_N(\gamma) = \begin{cases}
			\frac{\gamma}{4}	 & \gamma \leq 1\\
			\frac{\ln(\gamma)}{2} + \frac{1}{4\gamma}	 & \gamma > 1
		\end{cases}
		\]
		and the MMSE is
		\[
		\lim_{N \to +\infty} \mathrm{MMSE}(\gamma) = \begin{cases}
			1 &  \gamma \leq 1\\
			\frac{1}{\gamma} \Big(2 - \frac{1}{\gamma} \Big)  &  \gamma > 1
		\end{cases}.
		\]
	\end{Ex}

	\subsection{Growing Rank Matrix Estimation}\label{sec:growingrank}
	
	We now prove a limiting formula for the matrix factorization problem when $k(N)$ increases in $N$ and the perturbation $D_N$ is random.	
	
	Because the asymptotics of the spherical integrals in the growing rank case are given by the sums of the one dimensional sperical integrals, the results for the finite rank case discussed in Section~\ref{sec:finrank} generalizes to the growing rank case. We have the following limit for the mutual information.
	
	\begin{prop}
		Suppose the rank $k(N)$ of the signal satisfies $\lim_{N \to \infty} k(N) = \infty$ and $\lim_{N\to \infty}\frac{k(N)}{N} = 0$. If $D_N$ satisfies Assumption~\ref{assum:eigdist}, then for $f$ defined in \eqref{eq:bbp} and quantile functions $Q_\mu$ defined in \eqref{eq:quantile},
		\begin{align*}
			\lim_{N \to \infty} \frac{1}{Nk(N)} I_N(\gamma)  &= \frac{\gamma}{4} \int_0^1 (Q_\eta (x) )^2 dx - \sup_{\nu} \bigg( - \frac{1}{4} \int_0^1 (Q_\nu(x) )^2 \, d x + \frac{1}{2} \int_0^1 J( \sqrt{\gamma} Q_\nu( x ), f( \sqrt{\gamma} Q_\eta(x) ) , \mu ) \, d x - \Gamma(\nu) \bigg).
		\end{align*}
	\end{prop}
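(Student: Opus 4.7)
The overall strategy is to reduce the problem to a Varadhan-type evaluation of the inner integral, using the spherical integral asymptotics of Theorem~\ref{maintheo} to identify the exponential rate pointwise in the configuration $\theta$, and then applying the large deviation principle for the empirical measure of $D_N$ to integrate out $dP_D(\theta)$.

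First, I would invoke Lemma~\ref{lem:deterministicdisorder} to replace the random observation $Y_N$ in the free entropy \eqref{eq:freentropyspiked} by a deterministic matrix $A_N$ whose bulk spectrum converges to the semicircle law $\sigma$ and whose $k(N)$ outlying eigenvalues are precisely $\lambda_i = f(\sqrt{\gamma}\theta_i^\ast)$ with $\theta_i^\ast$ the eigenvalues of the signal (Proposition~\ref{prop:topeig} gives these values). Since the signal eigenvalues $\theta^\ast$ have empirical distribution converging to $\eta$ by Assumption~\ref{assum:eigdist}, the extremal empirical measure of $A_N$ is (in the limit) the pushforward $f(\sqrt{\gamma}\cdot)_{\#}\eta$, and the corresponding quantile is $f(\sqrt{\gamma}Q_\eta(\cdot))$.

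Second, for each realization of the integration variable $\theta = (\theta_1,\ldots,\theta_{k(N)})$ in the posterior (with empirical measure $\hat\nu_\theta$), Theorem~\ref{maintheo} applied to $D = \mathrm{diag}(\sqrt{\gamma}\theta_1,\ldots,\sqrt{\gamma}\theta_{k(N)})$ and $A_N$ yields
\[
\frac{1}{Nk(N)}\ln \int \exp\Big(\tfrac{\sqrt{\gamma}N}{2}\tr(U^\top A_N U\,\mathrm{diag}(\theta))\Big)dU \;=\; \frac{1}{2k(N)}\sum_{i=1}^{k(N)} J\bigl(\sqrt{\gamma}\theta_i, f(\sqrt{\gamma}\theta_i^\ast),\sigma\bigr) + o(1).
\]
Rewriting the Riemann sum via quantile functions, this right-hand side equals $\tfrac{1}{2}\int_0^1 J(\sqrt{\gamma}Q_{\hat\nu_\theta}(x), f(\sqrt{\gamma}Q_\eta(x)),\sigma)\,dx + o(1)$. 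Combined with the term $-\tfrac{\gamma}{4}\int Q_{\hat\nu_\theta}^2\,dx$ coming from $-\tfrac{N\gamma}{4}\tr(D_N^2)$, the integrand in \eqref{eq:mutualinformationgrowingrankdeterministic} has exponential rate
\[
\Phi(\nu) := -\tfrac{\gamma}{4}\int_0^1 Q_\nu(x)^2\,dx + \tfrac{1}{2}\int_0^1 J(\sqrt{\gamma}Q_\nu(x), f(\sqrt{\gamma}Q_\eta(x)),\sigma)\,dx.
\]

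Third, Varadhan's lemma applied to the outer integral $\int e^{Nk(N)\Phi(\hat\nu_\theta)+o(Nk(N))}\,dP_D(\theta)$, with rate function $\Gamma$ at speed $Nk(N)$ (by Assumption~\ref{assum:eigdist}), gives
\[
\lim_{N\to\infty}\frac{1}{Nk(N)}\ln\int e^{-\tfrac{\gamma N}{4}\tr D_N^2}\Big(\!\int e^{\tfrac{\sqrt{\gamma}N}{2}\tr(U^\top A_N U D_N)}dU\Big)dP_D(\theta) \;=\; \sup_\nu\bigl(\Phi(\nu)-\Gamma(\nu)\bigr).
\]
Subtracting this from $\tfrac{\gamma}{4k(N)}\sum(\theta_i^\ast)^2 \to \tfrac{\gamma}{4}\int_0^1 Q_\eta(x)^2\,dx$ yields the claimed formula.

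The main obstacle is making the second step rigorous, which requires two uniformity results in order for Varadhan's lemma to apply cleanly. The first is that the error $o(1)$ in the spherical integral asymptotics must hold uniformly in $\theta \in [-M,M]^{k(N)}$; since the extremal eigenvalues of $A_N$ are bounded (as $\theta_i^\ast \in [-M,M]$ and $f$ is continuous on $[-M,M]$) and $\mathrm{diag}(\theta)$ satisfies Assumption~\ref{assum:A1} uniformly, one can obtain uniform control by tracking the error estimates in the proofs of Propositions~\ref{prop:UB} and~\ref{prop:lwbd}, using the equicontinuity in the spectra established in Lemma~\ref{equicont}. The second is the continuity of $\nu\mapsto \int J(\sqrt{\gamma}Q_\nu(x), f(\sqrt{\gamma}Q_\eta(x)),\sigma)\,dx$ with respect to the weak topology on measures supported in $[-M,M]$; this follows because weak convergence implies a.e.\ convergence of quantile functions and $J(\theta,\lambda,\sigma)$ is continuous in $(\theta,\lambda)$ (jointly on bounded sets) together with dominated convergence. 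Together, these give both the exponential upper bound and the matching lower bound via Varadhan's lemma, completing the proof.
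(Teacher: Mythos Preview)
Your proposal is correct and follows essentially the same route as the paper's proof: replace $Y_N$ by a deterministic $A_N$ via Lemma~\ref{lem:deterministicdisorder}, apply Theorem~\ref{maintheo} to the inner spherical integral to obtain the exponential rate $\Phi(\hat\nu_\theta)$ in quantile form, and then use Varadhan's lemma against the large deviation principle for $\hat\nu_\theta$ under $P_D$. Your discussion of the two uniformity issues (uniform $o(1)$ in the spherical asymptotics over $\theta\in[-M,M]^{k(N)}$, and weak continuity of $\nu\mapsto\Phi(\nu)$ via a.e.\ quantile convergence plus dominated convergence) is in fact more explicit than the paper's own proof, which simply writes the $o_N(1)$ inside the $P_D$-integral without further comment.
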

	
	\begin{proof}
		The first term in the mutual information \eqref{eq:mutualinformationgrowingrank} is trivial, and converges to
		\begin{equation}\label{eq:limitfirstterm}
			\lim_{N\to \infty} \frac{\gamma}{4k(N)} \sum_{i = 1}^{k(N)} \theta^2_i =  \frac{\gamma}{4} \int_0^1 x^2 d\eta(x)  = \frac{\gamma}{4} \int_0^1 (Q_\eta (x) )^2 dx.
		\end{equation}
		We focus on computing the limit of the second term of \eqref{eq:mutualinformationgrowingrank}, which we will denote by
		\begin{align*}
			F_N =  \frac{1}{Nk(N)} \ln \int e^{- \frac{\gamma N}{4} \tr( D_N^2 ) } \bigg(\int e^{ \frac{\sqrt{\gamma} N}{2} \tr( U^\top A_N U D_N ) } dU \bigg) d P_D(\theta) .
		\end{align*}
		Using Theorem~\ref{maintheo}, we can compute the spherical integral on the inside to determine that
		\[
		\frac{1}{Nk(N)} \ln \int e^{- \frac{N k(N)}{4}  \frac{1}{k(N)}\sum_{i = 1}^{k(N)} \theta^2_i + N k(N) \frac{1}{k(N)} \sum_{i = 1}^{k(N)} J(\sqrt{\gamma} \theta_i, \lambda_i , \mu ) + o_N(1) }   d P_D(\theta).
		\]
		Recall that the extremal eigenvalues $\lambda_i$ converge weakly to $f^\gamma_{\#} \eta$ where $\eta$ is the limiting eigenvalue distribution of $D_N$ and $f^\gamma$ is the scaled BBP transition map \eqref{eq:bbp}, 
		\[
		f^\gamma(x)  = \begin{cases}
			2	&x \leq \frac{1}{\sqrt{\gamma}}
			\\\sqrt{\gamma} x + \frac{1}{ \sqrt{\gamma} x} &x \geq \frac{1}{\sqrt{\gamma}}.
		\end{cases}
		\]
		Therefore, if the empirical measure of the $\theta$ converges to $\nu$, then
		\[
		\lim_{N \to +\infty} \frac{1}{k(N)} \sum_{i = 1}^{k(N)} J(\sqrt{\gamma} \theta_i, \lambda_i , \mu ) = \frac{1}{2} \int_0^1 J( \sqrt{\gamma} Q_\nu( x ), f^\gamma( Q_\eta(x ) ) , \sigma ) \, d x.
		\]
		We used the quantile functions to couple the ordered eigenvalues $\lambda_i$ in the limit with the ordered eigenvalues $\sqrt{\gamma} \theta_i$. Lastly, if the law $P_D$ of $\theta$ satisfies a large deviations principle with rate function $\Gamma$ and speed $k(N)N$ so that $\inf\{ \Gamma(\mu):\mu(x^{2})\ge L\}$ goes to infinity with $L$, then by Varadhan's lemma,
		\begin{equation}\label{lkj}
			\lim_{N \to \infty} F_N = \sup_{\nu} \bigg( - \frac{\gamma}{4} \int_0^1 x^{2}d\nu(x)  + \frac{1}{2} \int_0^1 J( \sqrt{\gamma} Q_\nu( x ), f^\gamma( Q_\eta(x ) ) , \sigma ) \, d x - \Gamma(\nu) \bigg),
		\end{equation}
	where the supremum is taken over probability measures $\nu$ with finite second moment. 
	Combining \eqref{lkj} and \eqref{eq:limitfirstterm} with the decomposition \eqref{eq:mutualinformationgrowingrank} finishes the proof.
	\end{proof}
	
	\begin{rem}
		The rate function $J$ has an explicit form. The terms in $J$ were computed in the proof of Proposition~\ref{prop:sphericalSK} and Proposition~\ref{prop:matfact}. By the definition of $G_\sigma$, we see that
		\[
		G_{\sigma}( f^\gamma( Q_\eta(x ) ) ) = \begin{cases}
			1 &  \sqrt{\gamma} Q_\eta(x )  \leq 1 \\
			\frac{1}{ \sqrt{\gamma} Q_\eta(x ) } & \sqrt{\gamma} Q_\eta(x )  \geq 1.
		\end{cases} = 1 \wedge 	\frac{1}{ \sqrt{\gamma} Q_\eta(x ) } 
		\]
		Since $G_{\sigma}( f^\gamma( Q_\eta(x ) ) )$ is decreasing in $x$ and $\sqrt{\gamma} Q_\nu( x )$ is increasing in $x$, so we can define $x^*$  to be the smallest number (which may be infinite) such that $G_{\sigma}( f^\gamma( Q_\eta(x^* ) ) ) = \sqrt{\gamma} Q_\nu( x^* )$. It follows that $\sqrt{\gamma} Q_\nu( x ) \leq G_{\sigma}( f^\gamma( Q_\eta(x ) ) )$ for $x < x^*$ and $\sqrt{\gamma} Q_\nu( x ) \geq G_{\sigma}( f^\gamma( Q_\eta(x ) ) )$ for $x > x^*$, so
		\[
		v(  f^\gamma( Q_\eta(x ) ), \sqrt{\gamma} Q_\nu( x )) = \begin{cases} 	
			\sqrt{\gamma} Q_\nu( x ) + \frac{1}{\sqrt{\gamma} Q_\nu( x )} &x < x^*\\
			f^\gamma( Q_\eta(x ) )  &x > x^*.
		\end{cases}
		\]
		Substituting this into Definition~\ref{def:J} and using formulas \eqref{eq:stieljessc} and \eqref{eq:logpotential} implies
		\[
		J( \sqrt{\gamma} Q_\nu( x ), f^\gamma( Q_\eta(x ) ) , \sigma ) = \begin{cases} 	
			\frac{\gamma Q^2_\nu( x )}{2}	 &x < x^*\\
			\sqrt{\gamma} Q_\nu( x ) f^\gamma( Q_\eta(x ) )  - \ln \sqrt{\gamma} Q_\nu( x )  - h( f^\gamma(Q_\eta(x)) ) - 1   &x > x^*,
		\end{cases}
		\]
		where
		\[
		h(z) = \frac{z^2}{4} - \frac{z \sqrt{z^2 - 4}}{4} + \ln \frac{z + \sqrt{z^2 - 4}}{2}  - \frac{1}{2} \quad \text{ for } z \geq 2.
		\]
	\end{rem}

	\appendix 
		\section{Annealed spherical integral lower bound: Proof of lemma \ref{excessmass} }\label{app:A}
	To prove this lemma, we are going to prove first to separate the terms in the sum that define $A_N^{(\epsilon)}$ into two terms, a term covering the diagonal entries of $(U^* D_N U)$ and another one covering the off-diagonal entries. First, let us deal with the diagonal entries: 
	\begin{lemma} \label{excessmass1}
		The random variable $\max_{i =1,\dots,N} |(U^* D_N U)_{i,i}|$ converges to $0$ in probability.  \end{lemma}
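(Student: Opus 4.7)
The plan is to reduce the claim to a one-column estimate plus a union bound over the $N$ columns of $U$, exploiting the fact that the diagonal entries have the clean expression
\[
(U^*D_N U)_{ii} \;=\; \sum_{k=1}^{2k(N)} \theta_k \,|U_{ki}|^2,
\]
together with the marginal fact that for each fixed $i$ the $i$-th column of a Haar matrix is uniform on the sphere, hence equal in law to $g/\|g\|$ where $g=(g_1,\dots,g_N)$ is a standard real (resp. complex) Gaussian vector. In particular the vector $(|U_{1i}|^2,\dots,|U_{Ni}|^2)$ has the same law as $(g_1^2,\dots,g_N^2)/\|g\|^2$, so the analysis of each diagonal entry reduces to a ratio of quadratic forms in independent Gaussians.

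The main step is a two-sided concentration bound: with $|\theta_k|\le K$, I would first apply Bernstein's inequality to the sub-exponential sum $\sum_{k=1}^{2k(N)} \theta_k(g_k^2-1)$ to obtain
\[
\Pp\!\left[\Big|\sum_{k=1}^{2k(N)} \theta_k(g_k^2-1)\Big| \ge \tfrac{\delta N}{2}\right] \;\le\; 2\exp\!\Big(-c\min\!\big(\tfrac{\delta^2 N^2}{K^2 k(N)},\,\tfrac{\delta N}{K}\big)\Big),
\]
and combine it with the standard chi-square bound $\Pp[\|g\|^2 \le N/2]\le e^{-cN}$. On the complementary good event we have $\|g\|^2 \ge N/2$ and the numerator is controlled by $|\sum \theta_k| + \delta N/2 \le 2K k(N) + \delta N/2$, yielding
\[
|(U^*D_N U)_{ii}| \;\le\; \frac{4K k(N)}{N} + \delta.
\]
Since $k(N)=o(N)$ and $\delta$ is arbitrary, this bound goes to $0$. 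For the complex case the same argument goes through with $g_k$ replaced by $|g_k|^2 = (\mathrm{Re}\,g_k)^2 + (\mathrm{Im}\,g_k)^2$, an exponential random variable, which is still sub-exponential and obeys Bernstein's inequality.

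Finally, I take a union bound over $i=1,\dots,N$. The Bernstein exponent of order $\min(N^2/k(N), N)$ easily dominates $\ln N$ under the running hypothesis $k(N)=o(N/\ln N)$ (in fact the weaker $k(N)=o(N^2/\ln N)$ would suffice here), so
\[
\Pp\!\left[\max_{1\le i\le N} |(U^*D_N U)_{ii}| \ge \delta + o_N(1)\right] \;\le\; N\cdot\big(2e^{-c\delta^2 N^2/(K^2 k(N))} + e^{-cN}\big) \;\longrightarrow\; 0.
\]
The main mild obstacle is that the columns of $U$ are not independent, so one cannot exploit joint independence; however, the argument only uses the marginal distribution of each column, which is the uniform distribution on the sphere, and this is all that is needed for the union bound. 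The stronger hypothesis $k(N)=o(N/\ln N)$ will only be needed downstream, when handling the off-diagonal contribution in $A_N^{(\epsilon)}$.
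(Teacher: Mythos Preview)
Your proposal is correct and follows the same overall strategy as the paper: exploit that each column of $U$ is marginally uniform on the sphere, derive an exponential tail bound for a single $|(U^*D_NU)_{ii}|$, and finish with a union bound over $i$. The only difference is the tool used for the tail bound: the paper bounds $|(U^*D_NU)_{1,1}|\le K\sum_{i=1}^{2k(N)}|e_i|^2$ and uses directly that this sum is a $\mathrm{Beta}(\beta k(N),\tfrac{\beta}{2}(N-2k(N)))$ variable together with a Laplace-method estimate to get $\Pp[|(U^*D_NU)_{1,1}|\ge\epsilon]=O(e^{-Nc(\epsilon)})$, whereas you pass through the Gaussian representation $e=g/\|g\|$ and apply Bernstein to the numerator plus a chi-square bound on the denominator. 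Both routes give an $e^{-cN}$-type tail that comfortably beats the factor $N$ from the union bound, and your observation that only $k(N)=o(N)$ is needed here (the hypothesis $k(N)=o(N/\ln N)$ being reserved for the off-diagonal terms) is consistent with the paper.
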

	\begin{proof}
		By unitary invariance all the $|(U^* D_N U)_{i,i}|$ have the same distribution. If $(e_i)_{1 \leq i \leq N}$ is the first column of $U$, we have that: 
		\[ |(U^* D_N U)_{1,1}| = \Big| \sum_{i=1}^{k(N)} \theta_{-i}^N e_i ^2 + \sum_{i=1}^{k(N)}\theta_{i}^N e_{i+k(N)} ^2 \Big| \leq K \sum_{i=1}^{2k(N)} e_i^2 \]
		and $\sum_{i=1}^{2k(N)} e_i^2$ is a beta variable of parameter $( \beta k(N) , \frac{\beta N - 2 \beta k(N)}{2})$. Using a classical Laplace method, one has for every $\epsilon > 0$ the existence of some $c(\epsilon) > 0$ such that: 
		\[ \Pp[ |(U^* D_N U)_{1,1}| \geq \epsilon] = O(e^{-N c(\epsilon)}) \]
		The lemma then come from a union bound on the $i$. 
	\end{proof}
	
	With the following lemma, we deal with the off-diagonal entries. 
	\begin{lemma}\label{excessmass2}
		If $k(N) = o(N / \ln N) $ then for all $\epsilon > 0$: 
		\[ \Pp[ \exists i,j \in[1,N], \text{ such that }i \neq j \text{ and }\sqrt{N} |(U D_N U^*)_{i,j}| / 2 > \epsilon] = o(1) \]
	\end{lemma}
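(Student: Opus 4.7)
The plan is to replace the Haar unitary by an explicit Gaussian representation, reduce the matrix entry $(UD_NU^*)_{i,j}$ to a bilinear form in two independent Gaussian vectors (for $i \neq j$), apply conditional Gaussian concentration to obtain a sub-Gaussian tail at the correct scale $\sqrt{k(N)}/N$, and then conclude via a union bound over the $N^2$ pairs. The hypothesis $k(N) = o(N/\ln N)$ will enter exactly at the union-bound step.

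Set $k = 2k(N)$. By left-right invariance of Haar, I may assume $D_N = \diag(\theta_1, \ldots, \theta_k, 0, \ldots, 0)$, so $(UD_NU^*)_{i,j}$ depends only on the first $k$ columns of $U$. These columns can be realized as $X(X^*X)^{-1/2}$ for an $N \times k$ matrix $X$ with i.i.d.\ standard (real or complex) Gaussian entries. Writing $\tilde X_r$ for the $r$-th row of $X$ and $D_k = \diag(\theta_1, \ldots, \theta_k)$, we get $(UD_NU^*)_{i,j} = \tilde X_i M \tilde X_j^*$ with $M = (X^*X)^{-1/2} D_k (X^*X)^{-1/2}$. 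Standard Davidson--Szarek type estimates give $\|X^*X/N - I_k\|_{op} \leq \eta$ with probability $1 - o(N^{-3})$ for arbitrary small $\eta > 0$, and chi-square tail bounds yield $\max_r \|\tilde X_r\|^2 \leq 2k$ with probability $1 - o(1)$, so all subsequent estimates can be performed on a good event of overwhelming probability. On this event, write $M = \frac{1}{N}D_k + R_N$: the main term gives $\frac{1}{N}\tilde X_i D_k \tilde X_j^*$, and for $i \neq j$ the rows $\tilde X_i$ and $\tilde X_j$ are independent. Conditioning on $\tilde X_i$, this is a centered Gaussian in $\tilde X_j$ with variance $\frac{1}{N^2}\sum_\ell \theta_\ell^2 |\tilde X_{i\ell}|^2 \leq \frac{2K^2 k(N)}{N^2}$, so the Gaussian tail bound yields $\Pp\bigl[\tfrac{1}{N}|\tilde X_i D_k \tilde X_j^*| > 2\epsilon/\sqrt{N}\bigr] \leq 2\exp\bigl(-c\epsilon^2 N/(K^2 k(N))\bigr)$, and summing over the $N^2$ pairs converges to zero exactly when $k(N) = o(N/\ln N)$.

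The main obstacle is estimating the residual term $\tilde X_i R_N \tilde X_j^*$. A crude bound using $\|R_N\|_{op} = O(Kk/N^2)$ and $\|\tilde X_i\|\|\tilde X_j\| = O(k)$ yields $O(Kk^2/N^2)$, which multiplied by $\sqrt{N}$ is $O(Kk^2/N^{3/2})$ and fails to be $o(1)$ when $k$ approaches $N/\ln N$. To refine this, I will apply the resolvent identity to the rank-two perturbation $X^*X - S_{ij} = \tilde X_i^*\tilde X_i + \tilde X_j^*\tilde X_j$, where $S_{ij} = \sum_{r \neq i,j}\tilde X_r^*\tilde X_r$ is independent of $(\tilde X_i, \tilde X_j)$. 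Expanding $(X^*X)^{-1/2}$ around $S_{ij}^{-1/2}$ and substituting into $M$, the leading contributions to $\tilde X_i R_N \tilde X_j^*$ factor as products such as $\|\tilde X_i\|^2 \tilde X_i D_k \tilde X_j^*/N^2$ and $\langle \tilde X_i, \tilde X_j\rangle \tilde X_j D_k \tilde X_j^*/N^2$. Crucially, $|\langle \tilde X_i, \tilde X_j\rangle| = O(\sqrt{k})$ while $\tilde X_j D_k \tilde X_j^* = O(Kk)$ on the good event, so the residual is $O(Kk^{3/2}/N^2)$, yielding $\sqrt{N}|\tilde X_i R_N \tilde X_j^*|/2 = O(Kk^{3/2}/N^{3/2}) = o(1)$ uniformly in $(i,j)$ under $k(N) = o(N/\ln N)$. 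Combined with the Gaussian tail bound for the main term, this gives the claimed vanishing probability.
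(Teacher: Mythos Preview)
Your approach via the Gaussian realization of Haar columns is genuinely different from the paper's, but the sketch has real gaps and the paper's route is considerably cleaner. The paper never leaves the sphere: it writes $(U^*D_NU)_{1,2}=\langle e,\Pi^{(f)}D_Nf\rangle$ for the first two Haar columns $e,f$, observes that conditional on $f$ the square of this is a $\mathrm{Beta}(\tfrac{\beta}{2},\tfrac{\beta(N-2)}{2})$ variable times $\|\Pi^{(f)}D_Nf\|^2$, and bounds $\|D_Nf\|^2\le K\sum_{\ell\le 2k(N)}f_\ell^2$, which is itself $\mathrm{Beta}(\beta k(N),\tfrac{\beta}{2}(N-2k(N)))$. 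Two explicit Beta tail estimates (with the auxiliary scale $l(N)=\max(\ln N,2k(N))$) and a union bound over $N^2$ pairs finish the proof; no matrix square roots, no perturbation theory.

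Your outline can in principle be completed, but several steps are either wrong or missing. First, ``$\max_r\|\tilde X_r\|^2\le 2k$ with probability $1-o(1)$'' is false whenever $k=o(\ln N)$: the maximum of $N$ independent $\chi^2_{\beta k}$ variables is of order $\max(k,\ln N)$, not $k$. Second, the order $\|R_N\|_{op}=O(Kk/N^2)$ is incorrect: Davidson--Szarek gives $\|X^*X/N-I\|=O(\sqrt{k/N})$, hence $\|R_N\|=O(K\sqrt{k}/N^{3/2})$, and the crude bound on $\sqrt{N}\,|\tilde X_iR_N\tilde X_j^*|$ is then $O(K\,k\sqrt{k}/N)$, which still blows up near $k\sim N/\ln N$. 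Third, and most importantly, the ``refinement'' is not a proof: you must expand the \emph{square root} $(S_{ij}+P)^{-1/2}$ under a rank-two perturbation $P$, identify \emph{all} contributions to $\tilde X_iR_N\tilde X_j^*$ to the required order, and bound each of them uniformly over the $N^2$ pairs $(i,j)$ (so e.g.\ $|\langle\tilde X_i,\tilde X_j\rangle|$ carries an extra $\sqrt{\ln N}$). This is feasible through the integral representation $A^{-1/2}=\pi^{-1}\int_0^\infty t^{-1/2}(A+t)^{-1}\,dt$ combined with Woodbury, but it is a genuine calculation that you have entirely omitted, and nothing in your sketch explains why the two displayed terms exhaust the leading order.
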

	\begin{proof}
		There again, for all $i \neq j$, $(U^* D_N U)_{i,j}$ has the same distribution as $(U^* D_N U)_{1,2}$. Let us denote $(e_i)_{1 \leq i \leq  N}$ and $(f_i)_{1 \leq i \leq  N}$ respectively the first and second columns of $U$. For $u \in \Ss^{\beta N-1}$, we denote $\Pi^{(u)}$ the orthogonal projection on the orthogonal of $Vect(u)$, then if we condition on $f$, $e$ is distributed uniformly on the sphere $\Pi^{(f)} (\Ss^{\beta N -1})$ of dimension $N-2$. Therefore, since: 
		\[ (U^* D_N U)_{1,2} = \langle e, D_N f \rangle = \langle e,\Pi^{(f)} D_N f \rangle \]
		conditionally on $f$, $ (U^* D_N U)_{1,2}^2$ has the law of a beta variable of parameters $\frac{\beta}{2}( 1, N-2)$ multiplied by $||\Pi^{(f)} D_N f||^2$. First, we have that 
		\[ ||\Pi^{(f)} D_N f||^2 \leq ||D_N f||^2 \leq K \sum_{i=1}^{2 k(N)} f^2_i \]
		Let $l(N) = \max(\ln N, 2 k(N))$. Let us prove that there is $C >0$ such that $\Pp[ ||\Pi^{(f)} D_N f||^2 > C l(N)/N] \leq N^{-2}$. 
		One can write that 
		\[ \Pp[ ||\Pi^{(f)} D_N f||^2 > C l(N)/N] \leq \Pp \Big[ \sum_{i=1}^{2 k(N)} f^2_i > \frac{ C l(N)}{KN} \Big] .\]
		Since the distribution of $\sum_{i=1}^{2 k(N)} f^2_i$ is a Beta law of parameter $\frac{\beta}{2} ( 2 k(N), N - 2 k(N))$, whose density on $[0,1]$ is given by: 
		\[ \frac{\Gamma(\frac{\beta N}{2})}{\Gamma(\beta k(N)) \Gamma( \frac{\beta}{2}(N - 2 k(N)))}x^{\beta k(N) -1} (1- x)^{\frac{\beta}{2}(N - 2 k(N)) -1} \]
		there, let us deal with the term in $\Gamma$. Using Stirling's equivalent, we have that 
		
		\begin{eqnarray*}  \frac{\Gamma(\frac{\beta N}{2})}{\Gamma(\beta k(N)) \Gamma( \frac{\beta}{2}(N - 2 k(N)))} &=&  \Big( \frac{ 2 k(N)( N -2k(N))}{N} \Big)^2 \Big( \frac{N}{e} \Big)^{\frac{\beta N}{2}} \Big( \frac{(N- 2 k(N))}{e} \Big)^{- \frac{\beta (N- 2 k(N))}{2}} \Big( \frac{2k(N)}{e} \Big)^{-\beta k(N)} +o(1) \end{eqnarray*}
		And therefore
		\begin{eqnarray*}
			\ln \frac{\Gamma(\frac{\beta N}{2})}{\Gamma(\beta k(N)) \Gamma( \frac{\beta}{2}(N - 2 k(N)))} &=& \frac{1}{2} ( \ln(2k(N)) + \ln (N -2 k(N)) - \ln N ) \\
			& & \quad+ \frac{\beta}{2} ( N \ln N  - 2 k(N) \ln(2k(N)) - (N - 2k(N)) \ln (N - 2k(N))) + O(1) \\
			&=& \frac{1}{2} ( \ln(2k(N)) - \frac{2k(N)}{N} + o( \frac{k(N)}{N} ))  \\
			& & \quad+ \frac{\beta}{2} ( 2 k(N) \ln N  - 2 k(N) \ln(2k(N)) - \frac{\beta}{2}(N - 2k(N)) \ln (1 - \frac{2k(N)}{N})) +O(1) \\
			&=& \frac{\beta}{2} ( 2k(N) \ln N  - \beta k(N) \ln(2k(N)) + 2 k(N) + o(k(N)) 
		\end{eqnarray*}

	  Since for $x^* = \frac{ \beta k(N)  -1}{ \beta N -2}$, $x \mapsto x^{\beta k(N) -1} (1- x)^{\frac{\beta}{2}(N - 2 k(N)) -1}$ is increasing on $[0, x^*]$ and decreasing on $[x^*,1]$, for $C > 1$, we have for $N$ large enough:
		\[ \Pp[ ||\Pi^{(f)} D_N f||^2 > C l(N)/N] \leq \frac{\Gamma(\frac{\beta N}{2})}{\Gamma(\beta k(N)) \Gamma( \frac{\beta}{2}(N - 2 k(N)))} \Big( \frac{C l(N)}{N} \Big)^{\beta k(N) -1}\Big( 1 - \frac{C l(N)}{N} \Big)^{\frac{\beta}{2}(N - 2 k(N)) -1} \]
		and therefore 
		\begin{eqnarray*} \ln \Pp[ ||\Pi^{(f)} D_N f||^2 > C l(N)/N] &\leq& \frac{\beta}{2} ( 2k(N)( \ln( C l(N)) - \ln( 2 k(N)) )+ \beta k(N)  - \frac{\beta}{2} Cl(N) + o(\max(k(N),l(N)))) 
			\\
		\end{eqnarray*}  
		
		Then, using that $2 k(N) \leq \l(N)$, we have that $2 k(N)\ln \frac{l(N)}{2k(N)} \leq e^{-1} l(N)$ and therefore:
		\begin{eqnarray*} 
			\ln \Pp[ ||\Pi^{(f)} D_N f||^2 > C l(N)/N] &\leq& \frac{\beta}{2} ( 2k(N) \ln(\frac{C l(N)}{2 k(N)})+2k(N)- C l(N)) + o(l(N)) \\
			& \leq & \frac{\beta}{2}( e^{-1}   + \ln C + (1 - C)) l(N) + o(l(N) )
		\end{eqnarray*}
		Choosing $C$ large enough such that 
		\[  \frac{\beta}{2}( e^{-1}   + \ln C + (1 - C)) <  -2 \]
		we have using $l(N) \geq \ln N$ that 
		\[ \Pp[ ||\Pi^{(f)} D_N f||^2 > C l(N)/N] = o(N ^{-2}) \]
		
		Going back to $(U^* D_N U)_{1,2}$, we have that 
		\[\Pp[ |\sqrt{N}(U^* D_N U)_{1,2}| \geq \epsilon] \leq \Pp[ B_N \geq \frac{\epsilon^2}{ C l(N) }] + o(N^{-2})\]
		where $B_N$ is some Beta variable of parameters $\frac{\beta}{2}(1,N-1)$. With the same estimation as for $||\Pi^{(f)} D_N f||^2$, one gets
		
		\[  \ln \Pp[ B_N \geq \frac{\epsilon^2}{ C l(N) }] \leq \frac{\beta}{2} ( \ln N - \frac{N \epsilon^2}{ C l(N) }) + o(\max(\frac{N }{l(N) }, \ln N))\]
		Using that $k(N) = o \Big( \frac{N}{\ln N} \Big)$, we have $ l(N) =  o \Big( \frac{N}{\ln N} \Big)$ and therefore $\ln N = o( \frac{N}{l( N)})$ whice gives then $\Pp[ B_N \geq \frac{\epsilon^2}{ C l(N) }] = o(N^{-2})$ and therefore 
		$\Pp[ |\sqrt{N}(U^* D_N U)_{1,2}| \geq \epsilon] = o(N^{-2})$. The lemma then follows by a simple union bound. 
	\end{proof}
	We now have all the ingredients to prove Lemma \ref{excessmass}. 
	\begin{proof}[Proof of Lemma \ref{excessmass}]
		We split $A_N^{(\epsilon)}$ into two terms: 
		
		\[ A^{(\epsilon)}_N:= \frac{1}{k(N)}\Big[ \sum_{i \neq j}  \mathds{1}_{ \beta\sqrt{N}| ( U^* D_N U)_{i,j}|/2 \geq \epsilon} |( U^* D_N U)_{i,j}|^2 + \sum_{i }  \mathds{1}_{ \beta\sqrt{N}| ( U^* D_N U)_{i,i}|/2 \geq \epsilon} |( U^* D_N U)_{i,i}|^2 \Big]\]
		Following Lemma \ref{excessmass2}, the first term is equal to $0$ with probability $1 - o(1)$. The second term can be bounded as follows: 
		
		\begin{eqnarray*}
			\sum_{i }  \mathds{1}_{ \beta\sqrt{N}| ( U^* D_N U)_{i,i}|/2 \geq \epsilon} |( U^* D_N U)_{i,i}|^2 &\leq & \sum_{i }   |( U^* D_N U)_{i,i}|^2 \\
			& \leq& \max_{j=1}^N |( U^* D_N U)_{j,j}| \sum_{i }  |( U^* D_N U)_{i,i}| \\
			& \leq & \max_{j=1}^N |( U^* D_N U)_{j,j}| \sum_{i }  ( U^* |D_N| U)_{i,i} \\
			& \leq& \max_{j=1}^N |( U^* D_N U)_{j,j}| \tr(|D_N|) \\
			& \leq&2 K k(N) \max_{j=1}^N |( U^* D_N U)_{j,j}| 
		\end{eqnarray*}
		where $|D_N|$ is the diagonal matrix whose entries are the $|\theta_{\pm i}^N|$. We used here that $|(U D_N U)_{i,i}| \leq (U |D_N| U)_{i,i}$ and that $|\theta^N_{\pm i} | \leq K$. From this bound and Lemma \ref{excessmass1}, this second term divided by $k(N)$ converges in probability toward $0$. Therefore the Lemma is proved. 
	\end{proof}
	
	\section{Proof of lemma \ref{tiltexptight}}
	To prove this exponential tightness lemma, we will first need the following result:
	\begin{lemma}
		For $C >0$, 
		
		\[\E[ \mathds{ 1}_{ I_N(X_N,D_N) \geq \exp(Nk(N)C)} I_N(X_N,D_N) ] \leq \exp( \frac{N k(N)}{2} ( (5\beta/2) K^2 - C)) \]
		
	\end{lemma}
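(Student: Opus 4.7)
The natural strategy is Cauchy--Schwarz combined with Chebyshev's inequality, where both the first and second annealed moments of $I_N(X_N,D_N)$ are controlled via the sharp sub-Gaussian domination used already in the proof of Proposition~\ref{AnnealedSI}. Writing $\mathcal{A} = \{I_N(X_N,D_N) \geq e^{Nk(N)C}\}$, we have
\[
\E[\mathds{1}_{\mathcal{A}}\, I_N(X_N,D_N)] \;\leq\; \Pp(\mathcal{A})^{1/2}\, \E[I_N(X_N,D_N)^2]^{1/2} \;\leq\; e^{-Nk(N)C/2}\, \E[I_N(X_N,D_N)]^{1/2}\, \E[I_N(X_N,D_N)^2]^{1/2},
\]
where the last step uses Markov's inequality $\Pp(\mathcal{A}) \leq e^{-Nk(N)C}\E[I_N(X_N,D_N)]$.

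The first moment is already under control from the computations of Proposition~\ref{AnnealedSI}: expanding the spherical integral and taking expectation over the sharp sub-Gaussian entries first yields, using $L_{i,j}(z)\leq |z|^2/(2\beta)$ for $i\neq j$ and $L_{i,i}(z) \leq \Re(z)^2/\beta$,
\[
\E[I_N(X_N,D_N)] \;\leq\; \E_U\Big[\exp\Big(\tfrac{\beta N}{4}\tr\big((U^*D_N U)^2\big)\Big)\Big] \;=\; \exp\Big(\tfrac{\beta N}{4}\tr(D_N^2)\Big)\;\leq\; \exp\Big(\tfrac{\beta N K^2 k(N)}{2}\Big),
\]
since $D_N$ has at most $2k(N)$ nonzero eigenvalues, each of modulus at most $K$.

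For the second moment, Fubini gives
\[
\E[I_N(X_N,D_N)^2] \;=\; \E_{U_1,U_2}\Big[\E_X \exp\Big(\tfrac{\beta N}{2}\tr\big(X_N\, M\big)\Big)\Big], \qquad M \;=\; U_1 D_N U_1^* + U_2 D_N U_2^*.
\]
Applying the same Gaussian domination estimate to the linear functional $X_N \mapsto \tr(X_N M)$ yields $\E_X \exp(\tfrac{\beta N}{2}\tr(X_N M)) \leq \exp(\tfrac{\beta N}{4}\tr(M^2))$. Expanding $\tr(M^2)$ and using the von Neumann trace inequality $|\tr(U_1 D_N U_1^* U_2 D_N U_2^*)| \leq \tr(D_N^2)$ gives $\tr(M^2) \leq 4\tr(D_N^2) \leq 8k(N)K^2$, hence $\E[I_N^2] \leq \exp(2\beta N K^2 k(N))$.

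Plugging these two moment bounds into the Cauchy--Schwarz/Markov inequality and combining the exponents yields
\[
\E[\mathds{1}_{\mathcal{A}}\, I_N(X_N,D_N)] \;\leq\; \exp\Big(\tfrac{Nk(N)}{2}\big(\tfrac{5\beta}{2}K^2 - C\big)\Big),
\]
which is the claim. The only nontrivial ingredient is recognising that the computations of Proposition~\ref{AnnealedSI} apply verbatim to the second moment once one replaces $U^*D_N U$ by the sum $M = U_1 D_N U_1^* + U_2 D_N U_2^*$; everything else is bookkeeping.
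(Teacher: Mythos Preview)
Your proof is correct and follows essentially the same route as the paper: Cauchy--Schwarz followed by Markov on the tail event, with the first and second annealed moments of $I_N(X_N,D_N)$ bounded via the sharp sub-Gaussian estimate $\E[\exp(\tr(AX_N))]\leq \exp(\tfrac{\beta}{4}\tr(A^2))$, and the second moment handled by the two-replica expansion with the bound $\tr((U_1 D_N U_1^* + U_2 D_N U_2^*)^2)\leq 4\tr(D_N^2)$. The bookkeeping and the final exponent match the paper's exactly.
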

	\begin{proof}
		In this proof, we will the sharp sub-Gaussian character of $X_N$ to do the following bound for any $A \in \mathcal{H}_N^{\beta}$:
		\[ \E[ \exp(Tr(AX_N)) ] \leq \exp( \frac{\beta}{4} Tr(A^2)) \]
		
		First we use Cauchy Schwartz inequality:
		\[\E[ \mathds{ 1}_{ I_N(X_N,D_N) \geq \exp(Nk(N)C)} I_N(X_N,D_N) ]	\leq \sqrt{ \Pp[ \mathds{ 1}_{ I_N(X_N,D_N) \geq \exp(Nk(N)C)}] \E[ I_N(X_N,D_N)^2]} \]
		By Markov inequality, we have: 
		
		\begin{eqnarray*} \E[ \mathds{ 1}_{ I_N(X_N,D_N) \geq \exp(Nk(N)C)}] &\leq& \E[ I_N(X_N,D_N)] \exp( - N k(N) C ) \\
			& \leq & \E_U [ \E_X [ \exp \Big( \frac{\beta N}{2} Tr( X_N U D_N U^* \Big) ]]\exp( - N k(N) C ) \\
			&\leq& \exp( N \frac{\beta}{2} Tr(D_N^2))\exp( - N k(N) C ) \\
			&\leq& \exp( N k(N) ( \frac{\beta}{2} K^2 - C)) \end{eqnarray*}
		
		If $U,U'$ are two independant random matrices Haar-distributed in $\mathcal{U}_N^{\beta}$:
		
		\begin{eqnarray*} \E[ I_N(X_N,D_N)^2] &=& \E_{U,U'}[ \E_X[ \exp \Big( \frac{\beta N }{2} \Big(\tr( X_N U D_N U^*) + \tr(X_N U' D_N U'^*)\Big) \Big)]] \\
			& =&\E_{U,U'}[ \E_X[ \exp \Big( \frac{\beta N }{2} \tr( X_N (U D_N U^* + U'D_N U'^*) )\Big)]] \\
			&\leq & \E_{U,U'}[ \exp( N \frac{\beta}{4} \tr(  (U D_N U^* + U'D_N U'^*)^2))] \\
			&\leq &  \exp( N \beta 2 k(N) K^2) .
		\end{eqnarray*}
		
		Here we used that $	 \tr(  (U D_N U^* + U'D_N U'^*)^2) \leq 4 \tr(D_N^2)\leq 8 K^2 k(N)$.
		
	\end{proof}
	Therefore, if $E$ is some event, we have that 
	\begin{eqnarray*} 
		\Pp^{\theta_N}[ E] &\leq& \E[ \exp( N I_N(X_N,D_N)) \mathds{1}_{E}] \exp( - \frac{\beta}{2} N k(N)( K^2 + o(1))) \\
		& \leq & (\E[ \mathds{1}_E \mathds{1}_{ I_N(D_N,X_N) \leq \exp(N k(N) C)}  I_N(X_N,D_N) ] + \exp( \frac{N k(N)}{2} ( (5\beta/2) K^2 - C)))
		\\
		& & \quad \exp( - \frac{\beta}{2} N k(N)( K^2 + o(1))) \\
		&\leq& \Pp[E] \exp ( N k(N)( C - \frac{\beta}{2} K^2 + o(1) )) + \exp(\frac{N k(N)}{2} ( (3\beta/2) K^2 - C +o(1))) .
	\end{eqnarray*}
	Then using Proposition \ref{exptight}  as well as Assumption \ref{assum2} for the measure $\Pp$, we prove Lemma \ref{tiltexptight}.

\end{document}